\documentclass[12pt]{amsart}

\usepackage{aliascnt,amsmath,amssymb,amsthm,amsfonts,enumerate,mathrsfs,latexsym,mathtools,bm,xcolor,shuffle}
\usepackage{stmaryrd}	
\usepackage[abbrev]{amsrefs}
\usepackage{appendix}

\usepackage[OT2,T1]{fontenc}
\usepackage{scalerel}

\usepackage[marginparwidth=0pt,margin=24truemm]{geometry}

\definecolor{mylinkcolor}{RGB}{16, 156, 81}
\definecolor{mycitecolor}{RGB}{20, 80, 140}

\usepackage{hyperref}
\usepackage[nameinlink]{cleveref}
\hypersetup{
    setpagesize=false, 
    bookmarksnumbered=true, 
    bookmarksopen=true, 
    colorlinks=true, 
    linkcolor=mylinkcolor, 
    citecolor=mycitecolor,}
\usepackage{autonum}
\numberwithin{equation}{section}

\usepackage{tikz}
\usetikzlibrary{intersections,calc,arrows.meta,decorations,decorations.pathreplacing}

\theoremstyle{plain}

\newtheorem{thm}{Theorem}[section]
\crefname{thm}{Theorem}{Theorems}

\newaliascnt{lem}{thm} 
\newtheorem{lem}[lem]{Lemma}
\aliascntresetthe{lem}
\crefname{lem}{Lemma}{Lemmas}

\newaliascnt{prop}{thm}
\newtheorem{prop}[prop]{Proposition}
\aliascntresetthe{prop}
\crefname{prop}{Proposition}{Propositions}

\newaliascnt{cor}{thm}
\newtheorem{cor}[cor]{Corollary}
\aliascntresetthe{cor}
\crefname{cor}{Corollary}{Corollaries}

\newaliascnt{fact}{thm}

\aliascntresetthe{fact}
\crefname{fact}{Fact}{Facts}

\newaliascnt{conj}{thm}
\newtheorem{conj}[conj]{Conjecture}
\aliascntresetthe{conj}
\crefname{conj}{Conjecture}{Conjectures}

\newaliascnt{dfn}{thm}
\newtheorem{dfn}[dfn]{Definition}
\aliascntresetthe{dfn}
\crefname{dfn}{Definition}{Definitions}

\newaliascnt{rem}{thm}
\newtheorem{rem}[rem]{Remark}
\aliascntresetthe{rem}
\crefname{rem}{Remark}{Remarks}

\newaliascnt{ex}{thm}
\newtheorem{ex}[ex]{Example}
\aliascntresetthe{ex}
\crefname{ex}{Example}{Examples}

\newtheorem{mainthm}{Main Theorem}
 
\newaliascnt{mainconj}{mainthm}   
 
\aliascntresetthe{mainconj}        
\crefname{mainconj}{Main Conjecture}{Main Conjectures}

\allowdisplaybreaks[2]

\everymath{\displaystyle}

\newcommand{\relmid}{\mathrel{}\middle|\mathrel{}}
\newcommand{\summ}[1]{\sum_{\substack{#1}}}

\newcommand{\odd}{\mathrm{odd}}
\newcommand{\even}{\mathrm{even}}
\newcommand{\otherwise}{\text{otherwise}}

\newcommand{\QQ}{\mathbb{Q}}
\newcommand{\ZZ}{\mathbb{Z}}
\newcommand{\RR}{\mathbb{R}}
\newcommand{\NN}{\mathbb{N}}
\newcommand{\CC}{\mathbb{C}}
\newcommand{\HH}{\mathbb{H}}

\newcommand{\sltwo}{\mathfrak{sl}_2}

\DeclareMathOperator{\Span}{Span}
\DeclareMathOperator{\Hom}{Hom}
\DeclareMathOperator{\SL}{SL}

\DeclareMathOperator{\id}{id}
\DeclareMathOperator{\Der}{Der}

\newcommand{\halm}{\ha^{\ge2,\mathrm{alm}}}
\newcommand{\der}{\theta} 

\newcommand\quotient[2]{
	\mathchoice
	{
		\text{\raise1ex\hbox{$#1$}\Big/\lower1ex\hbox{$#2$}}%
	}
	{
		#1\,/\,#2
	}
	{
		#1\,/\,#2
	}
	{
		#1\,/\,#2
	}
}

\DeclareMathOperator*{\bigsha}{\scalerel*{\shuffle}{\sum}}

\newcommand{\mes}{\mathcal{E}}
\newcommand{\bmes}{\mathcal{G}}

\newcommand{\MEI}{\mathfrak{g}} 

\newcommand{\mz}{\mathcal{Z}}
\newcommand{\Ds}{\operatorname{ds}}
\newcommand{\dep}{\operatorname{dep}}

\newcommand{\ha}{\mathfrak{H}}
\newcommand{\hha}{\widehat{\mathfrak{H}}}
\newcommand{\bMZV}{\zeta}
\newcommand{\qmzv}{\mathsf{qMZV}}
\newcommand{\cmes}{G^{q}}

\newcommand{\nega}{\mathsf{neg}}
\newcommand{\anti}{\mathsf{anti}}
\newcommand{\pari}{\mathsf{pari}}
\newcommand{\swap}{\mathsf{swap}}

\newcommand{\BIMU}{\mathsf{BIMU}}
\newcommand{\gaxit}{\mathsf{gaxit}}

\newcommand{\gilat}{\mathsf{gilat}}
\newcommand{\gila}{\mathsf{gila}}

\newcommand{\co}{\mathsf{co}}

\renewcommand{\Im}{\operatorname{Im}}

\newcommand{\A}{\mathcal{A}}
\DeclareRobustCommand{\ai}{\genfrac{[}{]}{0pt}{}}
\newcommand{\QA}{\mathbb{Q}\langle\mathcal{A}\rangle}
\newcommand{\fA}{\mathfrak{A}}

\newcommand{\I}{\mathcal{I}}
\newcommand{\bMTF}[1]{\mathcal{L}_{#1}(\tau)}
\newcommand{\bMEI}{\mathfrak{g}}
\newcommand{\bMES}{G}

\title{Relations and Derivatives of Multiple Eisenstein Series}

\author{Henrik Bachmann}
\address{Graduate School of Mathematics, Nagoya University, Nagoya, Japan.}
\email{henrik.bachmann@math.nagoya-u.ac.jp}

\author{Hayato Kanno}
\address{Mathematical Institute, Tohoku University, Sendai, Japan.}
\email{hayato.kanno.q1@dc.tohoku.ac.jp}

\author{Takumi Maesaka}
\address{Faculty of Mathematics, Kyushu University, Fukuoka, Japan}
\email{nozaki.takumi.912@s.kyushu-u.ac.jp}

\date{\today}

\subjclass[2020]{Primary 
11F11,
11M32. 
Secondary 13N15,
16T30 
}
\keywords{multiple zeta values, multiple Eisenstein series, derivations, (quasi)modular forms}

\begin{document}

\begin{abstract}
In this paper, we study multiple Eisenstein series, which build a natural bridge between the theory of multiple zeta values and modular forms. We prove a large family of relations among these series and give an explicit formula for their derivatives. This formula is expressed using the double shuffle structure and the Drop1 operator introduced by Hirose, Maesaka, Seki, and Watanabe. In particular, the space of multiple Eisenstein series is closed under the derivative.  Further we construct bi-multiple Eisenstein series, which give a realization of the formal multiple Eisenstein series as holomorphic functions on the upper half-plane, and we prove a conjecture of Okounkov on derivatives of $q$-analogues of multiple zeta values. Based on the derivative formula, we propose a family of linear relations that is conjectured to generate all linear relations among multiple Eisenstein series. Motivated by this conjecture, we introduce a space of formal multiple Eisenstein series and show that it is an $\mathfrak{sl}_2$-algebra.
\end{abstract}

\maketitle

\section{Introduction}

The purpose of this paper is to study relations and derivatives of multiple Eisenstein series, which were introduced by Gangl, Kaneko and Zagier in \cite{GKZ}. These are holomorphic functions on the complex upper half-plane that can be viewed as hybrids of multiple zeta values and classical Eisenstein series. For multiple zeta values, there exist various families of relations, such as the extended double shuffle relations (\cite{IKZ}), associator relations (\cite{Dr},\cite{Fu}), confluence relations (\cite{HS}), and Kawashima relations (\cite{Kaw}), each of which is conjectured to generate all relations among multiple zeta values. However, for multiple Eisenstein series, such a family has not been proposed before. In this paper, we extend some results from \cite{BT} and prove a large family of relations among multiple Eisenstein series (\Cref{cor:R}) by showing that their harmonic and shuffle regularizations are the same on a certain domain (\Cref{thm:comparison of regularization}). Furthermore, following \cite{BIM}, one expects that there exists an $\mathfrak{sl}_2$-action on the space of multiple Eisenstein series, and in particular, that the space is closed under the derivative. To this end, we construct a realization of the formal multiple Eisenstein series introduced in \cite{BIM} as holomorphic functions on the upper half-plane, called bi-multiple Eisenstein series (\Cref{thm:biMES}), and we prove an explicit formula for the derivative (\Cref{thm:der}), which shows in particular that the space of multiple Eisenstein series is indeed closed under the derivative. As an application, we prove a conjecture of Okounkov on the derivatives of his $q$-analogues of multiple zeta values (\Cref{cor:okounkov}). Moreover, we propose, based on numerical computations, that all relations among multiple Eisenstein series are obtained from the relations in \Cref{cor:R} together with their derivatives (\Cref{conj:all relation}). Motivated by this conjecture, we introduce a new variant of the algebra of formal multiple Eisenstein series and show that it is an $\mathfrak{sl}_2$-algebra (\Cref{thm:main2}). Building on these results, we conclude by defining a new regularization of multiple Eisenstein series, which we call multiple Eisenstein-diamond series $G^\diamondsuit$.

\subsection{Multiple zeta values}

For an integer $r\ge1$ and a tuple $(k_1,\dots,k_r)\in(\ZZ_{>0})^r$ satisfying $k_1\ge2$ (called an \emph{admissible index}), the \emph{multiple zeta value} is defined by
\begin{align}
    \zeta(k_1,\dots,k_r)\coloneqq\sum_{n_1>\cdots>n_r>0}\frac{1}{n_1^{k_1}\cdots n_r^{k_r}}.
\end{align}
These values are subject to many relations; for example, Euler showed that $\zeta(2,1)=\zeta(3)$. Denote the $\QQ$-algebra of all multiple zeta values by $\mz$. Conjecturally, all algebraic relations among multiple zeta values are a consequence of the \emph{extended double shuffle relations} (\cite{IKZ}). These relations are obtained (after possible regularization) from the two ways of expressing the product of multiple zeta values: the harmonic product from the iterated sum representation, and the shuffle product from the iterated integral representation of multiple zeta values.
First, we recall the usual algebraic setup for multiple zeta values, following \cite{IKZ}. Let $\ha^1\coloneqq\QQ\langle z_k\mid k\ge1\rangle$ be the non-commutative polynomial ring over $\QQ$ generated by the letters $\{z_k\mid k\ge1\}$. The harmonic product on $\ha^1$ is defined by $w\ast1=1\ast w=w$ for any word $w\in\ha^1$ and $z_ku\ast z_lv=z_k(u\ast z_lv)+z_l(z_ku\ast v)+z_{k+l}(u\ast v)$ for any words $u,v\in\ha^1$ and any positive integers $k,l$ with $\QQ$-bilinearity. We can equip $\ha^1$ with a commutative $\QQ$-algebra structure and denote it by $\ha^1_\ast$. Let $\ha^0$ be the subspace of $\ha^1$ spanned by all words not starting in $z_1$. The linear map defined by
\begin{align}\label{eq:zeta evaluate}
    \zeta:\ha^0&\longrightarrow\mz\\
    z_{k_1}\cdots z_{k_r}&\longmapsto\zeta(k_1,\dots,k_r)
\end{align}
is an algebra homomorphism from $\ha^0_\ast$ to $\mz$. This homomorphism can be extended to a homomorphism $\zeta^\ast:\ha^1_\ast\to\mz\lbrack T\rbrack$, that is, we obtain elements $\zeta^\ast(k_1,\dots,k_r;T)\in\RR\lbrack T\rbrack$ for all $k_1,\dots,k_r\ge1$ called the \emph{harmonic regularized multiple zeta values}. In the case $k_1\ge2$, these coincide with the original multiple zeta values.
To describe the second product, we identify $\ha^1$ with $\QQ+\QQ\langle x,y\rangle y$, the subspace of the non-commutative polynomial ring in two variables $x,y$, where the identification is given by $z_{k_1}\cdots z_{k_r}=x^{k_1-1}y\cdots x^{k_r-1}y$. Then, the shuffle product on $\ha^1$ is defined by $w\shuffle 1=1\shuffle w=w$ for any word $w\in\ha^1$ and $au\shuffle bv=a(u\shuffle bv)+b(au\shuffle v)$ for any words $u,v\in\ha^1$ and any letters $a,b\in\{x,y\}$ with $\QQ$-bilinearity. Due to the iterated integral representation of multiple zeta values, one obtains that the map \eqref{eq:zeta evaluate} gives an algebra homomorphism from $\ha^0_\shuffle$ to $\mz$. There is also a unique extension of the map $\zeta$ to an algebra homomorphism $\zeta^\shuffle:\ha^1_\shuffle\to\mz\lbrack T\rbrack$. 

There are various families of relations, which conjecturally describe all relations among multiple zeta values, i.e., there exist explicit descriptions of the kernel $\ker(\zeta)$ of the map \eqref{eq:zeta evaluate}. One prominent family of such relations are the double shuffle relations (\cite{IKZ}). Define for $u,v \in \ha^1$ the following element in $\ha^1$
\begin{align}\label{eq:defds}
    \Ds(u,v)\coloneqq u\ast v-u\shuffle v.
\end{align}

Then, by the above discussion, for $u,v \in \ha^0$ we get the finite double shuffle relations $\Ds(u,v) \in \ker(\zeta)$. The extended double shuffle relations extend these to $u\in \ha^0 , v\in \ha^0 \cup \{ z_1 \}$. Conjecturally $\ker(\zeta)$ is then the ideal generated by these $\Ds(u,v)$. Further, we have a dimension conjecture by Zagier, which states that
\begin{align}\label{eq:zagierconjagain}
       \sum_{k\geq 0} \dim_\QQ \mz_k X^k  \overset{?}{=} \frac{1}{1-X^2-X^3},
\end{align}
where $\mz_k$ denotes the space of multiple zeta values of weight $k$. While this remains a conjecture for classical multiple zeta values, the analogous statement was proven by Brown (\cite{Br}) for the graded algebra of motivic multiple zeta values $\mz^\mathfrak{m}$.

To make more sense of the analogous conjecture for multiple Eisenstein series, we give a different interpretation of the above formula. For this, consider the series
\begin{align}
    \mathsf{E}(X) = \frac{1}{1-X^2} = 1 + X^2 + X^4 + \dots, \quad \mathsf{O}(X) = \frac{X^3}{1-X^2} = X^3 + X^5 + \dots.
\end{align}
Then one can directly verify that \eqref{eq:zagierconjagain} can be rewritten as
\begin{align}\label{eq:zagierconjagainagain}
\sum_{k\geq 0} \dim_\QQ \mz_k X^k  \overset{?}{=} \frac{1}{1-X^2-X^3} =  \mathsf{E}(X) \cdot  \frac{1}{1 -  \mathsf{O}(X)}.
\end{align}
From the perspective of Hilbert--Poincar\'e series for graded algebras, this reflects the structure of motivic multiple zeta values established by Brown:
\begin{align}\label{eq:mzvalgebraconj}
\mz^\mathfrak{m} \cong \QQ[f_2] \otimes \QQ\langle f_3,f_5,\dots \rangle \cong\QQ[f_2] \otimes U(\mathfrak{g}^\mathfrak{m})^{\vee}
\end{align}
where the isomorphism on the right factor $\QQ\langle f_3,f_5,\dots \rangle$ is equipped with the shuffle product. The left factor $\QQ[f_2]$ corresponds to all even single zeta values (which, by Euler, satisfy $\zeta(2m) \in \QQ[\zeta(2)]$). The right factor can be identified with the universal enveloping algebra of the motivic Lie algebra $\mathfrak{g}^\mathfrak{m} \cong \operatorname{Lie}(\sigma_3,\sigma_5,\sigma_7,\ldots)$, a free Lie algebra with one generator in each odd weight.

\subsection{Multiple Eisenstein series}

Multiple zeta values and modular forms are connected in various ways. Zagier \cites{Za1,Za2} initially noted that there exist some relations among double zeta values whose coefficients originate from modular forms for $\SL_2(\ZZ)$. A formulation of such modular relations for double zeta values was established by Gangl, Kaneko and Zagier \cite{GKZ}. In this work, they also introduced \emph{double Eisenstein series} and constructed regularized double Eisenstein series, which satisfy the extended double shuffle relations. At the same time, Kaneko \cite{Ka} described the modular relation for double zeta values via double Eisenstein series. For general depth $r\ge1$, the multiple Eisenstein series are defined for integers $k_1,\dots,k_r\ge2$ and $\tau\in\HH=\{z\in\CC\mid\Im(z)>0\}$ by
\begin{align}
    G_{k_1,\dots,k_r}(\tau)\coloneqq\lim_{M\to\infty}\lim_{N\to\infty}\summ{\lambda_1,\dots,\lambda_r\in\ZZ_M\tau+\ZZ_N\\\lambda_1\succ\cdots\succ\lambda_r\succ0}\frac{1}{\lambda_1^{k_1}\cdots\lambda_r^{k_r}},
\end{align}
where $\ZZ_M=\{m\in\ZZ\mid|m|<M\}$ and the order $\succ$ on the lattice $\ZZ\tau+\ZZ$ is given by the lexicographical order defined by $m_1\tau+n_1\succ m_2\tau+n_2$ iff $m_1>m_2$ or $m_1=m_2\;\wedge\;n_1>n_2$. Since $G_{k_1,\dots,k_r}(\tau+1)=G_{k_1,\dots,k_r}(\tau)$, which can be obtained by the above definition, the multiple Eisenstein series has a Fourier expansion
\begin{align}\label{eq:mesfourier}
    G_{k_1,\dots,k_r}(\tau)=\zeta(k_1,\dots,k_r)+\sum_{n\ge1}a_nq^n\qquad(a_n\in\mz\lbrack2\pi i\rbrack,\;q=e^{2\pi i\tau}),
\end{align}
which was calculated in depth $r=2$ in \cite{GKZ} and for arbitrary depth by the first author in \cite{Ba1}. More precisely, the Fourier expansion can be written as an explicit $\mz$-linear combination of the $q$-series
\begin{align}
    g_{k_1,\dots,k_r}(\tau)\coloneqq\frac{(-2\pi i)^{k_1+\cdots+k_r}}{(k_1-1)!\cdots(k_r-1)!}\summ{c_1>\cdots>c_r>0\\d_1,\dots,d_r>0}d_1^{k_1-1}\cdots d_r^{k_r-1}q^{c_1 d_1+\cdots+c_r d_r}
\end{align}
for $k_1,\dots,k_r\ge1$. For example, we have 
\begin{align}
G_{4,2}(\tau) &= \zeta(4,2) + 2 \zeta(2)\, g_4(\tau) + 2 \zeta(3) \,g_3(\tau) + 4 \zeta(4) \,g_{2}(\tau) + g_{4,2}(\tau).
\end{align}
The explicit formula for the Fourier expansion will be reviewed in \Cref{thm:Fourier exp}.
We denote the $\QQ$-linear space spanned by all multiple Eisenstein series by
\begin{align}
    \mes\coloneqq\Span_\QQ\{G_{k_1,\dots,k_r}(\tau),G_\varnothing(\tau)\mid r\ge1,k_1,\dots,k_r\ge2\},
\end{align}
where $G_{\varnothing}(\tau)\coloneqq1$ for the \emph{empty index} $\varnothing$. Since multiple Eisenstein series satisfy the harmonic product formula by definition, we know that $\mes$ is a $\QQ$-algebra. The subspace $$\ha^{\ge2}\coloneqq\QQ\langle z_k\mid k\ge2\rangle \quad \subset \quad  \ha^0 \quad  \subset \quad \ha^1$$ is closed under the harmonic product $\ast$. The space $\mes$ can be viewed as the image of the $\QQ$-linear map 
\begin{align}\label{eq:mapG}
    G:\ha^{\ge2} &\longrightarrow  \mathcal{O}(\HH),\\
    z_{k_1}\cdots z_{k_r}&\longmapsto G_{k_1,\dots,k_r}(\tau).
\end{align}
This map is an algebra homomorphism with respect to the harmonic product $\ast$. Similar to the case of multiple zeta values, the space $\mes$ is expected to be graded by weight.

\begin{conj}\label{conj:mesgraded}
We have
$$\mes = \bigoplus_{k\ge0}\mes_k,$$
where $\mes_0\coloneqq\QQ\cdot G_\varnothing(\tau)=\QQ$ and for $k\geq 1$
$$\mes_k\coloneqq\Span_\QQ\{G_{k_1,\dots,k_r}(\tau)\mid k_1,\dots,k_r\ge2,\, k_1+\cdots+k_r=k\}.$$
\end{conj}
Analogous to the case of multiple zeta values, there exists a dimension conjecture for $\mes_k$. This conjecture is based on corresponding conjectures for the associated graded spaces of $q$-analogues of multiple zeta values given by Okounkov in \cite{O} and by the first author and K\"uhn in \cite{BK}, combined with the expectation that multiple Eisenstein series satisfy the same relations as these $q$-analogues modulo lower weight terms as mentioned in \cite{BT}, \cite{BIM} and \cite{HST}. It can be viewed as an analogue of Zagier's conjecture \eqref{eq:zagierconjagainagain}. To state the conjecture, let
\begin{align}
\mathsf{M}(X) =\sum_{k\geq 0} \dim \mathcal{M}_k X^k = \frac{1}{(1-X^4)(1-X^6)},\qquad
\mathsf{S}(X) = \sum_{k\geq 0} \dim \mathcal{S}_k X^k =\frac{X^{12}}{(1-X^4)(1-X^6)}
\end{align}
denote the generating series of the dimensions of modular forms and cusp forms for $\SL_2(\ZZ)$, respectively.

\begin{conj}\label{conj:dimension}
We have
\begin{align}
\sum_{k\ge0}\dim_\QQ \mes_kX^k &= \mathsf{M}(X) \cdot \frac{1}{1 - X^2 - \mathsf{O}(X) + 2 \mathsf{S}(X)} \\
&= \frac{1}{1-X^2-X^3-X^4-X^5+X^8+X^9+X^{10}+X^{11}+X^{12}}.
\end{align}
\end{conj}

The first equation in this conjecture suggests a structural analogue of the motivic isomorphism \eqref{eq:mzvalgebraconj}. Here, the ring of even single zeta values is replaced by the algebra of modular forms $\mathcal{M}$, leading to the expectation that there exists a Lie algebra $\mathfrak{E}$ such that
$$\mes \overset{?}{\cong} \mathcal{M} \otimes U(\mathfrak{E})^{\vee}.$$

The Lie algebra $\mathfrak{E}$ resembles the motivic Lie algebra $\mathfrak{g}^\mathfrak{m}$ discussed above, as it is generated by elements $\sigma_3, \sigma_5, \dots$ in odd weights. However, the dimension formula suggests two essential differences in the structure of $\mathfrak{E}$. First, the term $-X^2$ in the denominator corresponds to the existence of an additional generator $\delta_2$ in weight $2$. Second, the term $+2\mathsf{S}(X)$ indicates that $\mathfrak{E}$ is not free, but subject to relations arising from cusp forms. This Lie algebra is expected to be isomorphic to a Lie subalgebra of the $\sigma$-equivariant derivations discussed in \cite{BIM}, where the additional generator $\delta_2$ corresponds exactly to the derivation of weight $-2$. This derivation plays a central role in the $\mathfrak{sl}_2$-algebra structure conjectured in \cite{BIM} for the formal space, described below in \Cref{conj:sl_2} for the space $\mes$. We will not focus on these aspects in this work, but we expect that the Lie algebra $\mathfrak{E}$ is, for the above reasons, deeply connected to the Lie algebra of derivations of the $\mathfrak{sl}_2$-algebra $\widetilde{\mes^f}$ in \Cref{thm:main2}.

\Cref{conj:dimension} predicts that there exist many linear relations among multiple Eisenstein series. The number of generators of $\mes_k$, which equals the number of indices of weight $k$ with all entries $\ge2$, is given by the $(k-1)$-th Fibonacci number $F_{k-1}$. The conjectured number of independent linear relations is given by the following table.
\begin{align}
\begin{array}{c|ccccccccccccccccc}
\text{weight }k & 0 & 1 & 2 & 3 & 4 & 5 & 6 & 7 & 8 & 9 & 10 & 11 & 12 & 13 & 14 & 15 & 16
\\\hline
\text{$\#\text{ of generators}=F_{k-1}$} & 1 & 0 & 1 & 1 & 2 & 3 & 5 & 8 & 13 & 21 & 34 & 55 & 89 & 144 & 233 & 377 & 610
\\
\text{$\dim_\QQ\mes_k\overset{?}{=}$} & 1 & 0 & 1 & 1 & 2 & 3 & 4 & 7 & 9 & 15 & 21 & 32 & 47 & 70 & 104 & 153 & 228
\\
\text{$\#\text{ of relations}\overset{?}{=}$} & 0 & 0 & 0 & 0 & 0 & 0 & 1 & 1 & 4 & 6 & 13 & 23 & 42 & 74 & 129 & 224 & 382
\end{array}
\end{align}

\begin{ex}\label{ex:wt6and7}
    The relations among multiple Eisenstein series in weight $6$ and $7$ are
\begin{align}
    6G_{3,3}(\tau)-3G_{4,2}(\tau)-G_6(\tau)&=0,\\
    4G_{3,4}(\tau)+3G_{4,3}(\tau)-2G_{5,2}(\tau)-G_7(\tau) &=0.
\end{align}
\end{ex}

In contrast to multiple zeta values, where we have several explicit conjectures describing $\ker(\zeta)$, there were no explicit conjectures yet for the kernel of the map $G$ in \eqref{eq:mapG}. 

For multiple zeta values, the comparison of the two different regularizations plays a crucial role. In the case of multiple Eisenstein series, this perspective is also important and raises the natural question of whether there exist algebra homomorphisms $G^\bullet:\ha^1_\bullet\to\mathcal{O}(\HH)$, for $\bullet\in\{\ast,\shuffle\}$, such that $G^\bullet\mid_{\ha^{\ge2}}=G$ and such that $G^\bullet_{k_1,\dots,k_r}(\tau)$ has a Fourier expansion with a constant term $\zeta^\bullet(k_1,\dots,k_r)\coloneqq\zeta^\bullet(k_1,\dots,k_r;0)$ for any $k_1,\dots,k_r\ge1$. In the case $\bullet=\shuffle$, the first author and Tasaka \cite{BT} constructed the shuffle regularized multiple Eisenstein series $G^\shuffle$ by revealing the correspondence between the Fourier expansion of multiple Eisenstein series and the Goncharov coproduct, which arose in a certain Hopf algebra. Similarly, the harmonic regularized multiple Eisenstein series $G^\ast$ is constructed by the first author in \cite{Ba2}. We review the precise construction of $G^\bullet$ in \Cref{sec:regularization} by using generating functions.

Using the shuffle regularization $G^\shuffle$, one can obtain linear relations among multiple Eisenstein series, called the \emph{restricted double shuffle relation}.
Recall that $\Ds(u,v)\coloneqq u\ast v-u\shuffle v$ for $u,v\in\ha^1$.

\begin{thm}[\cite{BT}]\label{thm:restricted DSR}
    For $u,v\in\ha^{\ge2}$ we have $\Ds(u,v)\in\ker G^\shuffle$.
\end{thm}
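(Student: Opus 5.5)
The plan is to reduce the statement to two facts. First, $G^\shuffle$ is an algebra homomorphism on $\ha^1_\shuffle$. Second, $G^\shuffle$ agrees with $G$ on $\ha^{\ge2}$, and $G$ is multiplicative for the harmonic product. For $u,v\in\ha^{\ge2}$ we have $u\ast v\in\ha^{\ge2}$, since $\ha^{\ge2}$ is closed under $\ast$. Hence
\begin{align}
G^\shuffle(u\ast v)=G(u\ast v)=G(u)G(v)=G^\shuffle(u)G^\shuffle(v)=G^\shuffle(u\shuffle v),
\end{align}
which gives $\Ds(u,v)\in\ker G^\shuffle$. Note that $u\shuffle v$ usually leaves $\ha^{\ge2}$, since shuffles produce letters $z_1$. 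So it is essential that $G^\shuffle$ is defined on all of $\ha^1$ and is a $\shuffle$-homomorphism there. The whole content is therefore in the construction of $G^\shuffle$ with these two properties.

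I would recall the construction of \cite{BT}, which is to be reviewed in \Cref{sec:regularization}. By \Cref{thm:Fourier exp}, the Fourier expansion of $G_{k_1,\dots,k_r}$ can be organized through the Goncharov coproduct $\Delta$ on the Hopf algebra of formal iterated integrals $\I(a_0;a_1,\dots,a_n;a_{n+1})$. Concretely, $G=(\zeta^\shuffle\otimes g^\shuffle)\circ\Delta$ on $\ha^{\ge2}$, where $g^\shuffle$ is a shuffle-regularized version of the $q$-series $g_{k_1,\dots,k_r}$. One then defines $G^\shuffle:=(\zeta^\shuffle\otimes g^\shuffle)\circ\Delta$ on all of $\ha^1$. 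The key steps are as follows.
\begin{enumerate}[(i)]
\item Construct $g^\shuffle:\ha^1_\shuffle\to\CC[[q]]$ as an algebra homomorphism extending suitable $q$-series. The natural route is via generating functions. The $g$'s satisfy the harmonic-type product, and their generating series are symmetric under the swap $\swap$ (a partition-type duality). Composing with $\swap$ turns the harmonic product into shuffle-type behaviour, so $g^\shuffle$ can be defined on generating series in the form $\mathfrak g(X_1,\dots,X_r)$ after the change of variables.
\item Use that $\Delta$ is an algebra morphism for $\shuffle$ and that $\zeta^\shuffle$ is a $\shuffle$-homomorphism. Then $G^\shuffle$ is a $\shuffle$-homomorphism as a composition of such maps.
\item Check that $G^\shuffle|_{\ha^{\ge2}}=G$. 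This amounts to comparing $(\zeta^\shuffle\otimes g^\shuffle)\circ\Delta$ with the explicit Fourier expansion in \Cref{thm:Fourier exp}. On $\ha^{\ge2}$, the regularized $\zeta^\shuffle$ values appearing in the coproduct terms must combine to exactly the coefficients found in \cite{Ba1}.
\end{enumerate}

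I expect step (iii) to be the main obstacle. The coproduct produces terms in which the left factors carry $z_1$'s (non-admissible indices) and the right factors mix letters. One has to show that these regularized contributions reassemble into the iterated-sum expression of the Fourier coefficients. My first attempt would be to work with generating functions in depth $r$. I would prove the identity by induction on depth, splitting the lattice sum $\lambda_1\succ\cdots\succ\lambda_r$ according to which $\lambda_j$ have zero $\tau$-coefficient. This splitting reproduces the deconcatenation-with-gaps structure of $\Delta$: the blocks with $m_j=0$ give multiple zeta values, while the nonzero blocks give multiple cotangent (Lipschitz) sums, which become the $g$'s. The remaining work is to match the $z_1$-regularization terms, using $\zeta^\shuffle(1)=0$ and the standard regularization formula relating $\zeta^\shuffle$ of words starting with $z_1$ to admissible values.
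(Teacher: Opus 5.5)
Your proof is correct and matches how the statement follows in the paper. The paper quotes two properties of $G^\shuffle$ from \cite{BT}: $G^\shuffle$ agrees with $G$ on $\ha^{\ge2}$, and it is a $\shuffle$-homomorphism on all of $\ha^1$; together with the $\ast$-multiplicativity of $G$ and closure of $\ha^{\ge2}$ under $\ast$, these give your one-line chain, and your sketch (i)--(iii) of how $G^\shuffle$ is built is not needed, since the paper takes those properties as given (in its convention the coproduct is evaluated as $g^\shuffle\star\zeta^\shuffle$, with $g^\shuffle$ on the left factor).
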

This theorem gives a lot of relations among multiple Eisenstein series, but it was already observed in \cite{BT} that it does not suffice to give all relations. 

\subsection{The first two main theorems}\label{subsec:main theorem AB}

The first main theorem is a generalization of \Cref{thm:restricted DSR}, which is obtained by a comparison of the harmonic and shuffle regularization of multiple Eisenstein series. We denote by $\halm$ the space spanned by words $z_{k_1}\cdots z_{k_r}\in\ha^1$ where the indices are \emph{almost} all $\ge 2$; specifically, we allow at most one $z_1$, provided it is not at the beginning of the word. Explicitly,
\begin{align}
    \halm \coloneqq \ha^{\ge2} \oplus \Span_\QQ \{z_{k_1}\cdots z_{k_r} \mid r\ge2, k_1,\dots, k_{i-1}, k_{i+1}, \dots, k_r \ge 2, k_i=1\;(2\le i\le r)\}.
\end{align}
In \Cref{sec:regularization}, we will show that $G^\shuffle$ coincides with $G^\ast$ on the space $\halm$, which as a consequence extends \Cref{thm:restricted DSR} and allows one of the words to be in $\halm$ instead of $\ha^{\ge2}$.
\begin{mainthm}\label{thm:comparison of regularization}
\begin{enumerate}[(i)]
    \item    For $w\in\halm$, we have $$G^\shuffle(w)=G^\ast(w).$$
    \item For $u\in\ha^{\ge2}$ and $v\in\halm$, we have $$\Ds(u,v)\in\ker G^\shuffle.$$
\end{enumerate}
 
\end{mainthm}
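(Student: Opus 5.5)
Part (ii) follows from part (i), so the plan is to prove (i) and then deduce (ii).

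For (i), compare $G^\shuffle$ and $G^\ast$ through their Fourier expansions, as reviewed in \Cref{sec:regularization}. Both regularizations come from generating functions of the same shape: a sum over deconcatenations $w=w_1w_2\cdots$ of products $\zeta^\bullet(\text{first block})\cdot(\text{$q$-series part built from the } g\text{'s and their regularizations})$. The construction follows the Goncharov coproduct description of \cite{BT} for $\shuffle$, and \cite{Ba2} for $\ast$. The difference between the two regularizations is therefore governed by two ingredients:
\begin{enumerate}[(a)]
\item the difference $\zeta^\shuffle-\zeta^\ast$ on the constant-term factors;
\item the difference between the shuffle- and harmonic-regularized $q$-series factors.
\end{enumerate}
I would handle each ingredient separately.

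For (a), recall the comparison theorem of \cite{IKZ}: $\zeta^\shuffle=\rho\circ\zeta^\ast$. Here $\rho$ is the $\RR$-linear map on $\RR[T]$ determined by $\exp(Tu)\mapsto A(u)\exp(Tu)$ with $A(u)=\exp\bigl(\sum_{n\ge2}\frac{(-1)^n}{n}\zeta(n)u^n\bigr)$. Consequently, $\zeta^\shuffle(w)$ and $\zeta^\ast(w)$ agree whenever $\zeta^\ast(w;T)$ has degree $\le1$ in $T$, since $A(u)=1+O(u^2)$. The key combinatorial point is the following. Any word $w\in\halm$ contains at most one $z_1$, and that $z_1$ is not at the front. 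Hence every left factor arising in the coproduct is either in $\ha^{\ge2}\subset\ha^0$, or ends in $z_1$ with only a single $z_1$. For such a word, the harmonic regularization polynomial is linear in $T$: peel off the trailing $z_1$ using $u\ast z_1$. The regularized value at $T=0$ therefore coincides for $\ast$ and $\shuffle$. The same check is needed for the $q$-series factors, which involve the regularized $g$-type objects on sub-words. These sub-words again contain at most one $z_1$, and the $z_1$-divergences there are of order one. I expect the corresponding $q$-analogue statement can be verified directly from the generating-function definitions in \Cref{sec:regularization}: the two regularized series differ only by terms coming from $u^{\ge2}$ corrections, which vanish here.

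I expect the main obstacle to be exactly this step. The constant-term identification is classical, but the positional structure of the coproduct must be tracked carefully. A single $z_1$ in the interior of $w$ can become the first letter of a right-hand factor. It can also become the last letter of a left factor, or sit inside a middle $g$-block. All three cases must be shown to produce matching regularizations. My first attempt would be to work with the generating series in the variables $X_1,\dots,X_r$ and show the two generating functions agree modulo the ideal generated by second-order terms in the variable attached to the $z_1$ slot. Failing that, I would argue by induction on depth, using the explicit Fourier expansion in \Cref{thm:Fourier exp}.

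For (ii), take $u\in\ha^{\ge2}$ and $v\in\halm$. Since $G^\ast$ and $G^\shuffle$ are algebra homomorphisms for their respective products,
\begin{align}
G^\shuffle(\Ds(u,v))=G^\shuffle(u\ast v)-G^\shuffle(u)G^\shuffle(v).
\end{align}
The harmonic product $u\ast v$ again lies in $\halm$. Its only possible $z_1$ comes from $v$, and it is never placed first, because the first letter of each term is a first letter of $u$ or of $v$, or a sum of letters. By (i), $G^\shuffle(u\ast v)=G^\ast(u\ast v)=G^\ast(u)G^\ast(v)$. Applying (i) again to $u$ and $v$ turns this into $G^\shuffle(u)G^\shuffle(v)$, so $G^\shuffle(\Ds(u,v))=0$.
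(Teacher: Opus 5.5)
Your deduction of (ii) from (i) is correct. Your handling of the constant-term factors is essentially \Cref{lem:zetacomp}: degree $\le1$ in $T$ forces $\zeta^\shuffle=\zeta^\ast$ at $T=0$. You have the orientation reversed, though. In $G^\ast=\MEI^\ast\times\zeta^\ast$ and $G^\shuffle=\gilat_{\zeta^\shuffle}(g^\shuffle)\times\zeta^\shuffle$ the $\zeta$-factor is the \emph{tail}, so the words needing this are $z_1z_{\bm l}$ with $z_{\bm l}\in\ha^{\ge2}$. A word starting with $z_k$, $k\ge2$, and ending in $z_1$ is already admissible.

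The genuine gap is ingredient (b), which is the whole content of (i), and which you support only by analogy with $\rho$. No IKZ-type comparison map exists for the $q$-series parts. $\MEI^\ast$ is defined as $\lim_M\co{\MEI}_M\times\mathsf{do}{\MEI}_M$ from harmonic-regularized multitangents, while $g^\shuffle$ is defined by the explicit $H$-series. So ``they differ only by $u^{\ge2}$ corrections'' has no basis. You also simply assume that the harmonic side already has the same coproduct shape as the shuffle side.

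Moreover, your premise that the $g$-factors ``again contain at most one $z_1$'' is false. In \Cref{lem:gila} the $g$-index $(n_{q_1},\dots,n_{q_s})$ is not a sub-word of $w$. The binomials force $n_p\ge k_p$ only for $p\ne q_j$, so all you know is $1\le n_{q_j}\le k_{q_j}$. Spurious $1$'s therefore occur in any position, including the first, already for $\bm k\in\mathbb{I}^{\ge2}$. For example, for $\bm k=(2,2)$ and $s=1$, the choices $q_1=1$ and $q_1=2$ give $2\zeta(3)\,g_1(\tau)$ and $-2\zeta(3)\,g_1(\tau)$ separately. For such indices $g^M$ diverges as $M\to\infty$ and no identity $g^\shuffle=\widetilde g$ is available. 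A term-by-term comparison, or your fallback induction on depth, therefore has nothing to run on.

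The paper closes this in three concrete steps, none of which appears in your plan.
\begin{enumerate}[(1)]
\item \Cref{lem:hast} uses Bouillot's reduction (\Cref{thm:reduction}) to monotangents, together with the fact that the constant moulds $\co{\MEI}_M$ and $\delta$ only see entries equal to $1$. It gives $\MEI^\ast_{\bm k}=\lim_M\langle\gilat_{\zeta^\ast}(g^M(\tau))\mid\bm k\rangle$ for $\bm k\in\mathbb{I}^{\ge2,\mathrm{alm}}$, which puts the harmonic side into the $\gilat$ shape. The interior $\zeta^\ast$-factors have at most one $1$, so they may be replaced by $\zeta^\shuffle$.
\item All terms with a spurious $n_{q_j}=1$ cancel after summing over $q_j$ inside the block, by the shuffle antipode relations for $\zeta^\shuffle$.
\item For the genuine $1$ coming from $w$, which stays at a position $\ge2$ of the $g$-index, \Cref{lem:gcomp} shows $g^\shuffle_{\bm k}=\widetilde g_{\bm k}$ by direct computation. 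The extra $H$-term with one entry $2$ in the definition of $g^\shuffle$ supplies exactly the constant term $-\pi i$ of $\Psi_1(z)=-\pi i-2\pi i\sum_{d\ge1}e^{2\pi i d z}$, which $g_{\bm k}$ lacks.
\end{enumerate}
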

\newcommand{\dsht}{\varphi}
As a consequence, we can obtain a family of linear relations among classical multiple Eisenstein series, which play an important role throughout the discussion of this paper. We define the $\QQ$-linear map $\dsht:\ha^1\to\ha^1$ and $\QQ$-bilinear map $R:\ha^1\times\ha^1\to\ha^1$ by
\begin{align}
    \dsht(w)&\coloneqq \Ds(w,z_2)=w\ast z_2-w\shuffle z_2,\\
    R(u,v)&\coloneqq \dsht(u\ast v)-\dsht(u)\ast v-u\ast \dsht(v).
\end{align}
Notice that $R$ measures the failure of $\dsht$ being a derivation with respect to the harmonic product $\ast$. This is motivated by the work \cite{BB} and \cite{BIM} where the operator $\dsht$ is a derivation on a subspace of the algebra of formal multiple Eisenstein series. A consequence of our result is indeed that it gives elements in the kernel of $G$.

By the restricted double shuffle relation (\Cref{thm:restricted DSR}), $\dsht(w)\in\halm$ $(w\in\ha^{\ge2})$ give relations among shuffle regularized multiple Eisenstein series $G^\shuffle$, i.e. we have $\dsht(w)\in\ker G^\shuffle\cap\halm$ for any $w\in\ha^{\ge2}$. By \Cref{thm:comparison of regularization}, one obtains $\ker G^\shuffle\cap\halm=\ker G^\ast\cap\halm$. Therefore, we know that $R(u,v)$ give relations among harmonic regularized multiple Eisenstein series $G^\ast$ as one can show that $R(u,v)$ lies in $\ha^{\ge2}$ for any $u,v\in\ha^{\ge2}$ (\Cref{lem:R in ha2}). As a consequence we obtain:

\begin{mainthm}\label{cor:R}
    For $u,v\in\ha^{\ge2}$, we have $R(u,v)\in\ker G$.
\end{mainthm}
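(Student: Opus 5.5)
The plan is the one sketched in the text before the statement. We use that $G^\ast$ is an algebra homomorphism for $\ast$ extending $G$, and we combine the restricted double shuffle relation with \Cref{thm:comparison of regularization}.

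\textbf{Step 1: membership in $\ha^{\ge2}$.} First, for $w\in\ha^{\ge2}$, $\dsht(w)=w\ast z_2-w\shuffle z_2$ lies in $\halm$. The harmonic product $w\ast z_2$ stays in $\ha^{\ge2}$. In $w\shuffle z_2$, the letter $z_2=xy$ is shuffled into $w$; the only way to create a $z_1$ is to split $x$ and $y$ into different positions. This creates at most one $z_1$, and never in the first position, since every word of $w$ begins with $x$.

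Next, I would prove that $R(u,v)\in\ha^{\ge2}$ for $u,v\in\ha^{\ge2}$ (this is \Cref{lem:R in ha2}). The approach is to write $\dsht(w)=\dsht_2(w)+\dsht_1(w)$, where $\dsht_1$ collects the words containing $z_1$. One then checks that $\dsht_1$ is a derivation for $\ast$ modulo $\ha^{\ge2}$. The words with one $z_1$ produced from $u\ast v$ should match those from $\dsht_1(u)\ast v+u\ast\dsht_1(v)$. This amounts to a combinatorial identity comparing ``insert $x\cdot\ldots\cdot y$ split around a $y$'' with the stuffle recursion. A contraction of $z_1$ with another letter under $\ast$ gives a letter $z_{k+1}$ with $k+1\ge3$, so such terms land in $\ha^{\ge2}$ anyway. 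I expect this lemma to be the main combinatorial obstacle. I would attack it by induction on the total length of $u,v$, using the recursive definitions of $\ast$ and $\shuffle$ on the first letters.

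\textbf{Step 2: the main argument.} For $w\in\ha^{\ge2}$, \Cref{thm:restricted DSR} with $u=w$ and $v=z_2$ gives $G^\shuffle(\dsht(w))=0$. Since $\dsht(w)\in\halm$, \Cref{thm:comparison of regularization}(i) gives $G^\ast(\dsht(w))=G^\shuffle(\dsht(w))=0$. Apply this to $w=u\ast v$, to $w=u$ and to $w=v$; all three lie in $\ha^{\ge2}$ because $\ha^{\ge2}$ is closed under $\ast$. Since $G^\ast$ is a $\ast$-homomorphism, we get
\[
G^\ast(R(u,v))=G^\ast(\dsht(u\ast v))-G^\ast(\dsht(u))\,G^\ast(v)-G^\ast(u)\,G^\ast(\dsht(v))=0.
\]
Finally, $R(u,v)\in\ha^{\ge2}$ by Step 1, and $G^\ast|_{\ha^{\ge2}}=G$, so $G(R(u,v))=0$.

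The bulk of the difficulty therefore lies in \Cref{thm:comparison of regularization}(i), which we may assume, and in the lemma of Step 1.
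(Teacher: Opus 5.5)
Your proof is correct and is essentially the paper's argument. The paper notes that $\dsht(w)\in\ker G^\shuffle\cap\halm$ by \Cref{thm:restricted DSR}, so $\dsht(w)\in\ker G^\ast$ by \Cref{thm:comparison of regularization}(i); the $\ast$-homomorphism property of $G^\ast$, \Cref{lem:R in ha2} and $G^\ast|_{\ha^{\ge2}}=G$ then give $G(R(u,v))=0$. Your inductive plan for \Cref{lem:R in ha2} is only a sketch, but the paper proves exactly the congruence you describe (the $z_1$-part of $w\mapsto w\shuffle z_2$ is a $\ast$-derivation modulo $\Span_\QQ\mathbb{I}^{\ge2}$) by a direct comparison over surjections, so citing it is fine.
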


\begin{ex}
    The first two relations given in \Cref{ex:wt6and7} are obtained by this theorem since
    \begin{align}
        R(z_2,z_2)&=6z_3z_3-3z_4z_2-z_6,\\
        R(z_3,z_2)&=4z_3z_4+3z_4z_3-2z_5z_2-z_7.
    \end{align}
\end{ex}
\Cref{cor:R} gives all relations among multiple Eisenstein series up to weight $16$ (but fails in weight 17), assuming \Cref{conj:dimension} is correct. To get the missing relations it seems to be necessary to consider the $\sltwo$-algebra structure for multiple Eisenstein series.

\subsection{$\sltwo$-algebra structure of $\mes$}
An \emph{$\sltwo$-algebra} is an algebra $A$ equipped with a Lie algebra homomorphism $\sltwo \rightarrow \operatorname{Der}(A)$. Equivalently, it can be defined as an algebra $A$ paired with three derivations $W,D,\delta \in \operatorname{Der}(A)$ that satisfy the commutator relations
\begin{align}
[W, D] = 2D, \quad [W, \delta] = -2\delta, \quad [\delta, D] = W.
\end{align}
These derivations $(W,D,\delta)$ form an \emph{$\mathfrak{sl}_2$-triple}, and the kernel $\ker(\delta)$ inherits the structure of a Rankin-Cohen algebra as introduced in \cite{Za4}.

Furthermore, the algebra $\mes$ contains the $\QQ$-algebra of quasi-modular forms, $\widetilde{\mathcal{M}}^{\QQ}=\QQ\lbrack G_2,G_4,G_6\rbrack$, as a subalgebra. It is known (\cite{Za3}) that $\widetilde{\mathcal{M}}^\QQ$ admits an $\sltwo$-algebra structure, and its corresponding Rankin-Cohen algebra is the algebra of modular forms $\mathcal{M}=\QQ[G_4,G_6] = \ker(\delta)$, which is closed under the classical Rankin-Cohen brackets. Building on this framework, the following conjecture from \cite{BIM} proposes a natural generalization of the $\sltwo$-algebra $\widetilde{\mathcal{M}}^\QQ$.
\begin{conj}[\cite{BIM}]\label{conj:sl_2}
    \begin{enumerate}
        \item The maps $W^\prime ,D^\prime,\delta^\prime$ defined on the generators of $\mes$ by
        \begin{align}
            W^\prime&:G_{k_1,\dots,k_r}(\tau)\longmapsto(k_1+\cdots +k_r)G_{k_1,\dots,k_r}(\tau),\\
            D^\prime&:G_{k_1,\dots,k_r}(\tau)\longmapsto 2\pi i\frac{d}{d\tau}G_{k_1,\dots,k_r}(\tau),\\
            \delta^\prime&:G_{k_1,\dots,k_r}(\tau)\longmapsto \begin{cases}-\frac{1}{2}G_{k_2,\dots,k_r}(\tau)&k_1=2,\\0&k_1>2,\end{cases}
        \end{align}
        give well-defined $\QQ$-linear maps $\mes\to\mes$.
        \item The maps $W^\prime,D^\prime,\delta^\prime$ are derivations on $\mes$.
        \item $(W^\prime,D^\prime,\delta^\prime)$ forms an $\sltwo$-triple, i.e. we have the commutator relations
        \begin{align}
            \lbrack W^\prime,D^\prime\rbrack=2D^\prime,\quad\lbrack W^\prime,\delta^\prime\rbrack=-2\delta^\prime,\quad\lbrack \delta^\prime,D^\prime\rbrack=W^\prime
        \end{align}
        and thus $\mes$ is an $\sltwo$-algebra.
    \end{enumerate}
\end{conj}
Notice that even the well-definedness in (i) is not clear, as, for example, the map $W^\prime$ being well-defined is equivalent to \Cref{conj:mesgraded}. As a consequence of our fourth main theorem (\Cref{thm:der}), the map $D^\prime$ is well-defined, i.e. the space $\mes$ is closed under the derivative $2\pi i\frac{d}{d\tau}$. Since $D^\prime$ is a differential operator, it is then automatically a derivation on $\mes$. The well-definedness of $W^\prime$ and $\delta^\prime$ remains open (see \Cref{sec:another formal MES} for a discussion of $\delta^\prime$).
The restriction of the maps $W^\prime,D^\prime,\delta^\prime$ to $\widetilde{\mathcal{M}}^\QQ$ are well-defined derivations and form an $\sltwo$-triple.

\subsection{Formal multiple Eisenstein series and the third main theorem}

In \cite{BIM}, the authors introduced the notion of \emph{formal multiple Eisenstein series} and studied their derivations and the $\sltwo$-algebra structure. The algebra of formal multiple Eisenstein series $\bmes^f$ is defined (\Cref{def:formal MES}) to be the $\QQ$-linear space spanned by symbols $G^f\binom{k_1,\dots,k_r}{d_1,\dots,d_r}$ ($k_1,\dots,k_r\geq1,d_1,\dots,d_r\geq0$), which are swap invariant (\Cref{def:swap}) and whose product is given by (a double-indexed version of) the harmonic product. Here, swap invariance is a family of relations among double-indexed objects, which can be interpreted via the conjugation of partitions. In the case of depth $1$, this just means $G^f\binom{k}{d}=\frac{d!}{(k-1)!}G^f\binom{d+1}{k-1}$. The use of double indices is necessary to make sense of the swap invariance. But often we will be interested in the case when $d_1=\cdots=d_r=0$ and denote
\begin{align}
    G^f(k_1,\dots,k_r)\coloneqq G^f\binom{k_1,\dots,k_r}{0,\dots,0}.
\end{align}
Let $\bmes^{f,0}$ (resp. $\mes^f$) be the space spanned by all $G^f(k_1,\dots,k_r)$ with $k_1,\dots,k_r\geq1$ (resp. $k_1,\dots,k_r\geq2$). Then, the space $\bmes^f$ is expected to coincide with the subspace $\bmes^{f,0}$ (\cite{BIM}), and the space $\mes^f$ can be seen as the formal version of the algebra $\mes$. One of the main results in \cite{BIM} is that the algebra $\bmes^f$ has an $\sltwo$-algebra structure. The $\sltwo$-triple is denoted by $(W,D,\delta)$ (the definition is given in \Cref{subsec:formal MES}). In particular, the derivation $D$ is defined by
\begin{align}
    D\left(G^f\binom{k_1,\dots,k_r}{d_1,\dots,d_r}\right)\coloneqq\sum_{j=1}^rk_jG^f\binom{k_1,\dots,k_j+1,\dots,k_r}{d_1,\dots,d_j+1,\dots,d_r}.
\end{align}
In \cite{BIM} it was shown that $D$ coincides with $\dsht$ within $\bmes^{f,0}$, i.e., we have $D(G^f(k_1,\dots,k_r))=G^f(\dsht(z_{k_1}\cdots z_{k_r}))$ for any $k_1,\dots,k_r\geq1$. From this fact, we know that $\bmes^{f,0}$ is an $\sltwo$-subalgebra of $\bmes^f$, and for any $u,v\in\ha^1$, $R(u,v)$ gives a relation among formal multiple Eisenstein series. In \cite{BIM}, the authors conjectured that $\mes^f$ is also an $\sltwo$-subalgebra, which reduces to the question of whether $\mes^f$ is closed under $D$; this question remained open. Another problem was that a realization map from $\mes^f$ to $\mes$ had not been constructed (despite the conjecture that $\mes^f$ is isomorphic to $\mes$). This was due to the fact that, while the first author and Burmester (\cite{BB}) constructed a realization from $\bmes^f$ to $\QQ\llbracket q\rrbracket$ (called the combinatorial multiple Eisenstein series), a realization from $\bmes^f$ to $\mathcal{O}(\HH)$ had not been constructed before. The third main result in this work provides such a realization; that is, we define the bi-multiple Eisenstein series $\bMES\binom{k_1,\dots,k_r}{d_1,\dots,d_r}(\tau)\in\mathcal{O}(\HH)$ for $k_1,\dots,k_r\geq1$, $d_1,\dots,d_r\geq0$, which recover $G_{k_1,\dots,k_r}(\tau)$ and satisfy the swap invariance and the harmonic product.
\begin{mainthm}\label{thm:biMES}
    There exists an algebra homomorphism $\bMES:\bmes^f\rightarrow\mathcal{O}(\HH):G^f(w)\mapsto\bMES(w)$ such that
    \begin{enumerate}
        \item $\bMES\binom{k_1,\dots,k_r}{0,\dots,0}(\tau)=G_{k_1,\dots,k_r}(\tau)$ for any $k_1,\dots,k_r\geq2$.
        \item $\bMES(D(f))=2\pi i\frac{d}{d\tau}\bMES(f)$ for any $f\in\bmes^f$.
    \end{enumerate}
\end{mainthm}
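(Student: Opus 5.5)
We construct $\bMES$ through the Fourier expansion of multiple Eisenstein series and the known combinatorial realization of \cite{BB}. Following \cite{Ba1} and \cite{BT}, $G_{k_1,\dots,k_r}(\tau)$ is a sum over deconcatenations of the index, $w=w_1w_2\cdots$. Each term is a product of a regularized multiple zeta value, coming from the ``horizontal'' parts, and a $q$-series $g$, coming from the ``vertical'' parts. Equivalently, it is the image of a Goncharov-type coproduct under $\zeta^\shuffle\otimes g^\ast$.

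\textbf{Construction.} In generating-function form we set
\begin{align}
\mathfrak{G}\binom{X_1,\dots,X_r}{Y_1,\dots,Y_r}=\sum_{j=0}^r \mathfrak{z}\binom{X_1,\dots,X_j}{Y_1,\dots,Y_j}\,\mathfrak{g}\binom{X_{j+1},\dots,X_r}{Y_{j+1},\dots,Y_r}(\tau),
\end{align}
with suitable variable shifts (e.g.\ $X_i\mapsto X_i-X_{j+1}$).
\begin{enumerate}[(a)]
\item The factor $\mathfrak{g}$ is the bi-brackets generating series of \cite{BB}. These are the $q$-series $\sum d^{k-1}c^{d}q^{cd}$ indexed by $\binom{k}{d}$, and they are already swap invariant and satisfy the harmonic (stuffle) product.
\item The factor $\mathfrak{z}$ is a swap-invariant bi-indexed extension of the regularized multiple zeta values. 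Its natural choice is the constant-term part $\zeta^\shuffle$ in the $X$-variables, extended to the $Y$-variables by $Y_i=0$ plus a polynomial correction chosen to force swap invariance. For example, one can take the bi-multiple zeta values that are symmetric under $X\leftrightarrow Y$ up to the shifts.
\end{enumerate}

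\textbf{Verification.} The steps, in order, are:
\begin{enumerate}[(i)]
\item Show that the coefficient of $\prod X_i^{k_i-1}\prod Y_i^{d_i}/d_i!$ defines holomorphic functions.
\item Check that specialising $Y=0$ and taking $k_i\ge2$ recovers the Fourier expansion formula of \Cref{thm:Fourier exp}. This gives property (1).
\item Verify swap invariance, $\mathfrak{G}\binom{X}{Y}=\mathfrak{G}\binom{Y_1+\cdots+Y_r,\dots}{X_r,\dots}$. The swap interchanges the order of deconcatenation, so the $\mathfrak{z}$-part and $\mathfrak{g}$-part exchange roles. Swap invariance of each factor is then needed, together with a compatibility of the shifts.
\item Verify the harmonic product via the coproduct compatibility: the Goncharov coproduct is an algebra map for $\ast$, provided $\mathfrak{z}$ satisfies the bi-harmonic product.
\end{enumerate}
Once (ii)–(iv) hold, $\bMES$ descends to $\bmes^f$, since $\bmes^f$ is defined by exactly these relations.

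\textbf{Property (2).} We have $2\pi i\frac{d}{d\tau}=(2\pi i)^2 q\frac{d}{dq}$ up to normalisation. Acting on the bi-bracket part, $q\frac{d}{dq}$ multiplies by $cd$, which raises both indices of one entry. In generating series this is $\sum_j \partial_{X_j}\partial_{Y_j}$ applied to the $\mathfrak{g}$-factor, and that is precisely $D$ in the bi-indexed form with the factor $k_j$. The $\mathfrak{z}$-factor is constant in $\tau$. To get $\bMES(D f)=2\pi i\frac{d}{d\tau}\bMES(f)$ we must therefore check that $\sum_j\partial_{X_j}\partial_{Y_j}$ acting on $\mathfrak{z}$ contributes nothing, or that its contribution cancels against the shifted variables. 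This forces the $\mathfrak{z}$-part to depend on the $Y$-variables only through combinations annihilated by this operator, which is a guide for choosing $\mathfrak{z}$.

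\textbf{Main obstacle.} I expect the hardest step to be finding $\mathfrak{z}$: it must be simultaneously swap invariant, harmonic, compatible with the shuffle-regularised constant terms that appear for $k_i=1$, and annihilated by the derivative operator. My first attempt would be to put all $Y$-dependence into $\mathfrak{z}$ only through the regularisation polynomial, using \Cref{thm:comparison of regularization} to reconcile the harmonic and shuffle constants. I would then verify swap invariance on the product of the two generating series directly.
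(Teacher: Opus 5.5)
There is a genuine gap, and it lies in the $\tau$-dependent factor rather than in $\mathfrak{z}$.

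\textbf{Where the proposal breaks.}
\begin{enumerate}
\item \emph{Bi-brackets are not symmetril.} They are swap invariant, but they do not satisfy the harmonic product. With $[k]=\sum_{c,d>0}\frac{d^{k-1}}{(k-1)!}q^{cd}$ one computes $[2]^2=2[2,2]+[4]-\tfrac16[2]$, so products carry lower-weight corrections. This is why the combinatorial realization of \cite{BB} has to combine bi-brackets with an auxiliary rational solution of the extended double shuffle equations. Since harmonic compatibility of a $\times$-product comes from deconcatenation (\Cref{rem:symmetril}), both factors must be symmetril, so (iv) fails. Also, the Goncharov coproduct is compatible with $\shuffle$, not with $\ast$.
\item \emph{A single deconcatenation cannot give (ii).} The expansion $G_{\bm k}=\langle\gila(g,\zeta^\shuffle)\mid\bm k\rangle$ has zeta blocks on both sides of every $g$-letter. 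Already in depth $2$ there are three cross terms, $g(v_1)\zeta(v_2)$, $\zeta(v_1-v_2)g(v_2)$ and $g(v_1)\zeta_-(v_2-v_1)$. These produce the $\delta_{n_1,k_1}$ term and the two binomial terms in $\mathsf{b}^{n_2}_{k_1,k_2}$. Your $\sum_j\mathfrak{z}(\cdots)\mathfrak{g}(\cdots)$ has only one cross term, and no linear shift fixes this. Moreover, the $q$-part must stand on the left, because lattice points with positive $\tau$-coefficient precede the integers.
\item \emph{Swap invariance of the factors is not enough for (iii).} One has $\swap(A\times B)(\bm w)=\sum_{\bm w=\bm a\bm b}\swap(B)(\bm a\rfloor)\,\swap(A)(\lceil\bm b)$. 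Swapping therefore reverses the factors and introduces flexions. The ``compatibility of the shifts'' you mention is the whole content of the step, not a side condition.
\item \emph{$\mathfrak{z}$ is not the obstacle.} The bi-multiple zeta values $\zeta=\zeta^\natural\times\zeta^\ast$ of \cite{BI} are already symmetril and swap invariant. They are annihilated by $\sum_j\partial_{u_j}\partial_{v_j}$ because each term depends only on $u_1,\dots,u_j$ and $v_{j+1},\dots,v_r$. \Cref{thm:comparison of regularization} plays no role.
\end{enumerate}

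\textbf{The missing idea.} The paper builds a bi-version of the lattice decomposition $G_{\bm k}=\sum_i\MEI_{k_1,\dots,k_i}(\tau)\zeta(k_{i+1},\dots,k_r)$ of \Cref{lem:decom G}, using multitangent functions as building blocks.
\begin{itemize}
\item Put $L_m(\tau)\binom{u}{v}=e^{-2\pi imu}T^+(m\tau)(v)$. In depth one this is exactly the bi-bracket kernel $-2\pi i\sum_{d>0}e^{-2\pi i(mu+dv)}q^{md}$, since the $\pi i$ in $T^+$ removes the constant term of $\Psi_1$. So your bi-bracket intuition is correct in depth one.
\item Dress it with zeta flexions: $\mathcal{L}_m(\tau)(\bm w)=\sum_{\bm w=\bm a\bm b\bm c}\zeta(\bm a\rfloor)L_m(\tau)(\lceil\bm b\rceil)\widetilde{\zeta}_-(\lfloor\bm c)$, where $\widetilde\zeta=\nega(\zeta^\natural)\times\pari(\delta_{\pi i})\times\zeta^\ast$.
\item The factorization $\mathcal{L}_m(\tau)=e^{-2\pi im|\bm u|}\,\zeta^\natural\times\Psi^+(m\tau)\times\anti\circ\pari(\zeta^\natural)$ gives symmetrility. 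It also shows that the $u$-degree-zero part is $\Psi^+(m\tau)$, which equals $\Psi_{\bm k}(m\tau)$ for $\bm k\in\mathbb{I}^{\ge2}$.
\item The bimould $\bMEI(\tau)=\lim_M\sum_{M>m_1>\cdots>m_s>0}\mathcal{L}_{m_1}\cdots\mathcal{L}_{m_s}$ is symmetril by \Cref{lem:f to F}.
\item Setting $\bMES(\tau)=\bMEI(\tau)\times\zeta$ gives property (1) through \Cref{lem:decom G}, not through the Fourier expansion.
\item Swap invariance comes from splitting $\bMES=\sum_j\bMES_j$ by the number of $\mathcal{L}$-factors. Each $\bMES_j$ is swap invariant as in \cite[Theorem 6.26]{BB}, using $\swap(\zeta)=\zeta$ and $\swap(\widetilde\zeta)=\nega(\widetilde\zeta)$.
\end{itemize}
Your reduction of property (2) to $\sum_j\partial_{u_j}\partial_{v_j}$ does match the paper. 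There it follows from \Cref{lem:der MTF} together with the factor $e^{-2\pi im|\bm u|}$.
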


\subsection{The fourth main theorem}

Hirose, Maesaka, Seki and Watanabe (\cite{HMSW}) provided an explicit algorithm to express every multiple zeta value as a $\ZZ$-linear combination of $\zeta(k_1,\dots,k_r)$ where $k_1,\dots,k_r\ge2$. Let $\mathcal{D}:\ha^0\to\ha^{\ge2}$ be the Drop1 operator introduced in \cite{HMSW} (see \Cref{def:drop1operator}) and define the $\QQ$-linear map $\der:\ha^0\to\ha^{\ge2}$ by $\der\coloneqq-\mathcal{D}\circ \dsht$. We give precise definitions in \Cref{subsec:Drop1}. The fourth main theorem of this work is the following explicit formula for the derivative of (formal) multiple Eisenstein series.
\begin{mainthm}\label{thm:der}
    \begin{enumerate}
        \item For $k_1,\dots,k_r\geq2$, we have
        \begin{align}
            D(G^f(k_1,\dots,k_r))=G^f(\der(z_{k_1}\cdots z_{k_r})),
        \end{align}
        and thus $\mes^f$ is an $\sltwo$-subalgebra of $\bmes^f$.
        \item For $k_1,\dots,k_r\geq2$, we have 
        \begin{align}
            2\pi i\frac{d}{d\tau}G_{k_1,\dots,k_r}(\tau)&=G(\der(z_{k_1}\cdots z_{k_r})).
        \end{align}
        In particular, the space $\mes$ is closed under $D^\prime=2\pi i\frac{d}{d\tau}$.
    \end{enumerate}
\end{mainthm}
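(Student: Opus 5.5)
Part (ii) follows from part (i) and \Cref{thm:biMES}: by \Cref{thm:biMES}(i) we have $\bMES(G^f(w))=G(w)$ for $w\in\ha^{\ge2}$, and by \Cref{thm:biMES}(ii) $\bMES$ intertwines $D$ with $2\pi i\frac{d}{d\tau}$. Applying $\bMES$ to the identity in (i) gives $2\pi i\frac{d}{d\tau}G_{k_1,\dots,k_r}(\tau)=\bMES(G^f(\der(z_{k_1}\cdots z_{k_r})))=G(\der(z_{k_1}\cdots z_{k_r}))$. Here we use that $\der$ takes values in $\ha^{\ge2}$ by construction of the Drop1 operator. The real work is therefore in (i).

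For (i), I start from the result of \cite{BIM} recalled above: $D(G^f(w))=G^f(\dsht(w))$ for every $w$ built from letters $z_k$ with $k\ge1$. For $w=z_{k_1}\cdots z_{k_r}$ with all $k_i\ge2$, the element $\dsht(w)=w\ast z_2-w\shuffle z_2$ lies in $\halm$, since words in the shuffle product can contain a single $z_1$ in a non-initial position. It then suffices to prove the following key claim.
\begin{align}
G^f(v)=-G^f(\mathcal{D}(v))\quad\text{for all } v \text{ in the image } \dsht(\ha^{\ge2}),
\end{align}
or more generally for suitable elements of $\halm$. Given this claim, $D(G^f(w))=G^f(-\mathcal{D}\dsht(w))=G^f(\der(w))$, and consequently $\mes^f$ is closed under $D$. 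It is also closed under $\delta$, since $\delta$ on $G^f(k_1,\dots,k_r)$ only removes a leading $z_2$, and under $W$. Hence $\mes^f$ is an $\sltwo$-subalgebra.

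To prove the claim, I would use the defining property of the Drop1 operator from \cite{HMSW}: for $v\in\ha^0$ one has $v-\mathcal{D}(v)\in\ker\zeta$, and more precisely $v-\mathcal{D}(v)$ lies in an explicit space of relations built from double shuffle and derivation-type relations. The sign is a point to check; the minus sign in $\der=-\mathcal{D}\circ\dsht$ suggests that the statement to prove is $\dsht(w)+\mathcal{D}(\dsht(w))\in\ker G^f$, or equivalently that $\mathcal{D}$ acts as $-\mathrm{id}$ modulo $\ker G^f$ on the $z_1$-containing part. The first thing I would try is to establish that the relations used in \cite{HMSW} to move a $z_1$ out of a word, namely $\Ds(u,z_1)$-type and shuffle-type identities, hold in $\bmes^{f,0}$ on $\halm$. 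The tools for this are the swap invariance and the harmonic product in $\bmes^f$, together with \Cref{thm:comparison of regularization}, which shows that on $\halm$ the harmonic and shuffle structures agree, so that $\Ds(u,v)$ vanishes for $u\in\ha^{\ge2}$ and $v\in\halm$. In particular, the kernel of $G^f$ should contain $\Ds(\ha^{\ge2},\halm)$ formally. This formal statement can be verified via the swap-invariant description in depth-graded generating functions, in the same way that \cite{BIM} proved $D=\dsht$.

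The main obstacle is the claim itself. It requires matching the combinatorial Drop1 algorithm with relations that actually hold for formal multiple Eisenstein series, rather than merely for multiple zeta values. Since $\ker\zeta$ is larger than $\ker G^f$, I must check that every step in the reduction of $\dsht(w)$ by $\mathcal{D}$ uses only relations of the type $\Ds(u,v)$ with $u\in\ha^{\ge2}$ and $v\in\halm$, and never a relation involving $\zeta(2)$-type identities that fail modularly. My approach would be an induction on depth and on the position of the single $z_1$. At each step, I would write the word containing $z_1$ as a combination of $\Ds(u,v)$ terms plus words with the $z_1$ absorbed, and compare this term by term with the recursive definition of $\mathcal{D}$ in \Cref{def:drop1operator}.
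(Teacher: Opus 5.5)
Your reduction of (ii) to (i) via $\bMES$ is exactly the paper's argument. The identity you isolate, $(\mathcal{D}+\id)(\dsht(w))\in\I$ for $w\in\ha^{\ge2}$, is true and equivalent to (i). The gap is in how you propose to prove it.

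You plan to show that the relations underlying $\mathcal{D}$, and more generally $\Ds(\ha^{\ge2},\halm)$, vanish in $\bmes^f$. They do not. Already $\Ds(w,z_2)=\dsht(w)\equiv D(w)\bmod\I$, and this is nonzero, since its realization is $2\pi i\frac{d}{d\tau}G(w)\neq0$ (e.g.\ for $w=z_2$). \Cref{thm:comparison of regularization} is a statement about $G^\shuffle$ and $G^\ast$. On words containing $z_1$ these differ from the realization $\bMES$ of $G^f$, so the theorem gives no relation in $\bmes^f$.

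Your plan is also self-defeating. Suppose every step of the Drop1 reduction of $\dsht(w)$ used only relations lying in $\I$. Then $\mathcal{D}(\dsht(w))\equiv\dsht(w)$, and together with your key claim this forces $\dsht(w)\equiv 0$, i.e.\ $D=0$ on $\mes^f$. Relatedly, the claim is not that $\mathcal{D}$ acts as $-\id$ on the $z_1$-part. $\mathcal{D}$ fixes the $\ha^{\ge2}$-part of $\dsht(w)$, and the real content is that it shifts the $z_1$-part by $-2D(w)$ modulo $\I$.

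What is missing is an exact computation of how the Drop1 relations \emph{fail} modulo $\I$. The paper does this for words with a single $z_1$:
\begin{enumerate}
\item[(a)] \Cref{thm:drop only one 1} gives $\mathcal{D}(z_{\bm k}z_1z_{\bm l})$ in closed form, as $z_{\bm k}z_1z_{\bm l}$ plus an antipode-type sum of terms $(\cdots)\shuffle z_1$. It is proved via multiple harmonic sums and the linear independence of the $\zeta^\diamondsuit(\bm k)$, $\bm k\in\mathbb{I}^{\ge2}$.
\item[(b)] \Cref{lem:shuffle with z_1} gives $\sigma(\sigma(u)\ast\sigma(z_1))=u\shuffle z_1+(\text{$u$ with $d_1=1$})$. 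So modulo $\I$, each $u\shuffle z_1$ becomes $u\ast z_1$ minus a $d$-shifted word.
\item[(c)] The $\ast z_1$ parts cancel by the antipode identity. What remains (\Cref{lem:drop1 mod I}) is that $(\mathcal{D}-\id)(z_{\bm k}z_1z_{\bm l})$ is congruent to the word $z_{\bm k}z_{\bm l}$ with $d=1$ at position $\dep(\bm k)$, minus the same word with $d=1$ at position $\dep(\bm k)+1$.
\item[(d)] Summing over the $z_1$-words of $w\shuffle z_2$, this telescopes to $(\mathcal{D}-\id)(w\shuffle z_2)\equiv 2D(w)$. Hence $\der(w)=\mathcal{D}(w\shuffle z_2)-w\ast z_2\equiv 2D(w)-\dsht(w)\equiv D(w)$.
\end{enumerate}
Your proposed term-by-term comparison with the recursion in \Cref{def:drop1operator} has no mechanism to produce these $d$-shifted correction terms, and they are precisely where the derivative comes from.
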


As an application of \Cref{thm:der}, together with the combinatorial multiple Eisenstein series introduced by the first author and Burmester in \cite{BB}, we prove the following prediction of Okounkov (\cite[Conjecture 1]{O}) on the space $\qmzv$ spanned by his $q$-analogues of multiple zeta values (see \Cref{subsec:okounkov} for the definition of $\qmzv$).
\begin{cor}\label{cor:okounkov}
    The space $\qmzv$ is closed under the operator $q\frac{d}{dq}$.
\end{cor}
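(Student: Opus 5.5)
We reduce \Cref{cor:okounkov} to \Cref{thm:der}(i) by passing through the combinatorial multiple Eisenstein series of \cite{BB}. Recall that \cite{BB} constructs a realization $G^{q}:\bmes^f\to\QQ\llbracket q\rrbracket$. It is an algebra homomorphism with respect to the (double-indexed) harmonic product. It is also compatible with the derivation $D$, in the sense that
\begin{align}
G^q(D(f))=q\frac{d}{dq}G^q(f) \qquad (f\in\bmes^f).
\end{align}
This identity should follow from the definition of $D$: raising $k_j$ and $d_j$ by one, with weight $k_j$, corresponds term by term to applying $q\frac{d}{dq}$ to the bi-bracket generating series. I would cite it from \cite{BB} (or from \cite{BIM}) rather than reprove it.

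The key input is the relationship between $\qmzv$ and the image $G^q(\mes^f)$. Okounkov's $q$-analogues are built from the $q$-series $\sum_{n_1>\cdots>n_r>0}\prod Q_{k_i}(q^{n_i})/(1-q^{n_i})^{k_i}$ with all $k_i\ge2$, where the numerator polynomials $Q_k$ are chosen for symmetry reasons. I would first recall from \Cref{subsec:okounkov} that $\qmzv$ is spanned, possibly together with its algebra structure and the constant $1$, by these series. I would then show that
\begin{align}
\qmzv=G^q(\mes^f)=\Span_\QQ\{G^q(k_1,\dots,k_r)\mid k_i\ge2\}.
\end{align}
My expectation, which follows \cite{BB}, is that the combinatorial multiple Eisenstein series $G^q(k_1,\dots,k_r)$ with $k_i\ge2$ equals a rational constant-term correction plus a $\QQ$-linear combination of brackets $[l_1,\dots,l_s]$ with $l_i\ge2$ and $\sum l_i\le\sum k_i$. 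The expansion should be triangular with respect to weight and depth, with leading term a nonzero multiple of the bracket of the same index. Okounkov's series admit a similar triangular expansion in the same brackets. Since both families are unitriangular (after normalization) in the bracket basis with $l_i\ge2$, filtered by weight and depth, the two spans agree.

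With this identification the corollary is immediate. Take any element of $\qmzv$ and write it as $G^q(f)$ with $f\in\mes^f$. By \Cref{thm:der}(i), $D(f)=\sum c\,G^f(\der(w))$ lies in $\mes^f$, since $\der$ maps into $\ha^{\ge2}$. Hence
\begin{align}
q\frac{d}{dq}G^q(f)=G^q(D(f))\in G^q(\mes^f)=\qmzv.
\end{align}

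The main obstacle is the identification $\qmzv=G^q(\mes^f)$. Okounkov's normalizing polynomials $Q_k$ differ from the specific numerators that appear in the constant terms and lower-order corrections of \cite{BB}. One must therefore check carefully that the lower weight terms on both sides stay inside the span of brackets with all entries $\ge2$, and in particular that no entries equal to $1$ are forced. I would handle this by an induction on weight and depth. The step I expect to require real care is verifying that the corrections in \cite{BB}, which come from regularized (Bernoulli-type) constants, only ever produce brackets with entries $\ge2$ when $k_i\ge2$.
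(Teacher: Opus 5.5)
Your proposal is correct and follows essentially the same route as the paper. The paper shows $\qmzv=\Span_\QQ\{G^q(k_1,\dots,k_r)\mid k_i\ge2\}$ from the depth-triangular expansion of $G^q(k_1,\dots,k_r)$ in the $q$-series $\tilde g_{l_1,\dots,l_s}$ with $l_i\ge2$, citing \cite{BB} for exactly the point you flag and \cite{BK} for the description of $\qmzv$ by these series rather than re-deriving it from Okounkov's $Q_k$; it then concludes via $q\frac{d}{dq}G^q(w)=G^q(D(w))=G^q(\der(w))$ with $\der(w)\in\ha^{\ge2}$ (the inversion step only needs the $\tilde g$'s as a spanning set, not a basis).
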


\subsection{The fifth main theorem}

As mentioned in \Cref{subsec:main theorem AB}, the relations obtained from \Cref{cor:R} do not capture all relations among multiple Eisenstein series. However, by \Cref{thm:der}, one obtains further relations by differentiating them. Let $\mathsf{DR}_\ast\coloneqq\Span_\QQ\{\der^n(R(u,v))\ast w\mid n\geq0,u,v,w\in\ha^{\geq2}\}$. By \Cref{cor:R} and \Cref{thm:der}, it holds $\mathsf{DR}_\ast\subset\ker G$. We conjecture that this family gives a complete description of all relations among multiple Eisenstein series.
\begin{conj}\label{conj:all relation}
    We have $\mathsf{DR}_\ast=\ker G$.
\end{conj}
Based on this conjecture, we define another formal multiple Eisenstein space as 
\begin{align}
    \widetilde{\mes^f}\coloneqq\quotient{\ha^{\ge2}_\ast} {\mathsf{DR}_\ast}.
\end{align}
We denote by $\widetilde{G^f}(k_1,\dots,k_r)$ the class of $z_{k_1}\cdots z_{k_r}$ in $\widetilde{\mes^f}$. Assuming \Cref{conj:all relation}, the space $\widetilde{\mes^f}$ is isomorphic to the multiple Eisenstein space $\mes$. According to \Cref{conj:sl_2}, $\mes$ is expected to have an $\sltwo$-algebra structure, and \Cref{thm:der} shows that the formal multiple Eisenstein algebra $\mes^f$ is an $\sltwo$-algebra. The fifth main theorem of this paper states that this is also the case for the alternative formal multiple Eisenstein algebra $\widetilde{\mes^f}$.
\begin{mainthm}\label{thm:main2}
    $\widetilde{\mes^f}$ is an $\sltwo$-algebra.
\end{mainthm}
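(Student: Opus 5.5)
We have to exhibit three derivations $W,D,\delta$ of $\ha^{\ge2}_\ast$ that preserve $\mathsf{DR}_\ast$ and satisfy the $\sltwo$ commutator relations modulo $\mathsf{DR}_\ast$. The natural choices are the following. $W$ is the weight operator, multiplying $z_{k_1}\cdots z_{k_r}$ by $k_1+\cdots+k_r$. $D$ is given by $\der=-\mathcal{D}\circ\dsht$. $\delta$ is the analogue of $\delta'$ in \Cref{conj:sl_2}, sending $z_2w\mapsto-\tfrac12 w$ and $z_kw\mapsto0$ for $k>2$. Possibly $\delta$ will need correction terms so that it becomes a derivation for $\ast$; one could take these from the formula for $\delta$ on $\bmes^f$ in \cite{BIM}, transported through \Cref{thm:der}.

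\textbf{$W$.} The operator $W$ is a derivation of $\ast$ because the harmonic product is weight-homogeneous. Since $\dsht$, $R$ and $\mathcal{D}$ are homogeneous, $\mathsf{DR}_\ast$ is spanned by homogeneous elements and is therefore $W$-stable.

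\textbf{$D$.} By \Cref{thm:der}(i) we have $G^f\circ\der=D\circ G^f$ on $\ha^{\ge2}$, and $D$ is a derivation of $\bmes^f$. However, $G^f$ need not be injective, so $\der$ is only known to be a derivation modulo $\ker G^f$, not modulo $\mathsf{DR}_\ast$. I would therefore try to prove directly, as a combinatorial statement, that
\[
\der(u\ast v)-\der(u)\ast v-u\ast\der(v)\in\mathsf{DR}_\ast .
\]
Since $\dsht(u\ast v)-\dsht(u)\ast v-u\ast\dsht(v)=R(u,v)$, the task is to understand how $\mathcal{D}$ interacts with $\ast$ on $\halm$. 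The goal is an identity of the form
\[
\mathcal{D}(x\ast v)=\mathcal{D}(x)\ast v\quad\text{for }x\in\halm,\ v\in\ha^{\ge2},
\]
up to terms in $\mathsf{DR}_\ast$, together with a check that $\mathcal{D}$ applied to the $R$-part produces an element of $\mathsf{DR}_\ast$. Stability of $\mathsf{DR}_\ast$ under $\der$ is then nearly automatic. Indeed, $\der(\der^nR\ast w)=\der^{n+1}R\ast w+\der^nR\ast\der w$ modulo the derivation defect, and that defect lies in $\mathsf{DR}_\ast$ by the previous step applied to $u=\der^nR$, $v=w$.

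\textbf{$\delta$.} Here one must show that $\delta$ is a derivation, that $[\delta,\der]=W$, and that $\delta(\mathsf{DR}_\ast)\subset\mathsf{DR}_\ast$. For the last point I would compute $\delta(R(u,v))$ explicitly; I expect it to vanish or to be a combination of $R$'s of lower weight. I would then use $[\delta,\der]=W$ to get
\[
\delta\,\der^n=\der^n\delta+\textstyle\sum(\text{terms }\der^{j}W\der^{n-1-j}),
\]
which reduces stability of $\mathsf{DR}_\ast$ to the base case of $R(u,v)\ast w$. The remaining relation $[W,\der]=2\der$ is immediate from weight, and $[W,\delta]=-2\delta$ is likewise immediate.

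The main obstacle is the identity $[\delta,\der]=W$ together with the derivation property of $\der$ modulo $\mathsf{DR}_\ast$. Both require explicit control of the Drop1 operator. I would first try to prove them via a generating-function description of $\mathcal{D}$ and $\dsht$, verifying the identities on words of the form $z_2w$ versus $z_kw$ with $k>2$ by induction on depth.
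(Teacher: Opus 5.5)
\textbf{There is a genuine gap.} Your architecture matches the paper's proof. You use the same triple $(W,\der,\delta)$, the same well-definedness check on $\ha^{\ge2}_\ast/\mathsf{DR}_\ast$, and the same reduction of $\delta(\mathsf{DR}_\ast)\subset\mathsf{DR}_\ast$ to $\delta(R(u,v))$, by commuting $\delta$ past $\der^n$ with $[\delta,\der]=W$ and $[W,\der]=2\der$. No correction terms are needed for $\delta$: on $\ha^{\ge2}$ the operator of \cite{BIM} (see \eqref{eq:delta in H1}) is exactly $z_2w\mapsto-\frac12 w$ and $z_kw\mapsto 0$ for $k\ge3$. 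It is a derivation of $\ast$ because a contracted letter $z_{k+l}$ with $k,l\ge2$ is never $z_2$. However, every step that carries real content is left as an expectation.

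\textbf{First missing ingredient: the exact product rule for $\mathcal{D}$.} The paper has the exact identity $\mathcal{D}(x\ast v)=\mathcal{D}(x)\ast v$ for $x\in\ha^0$, $v\in\ha^{\ge2}$ (\Cref{lem:Dhom}). It is not proved combinatorially. \cite[Proposition 3.1]{HMSW} gives $\mathcal{D}(x\ast v)-\mathcal{D}(x)\ast v\in\ker\zeta^\diamondsuit\cap\ha^{\ge2}$, and this intersection is $\{0\}$ because the $\zeta^\diamondsuit(\bm{k})$, $\bm{k}\in\mathbb{I}^{\ge2}$, are linearly independent. Combined with $R(u,v)\in\ha^{\ge2}$ (\Cref{lem:R in ha2}) and $\mathcal{D}|_{\ha^{\ge2}}=\id$, this gives $\der(u\ast v)-\der(u)\ast v-u\ast\der(v)=-R(u,v)$ on the nose. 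That identity is what makes $\der$ a derivation of the quotient with $\der(\mathsf{DR}_\ast)\subset\mathsf{DR}_\ast$. Your proposed route through ``a generating-function description of $\mathcal{D}$'' has nothing to stand on. No non-recursive formula for $\mathcal{D}$ is available: \Cref{conj:drop1 generating} is only conjectural. It also holds only modulo $\I$, which is too coarse here for the same reason you gave for $G^f$.

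\textbf{Second missing ingredient: $[\delta,\der]=W$ on $\ha^{\ge2}$.} This is \Cref{lem:commutator2}, which you also list as an obstacle. The paper uses $\delta(w\ast z_2)-\delta(w)\ast z_2=-\frac12 w$ and $\mathcal{D}|_{\ha^{\ge2}}=\id$ to reduce it to an identity for $\delta\circ\mathcal{D}(w\shuffle z_2)$. It then works directly with the recursion of \Cref{def:drop1operator}. Since $\delta$ on $\ha^{\ge2}$ only detects a leading $z_2=xy$, only the terms $xy\,\mathfrak{D}(\bm{c}_{(-A)}-\delta_B)$ with $\#A+\#B=2$ contribute. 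These cancel except for the pieces giving $\frac12 W(w)$ and the $\mathbf{1}_{k_1=2}$ term.

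\textbf{$\delta(R(u,v))$ needs no separate computation.} You only guess its form, but once the two facts above are in place it follows formally. Apply $\delta$ to $R(u,v)=-\der(u\ast v)+\der(u)\ast v+u\ast\der(v)$. Then use $[\delta,\der]=W$, the fact that $W$ is a derivation, and $\delta(\ha^{\ge2})\subset\ha^{\ge2}$. This gives $\delta(R(u,v))=R(\delta u,v)+R(u,\delta v)$, which is \Cref{lem:deltaR}. The paper obtains this lemma differently, from $[\delta,\dsht]=W$ on all of $\ha^1$, which in turn comes from the $\sigma$-equivariance of $\delta$ in \cite{BIM}.

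So your outline closes exactly when \Cref{lem:Dhom} and \Cref{lem:commutator2} are supplied. Without them it is a plan rather than a proof.
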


\subsection{Organization}
The organization of this paper is as follows. In \Cref{sec:regularization}, we review the construction of two regularizations of multiple Eisenstein series developed in \cite{BT}, \cite{Ba2}. We compare them and prove the first main theorem. In \Cref{sec:bi MES}, we recall the notion of formal multiple Eisenstein series introduced in \cite{BIM} and define the bi-multiple Eisenstein series as a realization of the formal multiple Eisenstein series. In \Cref{sec:der}, we provide a derivative formula, prove the fourth main theorem, and, as an application, prove \Cref{cor:okounkov} on Okounkov's $q$-analogues of multiple zeta values. In \Cref{sec:another formal MES}, we define another formal multiple Eisenstein space, prove the fifth main theorem, and introduce the multiple Eisenstein-diamond series.
\section*{Acknowledgments}
The authors would like to thank Hanamichi Kawamura for his valuable comments. The authors would also like to thank Shin-ichiro Seki for comments and corrections on an early version of this draft. This project was partially supported by JSPS KAKENHI Grant 23K03030 (H.B.) and 26KJ0525 (H.K).

\section{Regularization of multiple Eisenstein series}\label{sec:regularization}

\subsection{Mould theory}

For the purpose of comparing the two regularizations, it is convenient to use the language of \emph{moulds} introduced by \'Ecalle in \cite{Ec}.
\begin{dfn}
    Let $R$ be a $\QQ$-algebra. Define the set of bimoulds over $R$ valued in formal power series by
    \begin{align}
        \BIMU(R)\coloneqq\left\{M=\left(M_r\binom{u_1,\dots,u_r}{v_1,\dots,v_r}\right)_{r\ge0}\in\prod_{r\ge0}R\llbracket u_1,v_1,\dots,u_r,v_r\rrbracket\right\}.
    \end{align}
\end{dfn}
We often omit $R,r$ in $\BIMU(R),M_r$ and simply write $\BIMU,M$, respectively.
\begin{enumerate}[(i)]
    \item We call $M\in\BIMU$ a \emph{constant mould} if $M\binom{u_1,\dots,u_r}{v_1,\dots,v_r}\in R$ for every $r\ge0$.
    \item We call $M\in\BIMU$ a \emph{$u$-mould} if $M\binom{u_1,\dots,u_r}{v_1,\dots,v_r}$ does not depend on $v_1,\dots,v_r$ for any $r\ge0$.
    \item We call $M\in\BIMU$ a \emph{$v$-mould} if $M\binom{u_1,\dots,u_r}{v_1,\dots,v_r}$ does not depend on $u_1,\dots,u_r$ for any $r\ge0$.
    \item Define the multiplication $\times:\BIMU\times\BIMU\to\BIMU$ by
    \begin{align}
        (M\times N)(w_1,\dots,w_r)=\sum_{i=0}^rM(w_1,\dots,w_i)N(w_{i+1},\dots,w_r),
    \end{align}
    where we denote $w_i=\binom{u_i}{v_i}$.
    \item Define maps $\mathsf{neg},\anti,\pari,\swap:\BIMU\to\BIMU$ by
    \begin{align}
        \mathsf{neg}(M)(w_1,\dots,w_r)&\coloneqq M(-w_1,\dots,-w_r),\\
        \mathsf{anti}(M)(w_1,\dots,w_r)&\coloneqq M(w_r,\dots,w_1),\\
        \mathsf{pari}(M)(w_1,\dots,w_r)&\coloneqq(-1)^rM(w_1,\dots,w_r),\\
        \swap(M)\binom{u_1,\dots,u_r}{v_1,\dots,v_r}&\coloneqq M\binom{v_r,v_{r-1}-v_r,\dots,v_1-v_2}{u_1+\cdots+u_r,u_1+\cdots+u_{r-1},\dots,u_1}.
    \end{align}
    \item For $B,C\in\mathsf{BIMU}$, define $\gaxit_{B,C}:\mathsf{BIMU}\to\mathsf{BIMU}$ by
    \begin{align}
        \gaxit_{B,C}(A)(\bm{v})=\summ{\bm{v}=\bm{a}_1\bm{b}_1\bm{c}_1\cdots \bm{a}_s\bm{b}_s\bm{c}_s\\\bm{b}_i,\bm{c}_i\bm{a}_{i+1}\ne\emptyset}A(\lceil\bm{b}_1\rceil\cdots\lceil\bm{b}_s\rceil)B(\bm{a}_1\rfloor)\cdots B(\bm{a}_s\rfloor)C(\lfloor\bm{c}_1)\cdots C(\lfloor\bm{c}_s),
    \end{align}
    where the flexion markers $\lceil,\rceil,\lfloor,\rfloor$ are defined by
    \begin{align}
        \lceil\bm{b}&\coloneqq\binom{u_1+\cdots+u_{i+1},u_{i+2},\dots,u_r}{v_{i+1},v_{i+2},\dots,v_r},&\bm{a}\rceil&\coloneq\binom{u_1,\dots,u_{i-1},u_i+\cdots+u_r}{v_1,v_2,\dots,v_i},\\
        \lfloor\bm{b}&\coloneq\binom{u_i,\dots,u_r}{v_{i+1}-v_i,\dots,v_r-{v_i}},&\bm{a}\rfloor&\coloneqq\binom{u_1,\dots,u_i}{v_1-v_{i+1},\dots,v_i-v_{i+1}}
    \end{align}
    for a composition $\bm{a}\bm{b}=(w_1,\dots,w_i)(w_{i+1},\dots,w_r)=(w_1,\dots,w_r)$ $(0\le i\le r)$.
    \item For a $v$-mould $M$ and $\bm{k}=(k_1,\dots,k_r)\in(\ZZ_{>0})^r$, we denote by $\langle M\mid \bm{k}\rangle=m(k_1,\dots,k_r)$ the coefficient of $v_1^{k_1-1}\cdots v_r^{k_r-1}$ in $M(v_1,\dots,v_r)$, i.e. $$M(v_1,\dots,v_r)=\sum_{k_1,\dots,k_r>0}m(k_1,\dots,k_r)v_1^{k_1-1}\cdots v_r^{k_r-1}.$$
\end{enumerate}
\begin{rem}\label{rem:identification}
    In this paper, we take $R$ to be commutative in all cases except for \Cref{subsec:formal MES}. Given a commutative $\QQ$-algebra $R$ and $\QQ$-linear map $M:\ha^1\to R$, we can identify $M$ with a $v$-mould $M^\prime=(M^\prime(v_1,\dots,v_r))_{r\ge0}$ defined by
    \begin{align}
        M^\prime(v_1,\dots,v_r)\coloneqq\begin{cases}M(1)&r=0,\\\sum_{k_1,\dots,k_r>0}M(z_{k_1}\cdots z_{k_r})v_1^{k_1-1}\cdots v_r^{k_r-1}&r\ge1.\end{cases}
    \end{align}
    Under this identification, we can rephrase some notation (cf. \cite[Section 3]{BB}):
    \begin{enumerate}[(i)]
        \item $M:\ha^1_\ast\to R$ is an algebra homomorphism $\Leftrightarrow$ $M^\prime$ is \emph{symmetril}.
        \item $M:\ha^1_\shuffle\to R$ is an algebra homomorphism $\Leftrightarrow$ $\swap(M^\prime)$ is \emph{symmetral}.
    \end{enumerate}
    Throughout this paper, we identify a map $M:\ha^1\to R$ with a corresponding $v$-mould $M^\prime$, and use the same notation for both.
\end{rem}

\subsection{Goncharov coproduct in mould theory}\label{subsec:Goncharov}
In this section, we reinterpret the Goncharov coproduct in terms of moulds for the comparison of shuffle and harmonic regularizations of multiple Eisenstein series. We note that a corresponding formulation of the coproduct and shuffle regularization was recently given in \cite{HST} using the language of non-commutative power series.

Denote $B_{-}=\mathsf{neg}\circ\anti\circ\pari(B)$ for a bimould $B$, and denote $\gilat_B=\gaxit_{B,B_{-}}$. Define a map $\gila:\BIMU\times\BIMU\to\BIMU$ by
\begin{align}
    \gila(A,B)=\gilat_B(A)\times B.
\end{align}
Hereinafter, we only consider $v$-moulds and take a suitable $\QQ$-algebra $R$ as needed.
\begin{lem}\label{lem:gila}
    For $v$-moulds $A,B$ and $\bm{k}=(k_1,\dots,k_r)\in(\ZZ_{>0})^r$, $\langle\gila(A,B)\mid\bm{k}\rangle$ is equal to
    \begin{align}
        &\summ{0=t_0<q_1\le t_1<q_2\le t_2<\cdots<q_s\le t_s<r+1\\(0\le s\le r)}\summ{n_{t_{j-1}+1},\dots,n_{t_j}\ge1\\n_{t_{j-1};t_j}=k_{t_{j-1};t_j}\\(1\le j\le s)}a(n_{q_1},\dots,n_{q_s})b(k_{t_s+1},\dots,k_r)\\
        &\times\prod_{j=1}^s\bigg\{(-1)^{k_{t_{j-1};t_j}+k_{q_j}+n_{t_{j-1};q_j-1}}\Bigg(\prod_{\substack{p=t_{j-1}+1\\p\ne q_j}}^{t_j}\binom{n_p-1}{k_p-1}\Bigg)b(n_{t_{j-1}+1},\dots,n_{q_j-1})b(n_{t_j},\dots,n_{q_j+1})\bigg\},
    \end{align}
    where $k_{i;j}=k_{i+1}+\cdots+k_j$ for $i<j$.
\end{lem}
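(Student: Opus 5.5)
The plan is to unfold the definitions directly: $\gila(A,B)=\gilat_B(A)\times B$, with $\gilat_B=\gaxit_{B,B_-}$, and then read off the coefficient of $v_1^{k_1-1}\cdots v_r^{k_r-1}$. Since $A$ and $B$ are $v$-moulds, all $u$-flexions can be ignored. The $v$-parts of the flexion markers are
\[
\lceil\bm b\rceil\colon v_{q}\ \text{(the first lower entry of the middle block survives)},\qquad
\bm a\rfloor\colon (v_p-v_{q}),\qquad
\lfloor\bm c\colon (v_p-v_{q}).
\]
Here I will need to check, with the $v$-parts of $\lceil,\rceil$ as written, that $A$ receives exactly the variables $v_{q_1},\dots,v_{q_s}$, one distinguished position $q_j$ per block.

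First I will expand the mould product $\times$. This splits $(w_1,\dots,w_r)=(w_1,\dots,w_{t_s})(w_{t_s+1},\dots,w_r)$, and the second factor gives $b(k_{t_s+1},\dots,k_r)$ directly.

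Next I expand $\gaxit$ on the first part. For consistency with the claimed formula, each triple $\bm a_j\bm b_j\bm c_j$ should be occupying positions $t_{j-1}+1,\dots,t_j$, with $\bm b_j$ reduced to the single letter at position $q_j$. I must justify that only length-one $\bm b_j$ contribute. Since $A$ is a $v$-mould, $\lceil\bm b_j\rceil$ keeps only one $v$ (namely $v_{q_j}$), so $A$ gets $s$ variables, matching $a(n_{q_1},\dots,n_{q_s})$. I expect the formula's convention that $\bm b_j$ is length one to come from exactly this. The condition $\bm c_i\bm a_{i+1}\neq\emptyset$ is a subtlety I will handle by reindexing, and I will double-check it against the inequalities $q_j\le t_j<q_{j+1}$.

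Then comes the coefficient extraction. In block $j$ the factor is
\[
A(\dots v_{q_j}\dots)\,B(v_{t_{j-1}+1}-v_{q_j},\dots,v_{q_j-1}-v_{q_j})\,B_-(v_{q_j+1}-v_{q_j},\dots,v_{t_j}-v_{q_j}).
\]
Writing $B_-(x_1,\dots,x_m)=(-1)^mB(-x_m,\dots,-x_1)$ turns the second factor into $b(n_{t_j},\dots,n_{q_j+1})$ with reversed order and sign $(-1)^{\sum(n_p-1)}$ times a parity sign. I will expand each monomial $(v_p-v_{q_j})^{n_p-1}$ binomially and keep $v_p^{k_p-1}$. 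This produces $\binom{n_p-1}{k_p-1}(-v_{q_j})^{n_p-k_p}$, and all leftover powers of $v_{q_j}$ combine with the exponent $n_{q_j}-1$ from $A$. Matching the total power of $v_{q_j}$ to $k_{q_j}-1$ forces $\sum_{p=t_{j-1}+1}^{t_j}n_p=k_{t_{j-1};t_j}$, which is the stated constraint.

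The main obstacle is the sign bookkeeping. Here $(-1)^{\sum_{p\ne q_j}(n_p-k_p)}$ from the binomials, together with the $\pari$/$\nega$ signs on the $\bm c_j$ part, must combine into $(-1)^{k_{t_{j-1};t_j}+k_{q_j}+n_{t_{j-1};q_j-1}}$. I will do this by using $\sum n_p=\sum k_p$ to rewrite the exponent modulo $2$. For the right-hand part, the $\nega$ sign $(-1)^{\sum(n_p-1)}$ and the $\pari$ sign $(-1)^{t_j-q_j}$ combine to $(-1)^{\sum_{p>q_j}n_p}$. That then cancels against the binomial signs of the right-hand part, leaving only the left-hand part's $n$'s and the $k$'s as claimed.
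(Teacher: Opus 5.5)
Your overall route is the paper's: there the lemma is obtained by expanding $\gilat_B(A)$ (Lemma~\ref{lem:gilat}) and then the mould product with $B$. Your handling of the product $\times$, of $B_-(x_1,\dots,x_m)=(-1)^mB(-x_m,\dots,-x_1)$, of the binomial extraction forcing $\sum_p n_p=k_{t_{j-1};t_j}$, and of the sign bookkeeping is all correct.

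The genuine gap is the step where you restrict to single-letter $\bm b_j$, because its justification rests on a misreading of the flexions. The markers $\lceil$ and $\rceil$ modify only the upper ($u$) entries. So for a $v$-mould, $A(\lceil\bm b_1\rceil\cdots\lceil\bm b_s\rceil)=A(\bm b_1\cdots\bm b_s)$ receives \emph{every} lower variable of every $\bm b_j$, not one per block. Moreover, for $\bm b_j=(w_p,\dots,w_{p'})$ with $p<p'$, the flexion $\bm a_j\rfloor$ subtracts $v_p$ while $\lfloor\bm c_j$ subtracts $v_{p'}$, so there is no single $v_{q_j}$ per block. Longer $\bm b_j$ therefore do not drop out. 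If you discard them and keep the condition $\bm c_i\bm a_{i+1}\ne\emptyset$, you lose terms. For $r=2$ the target contains $a(k_1,k_2)$ (from $s=2$, $q_1=t_1=1$, $q_2=t_2=2$), and in $\gaxit$ this comes precisely from the single block $\bm b_1=(w_1,w_2)$.

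What is missing is a bijective reindexing, not a vanishing. Split each $\bm b_j=(w_p,\dots,w_{p'})$ into singletons $(w_p),\dots,(w_{p'})$. Attach $\bm a_j$ to $(w_p)$ and $\bm c_j$ to $(w_{p'})$, and give the inner singletons empty $\bm a$- and $\bm c$-parts. This is a bijection between two sets of decompositions: those in $\gaxit$ (with $\bm c_i\bm a_{i+1}\ne\emptyset$), and decompositions into single-letter $b$-blocks with \emph{no} condition on $\bm c_i\bm a_{i+1}$. The inverse merges maximal runs of adjacent $b$-letters.

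Corresponding terms agree for three reasons. $A$ sees the same variables. The outer factors still subtract $v_p$ and $v_{p'}$. The new inner factors are $B(\emptyset)=B_-(\emptyset)=1$; this normalization is needed for the stated formula anyway and holds for every mould to which it is applied. This is exactly why the indexing $0=t_0<q_1\le t_1<q_2\le\cdots$ in the statement imposes nothing beyond $t_j<q_{j+1}$.

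With this step supplied, the rest of your argument goes through. The paper's proof of Lemma~\ref{lem:gilat} performs the same reindexing silently in its second equality.
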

\begin{proof}
    This follows from \Cref{lem:gilat}.
\end{proof}
In \cite{Go}, Goncharov considered a formal version of the iterated integrals
\begin{align}
    \int_{a_{k+1}}^{a_0}\frac{dt_1}{t_1-a_1}\cdots\int_{a_{k+1}}^{t_{k-2}}\frac{dt_{k-1}}{t_{k-1}-a_{k-1}}\int_{a_{k+1}}^{t_{k-1}}\frac{dt_k}{t_k-a_k}\qquad(a_i\in\CC).
\end{align}
He proved that the space generated by such formal iterated integrals carries a Hopf algebra structure, whose coproduct is known as \textit{Goncharov's coproduct}. A certain quotient of the space restricted to $a_0,\dots,a_{k+1}\in\{0,1\}$ can be identified with the shuffle algebra $\ha^1_\shuffle$, and we can equip $\ha^1_\shuffle$ with the Hopf algebra structure with the coproduct $\Delta_\mathrm{G}$ (see \cite{BT}). The explicit formula for $\Delta_\mathrm{G}$ is obtained in \cite{BT}.
\begin{prop}[\cite{BT}]\label{prop:explicit Goncharov coproduct}
    For a word $w=z_{k_1}\cdots z_{k_r}\in\ha^1$, $\Delta_{\mathrm{G}}(w)$ is equal to
    \begin{align}
        &\summ{0=t_0<q_1\le t_1<q_2\le t_2<\cdots<q_s\le t_s<r+1\\(0\le s\le r)}\summ{n_{t_{j-1}+1},\dots,n_{t_j}\ge1\\n_{t_{j-1};t_j}=k_{t_{j-1};t_j}\\(1\le j\le s)}(z_{n_{q_1}}\cdots z_{n_{q_s}}\otimes z_{k_{t_s+1}}\cdots z_{k_r})\\
        &\shuffle\bigsha_{j=1}^s\!\bigg\{(-1)^{k_{t_{j-1};t_j}+k_{q_j}+n_{t_{j-1};q_j-1}}\Bigg(\prod_{\substack{p=t_{j-1}+1\\p\ne q_j}}^{t_j}\binom{n_p-1}{k_p-1}\Bigg)(1\otimes z_{n_{t_{j-1}+1}}\cdots z_{n_{q_j-1}}\shuffle z_{n_{t_j}}\cdots z_{n_{q_j+1}})\bigg\}.
    \end{align}
\end{prop}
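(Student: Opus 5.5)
The explicit formula in \Cref{prop:explicit Goncharov coproduct} is a recollection from \cite{BT}, so our aim is to re-derive it through the mould calculus set up above. The proof would then have the same shape as \Cref{lem:gila}, just with values in $\ha^1\otimes\ha^1$. The starting point is Goncharov's formula for the coproduct of a formal iterated integral $I(a_0;a_1,\dots,a_k;a_{k+1})$. It is a sum over subsequences $0=i_0<i_1<\dots<i_m<i_{m+1}=k+1$ of
\begin{align}
I(a_0;a_{i_1},\dots,a_{i_m};a_{k+1})\otimes\prod_{p=0}^{m}I(a_{i_p};a_{i_p+1},\dots,a_{i_{p+1}-1};a_{i_{p+1}}).
\end{align}
We specialise to $a_0=1$, $a_{k+1}=0$ and letters in $\{0,1\}$, encoding $w=z_{k_1}\cdots z_{k_r}$ as $x^{k_1-1}y\cdots x^{k_r-1}y$. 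We then pass to the quotient identified with $\ha^1_\shuffle$, using the usual rules $I(a;\dots;a)=0$ and path reversal $I(b;\bm{a};a)=(-1)^{|\bm{a}|}I(a;\bm{a}^{\mathrm{rev}};b)$. We also use the regularisation convention that kills iterated integrals starting and ending at the same point.

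The key step is to organise the surviving subsequences according to which letters $y$ they keep. A retained $y$ corresponds to a position $q_j$. Between consecutive retained letters, the right-hand factors are integrals $I(0;\dots;1)$, $I(1;\dots;0)$, or integrals with equal endpoints; the last kind vanish. This forces the structure $t_{j-1}<q_j\le t_j$. Here the block $z_{k_{t_{j-1}+1}}\cdots z_{k_{t_j}}$ is the part of $w$ whose $x$'s are partly absorbed into the left factor $z_{n_{q_j}}$. The segment to the left of $q_j$ contributes an ordinary word $z_{n_{t_{j-1}+1}}\cdots z_{n_{q_j-1}}$. The segment to the right, being an integral from $1$ to $0$, is reversed with a sign, giving $z_{n_{t_j}}\cdots z_{n_{q_j+1}}$. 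The tail after $t_s$ is untouched and yields $z_{k_{t_s+1}}\cdots z_{k_r}$.

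Converting an integral such as $I(0;x^{a}y\cdots;1)$, or its reversed form, back into $\ha^1$ requires rewriting strings of letters $0$ at the boundary. We do this with the standard identity that moves $x$'s across via binomial coefficients; equivalently, we expand $I(0;0^{a}\,\bm{b};1)$ by shuffle regularisation. This step produces the factors $\binom{n_p-1}{k_p-1}$ and the constraint $n_{t_{j-1};t_j}=k_{t_{j-1};t_j}$. The right-hand factors of different blocks multiply in the shuffle algebra, which accounts for the $\bigsha$ and the $\shuffle$ between the two segments of a single block.

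The main obstacle is the sign $(-1)^{k_{t_{j-1};t_j}+k_{q_j}+n_{t_{j-1};q_j-1}}$. To control it I would not track letters by hand. Instead I would pass to generating series and match the resulting expression with $\gila(A,B)$, using \Cref{lem:gila} with $A\otimes B$ valued in $\ha^1\otimes\ha^1$. The sign and the binomial coefficients then come out automatically from the flexion variables $v_i-v_{i+1}$ together with the operation $B_-=\nega\circ\anti\circ\pari(B)$, and the coefficient comparison is exactly \Cref{lem:gila}.
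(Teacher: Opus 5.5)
The paper gives no proof of this proposition. It is quoted from \cite{BT}, and the paper uses it in the other direction: it combines it with \Cref{lem:gila} to justify \Cref{rem:gila and Goncharov}. This makes your last step circular. \Cref{lem:gila} only extracts coefficients of the mould $\gila(A,B)$ and says nothing about $\Delta_{\mathrm{G}}$. Applying it presupposes that the generating series of $\Delta_{\mathrm{G}}(z_{\bm k})$ is $\gila$ of the two universal moulds with values in $\ha^1_\shuffle\otimes\ha^1_\shuffle$, and that is the proposition itself.

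The sign is not the real difficulty, but your bookkeeping is wrong. The retained $y$ of the $j$-th block is the last letter of $z_{k_{t_j}}$, not of $z_{k_{q_j}}$. The index $q_j$ labels the $x$-block containing the contiguous run of retained $x$'s that forms $x^{n_{q_j}-1}$. The gap from the end of $z_{k_{t_{j-1}}}$ to this run is an unreversed $I(1;\dots;0)$ with word $z_{k_{t_{j-1}+1}}\cdots z_{k_{q_j-1}}x^{a}$. The gap from the run to the retained $y$ is an $I(0;\dots;1)$, which reverses to $z_{k_{t_j}}\cdots z_{k_{q_j+1}}x^{b}$, where $a+b+n_{q_j}=k_{q_j}$. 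With your assignment, the piece following a retained $y$ would be an unreversed $I(1;\dots;0)$, so the reversal pattern of the formula would not come out.

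Let $\mathrm{reg}$ denote regularisation of trailing $x$'s. The identity $\mathrm{reg}(z_{m_1}\cdots z_{m_l}x^{a})=(-1)^{a}\sum\prod_p\binom{n_p-1}{m_p-1}z_{n_1}\cdots z_{n_l}$, summed over $n_p\ge m_p$ with $\sum_p(n_p-m_p)=a$, gives the binomials. The signs $(-1)^{a}$, $(-1)^{b}$ and the reversal sign $(-1)^{b+k_{q_j;t_j}}$ combine, using $a=n_{t_{j-1};q_j-1}-k_{t_{j-1};q_j-1}$, to exactly $(-1)^{k_{t_{j-1};t_j}+k_{q_j}+n_{t_{j-1};q_j-1}}$.

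The missing idea concerns the terms with $n_{q_j}=1$. The direct count only produces terms with $n_{q_j}\ge2$, together with $n_{q_j}=1$ when the block is a single letter $z_1$. The stated formula, however, has $n_{q_j}=1$ terms for every block. For $w=z_2z_2$, the choices $(q_1,t_1)=(1,2)$ and $(2,2)$ contribute $+2z_1\otimes z_3$ and $-2z_1\otimes z_3$. Neither comes from a surviving term of Goncharov's expansion, because a retained $y$ not preceded by a retained $x$ must follow an empty gap.

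To handle these, let $W=x^{k_{t_{j-1}+1}-1}y\cdots y\,x^{k_{t_j}-1}$ be the block with its final $y$ removed. Summed over $q_j$ and $a+b=k_{q_j}-1$, the $n_{q_j}=1$ terms run over all cuts $W=UV$ and equal $\sum_{UV=W}\mathrm{reg}(U)\shuffle\mathrm{reg}\bigl((-1)^{|V|}\overline{V}\bigr)$. By the shuffle antipode identity this vanishes unless $W=\varnothing$. Equivalently, you can use path composition $I(1;W;1)=\sum_{UV=W}I(1;U;0)I(0;V;1)$. This is the same cancellation the paper invokes at the end of the proof of \Cref{thm:comparison of regularization}(i). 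Your claim that vanishing of equal-endpoint integrals ``forces'' $t_{j-1}<q_j\le t_j$ misses it, so you would not reach the formula as stated.

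Once this is added, the generating-series form follows from $\sum_{\bm m,a}\mathrm{reg}(z_{m_1}\cdots z_{m_l}x^{a})\,v_1^{m_1-1}\cdots v_l^{m_l-1}t^{a}=\sum_{\bm n}z_{n_1}\cdots z_{n_l}\prod_p(v_p-t)^{n_p-1}$ with $t=v_{q_j}$. This is where $\bm a\rfloor$, $\lfloor\bm c$ and $B_-$ come from, and only at that point does \Cref{lem:gila} apply.
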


\begin{rem}\label{rem:gila and Goncharov}
    For a commutative $\QQ$-algebra $R$ and $\QQ$-linear maps $A,B:\ha^1\to R$, the convolution of $A$ and $B$ is defined by $A\star B\coloneqq\mu\circ(A\otimes B)\circ\Delta_\mathrm{G}$, where $\mu$ is the multiplication of $R$. \Cref{lem:gila} and \Cref{prop:explicit Goncharov coproduct} suggest that $\gila(A,B)$ coincides with $A\star B$ under the identification stated in \Cref{rem:identification}. And thus, $\gila(A,B)=A\star B$ is an algebra homomorphism on $\ha^1_\shuffle$ (this is equivalent to saying $\swap(\gila(A,B))$ is symmetral) if $A,B$ are algebra homomorphisms on $\ha^1_\shuffle$.
\end{rem}

\subsection{Shuffle regularization}

In this subsection, we briefly review the construction of the shuffle regularized multiple Eisenstein series $G^\shuffle$ developed in \cite{BT}. The authors used the Fourier expansion, which can be written in terms of the Goncharov coproduct ($\gila$-product). Let $\mathbb{I}^{\ge2}$ be the set of all indices $(k_1,\dots,k_r)$ whose entries are $\ge2$. The following theorem gives the explicit Fourier expansion of multiple Eisenstein series in the convergent case.
\begin{thm}[\cite{Ba1}]\label{thm:Fourier exp}
    For $\bm{k}\in\mathbb{I}^{\ge2}$, we have $G_{\bm{k}}(\tau)=\left\langle\gila(g(\tau),\zeta^\shuffle)\mid\bm{k}\right\rangle$, where $g(\tau)$ and $\zeta^\shuffle$ are $v$-moulds defined as generating series of $g_{\bm{k}}(\tau)$ and $\zeta^\shuffle(\bm{k})$, respectively.
\end{thm}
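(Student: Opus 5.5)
This is a citation of \cite{Ba1}, so the plan is to derive it from the known Fourier expansion and match the combinatorics with \Cref{lem:gila}. We start from the definition of $G_{\bm k}(\tau)$ as an ordered lattice sum. We split the tuple $\lambda_1\succ\cdots\succ\lambda_r\succ0$ according to which $\lambda_i=m_i\tau+n_i$ have $m_i=0$ (these form a final block, necessarily positive integers $n_i$, contributing a multiple zeta value) and according to the groups of consecutive indices sharing the same $m_i=m>0$.

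For a group sharing a common $m$, the inner sum over $n_{i}>\cdots>n_{j}$ in $\ZZ$ is a ``multiple cotangent'' sum
\[
\Psi_{k_i,\dots,k_j}(m\tau)=\sum_{n_i>\cdots>n_j}\prod_{p}\frac{1}{(m\tau+n_p)^{k_p}}.
\]
Following \cite{Ba1}, we reduce each such sum by partial fractions to single cotangent-type sums $\Psi_n(x)=\sum_{n'\in\ZZ}(x+n')^{-n}$ times multiple zeta values. The partial fraction decomposition of $\prod (x+n_p)^{-k_p}$ singles out one position $q_j$ in the group. The remaining factors on the left and right produce $\zeta^\shuffle$-values, which are read in reversed order on the right. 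They come with binomial coefficients $\binom{n_p-1}{k_p-1}$ and the sign $(-1)^{k_{t_{j-1};t_j}+k_{q_j}+n_{t_{j-1};q_j-1}}$; here the regularized (shuffle) values appear because individual pieces diverge when some $n_p=1$. Then the Lipschitz formula $\Psi_n(x)=\frac{(-2\pi i)^n}{(n-1)!}\sum_{d>0}d^{n-1}e^{2\pi i d x}$, summed over $m_1>\cdots>m_s>0$, produces exactly $g_{n_{q_1},\dots,n_{q_s}}(\tau)$.

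Collecting terms, $G_{\bm k}$ becomes a sum over data $0=t_0<q_1\le t_1<\cdots<q_s\le t_s<r+1$ and $n$'s with $n_{t_{j-1};t_j}=k_{t_{j-1};t_j}$. This is precisely the right-hand side of \Cref{lem:gila} with $a=g(\tau)$ and $b=\zeta^\shuffle$. Equivalently, by \Cref{rem:gila and Goncharov} and \Cref{prop:explicit Goncharov coproduct}, it is $(g\star\zeta^\shuffle)(z_{k_1}\cdots z_{k_r})$. The two products of $\zeta^\shuffle$-values coming from the left and right parts of each block must be merged into the shuffle product $z_{\dots}\shuffle z_{\dots}$ via the homomorphism property of $\zeta^\shuffle$.

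The main obstacle is the partial-fraction step for a block with all $k_p\ge2$. There the individual terms involve $\zeta^\shuffle$ of indices with entries $1$ and divergent sums over $n_p$, which must be handled by truncation and limits. Here the order of limits $\lim_M\lim_N$ in the definition matters (Eisenstein summation), and one must check that the divergent parts cancel so that shuffle, rather than harmonic, regularization appears. I would handle this by working with generating functions in $v$-variables, truncating at $N$, and using the known statement that the constant term of a truncated multiple cotangent sum tends to its shuffle-regularized value.
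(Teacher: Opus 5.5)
Your architecture is the standard one from \cite{Ba1}, and it is the same mechanism the paper uses in \Cref{lem:hast}. It decomposes the lattice sum into equal-$m$ blocks (\Cref{lem:decom G}), reduces each block multitangent by partial fractions (Bouillot's reduction, \Cref{thm:reduction}), applies the Lipschitz formula, and matches the result with \Cref{lem:gila}. However, you have misplaced the obstacle, and one step is false as stated.

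Regularization of zeta values never enters. The factor $\binom{n_p-1}{k_p-1}$ vanishes unless $n_p\ge k_p\ge 2$ for every $p\neq q_j$. So every $b(\cdots)$ in \Cref{lem:gila}, and the tail $b(k_{t_s+1},\dots,k_r)$, is a multiple zeta value with all entries $\ge2$. Here $\zeta^\shuffle$ could be replaced by $\zeta^\ast$ or by $\zeta$ itself. There is therefore no ``shuffle rather than harmonic'' cancellation to establish. The fact about truncated multiple cotangent sums that you plan to invoke is not needed, and it does not touch the actual divergence.

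The real issue is at the distinguished position. The entry $n_{q_j}=k_{t_{j-1};t_j}-\sum_{p\neq q_j}n_p$ can equal $1$, which produces $\Psi_1(m\tau)=\pi\cot(\pi m\tau)=-\pi i-2\pi i\sum_{d>0}q^{md}$. The Lipschitz formula fails for $n=1$ because of the constant $-\pi i$, and $\sum_{m}\Psi_1(m\tau)$ diverges. So your claim that summing over $m_1>\cdots>m_s>0$ gives exactly $g_{n_{q_1},\dots,n_{q_s}}(\tau)$ is wrong for these terms. Yet $\langle\gila(g(\tau),\zeta^\shuffle)\mid\bm{k}\rangle$ does contain terms $g_{\dots,1,\dots}$.

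The missing idea is that, in each block, the total coefficient of $\Psi_1$ in the reduction of the convergent multitangent vanishes (Bouillot). One proof is to let $\Im z\to\infty$ in the reduction: the multitangent with all entries $\ge2$ and every $\Psi_n$ with $n\ge2$ tend to $0$, while $\Psi_1(z)\to-\pi i$. Alternatively, this is the antipode-type vanishing used at the end of the proof of \Cref{thm:comparison of regularization}~(i).

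That coefficient is exactly the $j$-th block factor in \Cref{lem:gila} restricted to $n_{q_j}=1$, summed over $q_j$. This holds because $a(n_{q_1},\dots,n_{q_s})$ depends on the block only through the value $n_{q_j}=1$. Hence these terms vanish on both sides, and for all remaining terms $n_{q_j}\ge 2$, where Lipschitz applies.

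This is also where Eisenstein summation genuinely matters. The only non-absolutely convergent sums produced by the partial fractions are the $\sum_n (x+n)^{-1}$ pieces, and they must be summed symmetrically for $\Psi_1$ to appear in the reduction at all. With this step supplied, the rest of your argument goes through.
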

Note that, in the explicit formula for the Fourier expansion of $G_{\bm{k}}(\tau)$ for $\bm{k}\in\mathbb{I}^{\ge2}$, only admissible multiple zeta values appear. Examples in depth $2$ and $3$ are as follows.
\begin{ex}
    For $k_1,k_2,k_3\ge2$, we have
    \begin{align}
        G_{k_1,k_2}(\tau)&=g_{k_1,k_2}(\tau)+\sum_{\substack{n_1+n_2=k_1+k_2\\n_1,n_2\ge2}}\left(\delta_{n_1,k_1}+\mathsf{b}^{n_2}_{k_1,k_2}\right)g_{n_1}(\tau)\zeta(n_2)+\zeta(k_1,k_2),\\
        G_{k_1,k_2,k_3}(\tau)&=g_{k_1,k_2,k_3}(\tau)+g_{k_1,k_2}(\tau)\zeta(k_3)+g_{k_1}(\tau)\zeta(k_2,k_3)+\zeta(k_1,k_2,k_3)\\
        &\quad+\sum_{\substack{n_1+n_2+n_3=k_1+k_2+k_3\\n_1,n_2,n_3\ge2}}\bigg\{\left(\delta_{n_1,k_1}\mathsf{b}^{n_3}_{k_2,k_3}+\delta_{n_2,k_3}\mathsf{b}^{n_3}_{k_1,k_2}\right)g_{n_1,n_2}(\tau)\zeta(n_3)\\
        &\quad+\left((-1)^{k_3-n_1}\binom{n_2-1}{k_1-1}+(-1)^{k_2+k_3}\binom{n_2-1}{k_3-1}\right)\binom{n_3-1}{k_2-1}g_{n_1}(\tau)\zeta(n_2,n_3)\\
        &\quad+\left((-1)^{k_1+k_3-n_2}\binom{n_2-1}{k_1-1}\binom{n_3-1}{k_3-1}+\delta_{n_3,k_3}\mathsf{b}^{n_2}_{k_1,k_2}\right)g_{n_1}(\tau)\zeta(n_2)\zeta(n_3)\bigg\},
    \end{align}
    where $\mathsf{b}^{n_2}_{k_1,k_2}=(-1)^{n_2-k_1}\binom{n_2-1}{k_1-1}+(-1)^{k_2}\binom{n_2-1}{k_2-1}$.
\end{ex}
As mentioned in \Cref{rem:gila and Goncharov}, whenever $A,B$ satisfy the shuffle product (i.e. if $\swap(A),\swap(B)$ are symmetral), $\gila(A,B)$ also satisfies the shuffle product.
\begin{dfn}[\cite{BT}]\label{dfn:shufflereg}
    The shuffle regularization for $g(\tau)$ is defined by
    \begin{align}
        &\quad g^\shuffle(\tau)(v_1,\dots,v_r)\\
        &\coloneqq\sum_{k_1,\dots,k_r\ge1}g^\shuffle_{k_1,\dots,k_r}(\tau)v_1^{k_1-1}\cdots v_r^{k_r-1}\\
        &\coloneqq\summ{i_1+\cdots+i_m=r\\i_1,\dots,i_m>0}(-2\pi i)^{r-m}H(\tau)\binom{i_1,\dots,i_m}{v_{r-i_1+1},v_{r-i_1-i_2+1}-v_{r-i_1+1},\dots,v_1-v_{r-i_1-\cdots-i_{m-1}+1}},
    \end{align}
    where 
    \begin{align}\label{eq:H}
    H(\tau)\binom{n_1,\dots,n_r}{v_1,\dots,v_r}\coloneqq\frac{(-2\pi i)^r}{n_1!\cdots n_r!}\sum_{d_1>\cdots>d_r>0}\prod_{j=1}^re^{-2\pi id_jv_j}\left(\frac{q^{d_j}}{1-q^{d_{j}}}\right)^{n_j}.
\end{align}
\end{dfn}
\begin{dfn}[\cite{BT}]
    The shuffle regularized multiple Eisenstein series $G^\shuffle$ is defined by $G^\shuffle(\tau)\coloneqq\gila(g^\shuffle(\tau),\zeta^\shuffle)$, i.e. the map $G^\shuffle:\ha^1\to\mathcal{O}(\HH)$ is defined by $G^\shuffle(\tau)=g^\shuffle(\tau)\star\zeta^\shuffle$.
\end{dfn}
\begin{prop}[\cite{BT}]
    \begin{enumerate}
        \item For $\bm{k}\in\mathbb{I}^{\ge2}$, we have $G^\shuffle_{\bm{k}}(\tau)=G_{\bm{k}}(\tau)$.
        \item $G^\shuffle(\tau)$ satisfy the shuffle product $\shuffle$.
    \end{enumerate}
\end{prop}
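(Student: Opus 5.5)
The proof splits into the two parts of the statement. Part (ii) is mostly structural. Part (i) reduces to comparing $g^\shuffle(\tau)$ with $g(\tau)$ and tracking which coefficients of $g^\shuffle$ actually enter $\langle \gila(g^\shuffle(\tau),\zeta^\shuffle)\mid\bm k\rangle$ when $\bm k\in\mathbb{I}^{\ge2}$.

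For (ii), I would use \Cref{rem:gila and Goncharov}: $\gila(A,B)=A\star B$ is an algebra homomorphism on $\ha^1_\shuffle$ whenever $A$ and $B$ are. Since $\zeta^\shuffle$ is a shuffle homomorphism by construction, it suffices to show that $\swap(g^\shuffle(\tau))$ is symmetral.
\begin{itemize}
\item Applying $\swap$ to \Cref{dfn:shufflereg} turns the differences $v_{r-i_1-\cdots-i_{j-1}+1}-v_{r-i_1-\cdots-i_j+1}$ into single $u$-variables. Then $\swap(g^\shuffle)$ becomes a sum over compositions $(i_1,\dots,i_m)$ of $r$, in which the block $i_j$ carries the power $\bigl(q^{d_j}/(1-q^{d_j})\bigr)^{i_j}/i_j!$ and a factor $e^{-2\pi i d_j(u\text{-sum of the block})}$.
\item This is an ``exponential'' regrouping of the iterated $q$-series $\sum_{d_1>\cdots>d_r}\prod_j e^{-2\pi i d_j u_j}\,q^{d_j}/(1-q^{d_j})$: consecutive equal $d$'s are merged, with the weight $1/n!$ and the scalar $(-2\pi i)^{r-m}$.
\item I would prove symmetrality by expanding a product of two such series over the relative order of the $d$'s. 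Strict interleavings give the shuffle terms. Coincidences $d=d'$ are absorbed exactly by the blocks with $i_j\ge2$, via $\binom{a+b}{a}/(a+b)!=1/(a!\,b!)$.
\item In mould language, the merging of equal $d$'s into blocks weighted by $1/n!$ is the standard passage from a symmetril mould to a symmetral one, here applied on the swapped side.
\end{itemize}

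For (i), I would compare with \Cref{thm:Fourier exp}, which gives $G_{\bm k}(\tau)=\langle\gila(g(\tau),\zeta^\shuffle)\mid\bm k\rangle$ with the unregularized $g$.
\begin{itemize}
\item The composition $(1,\dots,1)$, i.e.\ $m=r$, in \Cref{dfn:shufflereg} reproduces $g(\tau)$ itself. This follows from $\sum_{k\ge1}g_k\,v^{k-1}\propto\sum_{d}e^{-2\pi i dv}\,q^d/(1-q^d)$, after reindexing the summation variables $c\leftrightarrow d$, which is the partition-conjugation/swap identity, in all depths.
\item Hence $g^\shuffle-g$ consists only of the terms with $m<r$.
\item It then suffices to show that these correction terms contribute zero to the coefficients of $v_1^{k_1-1}\cdots v_r^{k_r-1}$ in $\gila(\,\cdot\,,\zeta^\shuffle)$ whenever all $k_i\ge2$.
\end{itemize}

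The main obstacle is this vanishing. The $a$-arguments $n_{q_1},\dots,n_{q_s}$ in \Cref{lem:gila} may equal $1$ even when $\bm k\in\mathbb{I}^{\ge2}$, so one cannot simply argue that $g^\shuffle$ and $g$ agree on $\mathbb{I}^{\ge2}$. My first attempt would be a generating-function argument.
\begin{itemize}
\item Write the correction as the mould difference $\gila(g^\shuffle-g,\zeta^\shuffle)$.
\item Use the $\gaxit$ form of \Cref{lem:gilat}, together with the fact that $\zeta^\shuffle(\bm a\rfloor)$ and $\zeta^\shuffle_-(\lfloor\bm c)$ are paired as in the antipode relation.
\item Show that the contributions coming from merged blocks cancel modulo terms whose total polynomial degree in some $v_i$ is zero, that is, terms with some $k_i=1$.
\end{itemize}
If this cancellation proves unwieldy, the fallback is to adapt the comparison in \cite{BT}. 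There the shuffle-regularized series and the Goncharov-coproduct expression are matched directly on $\mathbb{I}^{\ge2}$, using the fact that the merged-block terms appear only paired with divergent (depth-lowering) factors $z_1$ in the coproduct.
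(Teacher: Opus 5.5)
Your part (ii) is fine. By \Cref{rem:gila and Goncharov} it reduces to the symmetrality of $\swap(g^\shuffle(\tau))$, and your expansion over blocks of equal $d$'s with weights $1/n!$ proves that. Two small corrections. For $u$-moulds symmetril and symmetral are the same notion, so the $1/n!$ merging is really Hoffman's exponential from a quasi-shuffle with contraction $f_d(u)f_d(u')$. And the $m=r$ term equals $g(\tau)$ because one sums the geometric series in the $c_j$ and passes to partial sums of the $d_j$, not because $c$ and $d$ are exchanged. Your reduction of (i) to the vanishing of $\langle\gila(g^\shuffle(\tau)-g(\tau),\zeta^\shuffle)\mid\bm k\rangle$ for $\bm k\in\mathbb{I}^{\ge2}$ is also correct. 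The paper does not reprove this proposition; it quotes it from \cite{BT}.

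The gap is that this vanishing, which you rightly single out as the crux, is only announced. Neither your first attempt nor the fallback contains an argument; the merged-block terms being ``paired with $z_1$'' is not by itself a reason for them to cancel. The missing observation is as follows.

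In depth $s$, the term of $g^\shuffle$ attached to a composition $(i_1,\dots,i_m)$ of $s$ involves only the variables $x_{p_1},\dots,x_{p_m}$, where $p_j=s-(i_1+\cdots+i_j)+1$. So for $m<s$ it does not depend on some slot $x_j$ with $j\ge2$.

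In $\gilat_{\zeta^\shuffle}(A)(\bm v)$ (\Cref{lem:gilat}) this $A$ is evaluated at $(v_{q_1},\dots,v_{q_s})$. Fix the $t_i$ and the $q_i$ for $i\ne j$. Then only the $j$-th pair of $\zeta^\shuffle$-factors depends on $q_j$, and summing over $q_j\in[t_{j-1}+1,t_j]$ produces the factor
\[
\sum_{q=t_{j-1}+1}^{t_j}\zeta^\shuffle(v_{t_{j-1}+1}-v_q,\dots,v_{q-1}-v_q)\,\zeta^\shuffle_-(v_{q+1}-v_q,\dots,v_{t_j}-v_q).
\]

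\emph{Blocks of length at least two.} For $t_j-t_{j-1}\ge2$ this factor vanishes identically. Rewritten via $\swap(\zeta^\shuffle)$ in the consecutive differences $v_i-v_{i+1}$, it is exactly the shuffle antipode identity $\sum_{uv=w}S(u)\shuffle v=0$. For a block of length $3$ it is the relation $\zeta^\shuffle(X)\zeta^\shuffle(Y)=\zeta^\shuffle(X+Y,X)+\zeta^\shuffle(X+Y,Y)$.

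\emph{Blocks of length one.} For $t_j-t_{j-1}=1$ the variable $v_{q_j}$ occurs nowhere, so the term only contributes to indices with $k_{q_j}=1$.

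Hence every correction term drops out for $\bm k\in\mathbb{I}^{\ge2}$. This is the same antipode cancellation the paper invokes at the end of its proof of \Cref{thm:comparison of regularization}~(i). With this step written out, your proof is complete.
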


\subsection{Harmonic regularization}

In this subsection, we review the construction of the harmonic regularized multiple Eisenstein series $G^\ast$ established in \cite{Ba2} via generating functions. A key ingredient in the study of multiple Eisenstein series is the \emph{multitangent function} introduced by Bouillot \cite{Bo}. Every multiple Eisenstein series can be written via multitangent functions.
\begin{dfn}[\cite{Bo}]
    For $k_1,k_r\ge2$ and $k_2,\dots,k_{r-1}\ge1$, define the multitangent function $\Psi_{k_1,\dots,k_r}(z)$  by
    \begin{align}
        \Psi_{k_1,\dots,k_r}(z)\coloneqq\sum_{n_1>\cdots>n_r}\frac{1}{(z+n_1)^{k_1}\cdots(z+n_r)^{k_r}}.
    \end{align}
    When $r=1$, we refer to $\Psi_k(z)$ as the monotangent function and additionally define $\Psi_1(z)\coloneqq\pi\cot(\pi z)$.
\end{dfn}
The series in the right-hand side converges absolutely and defines a holomorphic function on $\CC\setminus\ZZ$.
\begin{lem}\label{lem:decom G}
    For $\bm{k}=(k_1,\dots,k_r)\in\mathbb{I}^{\ge2}$, we have
    \begin{align}
        G_{k_1,\dots,k_r}(\tau)=\sum_{i=0}^r\MEI_{k_1,\dots,k_i}(\tau)\zeta(k_{i+1},\dots,k_r),
    \end{align}
    where the holomorphic function $\MEI_{\bm{k}}(\tau)$ on $\HH$ is defined for $\bm{k}=(k_1,\dots,k_r)\in\mathbb{I}^{\ge2}$ by
    \begin{align}\label{eq:def MEI}
        \MEI_{\bm{k}}(\tau)&\coloneqq\sum_{\substack{w_1\cdots w_s=\bm{k}\\w_i\ne\emptyset}}\sum_{m_1>\cdots>m_s>0}\Psi_{w_1}(m_1\tau)\cdots\Psi_{w_s}(m_s\tau)\\
        &=\lim_{M\to\infty}\lim_{N\to\infty}\summ{\lambda_1,\dots,\lambda_r\in\HH\cap(\ZZ_M\tau+\ZZ_N)\\\lambda_1\succ\cdots\lambda_r\succ0}\frac{1}{\lambda_1^{k_1}\cdots\lambda_r^{k_r}}.
    \end{align}
\end{lem}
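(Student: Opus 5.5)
The plan is to start from the lattice-sum definition of $G_{k_1,\dots,k_r}(\tau)$ and split the ordered chain $\lambda_1\succ\cdots\succ\lambda_r\succ0$ according to which lattice points lie in the upper half-plane. Every $\lambda=m\tau+n\succ0$ has either $m>0$, so $\lambda\in\HH$, or $m=0$ and $n>0$, so $\lambda$ is a positive integer. By the lexicographic order, in any chain the points with $m>0$ come first, say $\lambda_1,\dots,\lambda_i$, and the remaining $\lambda_{i+1},\dots,\lambda_r$ are positive integers $n_{i+1}>\cdots>n_r>0$. For fixed $M,N$ the truncated sum therefore factors as
\begin{align}
\sum_{i=0}^r\Bigg(\summ{\lambda_1,\dots,\lambda_i\in\HH\cap(\ZZ_M\tau+\ZZ_N)\\\lambda_1\succ\cdots\succ\lambda_i}\frac{1}{\lambda_1^{k_1}\cdots\lambda_i^{k_i}}\Bigg)\Bigg(\summ{N>n_{i+1}>\cdots>n_r>0}\frac{1}{n_{i+1}^{k_{i+1}}\cdots n_r^{k_r}}\Bigg).
\end{align}
The second factor does not depend on $M$, and it converges to $\zeta(k_{i+1},\dots,k_r)$ as $N\to\infty$ because $k_{i+1}\ge2$. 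Taking $\lim_N$ and then $\lim_M$ therefore gives the decomposition, with $\MEI_{k_1,\dots,k_i}(\tau)$ equal to the second expression in \eqref{eq:def MEI}. The one thing to check here is that this limit exists. That will follow from the next step, which identifies it with the first expression.

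To show the two expressions in \eqref{eq:def MEI} agree, I will group the chain $\lambda_1\succ\cdots\succ\lambda_i$ by its distinct values of $m$. Write these values as $m_1>\cdots>m_s>0$, each fixed value $m_j$ being shared by a consecutive block $w_j$ of the index. Inside a block the points are $m_j\tau+n$ with $n$ strictly decreasing. So for fixed $M$, the limit $N\to\infty$ of the inner sum over that block is
\begin{align}
\sum_{n_1>\cdots>n_l}\prod\frac{1}{(m_j\tau+n_p)^{k_p}}=\Psi_{w_j}(m_j\tau),
\end{align}
where the $n$'s run over all integers. This requires absolute convergence of the multitangent series. Since all entries are $\ge2$, it is covered by the convergence statement after the definition of $\Psi$, and $m_j\tau\notin\ZZ$ because $\tau\in\HH$. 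Summing over compositions $w_1\cdots w_s$ of the index and over $M>m_1>\cdots>m_s>0$, then letting $M\to\infty$, gives the first expression, provided the sum over the $m$'s converges.

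The main obstacle is this convergence in $m$, together with justifying the interchange of limits. The tool I would use is the classical bound $\Psi_k(z)=O(e^{-2\pi\Im z})$ for $k\ge2$, which follows from the Lipschitz formula $\Psi_k(z)=\frac{(-2\pi i)^k}{(k-1)!}\sum_{d>0}d^{k-1}e^{2\pi i dz}$. For multitangents with all entries $\ge2$, Bouillot's reduction expresses $\Psi_{w}$ as a finite combination of monotangents $\Psi_k$, $k\ge2$, with multiple-zeta coefficients. The term $\Psi_1$ does not appear here, since the first and last entries of $w$ are $\ge2$ and these reductions only introduce $\Psi_1$ when those conditions fail. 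This gives exponential decay in $m_j\Im\tau$ and hence absolute convergence of $\sum_{m_1>\cdots>m_s>0}$. If this reduction is not available, the fallback is a direct estimate: bound $|\Psi_w(z)|$ by the product of $\sum_n|z+n|^{-k_p}$ over the block, each factor being $O(1)$ uniformly for $\Im z\ge\Im\tau$. Only the first factor is needed for decay, since $\sum_n|z+n|^{-k}\ll(\Im z)^{1-k}$ and $k\ge2$ makes $\sum_m m^{1-k}$ summable at least for $k\ge3$. Because the weak bound is borderline for $k=2$, I would rely on the exponential bound. Finally, dominated convergence justifies replacing the iterated limit $\lim_M\lim_N$ by the absolutely convergent sum, which completes the argument.
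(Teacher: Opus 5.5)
Your argument is correct and is the standard one behind this lemma, which the paper states without proof as a known result: splitting off the lattice points on the positive real axis gives the factorization with truncated multiple zeta values, grouping the upper half-plane points by their $\tau$-coefficient produces the multitangents, and Bouillot's reduction to $\Psi_k$ with $k\ge2$, together with the Lipschitz formula, controls the sum over $m_1>\cdots>m_s>0$. Two minor remarks: since the exponential bound is uniform on $\Im\tau\ge c>0$, it also gives the holomorphy of $\MEI_{\bm{k}}$ asserted in the statement; and your fallback estimate already suffices without Bouillot's reduction if you use all factors of a block of length $l\ge2$, which gives $O\bigl((\Im z)^{-\sum_p(k_p-1)}\bigr)=O\bigl((\Im z)^{-2}\bigr)$, so only the singleton block $w=(2)$ needs the Lipschitz formula.
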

In \cite{Bo}, Bouillot constructed the harmonic regularized multitangent functions $\Psi^\ast_{k_1,\dots,k_r}(z)$ and showed that they can be written as $\mz$-linear combinations of monotangent functions. In this paper, we do not present the precise construction; rather, we introduce the reduction to monotangent functions. Define $v$-moulds $\Psi^\ast(z),T(z),\delta$ as follows. 
\begin{align}
    \Psi^\ast(z)(v_1,\dots,v_r)&\coloneqq\sum_{k_1,\dots,k_r>0}\Psi^\ast_{k_1,\dots,k_r}(z)v_1^{k_1-1}\cdots v_r^{k_r-1},\\
    T(z)(v_1,\dots,v_r)&\coloneqq\begin{cases}\sum_{k>0}\Psi_k(z)v_1^{k-1}&r=1\\0&\otherwise\end{cases},\\
    \delta(v_1,\dots,v_r)&\coloneqq\begin{cases}\frac{(\pi i)^r}{r!}&r:\even\\0&r:\odd\end{cases}.
\end{align}
\begin{thm}[\cite{Bo}]\label{thm:reduction}
    The following factorization of mould holds
    \begin{align}
        \Psi^\ast(z)(\bm{v})=\delta(\bm{v})+\sum_{\bm{v}=\bm{a}\bm{b}\bm{c}}\zeta^\ast(\bm{a}\rfloor)T(z)(\bm{b})\zeta^\ast_-(\lfloor \bm{c}).
    \end{align}
\end{thm}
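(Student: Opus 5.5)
The plan is to prove the factorization first for convergent indices by a partial fraction expansion in $z$, written with generating functions, and then to extend it to all indices via the harmonic regularization. The convergent case should produce the sum over $\bm{v}=\bm{a}\bm{b}\bm{c}$ directly, while the correction $\delta$ should appear only from the divergent depth-one pieces.

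\emph{Convergent case.} Using $\sum_{k\ge1}(z+n)^{-k}v^{k-1}=(z+n-v)^{-1}$, the generating series of $\Psi_{k_1,\dots,k_r}(z)$ is
\[
\sum_{n_1>\cdots>n_r}\prod_{i=1}^r\frac{1}{z+n_i-v_i}.
\]
This is formal in the $v_i$; all $k_1,\dots,k_r$ are encoded at once, and only the coefficients with $k_1,k_r\ge2$ are to be taken literally. Since the $n_i$ are distinct, the product has only simple poles in $z$, and partial fractions give
\[
\prod_{i=1}^r\frac{1}{z+n_i-v_i}=\sum_{j=1}^r\frac{1}{z+n_j-v_j}\prod_{i\neq j}\frac{1}{(n_i-n_j)-(v_i-v_j)}.
\]
Fix $j$ and set $m_i=n_i-n_j$. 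Then:
\begin{itemize}
\item the indices $i<j$ have $m_1>\cdots>m_{j-1}>0$ and give the generating series of $\zeta(k_1,\dots,k_{j-1})$ in the variables $v_i-v_j$, which is exactly $\zeta(\bm{a}\rfloor)$ for $\bm{a}=(v_1,\dots,v_{j-1})$;
\item the indices $i>j$ have $0>m_{j+1}>\cdots>m_r$; after $m\mapsto-m$ and reversing the order, they give $\zeta_-(\lfloor\bm{c})$ with $\bm{c}=(v_{j+1},\dots,v_r)$, where the signs are absorbed by $\mathsf{neg}\circ\anti\circ\pari$;
\item the remaining free sum over $n_j\in\ZZ$ of $(z+n_j-v_j)^{-1}$ is $T(z)(v_j)$.
\end{itemize}
Comparing coefficients yields the formula without $\delta$ for all $k_1,k_r\ge2$. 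The interchange of summations is justified by absolute convergence, provided one first expands in the $k$'s and keeps only the coefficients with $k_1,k_r\ge2$.

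\emph{Regularized case.} I would not rely on a uniqueness argument here: the mould $T$ is concentrated in depth one, so the right-hand side is not obviously symmetril. Instead I would repeat the computation with truncated sums $|n_i|<N$, which define Bouillot's $\Psi^\ast_N$ before regularization. After partial fractions:
\begin{itemize}
\item the $\bm{a}$- and $\bm{c}$-parts become truncated multiple harmonic sums over ranges depending on $n_j$;
\item the middle sum becomes $\sum_{|n_j|<N}(z+n_j-v_j)^{-1}$.
\end{itemize}
Replacing each truncated harmonic sum by its asymptotic expansion in $\log N$, in the form $\zeta^\ast(\,\cdot\,;\log N+\gamma)$ up to $o(1)$, I would show that the $\log N$-dependence cancels between $\bm{a}$ and $\bm{c}$. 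The reason for the expected cancellation is that the shifted ranges $n_i-n_j\in(0,N+n_j)$ and $(n_j-N,0)$ produce $\log(N+n_j)$ and $\log(N-n_j)$ terms, and their symmetric combination over $n_j$ is controlled. Setting $T=0$ in the truncated identity then identifies the constant terms with $\zeta^\ast$ and $\zeta^\ast_-$.

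\emph{Main obstacle: the constant $\delta$.} The difficulty is showing that the leftover boundary contribution is exactly $(\pi i)^r/r!$ for $r$ even and $0$ for $r$ odd. I would isolate it by evaluating at $\bm{k}=(1,\dots,1)$, where everything reduces to $\Psi_1=\pi\cot(\pi z)$ and the symmetric-function identity $\Psi^\ast_{1^r}=$ (elementary symmetric function in the $\Psi$'s). Together with $\cot^2$ relations such as $\Psi_1^2=\Psi_2-\pi^2$, this should produce the generating function $\cos$-type terms $\sum_{r\ \mathrm{even}}(\pi i)^r x^r/r!$. Equivalently, one compares with the known $\Psi^\ast_{1,1}=\tfrac12(\Psi_1^2-\Psi_2)=-\pi^2/2$. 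Finally, I would check that $\delta$ is $v$-independent, so that it contributes only to the all-ones coefficients, which is consistent with the mould $\delta$ being constant.
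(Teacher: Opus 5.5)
\textbf{There is a genuine gap in the regularized case.} Note that the paper gives no proof of this statement; it quotes it from Bouillot, so your argument has to stand on its own.

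\textbf{The convergent case.} This part is essentially right. The partial-fraction identity, the recentring $m_i=n_i-n_j$ and the signs do reproduce $\zeta(\bm a\rfloor)$ and $\zeta_-(\lfloor\bm c)$ with $\zeta_-=\mathsf{neg}\circ\mathsf{anti}\circ\mathsf{pari}(\zeta)$. One correction: the terms whose middle factor is $\Psi_1$ are not absolutely convergent. The rearrangement must therefore be done under the symmetric cutoff $|n_i|<N$. There the boundary error tends to $0$ because the outer multiple zeta values converge.

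\textbf{The $\log N$-dependence does not cancel.} Take $\Psi_{2,1;N}(z)=\sum_{N>n_1>n_2>-N}(z+n_1)^{-2}(z+n_2)^{-1}$. One finds $\Psi_{2,1;N}(z)=-(\log N+\gamma)\Psi_2(z)+o(1)$. What does hold is the following: with $T=\log N+\gamma$, the truncated moulds are asymptotically $\delta_T\times(\cdots)\times\delta_{-T}$, where $\delta_T=(T^r/r!)_r$ as in the paper. So your $\Psi^\ast$ must be defined as the $T$-constant term. You also still owe an argument that this regularization is the $\Psi^\ast$ the paper uses.

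\textbf{The step producing $\delta$ is missing.} This is the step you call the main obstacle. The leftover constants come only from the middle $\Psi_1$-term at $|n_j|\sim N$. There the outer factors depend on $\log(N-n_j)$ and $\log(N+n_j)$ separately. Let $K(\bm v)=\sum_{\bm v=\bm a(v_j)\bm c}\zeta^\ast(\bm a\rfloor)\zeta^\ast_-(\lfloor\bm c)$ be the full mould of $\Psi_1$-coefficients. The leftover constant is then
\[
\int_{-1}^{1}\frac{dx}{x}\Bigl(\delta_{\log(1-x)}\times K\times\delta_{-\log(1+x)}-K\Bigr).
\]
This can be nonzero at any index with $k_1=1$ or $k_r=1$, not only at $(1,\dots,1)$.
\begin{itemize}
\item For example, at $(1,1,k,1)$ it equals $-\frac{\pi^2}{4}\bigl(K(1,1,k)+K(1,k,1)\bigr)$.
\item $K(1,1,k)=0$ is exactly the depth-two sum formula $\sum_{a=2}^{k}\zeta(a,k+1-a)=\zeta(k+1)$.
\item $K(1,k,1)=0$ uses Euler's formula for $\zeta(k,1)$.
\end{itemize}
So the constant is supported on all-ones indices only if $K$ is a constant mould, which is a nontrivial family of MZV identities. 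Your all-ones computation is fine, but it only fixes those coefficients. ``Checking that $\delta$ is $v$-independent'' is the claim itself, not a check.

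\textbf{How the gap can be filled.}
\begin{enumerate}
\item Replace $\zeta^\ast$ by $\zeta^\shuffle$. The analogous mould $K^\shuffle$ is the depth-one unit, by the shuffle antipode relation. This is the same fact the paper uses in proving $G^\shuffle=G^\ast$ on almost-admissible words.
\item The Ihara--Kaneko--Zagier comparison gives $\zeta^\ast=\beta\times\zeta^\shuffle$ with a constant mould $\beta$. Hence $K=\beta\times K^\shuffle\times\beta_-$ is constant, with generating series $t/(\Gamma(1+t)\Gamma(1-t))=\sin(\pi t)/\pi$.
\item Using $\int_{-1}^{1}\bigl(((1-x)/(1+x))^{t}-1\bigr)\frac{dx}{x}=-\pi\tan(\pi t/2)$, the leftover constant becomes $-\sin(\pi t)\tan(\pi t/2)=\cos(\pi t)-1$. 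This is exactly $\delta$ in positive depth.
\end{enumerate}
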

Note that $\Psi_1(z)$ only appears in the reduction of $\Psi^\ast_{\bm{k}}(z)$ when $\bm{k}=\{1\}^{2n+1}$ for some $n\geq0$, and in the case when $\bm{k}=\{1\}^n$, the reduction is given as follows:
\begin{align}\label{eq:reduction ones}
    \Psi^\ast_{\{1\}^{2n}}(z)=\frac{(-1)^n}{(2n)!}\pi^{2n},\qquad\Psi^\ast_{\{1\}^{2n+1}}(z)=\frac{(-1)^n}{(2n+1)!}\pi^{2n}\Psi_1(z).
\end{align}
In \cite{Ba2}, the first author constructed the harmonic regularization for $\MEI(\tau)$ based on the result from Ihara--Kaneko--Zagier \cite{IKZ}. In this paper, we construct the regularization based on the work of \'Ecalle \cite{Ec}. For $M>0$, we define the $v$-moulds $\co{\MEI}_M(\tau),\mathsf{do}{\MEI}_M(\tau)$ over $\mathcal{O}(\HH)$ by
\begin{align}
    \co{\MEI}_M(\tau)(v_1,\dots,v_r)&\coloneqq(-1)^r\sum_{M>m_1\ge\cdots\ge m_r>0}\mu^{m_1,\dots,m_r}\Psi_1(m_1\tau)\cdots\Psi_1(m_r\tau),\\
    \mathsf{do}{\MEI}_M(\tau)(v_1,\dots,v_r)&\coloneqq\sum_{k_1,\dots,k_r\ge1}\MEI^M_{k_1,\dots,k_r}(\tau)v_1^{k_1-1}\cdots v_r^{k_r-1},
\end{align}
where $\mu^{m_1,\dots,m_r}$ and $\MEI^M_{k_1,\dots,k_r}(\tau)$ are defined by
\begin{align}
    \mu^{m_1,\dots,m_r}&=\frac{1}{r_1!\cdots r_s!}\quad((m_1,\dots,m_r)=(\{n_1\}^{r_1},\dots,\{n_s\}^{r_s})),\\
    \MEI^{M}_{k_1,\dots,k_r}(\tau)&=\sum_{\substack{w_1\cdots w_s=(k_1,\dots,k_r)\\w_i\ne\emptyset}}\sum_{M>m_1>\cdots>m_s>0}\Psi^\ast_{w_1}(m_1\tau)\cdots\Psi^\ast_{w_s}(m_s\tau).
\end{align}
We define the $v$-mould $\MEI^\ast(\tau)$ by
\begin{align}
\MEI^\ast(\tau)\coloneqq\lim_{M\to\infty}(\co{\MEI}_M(\tau)\times \mathsf{do}{\MEI}_M(\tau)).
\end{align}
Note that this mould-theoretic construction is based on the work by \'Ecalle \cite{Ec}. In \Cref{sec:appendix}, we show that this construction is essentially the same as the harmonic regularization by Ihara--Kaneko--Zagier \cite{IKZ}. Using \Cref{thm:reduction}, we can decompose the mould $\mathsf{do}{\MEI}_M(\tau)$ as follows.
\begin{lem}\label{lem:doh} We have
    \begin{align}
        \mathsf{do}{\MEI}_M(\tau)(\bm{v})=\sum_{\bm{v}=\underbrace{\bm{a}_1\bm{b}_1\bm{c}_1}_{\eqqcolon\bm{w}_1\ne\emptyset}\cdots\underbrace{\bm{a}_s\bm{b}_s\bm{c}_s}_{\eqqcolon\bm{w}_s\ne\emptyset}}\sum_{M>m_1>\cdots>m_s>0}&\left(\delta(\bm{w}_1)+\zeta^\ast(\bm{a}_1\rfloor)T(m_1\tau)(\bm{b}_1)\zeta^\ast_-(\lfloor \bm{c}_1)\right)\\
        &\cdots\left(\delta(\bm{w}_s)+\zeta^\ast(\bm{a}_s\rfloor)T(m_s\tau)(\bm{b}_s)\zeta^\ast_-(\lfloor\bm{c}_s)\right).
    \end{align}
\end{lem}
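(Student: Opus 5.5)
The identity follows by substituting \Cref{thm:reduction} into the definition of $\MEI^M_{k_1,\dots,k_r}(\tau)$ and passing to generating series; only the bookkeeping of variables needs care. I will first translate the definition of $\mathsf{do}{\MEI}_M(\tau)$ into mould language. Multiply $\MEI^M_{k_1,\dots,k_r}(\tau)$ by $v_1^{k_1-1}\cdots v_r^{k_r-1}$ and sum over all $k_1,\dots,k_r\ge1$. A decomposition $w_1\cdots w_s=(k_1,\dots,k_r)$ of the index into nonempty blocks corresponds to a decomposition $\bm{v}=\bm{w}_1\cdots\bm{w}_s$ of the variable sequence into nonempty consecutive blocks of the same lengths. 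The monomial factorizes accordingly, so
\begin{align}
\mathsf{do}{\MEI}_M(\tau)(\bm{v})=\sum_{\substack{\bm{v}=\bm{w}_1\cdots\bm{w}_s\\ \bm{w}_i\neq\emptyset}}\ \sum_{M>m_1>\cdots>m_s>0}\Psi^\ast(m_1\tau)(\bm{w}_1)\cdots\Psi^\ast(m_s\tau)(\bm{w}_s).
\end{align}
This step only needs that the sums are finite for fixed $M$ and that the generating series in each block are formal power series. The nesting $\sum_{k}\sum_{\text{blocks}}=\sum_{\text{blocks}}\prod_i\sum_{k\in\text{block }i}$ is then the usual factorization of exponential-type generating series.

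Next I apply \Cref{thm:reduction} to each factor with $z=m_i\tau$. Since $m_i\ge1$ and $\tau\in\HH$, we have $m_i\tau\notin\ZZ$, so every monotangent value $\Psi_k(m_i\tau)$ appearing in $T(m_i\tau)$ is well defined. This gives
\begin{align}
\Psi^\ast(m_i\tau)(\bm{w}_i)=\delta(\bm{w}_i)+\sum_{\bm{w}_i=\bm{a}_i\bm{b}_i\bm{c}_i}\zeta^\ast(\bm{a}_i\rfloor)\,T(m_i\tau)(\bm{b}_i)\,\zeta^\ast_-(\lfloor\bm{c}_i).
\end{align}
I must check that the flexion markers $\rfloor$ and $\lfloor$ in \Cref{thm:reduction} are computed relative to the block $\bm{w}_i$ alone, not the full sequence $\bm{v}$. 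This holds because the theorem is an identity of moulds evaluated at the argument $\bm{w}_i$, and the markers only involve the variables of the pieces $\bm{a}_i,\bm{b}_i,\bm{c}_i$ inside that block. Since $T$ vanishes outside depth one, $\bm{b}_i$ is effectively a single variable, but I do not need this.

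Finally, I substitute into the product and re-index the double sum over decompositions $\bm{v}=\bm{w}_1\cdots\bm{w}_s$ together with $\bm{w}_i=\bm{a}_i\bm{b}_i\bm{c}_i$. This is exactly the sum over decompositions $\bm{v}=\bm{a}_1\bm{b}_1\bm{c}_1\cdots\bm{a}_s\bm{b}_s\bm{c}_s$ with each $\bm{w}_i\neq\emptyset$, which is the claimed formula. The only point to watch is this last one: the constraint $\bm{w}_i\ne\emptyset$ must be kept, and the term $\delta(\bm{w}_i)$ must not be split further by the decomposition. The statement is written so that the $\delta$ term and the triple-split term appear together inside each bracket, so the identity follows directly.
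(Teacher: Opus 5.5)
Your proposal is correct and follows the paper's own route; the paper gives no argument beyond ``using \Cref{thm:reduction}''. That is, you rewrite $\mathsf{do}\MEI_M(\tau)$ blockwise as $\sum_{\bm v=\bm w_1\cdots\bm w_s}\sum_{M>m_1>\cdots>m_s>0}\prod_i\Psi^\ast(m_i\tau)(\bm w_i)$ and substitute the reduction formula in each block, with the flexion markers local to the block. Your reading that $\delta(\bm w_i)$ is counted once per block, not once per split $\bm w_i=\bm a_i\bm b_i\bm c_i$, is the correct interpretation of the statement's slightly loose notation. This is harmless for the paper, since the $\delta$ terms are discarded anyway in the proof of \Cref{lem:hast}.
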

\begin{dfn}[\cite{Ba2}]
    We define the mould of the harmonic regularized multiple Eisenstein series by $G^\ast(\tau)\coloneqq \MEI^\ast(\tau)\times\zeta^\ast$, i.e. for $\bm{k}=(k_1,\dots,k_r)\in(\ZZ_{>0})^r$, $G^\ast_{\bm{k}}(\tau)=\langle G^\ast(\tau)\mid\bm{k}\rangle$ is expressed as follows.
    \begin{align}
        G^\ast_{k_1,\dots,k_r}(\tau)=\sum_{i=0}^r\MEI^\ast_{k_1,\dots,k_i}(\tau)\zeta^\ast(k_{i+1},\dots,k_r),
    \end{align}
    where $\MEI^\ast_{\bm{k}}(\tau)\coloneqq\langle \MEI^\ast(\tau)\mid\bm{k}\rangle$.
\end{dfn}
\begin{prop}[\cite{Ba2}]
    \begin{enumerate}
        \item For $\bm{k}\in\mathbb{I}^{\ge2}$, we have $G^\ast_{\bm{k}}(\tau)=G_{\bm{k}}(\tau)$.
        \item $G^\ast(\tau)$ satisfy the harmonic product $\ast$.
    \end{enumerate}
\end{prop}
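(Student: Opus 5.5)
The plan is to treat the two parts separately and to work entirely at the level of $v$-moulds, using that $G^\ast(\tau)=\MEI^\ast(\tau)\times\zeta^\ast$. By \Cref{rem:identification}, part (ii) amounts to the symmetrility of $G^\ast(\tau)$.

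\emph{Part (i).} Fix $\bm{k}\in\mathbb{I}^{\ge2}$. The mould $\co\MEI_M(\tau)$ does not depend on $v$. Hence in the mould product $\co\MEI_M(\tau)\times\mathsf{do}\MEI_M(\tau)$, a factor $\co\MEI_M(\tau)(v_1,\dots,v_i)$ with $i\ge1$ only contributes to coefficients of monomials in which $v_1,\dots,v_i$ occur to the power $0$, i.e.\ $k_1=\dots=k_i=1$. For $\bm{k}\in\mathbb{I}^{\ge2}$ only the depth-$0$ term, equal to $1$, survives, so $\MEI^\ast_{\bm{k}}(\tau)=\lim_{M}\MEI^M_{\bm{k}}(\tau)$.

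Every subword $w_j$ of $\bm{k}$ has all entries $\ge2$. Its multitangent therefore converges absolutely, and I will use that $\Psi^\ast_{w_j}=\Psi_{w_j}$ in this convergent case. Comparing with \eqref{eq:def MEI}, the limit $M\to\infty$ gives $\MEI_{\bm{k}}(\tau)$. The sum over $m_1>\dots>m_s>0$ converges absolutely since $\Psi_{w}(m\tau)=O(e^{-2\pi m\Im\tau})$ up to polynomial factors. Finally, $\zeta^\ast(k_{i+1},\dots,k_r)=\zeta(k_{i+1},\dots,k_r)$ for admissible tails, so \Cref{lem:decom G} yields $G^\ast_{\bm{k}}=G_{\bm{k}}$.

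\emph{Part (ii).} Mould multiplication $\times$ preserves symmetrility (Ecalle), and $\zeta^\ast$ is symmetril. It therefore suffices to show that $\MEI^\ast(\tau)$ is symmetril. I will prove that $\co\MEI_M$ and $\mathsf{do}\MEI_M$ are each symmetril for every $M$, and that the limit exists coefficientwise; symmetrility is a family of polynomial identities among coefficients and so passes to the limit.

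For $\mathsf{do}\MEI_M$ the argument is the standard one. The harmonic regularized multitangents $\Psi^\ast$ form a symmetril mould (Bouillot). The construction "sum over $M>m_1>\dots>m_s>0$ of products over a deconcatenation" turns a symmetril family indexed by $m$ into a symmetril mould, because merging two ordered chains of $m$'s produces exactly the stuffle terms: equal $m$'s correspond to multiplying $\Psi^\ast_{w}\Psi^\ast_{w'}$, which is then expanded by symmetrility of $\Psi^\ast$. For $\co\MEI_M$ I only need to check the all-ones coefficients. Writing $x_m=-\Psi_1(m\tau)$, the coefficient in depth $r$ is $\sum_{m_1\ge\dots\ge m_r}\mu^{\bm m}x_{m_1}\cdots x_{m_r}$. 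This is the depth-$r$ coefficient of the group-like element $\prod_m\exp(x_m z_1)$ for the stuffle coproduct, so it is symmetril.

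The main obstacle is the existence of $\lim_{M\to\infty}$. The term $\Psi_1(m\tau)\to-\pi i$ as $m\to\infty$, so individually the sums over $m$ diverge polynomially in $M$. My approach is to use \Cref{lem:doh} to isolate the only source of divergence: the single-letter $T(m\tau)(\bm b)$ terms with $k=1$, i.e.\ $\Psi_1(m\tau)$. Together with the $\delta$-terms and \eqref{eq:reduction ones}, these divergent pieces should combine with the $\co\MEI_M$ factor into an expression of exponential type.

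Concretely, I will write $\Psi_1(m\tau)=-\pi i+\epsilon_m$ with $\epsilon_m=O(e^{-2\pi m\Im\tau})$. I then expect a telescoping identity showing that, in the product, all terms that are polynomial in $M$ cancel. What remains are sums involving $\epsilon_m$ and multitangents of entries $\ge2$ at the boundary, which converge absolutely. I expect this step to require a careful generating-function bookkeeping of the ones; it is what makes the Ecalle-style regularization agree with the IKZ one.
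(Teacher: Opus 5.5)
Your part (i) is correct and is essentially the paper's argument: the constant mould $\co\MEI_M(\tau)$ only sees all-ones prefixes, as in the first step of the proof of \Cref{lem:hast}. The finite-$M$ part of (ii) is also correct. The symmetrility of $\mathsf{do}\MEI_M(\tau)$ is \Cref{lem:f to F}, and your computation for $\co\MEI_M(\tau)$ is an equivalent form of \Cref{lem:coF}: its depth-$r$ coefficient is just $p^r/r!$ with $p=-\sum_{0<m<M}\Psi_1(m\tau)$.

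The gap is the existence of $\lim_{M\to\infty}$. You correctly identify this as the crux, but you leave it at ``I expect a telescoping identity'' without stating any identity. Without the limit, $\MEI^\ast(\tau)$ is not defined and symmetrility cannot be passed to it. The missing idea, which is \Cref{lem:comp} in the paper, is that no bookkeeping of divergences is needed, because the finite-$M$ homomorphism property already determines everything.

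Put $F^\ast(M)=\co\MEI_M(\tau)\times\mathsf{do}\MEI_M(\tau)$. You have shown that it is a $\ast$-homomorphism. Moreover:
\begin{itemize}
\item $F^\ast_{z_1}(M)=-\sum_{0<m<M}\Psi_1(m\tau)+\sum_{0<m<M}\Psi^\ast_1(m\tau)=0$;
\item $F^\ast(M)$ coincides with $\mathsf{do}\MEI_M(\tau)$ on $\ha^0$.
\end{itemize}
Since $\ha^1_\ast=\ha^0_\ast[z_1]$, write $w=\sum_j w_j\ast z_1^{\ast j}$ with $w_j\in\ha^0$. Then $F^\ast_w(M)=\MEI^M_{w_0}(\tau)$, where $w_0=\mathrm{reg}^0_\ast(w)$ is independent of $M$.

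So it suffices that $\MEI^M_w(\tau)$ converges for $w\in\ha^0$. The first block $w_1$ starts with an entry $\ge2$. By \Cref{thm:reduction} and the remark after it, $\Psi^\ast_{w_1}(z)$ is then a combination of $\Psi_n(z)$ with $n\ge2$ only, hence $O(e^{-2\pi\Im z})$. All later factors $\Psi^\ast_{w_j}(m_j\tau)$ are bounded, so the sum over $m_1>\cdots>m_s>0$ converges absolutely. This yields the limit, and with it the symmetrility of $\MEI^\ast(\tau)$ and of $G^\ast(\tau)=\MEI^\ast(\tau)\times\zeta^\ast$.
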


\subsection{Proof of \Cref{thm:comparison of regularization}}

In this subsection, we give the proof of \Cref{thm:comparison of regularization}. For a positive integer $M>0$, we define the $v$-mould $g^M(\tau)$ by
    \begin{align}
        g^M(\tau)(v_1,\dots,v_r)\coloneqq\sum_{M>m_1>\cdots>m_r>0}T(m_1\tau)(v_1)\cdots T(m_r\tau)(v_r).
    \end{align}
The coefficients $g^M_{\bm{k}}(\tau)\coloneqq\langle g^M(\tau)\mid\bm{k}\rangle$ are given by
    \begin{align}
        g^M_{k_1,\dots,k_r}(\tau)=\sum_{M>m_1>\cdots>m_r>0}\Psi_{k_1}(m_1\tau)\cdots\Psi_{k_r}(m_r\tau).
    \end{align}
for $k_1,\dots,k_r\ge1$.
\begin{lem}[\cite{Ba2}]
    When $k_1\ge2$, the limit $\widetilde{g}_{k_1,\dots,k_r}(\tau)\coloneqq\lim_{M\to\infty}g^M_{k_1,\dots,k_r}(\tau)$ exists and defines a holomorphic function on $\HH$. Moreover, $\widetilde{g}_{\bm{k}}(\tau)$ coincides with $g_{\bm{k}}(\tau)$ when $\bm{k}\in\mathbb{I}^{\ge2}$.
\end{lem}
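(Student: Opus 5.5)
We need to prove the lemma of \cite{Ba2}: for $k_1\ge2$ the limit $\widetilde g_{k_1,\dots,k_r}(\tau)=\lim_{M\to\infty}g^M_{k_1,\dots,k_r}(\tau)$ exists and is holomorphic on $\HH$, and it agrees with $g_{\bm k}(\tau)$ when all $k_i\ge2$. The approach is to expand the monotangent functions into $q$-series and compare term by term.

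The basic tool is the Lipschitz formula. For $k\ge2$ and $\Im(z)>0$,
\begin{align}
\Psi_k(z)=\frac{(-2\pi i)^k}{(k-1)!}\sum_{d>0}d^{k-1}e^{2\pi i dz}.
\end{align}
For $k=1$ we use $\Psi_1(z)=\pi\cot(\pi z)=-\pi i-2\pi i\sum_{d>0}e^{2\pi i dz}$, valid for $\Im z>0$. Set $z=m\tau$ with $m>0$, so that $e^{2\pi i d m\tau}=q^{md}$. Each factor with $k_j\ge2$ is then $O(|q|^{m})$ uniformly on compacta $K\subset\HH$. A factor with $k_j=1$ is $-\pi i+O(|q|^m)$, so it stays bounded.

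Existence and holomorphy of the limit come from a tail estimate:
\begin{align}
\bigl|g^{M'}_{\bm k}-g^M_{\bm k}\bigr|\le\sum_{m_1\ge M}|\Psi_{k_1}(m_1\tau)|\sum_{m_1>m_2>\cdots>m_r>0}\prod_{j\ge2}|\Psi_{k_j}(m_j\tau)|.
\end{align}
\begin{itemize}
\item On $K$ we have $|q|\le\rho<1$. The inner sum is bounded by $C\,m_1^{r-1}$, because each factor is bounded uniformly in $m_j\ge1$.
\item The outer factor satisfies $|\Psi_{k_1}(m_1\tau)|\le C'\rho^{m_1}$, since $k_1\ge2$.
\item The tail $\sum_{m_1\ge M}m_1^{r-1}\rho^{m_1}$ therefore tends to $0$.
\end{itemize}
So the sequence is uniformly Cauchy on compacta, and Weierstrass' theorem gives a holomorphic limit. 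Note that $k_1\ge2$ is used exactly here: it supplies the decaying outermost factor.

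For $\bm k\in\mathbb{I}^{\ge2}$ every factor is given by the Lipschitz formula. Then
\begin{align}
\widetilde g_{\bm k}(\tau)=\frac{(-2\pi i)^{k_1+\cdots+k_r}}{\prod_j(k_j-1)!}\sum_{m_1>\cdots>m_r>0}\ \sum_{d_1,\dots,d_r>0}\prod_j d_j^{k_j-1}q^{m_jd_j}.
\end{align}
Renaming $m_j=c_j$ gives exactly the definition of $g_{\bm k}(\tau)$. The rearrangement of the double sum is justified by absolute convergence: the sum is dominated by $\prod_j\sum_{c,d}d^{k_j-1}|q|^{cd}<\infty$.

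The only mildly delicate point is the uniform bound on the inner sum when some $k_j=1$, i.e.\ the non-decaying $\cot$ factors. Since these are bounded by $\pi+2\pi\rho/(1-\rho)$, they contribute at most polynomial growth $m_1^{r-1}$, which the exponential decay of the first factor absorbs.
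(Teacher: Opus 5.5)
Your proof is correct and takes the standard route. The Lipschitz expansion of $\Psi_k(m\tau)$ for $k\ge2$, together with $\Psi_1(m\tau)=-\pi i-2\pi i\sum_{d>0}q^{md}$, gives the locally uniform tail bound: the outermost factor decays exponentially because $k_1\ge2$, which beats the at most $m_1^{r-1}$ bounded inner terms. Absolute convergence then identifies the limit with $g_{\bm{k}}(\tau)$ for $\bm{k}\in\mathbb{I}^{\ge2}$.

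The paper does not reprove this lemma but quotes it from \cite{Ba2}. Within this paper, the same expansion $\Psi_1(c\tau)=-2\pi i\bigl(\tfrac12+\sum_{d>0}q^{cd}\bigr)$ is exactly what the proof of \Cref{lem:gcomp} uses to identify $\widetilde{g}_{\bm{k}}$.
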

Let $\mathbb{I}^{\ge2,\mathrm{alm}}$ be the set of all indices $\bm{k}=(k_1,\dots,k_r)\in(\ZZ_{>0})^r$ with at most one entry equal to $1$, located at some position $i\in\{2,\dots,r\}$, and all other entries $\ge2$, i.e.
\begin{align}
    \mathbb{I}^{\ge2,\mathrm{alm}}\coloneqq\mathbb{I}^{\ge2}\sqcup\{(k_1,\dots,k_r)\in(\ZZ_{>0})^r\mid r\ge2, k_1,\dots,k_{i-1},k_{i+1},\dots,k_r\ge2,k_i=1\;(2\le i\le r)\}.
\end{align}
\begin{lem}\label{lem:hast}
    For $\bm{k}\in\mathbb{I}^{\ge2,\mathrm{alm}}$, we have
    \begin{align}
        \MEI^\ast_{\bm{k}}(\tau)=\lim_{M\to\infty}\left\langle\gilat_{\zeta^\ast}(g^M(\tau))\mid\bm{k}\right\rangle.
    \end{align}
\end{lem}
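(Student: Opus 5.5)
We compare the two expressions at finite $M$ and then pass to the limit. By definition,
\begin{align}
\MEI^\ast(\tau)=\lim_{M\to\infty}\bigl(\co{\MEI}_M(\tau)\times\mathsf{do}{\MEI}_M(\tau)\bigr),
\end{align}
so we must understand $\langle \co{\MEI}_M(\tau)\times\mathsf{do}{\MEI}_M(\tau)\mid\bm{k}\rangle$ for $\bm{k}\in\mathbb{I}^{\ge2,\mathrm{alm}}$. The mould $\co{\MEI}_M(\tau)$ is supported only on the coefficients of $v_1^0\cdots v_r^0$, that is, on indices $(1,\dots,1)$. Hence in the product $\co{\MEI}_M\times\mathsf{do}{\MEI}_M$, evaluated at $\bm{k}$, only splittings $\bm{k}=(\{1\}^i)\cdot\bm{k}'$ contribute. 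Since $k_1\ge2$ for $\bm{k}\in\mathbb{I}^{\ge2,\mathrm{alm}}$, only $i=0$ survives, and $\co{\MEI}_M(\varnothing)=1$. Therefore
\begin{align}
\MEI^\ast_{\bm{k}}(\tau)=\lim_{M\to\infty}\langle\mathsf{do}{\MEI}_M(\tau)\mid\bm{k}\rangle .
\end{align}

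Next we expand $\mathsf{do}{\MEI}_M$ by \Cref{lem:doh} and compare it with $\gilat_{\zeta^\ast}(g^M(\tau))=\gaxit_{\zeta^\ast,\zeta^\ast_-}(g^M(\tau))$. Writing out $\gaxit$ with $A=g^M$, each block $\bm{b}_j$ is contracted to a single variable via $\lceil\bm{b}_1\rceil\cdots\lceil\bm{b}_s\rceil$. Since $g^M$ is built from $T(m\tau)$, which lives only in depth $1$, we have $\bm{b}_j$ of length one. The term $A(\lceil\bm{b}_1\rceil\cdots)$ then becomes $\sum_{M>m_1>\cdots>m_s>0}\prod_j T(m_j\tau)(b_j)$, because for $v$-moulds the flexion markers $\lceil\,\rceil$ only affect the $u$-variables. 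The factors $\zeta^\ast(\bm{a}_j\rfloor)\zeta^\ast_-(\lfloor\bm{c}_j)$ match those in \Cref{lem:doh} exactly. The only discrepancy is therefore the set of terms in \Cref{lem:doh} in which some block $\bm{w}_j$ contributes the factor $\delta(\bm{w}_j)$ instead of a $T$-term.

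The key step is to show that these $\delta$-terms vanish at $\bm{k}\in\mathbb{I}^{\ge2,\mathrm{alm}}$. The factor $\delta(\bm{w}_j)$ is a constant in $v$, so it only sees coefficients with all entries of $\bm{w}_j$ equal to $1$, and it is nonzero only if $|\bm{w}_j|$ is even and positive, hence $|\bm{w}_j|\ge2$. Such a block would require at least two consecutive entries equal to $1$ in $\bm{k}$, but $\bm{k}$ has at most one entry equal to $1$. So all these terms drop, and for each finite $M$ we get
\begin{align}
\langle\mathsf{do}{\MEI}_M(\tau)\mid\bm{k}\rangle=\langle\gilat_{\zeta^\ast}(g^M(\tau))\mid\bm{k}\rangle .
\end{align}
We also need to check that the constraints in the $\gaxit$ sum ($\bm{b}_i\neq\emptyset$ and $\bm{c}_i\bm{a}_{i+1}\neq\emptyset$) produce the same decompositions as the sum over $\bm{w}_1\cdots\bm{w}_s$ in \Cref{lem:doh}. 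One subtlety: \Cref{lem:doh} requires each $\bm{w}_j\ne\emptyset$, while $\gaxit$ imposes $\bm{c}_i\bm{a}_{i+1}\ne\emptyset$. I expect this to be a notational matching of the same decompositions, possibly with $\bm{b}_j$ nonempty automatically forcing $\bm{w}_j\ne\emptyset$. This bookkeeping is where I expect the main care to be needed.

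Finally we take $M\to\infty$. The left-hand side converges by the definition of $\MEI^\ast$, so the right-hand limit exists and the two agree. If instead one must justify convergence independently, the structure above shows that the first block must start with $k_1\ge2$, so $\Psi_{n}(m_1\tau)$ with $n\ge2$ appears in the leading position; convergence then follows from the lemma of \cite{Ba2} on $\widetilde g$, combined with the finite sums over the $n$'s coming from the binomial coefficients.
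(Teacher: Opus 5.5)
Your reduction to $\langle\mathsf{do}{\MEI}_M(\tau)\mid\bm{k}\rangle$ is correct: the constant mould $\co{\MEI}_M(\tau)$ only pairs with an initial run of $1$'s, which $k_1\ge2$ excludes. Your argument that the $\delta(\bm{w}_j)$-terms of \Cref{lem:doh} vanish (each needs at least two entries of $\bm{k}$ equal to $1$) is also correct, and the final limit argument is fine; these steps coincide with the paper's proof. The gap is in the matching with $\gilat_{\zeta^\ast}(g^M(\tau))$, where two of your claims are false. First, the markers $\lceil\,\cdot\,\rceil$ do not contract a block to one variable: for a $v$-mould, $g^M(\lceil\bm{b}_1\rceil\cdots\lceil\bm{b}_s\rceil)=g^M(\bm{b}_1\cdots\bm{b}_s)$, evaluated on the whole concatenation. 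Second, $g^M(\tau)$ is not concentrated in depth one: $g^M(\tau)(v_1,\dots,v_n)=\sum_{M>m_1>\cdots>m_n>0}\prod_i T(m_i\tau)(v_i)$ is nonzero in every depth. So blocks $\bm{b}_j$ of length $\ge2$ genuinely occur in $\gaxit$. Hence the subtlety you postpone is not a notational coincidence; it is the actual content of this step. After dropping $\delta$, the sum from \Cref{lem:doh} runs over decompositions in which every $\bm{b}_j$ is a single letter but $\bm{c}_j\bm{a}_{j+1}$ may be empty. By contrast, $\gaxit$ forbids empty $\bm{c}_j\bm{a}_{j+1}$ and allows long $\bm{b}_j$. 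Under your reading (single-letter $\bm{b}_j$ together with $\bm{c}_j\bm{a}_{j+1}\neq\varnothing$), already for $\bm{k}=(k_1,k_2)$ with $k_1,k_2\ge2$ the term $g^M_{k_1,k_2}(\tau)$ would be missing. In \Cref{lem:doh} it comes from $\bm{w}_1=\bm{b}_1=(v_1)$, $\bm{w}_2=\bm{b}_2=(v_2)$, whereas in $\gaxit$ it arises only from $s=1$, $\bm{b}_1=(v_1,v_2)$.

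What is needed is a regrouping bijection. In each term of the \Cref{lem:doh} expansion, merge every maximal run of single-letter blocks $\bm{b}_j,\bm{b}_{j+1},\dots,\bm{b}_{j+l}$ that are separated only by empty $\bm{c}\bm{a}$'s into one $\gaxit$-block of length $l+1$. This is a bijection onto the index set of $\gaxit$, and the summands agree, for three reasons. The inner factors are $\zeta^\ast(\varnothing)=\zeta^\ast_-(\varnothing)=1$. For the merged block, $\bm{a}\rfloor$ subtracts its first letter and $\lfloor\bm{c}$ its last letter, which are exactly the letters subtracted by the original outer factors. Finally, the sum over the single chain $M>m_1>\cdots>m_s>0$ of the products of the $T(m_j\tau)$ is exactly $g^M$ evaluated on the concatenation of the merged blocks. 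The paper compresses precisely this into the remark that $T(z)(v_1,\dots,v_r)$ vanishes for $r\ge2$. Once you supply it, your argument is the paper's proof.
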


\begin{proof}
    Since $\co{\MEI}_M(\tau)$ is a constant mould, we have
    \begin{align}
        \langle \MEI^\ast(\tau)\mid\bm{k}\rangle=\lim_{M\to\infty}\langle \mathsf{do}{\MEI}_M(\tau)\mid\bm{k}\rangle
    \end{align}
    for $\bm{k}\in\mathbb{I}^{\ge2,\mathrm{alm}}$. Note that the terms involving the constant mould $\delta$ in \Cref{lem:doh} do not contribute to the coefficient at $\bm{k}$: the mould $\delta$ is supported in even depths $\ge2$ and contributes in each block only to indices whose entries are all equal to $1$, while $\bm{k}\in\mathbb{I}^{\ge2,\mathrm{alm}}$ contains at most one $1$. Therefore, by \Cref{lem:doh}, we have
    \begin{align}
        \langle \mathsf{do}{\MEI}_M(\tau)\mid\bm{k}\rangle=\bigg\langle\summ{\bm{v}=\bm{a}_1\bm{b}_1\bm{c}_1\cdots\bm{a}_s\bm{b}_s\bm{c}_s\\\bm{b}_i\ne\emptyset}\sum_{M>m_1>\cdots>m_s>0}&\zeta^\ast(\bm{a}_1\rfloor)T(m_1\tau)(\bm{b}_1)\zeta^\ast_-(\lfloor \bm{c}_1)\\
        &\cdots\zeta^\ast(\bm{a}_s\rfloor)T(m_s\tau)(\bm{b}_s)\zeta^\ast_-(\lfloor \bm{c}_s)\;\bigg|\;\bm{k}\bigg\rangle.
    \end{align}
    Since $T(z)(v_1,\dots,v_r)$ vanishes if $r\ge2$, the sum inside the right-hand side can be written as follows.
    \begin{align}
        &\summ{\bm{v}=\bm{a}_1\bm{b}_1\bm{c}_1\cdots\bm{a}_s\bm{b}_s\bm{c}_s\\\bm{b}_i,\bm{c}_i\bm{a}_{i+1}\ne\emptyset}g^M(\tau)(\bm{b}_1\cdots \bm{b}_s)\zeta^\ast(\bm{a}_1\rfloor)\cdots\zeta^\ast(\bm{a}_s\rfloor)\zeta^\ast_-(\lfloor\bm{c}_1)\cdots\zeta^\ast_-(\lfloor\bm{c}_s)
        =\left(\gilat_{\zeta^\ast}(g^M(\tau))\right)(\bm{v}).
    \end{align}
    This completes the proof.
\end{proof}
\begin{lem}\label{lem:gcomp}
For $\bm{k}=(k_1,\dots,k_{l-1},1,k_{l+1},\dots,k_r)\in\mathbb{I}^{\ge2,\mathrm{alm}}$ with $2\le l\le r$ and $k_i\ge2$ for all $i\ne l$, we have
    \begin{align}
        g^\shuffle_{\bm{k}}(\tau)=\widetilde{g}_{\bm{k}}(\tau).
    \end{align}
\end{lem}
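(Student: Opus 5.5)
The plan is to compare the $q$-expansions of both sides explicitly at the index $\bm{k}=(k_1,\dots,k_{l-1},1,k_{l+1},\dots,k_r)$ rather than to work at the level of moulds. On the side of $\widetilde{g}_{\bm{k}}(\tau)$ we use the Lipschitz-type expansions
\begin{align}
\Psi_k(m\tau)=\frac{(-2\pi i)^k}{(k-1)!}\sum_{d>0}d^{k-1}q^{md}\quad(k\ge2),\qquad \Psi_1(m\tau)=-\pi i+(-2\pi i)\sum_{d>0}q^{md},
\end{align}
valid for $m>0$ and $\tau\in\HH$. Substituting these into $g^M_{\bm{k}}(\tau)$ splits it into two parts:
\begin{enumerate}[(a)]
\item the part in which the $l$-th factor contributes $(-2\pi i)\sum_d q^{m_ld}$, which as $M\to\infty$ tends to the naive $q$-series $g_{\bm{k}}(\tau)$ (with $d_l^{0}$ at position $l$);
\item the part $-\pi i\sum_{M>m_1>\cdots>m_r>0}\prod_{j\ne l}\Psi_{k_j}(m_j\tau)$.
\end{enumerate}
In (b) the sum over $m_l$ is free and produces the weight $m_{l-1}-m_{l+1}-1$, with $m_{r+1}:=0$ when $l=r$. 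Since $k_1\ge2$ and every $k_j\ge2$ for $j\ne l$, all these series converge absolutely and uniformly on compacta, uniformly in $M$, so the limit $M\to\infty$ can be taken termwise. This gives an explicit formula for $\widetilde{g}_{\bm{k}}$ as $g_{\bm{k}}$ plus $-\pi i$ times a weighted depth-$(r-1)$ $q$-series.

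On the side of $g^\shuffle_{\bm{k}}(\tau)$, the plan is to extract from \Cref{dfn:shufflereg} the coefficient of $v_1^{k_1-1}\cdots v_r^{k_r-1}$.
\begin{itemize}
\item The composition $(1,\dots,1)$ with $m=r$ gives $H(\tau)\binom{1,\dots,1}{\cdots}$. After the change of variables this is exactly the naive series $g_{\bm{k}}(\tau)$, once we expand $e^{-2\pi i d_j(\cdots)}$ in powers of the $v$'s and use $q^d/(1-q^d)=\sum_{c>0}q^{cd}$. This needs checking, because the arguments are the differences $v_{r-i_1-\cdots+1}-v_{\cdots}$. It is presumably how \cite{BT} verified $g^\shuffle=g$ on $\mathbb{I}^{\ge2}$, and we would simply reuse that computation.
\item Every composition containing some part $i_j=2$ (or larger) carries extra factors $(q^d/(1-q^d))^{n}$ with $n\ge2$ together with a factor $(-2\pi i)^{r-m}$. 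Because of the differences of $v$-variables, the coefficient at $\bm{k}$ of such a term can be nonzero only if the merged block sits on the entry $k_l=1$. Concretely, a block of size $2$ has one $v$-variable; expanding in the variables $v_{j}$ and $v_{j}-v_{j'}$ and reading off the exponent $k_l-1=0$ should force the merged pair to be $\{l-1,l\}$ or $\{l,l+1\}$, and forbid merges of size $\ge3$ or more than one merge. We would verify this by a degree count: each merge lowers the number of independent $v$-variables by one and can only be absorbed by a zero exponent.
\end{itemize}

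The key identity is then
\begin{align}
\Bigl(\frac{q^d}{1-q^d}\Bigr)^2=\sum_{c>0}(c-1)q^{cd},
\end{align}
together with the prefactor $(-2\pi i)\cdot\frac{1}{2!}$. The two admissible merges should produce weights of the form $(c_{l-1}-1)$-type and $-(c_{l+1}-\cdots)$-type, and these telescope to $\frac12\bigl(m_{l-1}-m_{l+1}-1\bigr)$. The remaining discrepancy should be absorbed by the $-\pi i=\tfrac12(-2\pi i)$ term. Matching these weighted sums with part (b) above completes the proof, with the case $l=r$ handled identically using $m_{r+1}=0$.

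The main obstacle is the bookkeeping in extracting the coefficient from $H(\tau)$ evaluated at the shifted arguments $v_{r-i_1+1},\,v_{r-i_1-i_2+1}-v_{r-i_1+1},\dots$. In particular one must check carefully that the reversal of order and the differences turn the $\Psi_{k_j}(m_j\tau)$ structure (with $c\leftrightarrow m$, $d$ the divisor variable) into exactly the stated weights, including the signs of the two merge contributions. I would first test the statement in depth $2$, $\bm{k}=(k_1,1)$, where only one merge occurs, and compare with $\widetilde{g}_{k_1,1}=g_{k_1,1}-\pi i\sum_{m}(m-1)\Psi_{k_1}(m\tau)$. I would then do the general case by the same computation localized around position $l$.
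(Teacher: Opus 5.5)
Your framework is the same as the paper's: extract the coefficient from \Cref{dfn:shufflereg}, note that the composition $(1,\dots,1)$ gives $g_{\bm{k}}(\tau)$, use $(q^d/(1-q^d))^2=\sum_{c>0}(c-1)q^{cd}$, and match against the constant term $-\pi i$ of $\Psi_1$. However, the crux step is misidentified.

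In the term indexed by $(i_1,\dots,i_m)$, the variables that occur are $v_{p_j}$ with $p_j=r-i_1-\cdots-i_j+1$. So in each block only the smallest index keeps its variable, and the other indices drop out of the power series entirely. A nonzero coefficient at $\bm{k}$ requires $k_p=1$ for every dropped index $p$, so the only index that may be dropped is $l$. Hence exactly one composition contributes besides $(1,\dots,1)$: a single part $2$ on the block $\{l-1,l\}$, with lower entries $v_r,\dots,v_{l+1}-v_{l+2},v_{l-1}-v_{l+1},v_{l-2}-v_{l-1},\dots,v_1-v_2$. The block $\{l,l+1\}$ removes $v_{l+1}$, whose exponent $k_{l+1}-1$ is positive, so it contributes nothing; for $l=r$ it does not exist at all. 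There are therefore no two merge terms that telescope and no ``remaining discrepancy'' to absorb. If you kept a nonzero contribution from $\{l,l+1\}$, you would double count.

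The single term already carries the full weight. Write the $H$-summation variables $D_1>\cdots>D_{r-1}>0$ via their consecutive differences $e_p$ ($p\ne l$), and write the exponent $\sum_j c'_jD_j$ via partial sums $c_p$ of the $c'_j$. Then the multiplicity $c'-1$ of the squared factor becomes $c_{l-1}-c_{l+1}-1$, with $c_{r+1}=0$ when $l=r$. The factor $(-2\pi i)^{r-m}\cdot\frac{1}{2!}$ (here $r-m=1$) is exactly $-\pi i$. The remaining powers of $-2\pi i$, from $H$ and from expanding the exponentials, supply the normalizations $\frac{(-2\pi i)^{k_j}}{(k_j-1)!}$ of the $\Psi_{k_j}$ for $j\ne l$. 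This gives
\begin{align}
g^\shuffle_{\bm{k}}(\tau)=g_{\bm{k}}(\tau)-\pi i\sum_{c_1>\cdots>c_{l-1}>c_{l+1}>\cdots>c_r>0}(c_{l-1}-c_{l+1}-1)\prod_{j\ne l}\Psi_{k_j}(c_j\tau),
\end{align}
which is exactly your part (b). Your own depth-two test $\bm{k}=(k_1,1)$ already shows this: only one merge is possible there, yet the full weight $m-1$ appears. With the contributing composition corrected, your argument is the paper's proof.
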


\begin{proof}
    By definition of $g^\shuffle$ (\Cref{dfn:shufflereg}), we have
    \begin{align}
        g^\shuffle_{\bm{k}}(\tau)&=g_{\bm{k}}(\tau)+\left\langle-2\pi i\cdot H(\tau)\binom{\overbrace{1,\dots,1}^{r-l},\color{red}2\color{black},\overbrace{1,\dots,1}^{l-2}}{v_r,\dots,v_{l+1}-v_{l+2},\color{red}v_{l-1}-v_{l+1}\color{black},v_{l-2}-v_{l-1},\dots,v_1-v_2}\;\bigg|\;\bm{k}\right\rangle.
    \end{align}
    By \eqref{eq:H}, one can compute
    \begin{align}
        &H(\tau)\binom{\overbrace{1,\dots,1}^{r-l},2,\overbrace{1,\dots,1}^{l-2}}{v_r,\dots,v_{l+1}-v_{l+2},v_{l-1}-v_{l+1},v_{l-2}-v_{l-1},\dots,v_1-v_2}\\
        &=\frac{(-2\pi i)^r}{2}\!\!\!\!\sum_{d_r>\cdots>d_{l-1}>d_{l+1}>\cdots>d_1>0}\left(\frac{q^{d_{l-1}}}{1-q^{d_{l-1}}}\right)^2\prod_{\substack{j=1\\j\ne l-1,l}}^r\frac{q^{d_j}}{1-q^{d_j}}e^{-2\pi i(d_{l+1}-d_{l-1})v_{l+1}}\prod_{\substack{j=1\\j\ne l,l+1}}^{r}e^{-2\pi i(d_j-d_{j-1})v_j}\\
        &=\frac{1}{2}\summ{k_1,\dots,k_{l-1},\\k_{l+1},\dots,k_r>0}\left\{\prod_{\substack{j=1\\j\ne l}}^r\frac{(-2\pi i)^{k_j}}{(k_j-1)!}\summ{c_1>\cdots>c_{l-1}>c_{l+1}>\cdots>c_r>0\\d_1,\dots,d_{l-1},d_{l+1},\dots,d_r>0}
        (c_{l-1}-c_{l+1}-1)\prod_{\substack{j=1\\j\ne l}}^rd_j^{k_j-1}q^{c_jd_j}\right\}v_1^{k_1-1}\cdots v_r^{k_r-1}.
    \end{align}
    Therefore we have
    \begin{align}
        g^\shuffle_{\bm{k}}(\tau)&=g_{\bm{k}}(\tau)+(-2\pi i)\sum_{c_1>\cdots >c_r>0}\Psi_{k_1}(c_1\tau)\cdots\Psi_{k_{l-1}}(c_{l-1}\tau)\cdot\frac{1}{2}\cdot\Psi_{k_{l+1}}(c_{l+1}\tau)\cdots\Psi_{k_r}(c_r\tau)\\
        &=\sum_{c_1>\cdots>c_r>0}\Psi_{k_1}(c_1\tau)\cdots\Psi_{k_{l-1}}(c_{l-1}\tau)\Psi_1(c_l\tau)\Psi_{k_{l+1}}(c_{l+1}\tau)\cdots\Psi_{k_r}(c_r\tau)\\
        &=\widetilde{g}_{\bm{k}}(\tau).
    \end{align}
\end{proof}
\begin{lem}\label{lem:zetacomp}
    For $(1,\bm{k})$ with $\bm{k}$ admissible, we have
    \begin{align}
        \zeta^\shuffle(1,\bm{k})=\zeta^\ast(1,\bm{k}).
    \end{align}
\end{lem}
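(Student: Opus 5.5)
The plan is to use the comparison theorem of Ihara--Kaneko--Zagier \cite{IKZ} between the two regularizations. It states that there is an $\RR$-linear map $\rho:\RR[T]\to\RR[T]$ such that $\zeta^\shuffle(w;T)=\rho\bigl(\zeta^\ast(w;T)\bigr)$ for every $w\in\ha^1$. The map $\rho$ is characterized by
\begin{align}
\rho(e^{Tu})=A(u)e^{Tu},\qquad A(u)=\exp\Bigl(\sum_{n\ge2}\frac{(-1)^n}{n}\zeta(n)u^n\Bigr).
\end{align}
The statement then reduces to two facts: both regularized values $\zeta^\bullet(1,\bm{k};T)$ are polynomials in $T$ of degree at most $1$, and $\rho$ is the identity on such polynomials.

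First, I show the degree bound. Let $w=z_{k_1}\cdots z_{k_r}$ with $k_1\ge2$. From the recursive definition of the harmonic product,
\begin{align}
z_1\ast w=z_1w+\sum_{j=1}^{r}z_{k_1}\cdots z_{k_j}z_1z_{k_{j+1}}\cdots z_{k_r}+\sum_{j=1}^r z_{k_1}\cdots z_{k_j+1}\cdots z_{k_r}.
\end{align}
Every word on the right except $z_1w$ lies in $\ha^0$. Since $\zeta^\ast$ is an algebra homomorphism with $\zeta^\ast(z_1;T)=T$, applying it gives
\begin{align}
\zeta^\ast(1,\bm{k};T)=T\,\zeta(\bm{k})-(\text{a finite sum of admissible }\zeta\text{-values}),
\end{align}
which is of degree $\le1$ in $T$. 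Similarly, $z_1\shuffle w=z_1w+(\text{words in }\ha^0)$, because inserting $y=z_1$ anywhere after the leading $x$ of $w$ keeps the word admissible. With $\zeta^\shuffle(z_1;T)=T$, this shows that $\zeta^\shuffle(1,\bm{k};T)$ is also of degree $\le1$ in $T$.

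Next, I compute $\rho$ in low degree. Expanding $\rho(e^{Tu})=A(u)e^{Tu}$ in powers of $u$ and comparing coefficients of $u^0$ and $u^1$ gives
\begin{align}
\rho(1)=1,\qquad \rho(T)=T.
\end{align}
This holds because $A(u)=1+O(u^2)$. By linearity, $\rho$ fixes every polynomial of degree $\le1$. Therefore
\begin{align}
\zeta^\shuffle(1,\bm{k};T)=\rho\bigl(\zeta^\ast(1,\bm{k};T)\bigr)=\zeta^\ast(1,\bm{k};T),
\end{align}
and setting $T=0$ yields the claim.

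The only point needing care is the bookkeeping that no second word beginning with $z_1$ appears in $z_1\ast w$ or $z_1\shuffle w$. This uses $k_1\ge2$. In the shuffle, any $y$ placed before the first letter of $w$ gives exactly $z_1w$, and every other placement sits after that first $x$. One could alternatively avoid $\rho$: compare $z_1\ast w-z_1\shuffle w=\Ds(z_1,w)$, whose $\zeta$-value vanishes by the extended double shuffle relations. This gives $\zeta^\ast(1,\bm{k};0)=\zeta^\shuffle(1,\bm{k};0)$ directly, since the admissible parts agree under $\zeta$.
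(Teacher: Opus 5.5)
Your proposal is correct and follows essentially the same route as the paper. Both arguments invoke the Ihara--Kaneko--Zagier regularization theorem $\zeta^\shuffle(w;T)=\rho(\zeta^\ast(w;T))$, observe that the regularized polynomials attached to $(1,\bm{k})$ have degree at most $1$ in $T$, and use $\rho(1)=1$, $\rho(T)=T$ before setting $T=0$; the only difference is that you explicitly verify the degree bound by expanding $z_1\ast w$ and $z_1\shuffle w$, which the paper merely asserts.
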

\begin{proof}
    Let $Z^\shuffle_{\bm{k}}(T)$ and $Z^\ast_{\bm{k}}(T)$ be the regularized polynomials of multiple zeta values constructed by Ihara--Kaneko--Zagier in \cite{IKZ}. Note that the degree of $Z^\shuffle_{1,\bm{k}}(T)$ and $Z^\ast_{1,\bm{k}}(T)$ are $1$ for any admissible index $\bm{k}$. In the \emph{regularization theorem};
    \begin{align}
        Z^\shuffle_{\bm{k}}(T)=\rho\left(Z^\ast_{\bm{k}}(T)\right),
    \end{align}
    the map $\rho$ is determined by $\rho(1)=1,\;\rho(T)=T,\dots$. Therefore $Z^\shuffle_{1,\bm{k}}(T)$ coincides with $Z^\ast_{1,\bm{k}}(T)$, and we have
    \begin{align}
        \zeta^\shuffle(1,\bm{k})=Z^\shuffle_{1,\bm{k}}(0)=Z^\ast_{1,\bm{k}}(0)=\zeta^\ast(1,\bm{k})
    \end{align}
    for any admissible index $\bm{k}$.
\end{proof}
\newtheorem*{regcomp}{\Cref{thm:comparison of regularization} (i)}
\begin{regcomp}
    For $\bm{k}=(k_1,\dots,k_r)\in\mathbb{I}^{\ge2,\mathrm{alm}}$, we have
    \begin{align}
        G^\shuffle_{\bm{k}}(\tau)=G^\ast_{\bm{k}}(\tau).
    \end{align}
\end{regcomp}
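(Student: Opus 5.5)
Both sides will be written as sums over the same combinatorial decomposition of $\bm{k}$. We then check that every factor appearing agrees, using \Cref{lem:gcomp} and \Cref{lem:zetacomp}.

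\textbf{Harmonic side.} By definition $G^\ast(\tau)=\MEI^\ast(\tau)\times\zeta^\ast$. Every prefix $(k_1,\dots,k_i)$ of $\bm{k}\in\mathbb{I}^{\ge2,\mathrm{alm}}$ again lies in $\mathbb{I}^{\ge2,\mathrm{alm}}$, so \Cref{lem:hast} applies to each factor $\MEI^\ast_{k_1,\dots,k_i}$. This gives
\begin{align}
G^\ast_{\bm{k}}(\tau)=\lim_{M\to\infty}\big\langle \gilat_{\zeta^\ast}(g^M(\tau))\times\zeta^\ast\mid\bm{k}\big\rangle=\lim_{M\to\infty}\langle\gila(g^M(\tau),\zeta^\ast)\mid\bm{k}\rangle .
\end{align}
Next we expand this with \Cref{lem:gila}. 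The result is a finite sum of products of coefficients $g^M_{n_{q_1},\dots,n_{q_s}}(\tau)$ with coefficients $\zeta^\ast(\cdot)$ of sub-indices, the latter taken either in the order of $\bm{k}$ or in reversed order.

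\textbf{Shuffle side.} By definition $G^\shuffle(\tau)=\gila(g^\shuffle(\tau),\zeta^\shuffle)$. \Cref{lem:gila} expands it with exactly the same index pattern and the same binomial and sign coefficients.

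\textbf{Matching terms.} It remains to compare the two sums term by term. The main work is to check which indices actually occur when $\bm{k}\in\mathbb{I}^{\ge2,\mathrm{alm}}$.

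For the $\zeta$-factors, the blocks are $b(k_{t_s+1},\dots,k_r)$, $b(n_{t_{j-1}+1},\dots,n_{q_j-1})$ and the reversed block $b(n_{t_j},\dots,n_{q_j+1})$. The $n_p$ range over all positive integers subject to $n_p\ge k_p$; this constraint comes from the factor $\binom{n_p-1}{k_p-1}$. So away from the position $l$ of the $1$, every $n_p$ with $p\neq q_j$ is $\ge2$. At position $l$ itself, $n_l$ may equal $1$.
\begin{itemize}
\item If $l$ falls in the tail $(k_{t_s+1},\dots,k_r)$, then $l$ is not the first entry of the tail unless $t_s+1=l$. In that case the index has the form $(1,\text{admissible})$, and \Cref{lem:zetacomp} applies.
\item If $l$ falls in a forward block, $n_l=1$ can be the first entry exactly when $l=t_{j-1}+1$. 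Again this is of the form $(1,\text{admissible})$.
\item In a reversed block $(n_{t_j},\dots,n_{q_j+1})$, a value $1$ can only sit at position $l>q_j$. It is the first entry only if $l=t_j$, which again gives the form $(1,\text{admissible})$.
\end{itemize}
In all other cases the index is admissible, and there $\zeta^\ast=\zeta^\shuffle=\zeta$.

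For the $g$-factor, the index $(n_{q_1},\dots,n_{q_s})$ has $n_{q_1}\ge2$, because $q_1\ge1$ and the position of the $1$ is $\ge2$. Either $q_j\neq l$ for all $j$ with $n_{q_j}\ge2$, or one entry at a position $\ge2$ equals $1$. In the first case the limit is $g_{\bm n}=g^\shuffle_{\bm n}$, by the lemma of \cite{Ba2} quoted above together with the fact that $g^\shuffle$ agrees with $g$ on $\mathbb{I}^{\ge2}$ (only the $m=r$ term survives there). In the second case the limit is $\widetilde g_{\bm n}=g^\shuffle_{\bm n}$ by \Cref{lem:gcomp}.

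\textbf{Main obstacle.} There is a subtle point: $n_{q_j}$ itself is not bounded below by $k_{q_j}$. A value $n_{q_j}=1$ with $q_j\ne l$ might therefore appear, and then the $g$-index would contain a $1$ at a position other than $l$, or two $1$'s. I would handle this by a weight count: block $j$ has total weight $k_{t_{j-1};t_j}$, and each $n_p\ge k_p$ for $p\ne q_j$. Hence $n_{q_j}\ge k_{q_j}\ge1$, with $n_{q_j}=1$ forcing $k_{q_j}=1$, that is $q_j=l$. So the $g$-index is always in $\mathbb{I}^{\ge2,\mathrm{alm}}$ with $n_{q_1}\ge2$.

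Finally, since the sum is finite, the limit $M\to\infty$ can be taken termwise.
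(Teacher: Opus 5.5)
\textbf{Gap: the weight count is reversed, so bad $g$-indices do occur and must cancel.}

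Your handling of the $\zeta$-factors is correct and matches the paper: every $\zeta$-index contains at most one $1$, so $\zeta^\ast$ can be replaced by $\zeta^\shuffle$ using \Cref{lem:zetacomp}. The treatment of the $g$-factors, however, fails.

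In block $j$ you have $\sum_p n_p=\sum_p k_p$ and $n_p\ge k_p$ for $p\ne q_j$. This gives
\[ n_{q_j}=k_{q_j}-\sum_{p\ne q_j}(n_p-k_p)\le k_{q_j}, \]
not $n_{q_j}\ge k_{q_j}$. So whenever a block has length $\ge2$, the value $n_{q_j}=1$ genuinely occurs for any choice of $q_j$. This includes $q_1$, and can happen in several blocks at once. The $g$-index can then start with $1$, where $\lim_M g^M$ does not exist; for example $g^M_1=\sum_{M>m>0}\Psi_1(m\tau)$ diverges because $\Psi_1(m\tau)\to-\pi i$. It can also contain two $1$'s, where \Cref{lem:gcomp} does not apply. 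In either case your term-by-term comparison and your termwise limit break down.

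This already happens for convergent $\bm{k}=(k_1,k_2)$. Take $s=1$ and $t_1=2$. The choices $q_1=1$ and $q_1=2$ each produce a term $g^M_1\,\zeta(k_1+k_2-1)$, with coefficients $(-1)^{k_2}\binom{k_1+k_2-2}{k_2-1}$ and $(-1)^{k_2-1}\binom{k_1+k_2-2}{k_1-1}$. These cancel only against each other (indeed $\mathsf{b}^{k_1+k_2-1}_{k_1,k_2}=0$).

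\textbf{Missing idea: antipode cancellation within a block.} This is the step the paper's proof supplies. Fix all data outside a block $\{t_{j-1}+1,\dots,t_j\}$ of length $\ge2$, and impose $n_{q_j}=1$. The resulting $g$-index does not depend on which position $q_j$ in the block carries the $1$. So you may sum over $q_j$ and over the remaining $n_p$ in the block. The resulting coefficient is
\[ \sum_{q_j}\sum(-1)^{n_{t_{j-1};q_j-1}+k_{q_j}}\prod_{p\ne q_j}\binom{n_p-1}{k_p-1}\zeta^\shuffle(n_{t_{j-1}+1},\dots,n_{q_j-1})\,\zeta^\shuffle(n_{t_j},\dots,n_{q_j+1}), \]
and it vanishes by the shuffle antipode relation for $\zeta^\shuffle$.

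After your replacement of $\zeta^\ast$ by $\zeta^\shuffle$, this same coefficient multiplies $g^M$ on the harmonic side for every finite $M$, and $g^\shuffle$ on the shuffle side. So all such terms drop out on both sides.

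The only $1$ that can survive comes from the singleton block $\{l\}$, where $n_l=k_l=1$. Since $l\ge2$, this is never the first block. The remaining $g$-indices therefore lie in $\mathbb{I}^{\ge2}$, or have a single $1$ in a position $\ge2$. Only at this point can you invoke \Cref{lem:gcomp} and take the limit $M\to\infty$ termwise.
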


\begin{proof}
    By \Cref{lem:zetacomp}, it suffices to show that
    \begin{align}
        \MEI^\ast_{\bm{k}}(\tau)=\left\langle\gilat_{\zeta^\shuffle}(g^\shuffle(\tau))\relmid\bm{k}\right\rangle
    \end{align}
    for $\bm{k}=(k_1,\dots,k_r)\in\mathbb{I}^{\ge2,\mathrm{alm}}$.
    By \Cref{lem:hast} and \Cref{lem:gila}, $\MEI^\ast_{k_1,\dots,k_r}(\tau)$ equals
    \begin{align}\label{eq:hast}
        &\qquad \lim_{M\to\infty}\summ{0=t_1<q_1\le t_2<q_2\le\cdots\le t_s<q_s\le t_{s+1}=r\\(1\le s\le r)}\summ{n_{t_1+1}+\cdots+n_{t_2}=k_{t_1;t_2}\\\vdots\\n_{t_s+1}+\cdots+n_{t_{s+1}}=k_{t_s;t_{s+1}}}g^M_{n_{q_1},\dots,n_{q_s}}(\tau)\\
        &\times\prod_{j=1}^s\bigg\{(-1)^{k_{t_j;t_{j+1}}+k_{q_j}+n_{t_j;q_j-1}}\Bigg(\prod_{\substack{p=t_j+1\\p\ne q_j}}^{t_{j+1}}\binom{n_p-1}{k_p-1}\Bigg)\zeta^\ast(n_{t_j+1},\dots,n_{q_j-1})\zeta^\ast(n_{t_{j+1}},\dots,n_{q_j+1})\bigg\}.
    \end{align}
    Note that all the indices of $\zeta^\ast$ in \eqref{eq:hast} contain at most one $1$ since the binomial coefficients vanish when $n_p<k_p$ for $p\in\{t_j+1,\dots,q_{j-1},q_{j+1},\dots,t_{j+1}\}$. Therefore we can replace $\zeta^\ast$ with $\zeta^\shuffle$ in \eqref{eq:hast}.
    Now, by \Cref{lem:gcomp}, it remains to show that the only term arising for $n_{q_j}=1$ is $\widetilde{g}_{\bm{k}}(\tau)$. We can write the term in \eqref{eq:hast} arising for $n_{q_j}=1$ for some $j$ and $1\le s<r$ as follows.
    \begin{align}
        &\lim_{M\to\infty}\summ{t_2,\dots,t_s\\q_1,\dots,q_{j-1},q_{j+1},\dots,q_s\\n_1,\dots,n_{t_j},n_{t_{j+1}+1},\dots,n_r}(-1)^{-}\prod\binom{-}{-}\zeta^\shuffle(-)\zeta^\shuffle(-)g^M_{n_{q_1},\dots,n_{q_{j-1}},1,n_{q_{j+1}},\dots,n_{q_s}}(\tau)\\
        &\times\left(\sum_{q_j=t_j+1}^{t_{j+1}}\summ{n_{t_j+1},\dots,n_{t_{j+1}}\ge0\\n_{t_j;t_{j+1}}=k_{t_j;t_{j+1}}\\n_{q_j}=1}(-1)^{n_{t_j;q_j-1}+k_{q_j}}\prod_{\substack{p=t_j+1\\p\ne q_j}}^{t_{j+1}}\binom{n_p-1}{k_p-1}\zeta^\shuffle(n_{t_j+1},\dots,n_{q_j-1})\zeta^\shuffle(n_{t_{j+1}},\dots,n_{q_j+1})\right).
    \end{align}
    By the shuffle antipode relations for the shuffle regularized multiple zeta values, we know that these terms vanish.
\end{proof}

\section{Realization of the formal multiple Eisenstein series}\label{sec:bi MES}

In this section, we define the \emph{bi-multiple Eisenstein series}, which is a realization of the formal multiple Eisenstein series developed in \cite{BIM}.

\subsection{Formal multiple Eisenstein series}\label{subsec:formal MES}

In this subsection, we briefly review the notion of formal multiple Eisenstein series following \cite{BIM}. For this let $$\A\coloneqq\left\{\ai{k}{d}\mid k\geq1,d\geq0\right\}$$ and denote by $\widehat{\ha}^1\coloneqq\QA$ the non-commutative algebra generated by the elements in $\A$, which we refer to as letters. We use the following notation to write words in $\hha^1$:
\begin{align}
    \ai{k_1,\dots,k_r}{d_1,\dots,d_r}\coloneqq\ai{k_1}{d_1}\cdots\ai{k_r}{d_r}
\end{align}
for $k_1,\dots,k_r\geq1$ and $d_1,\dots,d_r\geq0$. Define the harmonic product $\ast$ on $\hha^1$ recursively by
\begin{align}
    \ai{k_1}{d_1}u\ast\ai{k_2}{d_2}v=\ai{k_1}{d_1}\left(u\ast\ai{k_2}{d_2}v\right)+\ai{k_2}{d_2}\left(\ai{k_1}{d_1}u\ast v\right)+\ai{k_1+k_2}{d_1+d_2}\left(u\ast v\right)
\end{align}
for any letters $\ai{k_1}{d_1},\ai{k_2}{d_2}$ and words $u,v\in\hha^1$. This product endows $\hha^1$ with the structure of an associative commutative algebra, and we have a natural embedding of $\QQ$-algebras via the map
\begin{align}
    \ha^1_\ast&\longrightarrow\hha^1_\ast\\
    z_{k_1}\cdots z_{k_r}&\longmapsto\ai{k_1,\dots,k_r}{0,\dots,0}.
\end{align}
This setup fits naturally with the notion of bimoulds in the context of generating functions. As seen in \Cref{rem:identification},
for a given commutative $\QQ$-algebra $R$, linear map $M:\hha^1\rightarrow R$ and bimould $M=(M_r)_{r\geq0}$ over $R$ can be identified with each other via the following:
\begin{align}
    M_r\binom{u_1,\dots,u_r}{v_1,\dots,v_r}=\summ{k_1,\dots,k_r\geq1\\d_1,\dots,d_r\geq0}M\left(\ai{k_1,\dots,k_r}{d_1,\dots,d_r}\right)\frac{u_1^{d_1}}{d_1!}\cdots\frac{u_r^{d_r}}{d_r!}v_1^{k_1-1}\cdots v_r^{k_r-1}.
\end{align}
Under this identification, we say a bimould $M$ is \emph{symmetril} if the corresponding linear map $M:\hha^1\rightarrow R$ is algebra homomorphism in terms of the harmonic product $\ast$. If $M$ is symmetril, then in depth $2$ we have
\begin{align}
    M\binom{u_1}{v_1}M\binom{u_2}{v_2}=M\binom{u_1,u_2}{v_1,v_2}+M\binom{u_2,u_1}{v_2,v_1}+\frac{M\binom{u_1+u_2}{v_1}-M\binom{u_1+u_2}{v_2}}{v_1-v_2}.
\end{align}
\begin{rem}\label{rem:symmetril}
    A similar argument to \Cref{rem:gila and Goncharov} also holds for $\hha^1_\ast$. It is known that $\hha^1_\ast$ has the Hopf algebra structure with the deconcatenation coproduct $\Delta_\mathrm{dec}$ (see \cite{Ho}). For $\QQ$-algebra homomorphisms $A,B:\hha^1_\ast\rightarrow R$, the convolution of $A$ and $B$ coincides with the product of bimoulds, i.e. the linear map $A\times B:\hha^1\rightarrow R$ corresponding to the bimould $A\times B\in\BIMU$ is given by $A\times B=\mu\circ(A\otimes B)\circ\Delta_{\mathrm{dec}}$. And therefore, the product of bimoulds $A\times B$ is symmetril if $A$ and $B$ are both symmetril.
\end{rem}
Now we define the \emph{swap operator} $\sigma$ via the generating function. Let $\fA=(\fA_r)_{r\geq0}$ be a bimould over $\hha^1$ defined by
\begin{align}
    \fA_r\binom{u_1,\dots,u_r}{v_1,\dots,v_r}\coloneqq\summ{k_1,\dots,k_r\geq1\\d_1,\dots,d_r\geq0}\ai{k_1,\dots,k_r}{d_1,\dots,d_r}\frac{u_1^{d_1}}{d_1!}\cdots\frac{u_r^{d_r}}{d_r!}v_1^{k_1-1}\cdots v_r^{k_r-1}
\end{align}
for $r\geq1$ and $\fA_0\coloneqq1$. Note that for a given map $\rho:\BIMU(\hha^1)\rightarrow\BIMU(\hha^1)$, we can obtain a map $\rho^\prime:\hha^1\rightarrow\hha^1$ by comparing coefficients, i.e. $\rho^\prime\left(\ai{k_1,\dots,k_r}{d_1,\dots,d_r}\right)$ is defined as the coefficient of $\frac{u_1^{d_1}}{d_1!}\cdots\frac{u_r^{d_r}}{d_r!}v_1^{k_1-1}\cdots v_r^{k_r-1}$ in $\rho(\fA)\binom{u_1,\dots,u_r}{v_1,\dots,v_r}$.
\begin{dfn}\label{def:swap}
    We define the linear map $\sigma:\hha^1\rightarrow\hha^1$ given by $\swap:\BIMU\rightarrow\BIMU$, i.e.
\begin{align}
    \summ{k_1,\dots,k_r\geq1\\d_1,\dots,d_r\geq0}\sigma\left(\ai{k_1,\dots,k_r}{d_1,\dots,d_r}\right)\frac{u_1^{d_1}}{d_1!}\cdots\frac{u_r^{d_r}}{d_r!}v_1^{k_1-1}\cdots v_r^{k_r-1}=\fA_r\binom{v_r,v_{r-1}-v_r,\dots,v_1-v_2}{u_1+\cdots+u_r,u_1+\cdots+u_{r-1},\dots,u_1}.
\end{align}
\end{dfn}
The formal multiple Eisenstein series are objects whose generating series are symmetril and $\swap$-invariant.
\begin{dfn}\label{def:formal MES}
    We define the space of \emph{formal multiple Eisenstein series} by
    \begin{align}
        \bmes^f\coloneqq\quotient{\hha^1_\ast}{\I},
    \end{align}
    where $\I$ is the ideal in $\hha^1_\ast$ generated by $\sigma(w)-w$ for all $w\in\hha^1$. Denote the class of word $\ai{k_1,\dots,k_r}{d_1,\dots,d_r}$ by $G^f\binom{k_1,\dots,k_r}{d_1,\dots,d_r}$. For the class of word in $\ha^1$, we write
    \begin{align}
        G^f(k_1,\dots,k_r)\coloneqq G^f\binom{k_1,\dots,k_r}{0,\dots,0}.
    \end{align}
    We also define the subspaces $\bmes^{f,0}$ and $\mes^f$ by
    \begin{align}
         \bmes^{f,0}&\coloneqq\Span_\QQ\{G^f(k_1,\dots,k_r)\in\bmes^f\mid r\ge0, k_1,\dots,k_r\ge1\},\\
         \mes^f&\coloneqq\Span_\QQ\{G^f(k_1,\dots,k_r)\in\bmes^f\mid r\ge0,k_1,\dots,k_r\ge2\}.
    \end{align}
\end{dfn}
The main result in \cite{BIM} constructs $\sigma$-equivariant derivations $W,D,\delta$ such that $\bmes^f$ has an $\sltwo$-algebra structure with these derivations. The $\QQ$-linear maps $W,D:\hha^1\rightarrow\hha^1$ are defined by
\begin{align}
    W(w)&\coloneqq(k_1+\cdots+k_r+d_1+\cdots+d_r)\ai{k_1,\dots,k_r}{d_1,\dots,d_r},\\
    D(w)&\coloneqq\sum_{j=1}^rk_j\ai{k_1,\dots,k_j+1,\dots,k_r}{d_1,\dots,d_j+1,\dots,d_r}
\end{align}
for a word $w=\ai{k_1,\dots,k_r}{d_1,\dots,d_r}\in\hha^1$. In this paper, we do not give an explicit definition of the map $\delta:\hha^1\rightarrow\hha^1$ (see \cite{BIM}). Instead, we describe the image of $\delta$ restricted to $\ha^1$ later \eqref{eq:delta in H1}.
\begin{thm}[\cite{BIM}]
    $W,D,\delta$ are $\sigma$-equivariant derivations on $\hha^1_\ast$. Moreover, these derivations form an $\sltwo$-triple, and thus $\bmes^f$ is an $\sltwo$-algebra.
\end{thm}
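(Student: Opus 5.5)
The plan is to pass entirely to generating series. Each of $W,D,\delta$ becomes a differential (or multiplicative) operator on the bimould $\fA$, and the three required properties become identities between such operators. Those properties are: being a derivation of $\ast$, commuting with $\sigma$, and satisfying the $\sltwo$ commutator relations. I do not carry out the routine coefficient checks below, and several of the key steps are only checked on depth $1$ or on a single merge term.

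\emph{Translating to operators on $\fA$.} Comparing coefficients of $\frac{u^{d}}{d!}v^{k-1}$ shows that $W$ corresponds to the Euler-type operator $\sum_j(1+u_j\partial_{u_j}+v_j\partial_{v_j})$ acting on $\fA_r$. Similarly $D$ corresponds to $\sum_j\partial_{u_j}\partial_{v_j}$, since the factor $k_j$ is produced by $\partial_{v_j}v_j^{k_j}$. For $\delta$ I would use its definition from \cite{BIM} in the dual form, namely (up to sign and a correction term) multiplication by $\sum_j u_jv_j$, possibly conjugated by $\swap$. On letters this reads $\ai{k}{d}\mapsto d(\text{const})\ai{k-1}{d-1}$, with a correction at $k=1$ coming from the swap.

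\emph{The $\sltwo$ relations.} With these identifications, $[W,D]=2D$ and $[W,\delta]=-2\delta$ are homogeneity statements, because $D$ has bidegree $(-1,-1)$ in $(u,v)$ and $\delta$ has bidegree $(+1,+1)$. The relation $[\delta,D]=W$ reduces, variable by variable, to the Weyl-algebra identity
\[
[\partial_u\partial_v,\,uv]=1+u\partial_u+v\partial_v .
\]
Summed over $j$, this gives $\sum_j(k_j+d_j)$, which is exactly $W$.

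\emph{Derivation property.} For $W$ and $D$ I argue by induction on length using the recursive definition of $\ast$. The concatenation terms are immediate. The merge letter is compatible because weights add, and
\[
(k_1+k_2)\ai{k_1+k_2+1}{d_1+d_2+1}
\]
splits as the $k_1$-shift of the first letter plus the $k_2$-shift of the second.

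\emph{$\sigma$-equivariance.} I would verify equivariance of $D$ and $W$ by rewriting the operators through the swap change of variables
\[
(u_i,v_i)\mapsto\bigl(v_{r+1-i}-v_{r+2-i},\;u_1+\dots+u_{r+1-i}\bigr).
\]
The operator $\sum_j\partial_{u_j}\partial_{v_j}$ is invariant under this change of variables, since the map is a symplectic-type linear change that swaps the two families of coordinates. It follows that $\sigma$-equivariant derivations preserve the ideal $\I$, so all three descend to $\bmes^f$.

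\emph{Main obstacle.} I expect the hardest step to be showing that $\delta$ is a derivation of $\ast$. Multiplication by $u_jv_j$ does not obviously respect the merge term in the symmetrility relation, i.e. the divided-difference term
\[
\frac{M\binom{u_1+u_2}{v_1}-M\binom{u_1+u_2}{v_2}}{v_1-v_2}.
\]
I would first try to define $\delta\coloneqq-\sigma D\sigma$ (up to a suitable sign) and deduce the derivation property from that of $D$. This works if $\sigma$, although not an algebra map on all of $\hha^1_\ast$, intertwines the products modulo terms killed by $D$. If that fails, I would check the Leibniz rule directly on the depth-$2$ symmetrility relation, using $u_1v_1+u_2v_2=(u_1+u_2)v_1+u_2(v_2-v_1)$ to absorb the merge term, and then induct on depth.
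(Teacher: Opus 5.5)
\textbf{Gap: the derivation $\delta$ is never constructed.} Your treatment of $W$ and $D$ is fine. The merge-letter identity gives the Leibniz rule. The swap substitution preserves $\sum_j u_jv_j$, so it commutes with $\sum_j\partial_{u_j}\partial_{v_j}$ and with the Euler operator. $\sigma$-equivariant derivations preserve $\I$, and $[W,D]=2D$ is homogeneity.

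All the content of the cited theorem lies in $\delta$; the paper only quotes it from \cite{BIM} and records its restriction \eqref{eq:delta in H1} to $\ha^1$. You never define $\delta$. Your checks of $[\delta,D]=W$ and $[W,\delta]=-2\delta$ apply only to $\delta_0$, multiplication of $\fA$ by $\sum_j u_jv_j$. Letterwise this is the map $\ai{k}{d}\mapsto d\,\ai{k-1}{d-1}$, which is zero if $k=1$ or $d=0$.

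\emph{$\delta_0$ is not the $\delta$ of the statement.} It kills every word with all $d_j=0$, whereas \eqref{eq:delta in H1} gives $\delta(z_2)=-\tfrac12$ and $\delta(z_1z_1)=\tfrac14$.

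\emph{$\delta_0$ is not a derivation of $\ast$.} So your depth-$2$ fallback cannot work. Put $F_i=\fA\binom{u_1+u_2}{v_i}$. Applying $\delta_0$ to the merge term gives $\frac{(u_1+u_2)(v_1F_1-v_2F_2)}{v_1-v_2}$, while the Leibniz side gives $(u_1v_1+u_2v_2)\frac{F_1-F_2}{v_1-v_2}$. Your identity $u_1v_1+u_2v_2=(u_1+u_2)v_1+u_2(v_2-v_1)$ leaves exactly the residue $u_2F_1+u_1F_2\neq 0$. Concretely, $\ai{1}{1}\ast\ai{1}{0}=\ai{1,1}{1,0}+\ai{1,1}{0,1}+\ai{2}{1}$. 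The two depth-$2$ words are killed by $\delta_0$, so $\delta_0(\ai{1}{1}\ast\ai{1}{0})=\ai{1}{0}$. But $\delta_0(\ai{1}{1})=\delta_0(\ai{1}{0})=0$, so the Leibniz rule fails.

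\emph{The other fallback collapses.} You set $\delta\coloneqq\pm\sigma D\sigma$. You have just argued that $D$ commutes with $\sigma$, and $\sigma^2=\id$ because $\swap$ is an involution. Hence $\pm\sigma D\sigma=\pm D$, which raises the weight by $2$ and satisfies $[\pm D,D]=0\neq W$.

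\emph{What is actually needed.} You need an explicit correction $\epsilon$ such that $\delta=\delta_0+\epsilon$ has the following properties:
\begin{enumerate}[(i)]
\item $\epsilon$ is weight-homogeneous of degree $-2$; by \eqref{eq:delta in H1} and equivariance it must include depth-lowering terms such as $\ai{2}{0}\mapsto-\tfrac12$ and $\ai{1}{1}\mapsto-\tfrac12$.
\item $\delta$ cancels the residue $u_2F_1+u_1F_2$ in every depth.
\item $\delta$ remains $\sigma$-equivariant.
\item $\epsilon$ commutes with $D$, so that your Weyl-algebra computation for $\delta_0$ (with the sign chosen so that $[\delta_0,D]=W$) still gives $[\delta,D]=W$.
\end{enumerate}
Without this construction the proposal establishes only the $W,D$ half of the statement, and the $\sltwo$-algebra structure on $\bmes^f$ does not follow.
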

It is natural to ask whether the subalgebras $\bmes^{f,0}$ and $\mes^f$ inherit the $\sltwo$-algebra structure. In \cite{BIM}, the authors gave a positive answer for the former algebra. In \Cref{subsec:derivative}, we give a positive answer for the latter algebra. The maps $\delta,W$ are closed in $\ha^1$ (in particular in $\bmes^{f,0}$), and the image of $\delta$ is given by
\begin{align}\label{eq:delta in H1}
    \delta(w)&=\left(\frac{1}{2}\mathbf{1}_{w=(yx)u}-\frac{1}{2}\mathbf{1}_{w=(xy)u}+\frac{1}{4}\mathbf{1}_{w=(yy)u}\right)u+\frac{1}{2}\left(\sum_{w=u(yyx)u^\prime}-\sum_{w=u(xyy)u^\prime}\right)uyu^\prime
\end{align}
for a word $w=z_{k_1}\cdots z_{k_r}=x^{k_1-1}y\cdots x^{k_r-1}y\in\ha^1$. In \cite{BIM}, the authors showed that $\bmes^{f,0}$ is closed under $D$.
\begin{thm}[\cite{BIM}]\label{thm:sltwo for gf0}
    $\dsht=D$ in $\bmes^{f,0}$. In particular, $\bmes^{f,0}$ is an $\sltwo$-subalgebra of $\bmes^f$.
\end{thm}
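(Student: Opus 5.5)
The plan is to prove the identity $D(G^f(w))=G^f(\dsht(w))$ for every word $w=z_{k_1}\cdots z_{k_r}\in\ha^1$ by working entirely with generating bimoulds and using the swap invariance built into $\bmes^f$. The ``in particular'' part then follows formally. Indeed, $\dsht$ maps $\ha^1$ into $\ha^1$, so $D$ preserves $\bmes^{f,0}$. The derivations $W$ and $\delta$ are already known to preserve $\ha^1$ (for $\delta$ by \eqref{eq:delta in H1}). Hence $\bmes^{f,0}$ is stable under the whole $\sltwo$-triple $(W,D,\delta)$ of \cite{BIM}, and the commutator relations are inherited from $\bmes^f$.

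\textbf{Step 1: isolate $D$ inside a harmonic product.} Depth-one swap invariance gives $G^f\binom{2}{0}=G^f\binom{1}{1}$. So in $\bmes^f$ we have $G^f(w\ast z_2)=G^f\bigl(\ai{k_1,\dots,k_r}{0,\dots,0}\ast\ai{1}{1}\bigr)$. Expanding the double-indexed harmonic product with $\ai{1}{1}$ produces two kinds of terms:
\begin{itemize}
\item merge terms $\ai{\dots,k_j+1,\dots}{\dots,1,\dots}$;
\item insertion terms, in which a letter $\ai{1}{1}$ is placed in one of the $r+1$ slots.
\end{itemize}
The merge terms look like $D(w)$, except that each carries coefficient $1$ rather than $k_j$. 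So I do not expect $D(w)$ to appear directly. The cleaner route is to pass to generating series. Set
\[
\fA^{0}_r(v_1,\dots,v_r)=\sum_{k_1,\dots,k_r\ge1} G^f(k_1,\dots,k_r)\,v_1^{k_1-1}\cdots v_r^{k_r-1},
\]
and express the generating series of $D(w)$ as $\sum_j \partial_{u_j}\bigl(v_j\,\fA_r\bigr)\big|_{u=0}$. This works because $k_j v_j^{k_j}$ is exactly $\partial_{v_j}\bigl(v_j\cdot v_j^{k_j}\bigr)$-type data, coupled with the $u_j$-derivative that raises $d_j$ by one.

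\textbf{Step 2: swap the $u$-derivatives into $v$-data.} Apply swap invariance $\fA=\swap(\fA)$ in $\bmes^f$. Then derivatives in $u_j$ at $u=0$ become derivatives in the second row of the swapped bimould. Since the swapped second row consists of the partial sums $u_1+\cdots+u_i$, the term $\sum_j\partial_{u_j}$ turns into a first-order operator on $\fA\binom{v_r,\,v_{r-1}-v_r,\dots}{0,\dots,0}$. That operator is a combination of the finite-difference-like shifts that generate exactly the shuffle insertion of $x$ and $y$ letters. In the same way, the insertion terms from Step 1, namely words with one letter $\ai{1}{1}$, are swapped back to words with all $d=0$. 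Using $x^{k-1}y$ notation, I expect them to match the terms of $w\shuffle z_2=w\shuffle xy$ together with a leftover that exactly accounts for the discrepancy between the merge coefficients $1$ and $k_j$.

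\textbf{Step 3: the identity to verify.} Combining Steps 1 and 2, the claim reduces to the following formal identity of power series in $v_1,\dots,v_r$, to be checked coefficientwise:
\[
\text{(generating series of $D(w)$)} = \text{(generating series of $w\ast z_2$)} - \text{(generating series of $w\shuffle z_2$)}.
\]
This is where I expect the main obstacle. It requires computing precisely how $\swap$ transforms a bimould in which one $u$-variable has been differentiated once, and then matching the resulting $v$-polynomial against the explicit shuffle of $x^{k_1-1}y\cdots x^{k_r-1}y$ with $xy$. I would first carry this out in depth $1$ and depth $2$, where $D(z_k)=kG^f\binom{k+1}{1}$ and $\dsht(z_k)=z_k\ast z_2-z_k\shuffle z_2$ are short and can be matched by hand. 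I would then do a general-depth induction via deconcatenation: both sides are compatible with the $\times$-product structure of \Cref{rem:symmetril}, so it should suffice to control the last block.
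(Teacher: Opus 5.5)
There is a genuine gap, and your Step~3 is circular. The identity you propose to check coefficientwise, $D(w)=w\ast z_2-w\shuffle z_2$, is false in $\hha^1$. The left side consists of words $\ai{\dots,k_j+1,\dots}{\dots,1,\dots}$ with one lower entry equal to $1$, while the right side contains only words with all lower entries $0$. The identity holds only modulo $\I$, and that is exactly the theorem. So the proposal never supplies the mechanism that converts $D(w)$ into $d=0$ words.

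Step~2 does not supply it either, and its key claim is false: the insertion terms of $w\ast\ai{1}{1}$ do not swap to words with all lower entries $0$. Already for $w=z_k$ with $k\geq2$ one has $\sigma\bigl(\ai{k,1}{0,1}\bigr)=\frac{1}{(k-1)!}\ai{2,1}{0,k-1}$. Swapping the pieces of $w\ast\ai{1}{1}$ separately only moves the $d$-data around. Also, the generating series of $D(w)$ is $\sum_j\partial_{u_j}\partial_{v_j}\fA_r\big|_{u=0}$, not $\sum_j\partial_{u_j}(v_j\fA_r)\big|_{u=0}$.

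The missing idea is to swap $w$ as well and then swap the whole product. The argument behind the theorem, whose key identity the paper records in the proof of \Cref{lem:commutator}, is the exact identity in $\hha^1$
\[
\sigma\bigl(\sigma(w)\ast\sigma(z_2)\bigr)=w\shuffle z_2+D(w)\qquad(w\in\ha^1),\qquad\sigma(z_2)=\ai{1}{1}.
\]
Since $\I$ is an ideal containing every $\sigma(a)-a$, the left side is $\equiv\sigma(w)\ast\sigma(z_2)\equiv w\ast z_2\bmod\I$, and $D(w)\equiv\dsht(w)$ follows at once.

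The identity comes from the same generating-series computation as in \Cref{lem:shuffle with z_1}, reading off the coefficient of $v^1$ instead of $v^0$:
\begin{itemize}
\item the insertion terms give the generating series of $w\shuffle z_2$;
\item the merge terms, which become $\sum_j(\partial_{u_j}-\partial_{u_{j+1}})\fA_r\binom{\bm{u}}{v_1+v,\dots,v_j+v,v_{j+1},\dots,v_r}\big|_{\bm{u}=0}$, telescope in their $v^1$-coefficient to $\sum_i\partial_{u_i}\partial_{v_i}\fA_r\big|_{\bm{u}=0}$, which is the generating series of $D(w)$.
\end{itemize}

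In depth one you can check this by hand. We have $\sigma(z_k)=\frac{1}{(k-1)!}\ai{1}{k-1}$. The merge term $\frac{1}{(k-1)!}\ai{2}{k}$ swaps to $k\ai{k+1}{1}=D(z_k)$, and the two insertion terms $\frac{1}{(k-1)!}\bigl(\ai{1,1}{k-1,1}+\ai{1,1}{1,k-1}\bigr)$ swap to $z_k\shuffle z_2$.

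No depth induction via deconcatenation is needed. In any case, $w\mapsto w\shuffle z_2$ does not interact with deconcatenation of $z$-words in the simple way you suggest. Your derivation of the ``in particular'' part from the main identity is fine.
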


By \Cref{thm:sltwo for gf0}, the commutator relation $\lbrack\delta,\dsht\rbrack=W$ holds in $\bmes^{f,0}$. This relation actually holds in $\ha^1$.
\begin{lem}\label{lem:commutator}
    For any $w\in\ha^1$, we have $\lbrack\delta,\dsht\rbrack(w)=W(w)$.
\end{lem}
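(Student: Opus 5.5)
The plan is a direct combinatorial verification inside $\ha^1=\QQ+\QQ\langle x,y\rangle y$. All three operators will be written as sums of \emph{local} operations on the word $w=x^{k_1-1}y\cdots x^{k_r-1}y$. Then we check that in $\delta\dsht-\dsht\delta$ everything cancels except one copy of $w$ per letter of $w$. Since the number of letters is $k_1+\cdots+k_r$, this is exactly $W(w)$ (all $d_j=0$).

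We cannot simply deduce the claim from $[\delta,D]=W$ in \cite{BIM} together with \Cref{thm:sltwo for gf0}. Those give the identity only modulo the ideal $\I$, whereas the lemma asserts it in $\ha^1$ itself. So the computation has to be done by hand.

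\textbf{Step 1.} Rewrite $\dsht(w)=w\ast z_2-w\shuffle xy$ as a sum of insertions.
\begin{itemize}
\item The shuffle part inserts the letters $x$ and $y$ into the word, with $x$ occurring before $y$.
\item The harmonic part does two things: it inserts the block $xy$ at a block boundary $z_{k_i}|z_{k_{i+1}}$ (including the two ends), and it replaces $z_{k_i}$ by $z_{k_i+2}$, i.e.\ inserts $xx$ in front of the $i$-th $y$.
\end{itemize}
Many terms cancel immediately. Inserting $xy$ as an adjacent block at a block boundary appears in both parts, so it drops out. What survives is a signed sum of insertions of $x$ and $y$ at positions separated by at least one letter, together with the $z_{k_i+2}$-terms. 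Concretely, these are the terms $x^{a}yx^{b}$-type splittings of a single $z_{k_i}$ and the cross terms between different blocks. I will write this surviving expression explicitly as a sum over pairs of positions.

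\textbf{Step 2.} Write $\delta$ from \eqref{eq:delta in H1} as the sum of two kinds of terms. The \emph{boundary} terms delete a prefix $yx$, $xy$ or $yy$. The \emph{interior} terms contract a factor $yyx\mapsto y$ or $xyy\mapsto y$.

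\textbf{Step 3.} Expand $\delta\dsht(w)-\dsht\delta(w)$. Whenever the contraction performed by $\delta$ and the insertion performed by $\dsht$ involve disjoint sets of letters, the two orders of composition give the same word, and these terms cancel pairwise. Hence only overlapping configurations remain: those where $\delta$ contracts a factor containing a letter that $\dsht$ just inserted. For each letter $\ell$ of $w$, I expect the overlapping terms that insert letters adjacent to $\ell$ and are then contracted back by $\delta$ to telescope to exactly $w$. For example, inserting $x$ right after a $y$ and $y$ right after that $y$, and then contracting $yyx\mapsto y$ or $xyy\mapsto y$, returns the original word. The signs in $\dsht$ are arranged so that the net coefficient is $+1$ per letter. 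Summing over all $k_1+\cdots+k_r$ letters gives $W(w)$.

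\textbf{Main obstacle.} The delicate part is the boundary of the word: the terms of $\delta$ that remove a prefix $yx$, $xy$ or $yy$, with coefficients $\tfrac12,-\tfrac12,\tfrac14$. These interact with insertions at the very beginning of the word, and with the harmonic term that places $z_2$ in front. This is where the fractional coefficients must combine to give integer coefficient $1$ for the first letters.

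I would handle this by checking small cases first, $w=1,z_1,z_2,z_1z_1$, to fix the sign conventions. Then I would run an induction on the length of $w$: write $w=au$ with $a\in\{x,y\}$ and use the recursive definitions of $\shuffle$ and $\ast$. The interior contributions are then handled by the induction hypothesis, and only the first-letter boundary terms need explicit checking. Throughout, the empty word, for which $\delta(1)=\dsht(1)=0$ trivially, and the right end of the word (where no contraction of $\delta$ reaches) serve as sanity checks.
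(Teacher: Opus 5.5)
There is a genuine gap. Everything the lemma asserts sits in your Step~3, and there you only state expectations: ``I expect \dots\ to telescope to exactly $w$'' and ``the signs in $\dsht$ are arranged so that \dots''. Nothing in Steps~1--3 establishes this, and the heuristic does not match how the cancellation actually works. Take $w=yy=z_1z_1$, where $\dsht(yy)=yxxy+xxyy-2\,xyyy-yxyy$.
The two terms coming from $z_{k_i}\mapsto z_{k_i+2}$ are overlapping configurations. However, $\delta(yxxy)=\tfrac12 xy$ and $\delta(xxyy)=-\tfrac12 xy$ are not copies of $w$; they cancel only against each other. The term $yxyy$ gives $\tfrac12 yy-\tfrac12 yy=0$. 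All of $W(yy)=2yy$ comes from $-2\,\delta(xyyy)=2yy$, to which both the prefix term and the interior $xyy$-contraction contribute. So ``one copy of $w$ per letter'' is not something you could turn into a proof without a real bookkeeping scheme.

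Two further points are missing. The disjoint/overlapping dichotomy depends on which letter of a run $x^a$ you declare to be the inserted one, so the pairwise cancellation needs an explicit bijection. The fallback induction on $w=au$, $a\in\{x,y\}$, is also not set up. The product $\ast$ has no recursion in the letters $x,y$. The recursion $au\shuffle xy=a(u\shuffle xy)+x(au\shuffle y)$ produces $au\shuffle y$, about which a hypothesis on $[\delta,\dsht](u)$ says nothing.

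The missing idea is that the route you rejected works exactly. The identity $[\delta,D]=W$ is not a statement modulo $\I$. By the theorem of \cite{BIM} quoted in \Cref{subsec:formal MES}, $W,D,\delta$ are derivations of $\hha^1_\ast$ that form an $\sltwo$-triple on $\hha^1$ itself. Only $D\equiv\dsht$ is a congruence, and its defect is explicit. In $\hha^1$, for $w\in\ha^1$, one has $D(w)-\dsht(w)=\sigma(\sigma(w)\ast\sigma(z_2))-w\ast z_2$, with $\sigma(z_2)=\ai{1}{1}$. Equivalently $\sigma(\sigma(w)\ast\sigma(z_2))=w\shuffle z_2+D(w)$, the $z_2$-analogue of \Cref{lem:shuffle with z_1}.

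Now use that $\delta$ is a $\sigma$-equivariant derivation with $\delta(z_2)=-\tfrac12$:
\begin{itemize}
\item the commutator of $\delta$ with $w\mapsto w\ast z_2$ is $w\mapsto w\ast\delta(z_2)=-\tfrac12 w$;
\item the commutator of $\delta$ with $w\mapsto\sigma(\sigma(w)\ast\sigma(z_2))$ is $w\mapsto\sigma(\sigma(w)\ast\sigma(\delta(z_2)))=-\tfrac12 w$.
\end{itemize}
These cancel in $[\delta,D-\dsht]$, so $[\delta,\dsht]=[\delta,D]=W$ holds exactly on $\ha^1$. This is the paper's argument, and it avoids all word-level bookkeeping.
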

\begin{proof}
    By direct computation, we have
    \begin{align}
        D(w)-\dsht(w)=\sigma(\sigma(w)\ast\sigma(z_2))-w\ast z_2
    \end{align}
    for any $w\in\ha^1$. Thus, we have
    \begin{align}
        \lbrack\delta,\dsht\rbrack(w)&=\lbrack\delta,D\rbrack(w)+\delta(w\ast z_2)-\delta(w)\ast z_2-\sigma\left(\delta(\sigma(w)\ast\sigma(z_2))\right)+\sigma\left(\delta(\sigma(w))\ast\sigma(z_2)\right).
    \end{align}
    Since $\delta$ is a $\sigma$-equivariant derivation for $\hha^1_\ast$, we have
    \begin{align}
        \delta(w\ast z_2)-\delta(w)\ast z_2=-\frac{1}{2}w,\\
        \sigma\left(\delta(\sigma(w)\ast\sigma(z_2))\right)-\sigma\left(\delta(\sigma(w))\ast\sigma(z_2)\right)&=\frac{1}{2}w.
    \end{align}
    Therefore we have $\lbrack\delta,\dsht\rbrack(w)=W(w)$.
\end{proof}
\begin{lem}\label{lem:deltaR}
    For $u,v\in\ha^1$, we have
    \begin{align}
        \delta(R(u,v))=R(\delta(u),v)+R(u,\delta(v)).
    \end{align}
\end{lem}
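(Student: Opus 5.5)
We only use two facts about $\delta$ restricted to $\ha^1$: it is a derivation of $\ha^1_\ast$ (it is the restriction of a derivation of $\hha^1_\ast$ that preserves $\ha^1$), and it satisfies the commutator relation of \Cref{lem:commutator}, $[\delta,\dsht]=W$ on $\ha^1$. Writing $\dsht=\delta^{-1}$-free form $\delta\dsht=\dsht\delta+W$, we expand $\delta(R(u,v))$ term by term and compare with $R(\delta(u),v)+R(u,\delta(v))$.

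First, using the commutator relation and then the derivation property of $\delta$,
\begin{align}
\delta\bigl(\dsht(u\ast v)\bigr)&=\dsht\bigl(\delta(u\ast v)\bigr)+W(u\ast v)\\
&=\dsht\bigl(\delta(u)\ast v\bigr)+\dsht\bigl(u\ast\delta(v)\bigr)+W(u\ast v).
\end{align}
Next, by the derivation property of $\delta$ followed by the commutator relation,
\begin{align}
\delta\bigl(\dsht(u)\ast v\bigr)&=\delta\dsht(u)\ast v+\dsht(u)\ast\delta(v)\\
&=\dsht\delta(u)\ast v+W(u)\ast v+\dsht(u)\ast\delta(v),
\end{align}
and symmetrically
$$\delta\bigl(u\ast\dsht(v)\bigr)=\delta(u)\ast\dsht(v)+u\ast\dsht\delta(v)+u\ast W(v).$$

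Subtracting the second and third expansions from the first gives
$$\delta(R(u,v))=R(\delta(u),v)+R(u,\delta(v))+\bigl(W(u\ast v)-W(u)\ast v-u\ast W(v)\bigr).$$
Here we have regrouped $\dsht(\delta(u)\ast v)-\dsht\delta(u)\ast v-\delta(u)\ast\dsht(v)$ as $R(\delta(u),v)$, and the remaining three terms as $R(u,\delta(v))$.

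The leftover bracket vanishes because $W$ is a derivation of $\ha^1_\ast$. On homogeneous words $W$ multiplies by the weight, and the harmonic product is weight-homogeneous, since merging $z_k$ and $z_l$ into $z_{k+l}$ preserves weight. This completes the argument.

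The only point needing care is the justification that $\delta$ restricted to $\ha^1$ is a derivation for $\ast$ with values in $\ha^1$. This follows from the cited theorem of \cite{BIM} (derivations on $\hha^1_\ast$), together with the embedding $\ha^1_\ast\hookrightarrow\hha^1_\ast$ and the stated fact that $\delta$ preserves $\ha^1$, via \eqref{eq:delta in H1}. The rest is bookkeeping.
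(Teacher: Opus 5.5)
Your proposal is correct and takes essentially the same approach as the paper: you expand $\delta(R(u,v))$ using $[\delta,\varphi]=W$ from \Cref{lem:commutator} and the derivation property of $\delta$ on $\ha^1_\ast$, regroup the terms into $R(\delta(u),v)+R(u,\delta(v))$, and the leftover $W(u\ast v)-W(u)\ast v-u\ast W(v)$ vanishes because $W$ is a derivation. The phrase ``$\varphi=\delta^{-1}$-free form'' in your opening paragraph is garbled and should simply read ``in the form $\delta\varphi=\varphi\delta+W$''.
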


\begin{proof}
    By \Cref{lem:commutator}, we have
    \begin{align}
        \delta(R(u,v))&=\delta(\dsht(u\ast v)-\dsht(u)\ast v-u\ast \dsht(v))\\
        &=W(u\ast v)+\dsht(\delta(u\ast v))-\delta(\dsht(u)\ast v)-\delta(u\ast \dsht(v)).\label{eq:deltaR}
    \end{align}
    Since $\delta$ is a derivation on $\ha^1_\ast$, we have
    \begin{align}
        \dsht(\delta(u\ast v))=\dsht(\delta(u)\ast v)+\dsht(u\ast\delta(v)),\\
        \delta(\dsht(u)\ast v)=\dsht(\delta(u))\ast v+\dsht(u)\ast\delta(v),\\
        \delta(u\ast \dsht(v))=\delta(u)\ast \dsht(v)+u\ast\delta(\dsht(v)).
    \end{align}
    Again, by \Cref{lem:commutator} and \eqref{eq:deltaR}, we have
    \begin{align}
        \delta(R(u,v))&=W(u\ast v)+\dsht(\delta(u)\ast v)+\dsht(u\ast\delta(v))-W(u)\ast v-\dsht(\delta(u))\ast v\\
        &\quad-\dsht(u)\ast\delta(v)-\delta(u)\ast \dsht(v)-u\ast W(v)-u\ast \dsht(\delta(v))\\
        &=W(u\ast v)-W(u)\ast v-u\ast W(v)+R(\delta(u),v)+R(u,\delta(v)).
    \end{align}
    Since $W$ is a derivation on $\ha^1_\ast$, we have the conclusion.
\end{proof}

In \cite{BIM}, the authors conjectured that the space $\mes^f$ is also an $\sltwo$-subalgebra and the question was whether $\mes^f$ is closed under the derivation $D$. In \Cref{sec:der}, we provide an answer to this question. The following lemma is required in order to prove that $\mes^f$ is closed under $D$.
\begin{lem}\label{lem:shuffle with z_1}
    For $\bm{k}=(k_1,\dots,k_r)\in(\ZZ_{>0})^r$, we have
    \begin{align}
    \sigma(\sigma(z_{\bm{k}})\ast\sigma(z_1))=z_{\bm{k}}\shuffle z_1+\ai{k_1,k_2,\dots,k_r}{1,0,\dots,0},
    \end{align}
    where $z_{\bm{k}}=z_{k_1}\cdots z_{k_r}$.
\end{lem}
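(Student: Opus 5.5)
Since $\sigma$ is defined through $\swap$, the cleanest route is to translate both sides into generating series and compare them. For each depth $r$ we form the bimould $\fA_r$ and compute the generating series of $\sigma(\sigma(z_{\bm{k}})\ast\sigma(z_1))$, summed over $\bm{k}$ with weights $v_1^{k_1-1}\cdots v_r^{k_r-1}$; this is the relevant depth-$(r+1)$ part of $\fA$. Restricting to $d$'s equal to $0$ means we set $u_1=\dots=u_r=0$ before swapping. After the swap, $\sigma(z_{\bm{k}})$ has generating series obtained from $\fA_r\binom{0,\dots,0}{v_1,\dots,v_r}$ by the swap substitution, which is a $u$-mould type object: words $\ai{1,\dots,1}{d_1,\dots,d_r}$ with generating function in $u$-variables. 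Likewise $\sigma(z_1)=\ai{1}{0}$.

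The key step is that the harmonic product of words all of whose upper entries are $1$ is a stuffle in which merged letters $\ai{1}{a}\ast\ai{1}{b}$ contribute $\ai{2}{a+b}$. In bimould language, the depth-$(r+1)$ part of $\sigma(\bm{w})\ast\ai{1}{0}$ is a sum over insertion positions of a letter with $u=0$, plus contraction terms in depth $r$ where one $\binom{u_i}{v}$ gets $k=2$, i.e. a first-order $v$-derivative-type term. Concretely, I would write the generating series as $\sum_{j}\fA_{r+1}(\dots,\binom{0}{\cdot},\dots)$ plus $\sum_i(\text{coefficient of }v^1\text{ at slot }i)$.

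Then I would apply $\swap$ again and check each piece. The insertion terms with inserted $u$-variable $0$ should, after swap, correspond to inserting a $y$ in all possible positions in the $v$-side. Because the swap differences $v_{i}-v_{i+1}$ telescope, this gives exactly the shuffle $z_{\bm{k}}\shuffle z_1$ in its $x,y$ form: inserting $y$ between consecutive $x$'s amounts to splitting a $v$-power, with the generating function $\frac{f(v_i)-f(v_{i+1})}{v_i-v_{i+1}}$ type identity. The contraction terms, where a single $k$ becomes $2$ in depth $r$, should swap to a term in which a $u$ exponent becomes $1$. By the flexion, the sum $u_1+\dots$ sits in the first $v$-slot after swap, so only $d_1=1$ survives. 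This gives the extra $\ai{k_1,\dots,k_r}{1,0,\dots,0}$.

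The main obstacle is keeping track of the swap substitution on the insertion terms, in particular matching the boundary insertions (the first and last positions) with the shuffle terms where $y$ is appended at the end or where $z_1$ is prefixed. I would first verify depth $r=1$ explicitly, $\sigma(\sigma(z_k)\ast\sigma(z_1))=z_k\shuffle z_1+\ai{k}{1}$, using the depth-$2$ symmetrility formula displayed earlier. I would then do the general case by induction on $r$ using the recursive definition of $\ast$.
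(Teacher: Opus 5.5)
Your strategy is the paper's: expand $\sigma(z_{\bm k})$ and $\sigma(z_1)$ through $\fA$, split the harmonic product into insertion and contraction terms, apply $\swap$ again, and compare coefficients. The one procedural difference is harmless. The paper multiplies the full series $\sigma(\fA_1\binom{u}{v})\ast\sigma(\fA_r\binom{u_1,\dots,u_r}{v_1,\dots,v_r})$, so its contraction terms are divided differences in the $u$'s, which become derivatives after setting $u_1=\dots=u_r=0$ and letting $u\to0$. Specializing first, as you do, gives first derivatives directly.

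The gap is in the step that should produce $\ai{k_1,\dots,k_r}{1,0,\dots,0}$. Write the intermediate series as $\fA_r\binom{a_1,\dots,a_r}{b_1,\dots,b_r}\big|_{b=0}$ with $(a_1,\dots,a_r)=(v_r,v_{r-1}-v_r,\dots,v_1-v_2)$. Contracting $\ai{1}{0}$ into slot $i$ replaces this by $\partial_{b_i}\fA_r\binom{a}{b}\big|_{b=0}$. Applying $\sigma$ turns $\fA_r\binom{a}{b}$ into $\fA_r\binom{b_r,b_{r-1}-b_r,\dots,b_1-b_2}{a_1+\cdots+a_r,\dots,a_1}$, so the $b$'s land in the \emph{upper} row as differences. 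Hence this single contraction becomes $(\partial_{u_{r-i+1}}-\partial_{u_{r-i+2}})\fA_r\binom{u_1,\dots,u_r}{v_1,\dots,v_r}\big|_{u=0}$, with $\partial_{u_{r+1}}\coloneqq0$. It carries $d=1$ in two adjacent slots with opposite signs.

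So it is false that ``only $d_1=1$ survives'' term by term. For $r=2$, the contraction into slot $1$ alone gives $\ai{k_1,k_2}{0,1}$. The position of $u_1+\cdots+u_r$ plays no role here: the partial sums sit in the lower row and only govern the $k$-indices. What actually happens is that the sum over $i$ telescopes to $\partial_{u_1}$. Equivalently, $\sum_i\partial_{b_i}$ shifts all $b_i$ together and so moves only the first upper entry $b_r$. This telescoping is also what shows the coefficient of $\ai{k_1,\dots,k_r}{1,0,\dots,0}$ is exactly $1$, which your sketch never pins down.

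Two smaller corrections. First, the insertion terms need no divided-difference identity. Inserting $\ai{1}{0}$, whose upper variable is $0$, makes two consecutive partial sums coincide. After the second $\swap$ you get $\fA_{r+1}\binom{0,\dots,0}{v_1,\dots,v_m,v_m,\dots,v_r}$ for $1\le m\le r$, together with $\fA_{r+1}\binom{0,\dots,0}{v_1,\dots,v_r,0}$. These are literally the generating series of $z_{\bm k}\shuffle z_1$: splitting the block $x^{k_m-1}y$ as $x^a\,y\,x^b\,y$, respectively appending $y$.

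Second, the induction on $r$ in your last sentence is unnecessary, since the direct computation is uniform in $r$. It would also be awkward, because $\swap$ does not interact simply with prepending a letter: the new first letter's upper variable enters every lower entry.
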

\begin{proof}
     First, we can extend the harmonic product on $\hha^1$ to $\varinjlim_{r\geq0}\hha^1\llbracket u_1,v_1,\dots,u_r,v_r\rrbracket$, and the swap operator $\sigma$ can also be extended to $\varinjlim_{r\geq0}\hha^1\llbracket u_1,v_1,\dots,u_r,v_r\rrbracket$. By direct computation, we have
     \begin{align}
         \sigma\left(\fA_1\binom{u}{v}\right)\ast\sigma\left(\fA_r\binom{u_1,\dots,u_r}{v_1,\dots,v_r}\right)
         &=\sum_{j=1}^{r+1}\fA_{r+1}\binom{v_r,\dots,v_{j}-v_{j+1},v,v_{j-1}-v_{j},\dots,v_1-v_2}{u_{1;r},\dots,u_{1;j},u,u_{1;j-1},\dots,u_1}\\
         &+\sum_{j=1}^r\frac{1}{u-u_{1;j}}\bigg(\fA_r\binom{v_r,\dots,v_j-v_{j+1}+v,\dots,v_1-v_2}{u_{1;r},\dots,u,\dots,u_1}\\
         &-\fA_r\binom{v_r,\dots,v_j-v_{j+1}+v,\dots,v_1-v_2}{u_{1;r},\dots,u_{1;j},\dots,u_1}\bigg),
     \end{align}
     where $u_{i;j}=u_i+\cdots+u_j$ for $i\leq j$. By applying $\sigma$, substituting $u_1=\cdots=u_r=0$ and then taking $u\to0$, we have
     \begin{align}
         \sigma\left(\sigma\left(\fA_1\binom{0}{v}\right)\ast\sigma\left(\fA_r\binom{0,\dots,0}{v_1,\dots,v_r}\right)\right)=\sum_{j=1}^{r+1}\fA_{r+1}\binom{0,\dots,0}{v_1+v,\dots,v_j+v,v_j,\dots,v_r}\\
         +\sum_{j=1}^r\left(\frac{\partial}{\partial u_j}-\frac{\partial}{\partial u_{j+1}}\right)\fA_r\binom{u_1,\dots,u_r}{v_1+v,\dots,v_j+v,v_{j+1},\dots,v_r}\bigg|_{u_1=\cdots=u_r=0}.
     \end{align}
     By comparing the coefficients of $v_1^{k_1-1}\cdots v_r^{k_r-1}$ in both sides, we have the conclusion.
\end{proof}

\subsection{Bi-multiple Eisenstein series}

In this subsection, we construct the bi-multiple Eisenstein series as the image of the $\sigma$-invariant algebra homomorphism $G:\hha^1_\ast\rightarrow\mathcal{O}(\HH)$. The key idea of the construction is based on \cite{BB}, where \emph{combinatorial (bi-)multiple Eisenstein series} were introduced. The combinatorial multiple Eisenstein series are defined as the image of a realization map from $\bmes^f$ to $\QQ\llbracket q\rrbracket$, and the construction of such a realization map is non-canonical; more precisely, the realization depends on a choice of a rational solution of the extended double shuffle equations (see \cite{BB}). However, the construction of the bi-multiple Eisenstein series is canonical.

First, we define the bi-multiple zeta values, introduced in \cite{BI}. Define the $u$-mould $\zeta^\natural$ and the constant mould $\delta_T$ ($T$ is an indeterminate) by $\zeta^\natural\coloneqq\swap(\zeta^\shuffle),\delta_T\coloneqq\left(\frac{T^r}{r!}\right)_{r\geq0}$. Specifically, we have
\begin{align}
    \zeta^\natural(u_1,\dots,u_r)&=\zeta^\shuffle(u_1+\cdots+u_r,\dots,u_1)\\
    &=\sum_{k_1,\dots,k_r\geq1}\zeta^\shuffle(k_1,\dots,k_r)(u_1+\cdots+u_r)^{k_1-1}\cdots u_1^{k_r-1}.
\end{align}
Note that these moulds are symmetril. Indeed, it is known that the $u$-mould $\zeta^\natural$ is symmetral (cf. \cite{IKZ}), and the symmetrality is equivalent to the symmetrility in the case of $u$-moulds (cf. \cite{BB}).
\begin{dfn}[\cite{BI}]
    We define the \emph{bi-multiple zeta values} $\bMZV:\hha^1\rightarrow\mz$ by $\bMZV\coloneqq\zeta^\natural\times\zeta^\ast$, i.e.
    \begin{align}
        \bMZV\binom{u_1,\dots,u_r}{v_1,\dots,v_r}=\sum_{j=0}^r\zeta^\natural(u_1,\dots,u_j)\zeta^\ast(v_{j+1},\dots,v_r).
    \end{align}
\end{dfn}
Note that, by \Cref{rem:symmetril}, bi-multiple zeta values satisfy the harmonic product, i.e. $\bMZV:\hha^1_\ast\to\mz$ is an algebra homomorphism (or equivalently, $\bMZV$ is symmetril). We also define the symmetril bimould $\widetilde{\bMZV}$ (or equivalently an algebra homomorphism $\widetilde{\bMZV}:\hha^1_\ast\rightarrow\mz\lbrack\pi i\rbrack$) by $\widetilde{\bMZV}\coloneqq\nega(\zeta^\natural)\times\pari(\delta_{\pi i})\times\zeta^\ast$.
\begin{lem}
    We have $\swap(\widetilde{\bMZV})=\nega(\widetilde{\bMZV})$.
\end{lem}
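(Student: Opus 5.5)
The plan is to treat both sides as bimoulds and reduce the identity, via the regularization theorem, to a statement about one-variable generating series. There the factor $\pari(\delta_{\pi i})$ will be exactly what repairs the failure of $\swap$ to be anti-multiplicative.

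\textbf{Step 1: how $\swap$ acts on a product $A\times C\times B$.} Here $A$ is a $u$-mould, $C$ a constant mould and $B$ a $v$-mould. Writing out
\[
\swap(\nega(\zeta^\natural)\times\pari(\delta_{\pi i})\times\zeta^\ast)\binom{u_1,\dots,u_r}{v_1,\dots,v_r},
\]
a term $A(u_1,\dots,u_i)\,C_{j-i}\,B(v_{j+1},\dots,v_r)$ becomes
\[
A\bigl(v_r,v_{r-1}-v_r,\dots,v_{r-i+1}-v_{r-i+2}\bigr)\,C_{j-i}\,B\bigl(u_{1;r-j},\dots,u_1\bigr).
\]
The $A$-factor is exactly $\swap(A)(v_{r-i+1},\dots,v_r)=\nega(\zeta^\shuffle)(v_{r-i+1},\dots,v_r)$, since $\swap$ is an involution on $\zeta^\natural=\swap(\zeta^\shuffle)$. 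The $B$-factor, however, is not $\swap(B)$ of a block, because the arguments are partial sums $u_{1;\ast}$ that start at $u_1$ rather than at the start of the block. So I would not try to prove a general anti-multiplicativity of $\swap$. Instead I would compare both sides coefficientwise: on the right-hand side, $\nega(\widetilde{\bMZV})$ is $\nega\nega(\zeta^\natural)\times\pari(\delta_{\pi i})\times\nega(\zeta^\ast)$, with a $u$-part $\zeta^\natural(u_1,\dots,u_i)=\zeta^\shuffle(u_{1;i},\dots,u_1)$.

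\textbf{Step 2: put both sides into the same shape.} After Step 1 the left-hand side has the form ($v$-mould built from $\zeta^\shuffle$) $\times$ constant $\times$ ($u$-mould built from $\zeta^\ast$), while the right-hand side is ($u$-mould built from $\zeta^\shuffle$) $\times$ constant $\times$ ($v$-mould built from $\zeta^\ast$). To compare them I would use the Ihara--Kaneko--Zagier regularization theorem (the same input as in \Cref{lem:zetacomp}) in mould form:
\[
\zeta^\ast=\Lambda\times\zeta^\shuffle\quad\text{(up to the appropriate order and sign conventions)},
\]
where $\Lambda$ is the constant mould with generating series
\[
\sum_n\Lambda_r x^r=\exp\Bigl(\sum_{n\ge2}\tfrac{(-1)^{n-1}}{n}\zeta(n)x^n\Bigr)=\Gamma(1+x)e^{\gamma x}\text{-type}.
\]
With this, every occurrence of $\zeta^\ast$ becomes $\zeta^\shuffle$ dressed by constant moulds. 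The comparison then separates into two parts:
\begin{enumerate}[(a)]
\item an identity for $\zeta^\shuffle$ under $\swap$, which is its symmetrality together with $\zeta^\natural=\swap(\zeta^\shuffle)$ and the antipode relations already used in the proof of \Cref{thm:comparison of regularization};
\item an identity among constant moulds.
\end{enumerate}

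\textbf{Step 3: the constant identity (main obstacle).} What remains should be
\[
\Lambda(x)\,e^{\pi i x}\cdot\Lambda(-x)^{-1}\ \text{or similar}\ =\ \text{a ratio matching the other side}.
\]
This should reduce to Euler's reflection formula
\[
\Gamma(1+x)\Gamma(1-x)=\frac{\pi x}{\sin\pi x},
\]
with $e^{\pm\pi i x}$ accounting for the half-period (this is where $\pari(\delta_{\pi i})$, i.e.\ $e^{-\pi i x}$ in depth-generating form, enters). The bookkeeping of signs from $\nega$ and $\pari$, and of the partial-sum arguments $u_{1;\ast}$, is where I expect the real work. The difficulty is the translation mismatch noted in Step 1: a shift of all $v$-arguments of a $v$-mould of the form $\zeta^\shuffle$ must be absorbed using the fact that $\swap(\zeta^\shuffle)$ is a $u$-mould.

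My first attempt would be to check depth $1$ and $2$ explicitly to fix the conventions. I would then prove the general case by induction on depth using symmetrility, since all three factors are symmetril and both sides of the claimed identity are symmetril bimoulds determined by a consistent recursion.
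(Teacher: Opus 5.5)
Your Step~1 computation is correct, but you misread it, and that sends the rest of the plan astray. The factor $B(u_{1;r-j},\dots,u_1)$ \emph{is} $\swap(B)$ of a block: it is $\swap(B)(u_1,\dots,u_{r-j})$ on the initial block of $u$'s. The $A$-factor is $\swap(A)(v_{r-i+1},\dots,v_r)$ on the final block of $v$'s, and $C_{j-i}$ sits in between. Reindexing by $p=r-j\le q=r-i$ gives, for any $u$-mould $A$, constant mould $C$ and $v$-mould $B$,
\[
\swap(A\times C\times B)=\swap(B)\times C\times\swap(A).
\]
This is the missing idea. You explicitly reject it and place the difficulty in a reflection-formula identity for constants instead. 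That cannot be the crux, because the statement stays true with $\pari(\delta_{\pi i})$ replaced by an \emph{arbitrary} constant mould, so no property of $e^{-\pi i x}$ or of $\Gamma$ is involved. Your fallback induction also fails, because its premise is false. The bimould $\nega(\widetilde{\bMZV})=\zeta^\natural\times\pari(\delta_{\pi i})\times\nega(\zeta^\ast)$ is not symmetril, since $\nega(\zeta^\ast)$ is not: the $\zeta(k+l)$ term coming from $z_k\ast z_l$ acquires the wrong sign. Moreover, symmetrility would not determine a bimould from its lower-depth components anyway.

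With the displayed identity, your Step~2 finishes the proof in two lines. The regularization theorem gives $\zeta^\ast=\Lambda\times\zeta^\shuffle$ with $\Lambda$ constant and on the left (it absorbs leading $z_1$'s). Hence $\swap(\zeta^\ast)=\swap(\zeta^\shuffle)\times\Lambda=\zeta^\natural\times\Lambda$, using $\swap\circ\swap=\id$. Since $\nega$ commutes with $\swap$ and with $\times$ and fixes constant moulds, one gets
\[
\swap(\widetilde{\bMZV})=\zeta^\natural\times\Lambda\times\pari(\delta_{\pi i})\times\nega(\zeta^\shuffle),
\]
\[
\nega(\widetilde{\bMZV})=\zeta^\natural\times\pari(\delta_{\pi i})\times\Lambda\times\nega(\zeta^\shuffle).
\]
These agree because constant moulds commute under $\times$ (their product is a convolution of sequences). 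No symmetrality, antipode relation or reflection formula is needed.

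The paper instead imports the swap invariance $\swap(\bMZV)=\bMZV$ of bi-multiple zeta values. It then regroups $\swap(\widetilde{\bMZV})$ by the length $c$ of the middle constant block, each group being $\frac{(-\pi i)^c}{c!}$ times $\swap(\bMZV)$ in depth $r-c$. The corrected version of your route is more self-contained: taking $C=1$, it also reproves $\swap(\bMZV)=\bMZV$ from the regularization theorem.
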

\begin{proof}
    It is known that $\swap(\bMZV)=\bMZV$ (\cite{BI}). Therefore, we have
    \begin{align}
        \swap(\widetilde{\bMZV})\binom{u_1,\dots,u_r}{v_1,\dots,v_r}&=(-1)^r\sum_{j=0}^r\frac{(-\pi i)^j}{j!}\bMZV\binom{v_r,\dots,v_{j+1}-v_{j+2}}{-u_1-\cdots-u_{r-j},\dots,-u_1}\\
        &=(-1)^r\sum_{j=0}^r\frac{(-\pi i)^j}{j!}\bMZV\binom{-u_1,\dots,-u_{r-j}}{v_{j+1},\dots,v_r}\\
        &=\widetilde{\bMZV}\binom{-u_1,\dots,-u_r}{-v_1,\dots,-v_r}.
    \end{align}
\end{proof}
To construct the bi-multiple Eisenstein series, we consider the modified version of the regularized multitangent function. We define the symmetril $v$-mould $\Psi^+(z)$ by $\Psi^+(z)\coloneqq\Psi^\ast(z)\times\delta_{\pi i}$, specifically, $\Psi^+_{k_1,\dots,k_r}(z)$ is given by
\begin{align}\label{eq:MTF plus}
    \Psi^+_{\bm{k},\{1\}^n}(z)=\summ{j,l\geq0\\j+l=n}\Psi^\ast_{\bm{k},\{1\}^j}(z)\frac{(\pi i)^l}{l!}
\end{align}
for $\bm{k}=(k_1,\dots,k_r)\in(\ZZ_{>0})^r$ ($r\geq0$) with $k_r\geq2$ and $n\geq0$. The two regularized multitangent functions $\Psi^\ast(z)$ and $\Psi^+(z)$ satisfy the same differential equation.
\begin{lem}[\cite{Bo}]\label{lem:der MTF}
    For $k_1,\dots,k_r\geq1$, we have for $\bullet\in\{\ast,+\}$
    \begin{align}
        \frac{d}{dz}\Psi^\bullet_{k_1,\dots,k_r}(z)=-\sum_{j=1}^rk_j\Psi^\bullet_{k_1,\dots,k_j+1,\dots,k_r}(z).
    \end{align}
\end{lem}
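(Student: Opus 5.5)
The plan is to prove the statement for $\bullet=\ast$ first and then deduce the case $\bullet=+$ from it.

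\textbf{Case $\bullet=\ast$, convergent indices.} When $k_1,k_r\ge2$ the multitangent $\Psi_{k_1,\dots,k_r}(z)$ is an absolutely and locally uniformly convergent sum on $\CC\setminus\ZZ$. So we may differentiate term by term. For each summand, the product rule applied to $\prod_j (z+n_j)^{-k_j}$ produces exactly $-\sum_j k_j\prod_i (z+n_i)^{-k_i-\delta_{ij}}$, which gives the formula. The case of the cotangent, $\frac{d}{dz}\Psi_1=-\Psi_2$, is classical.

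\textbf{Case $\bullet=\ast$, general indices.} Here I would use the reduction in \Cref{thm:reduction}. In mould language, the claimed identity says
\begin{align}
    \frac{d}{dz}\Psi^\ast(z)(v_1,\dots,v_r)=-\sum_{j=1}^r\partial_{v_j}\bigl(v_j\,\Psi^\ast(z)(v_1,\dots,v_r)\bigr)+\text{(boundary terms)}.
\end{align}
It is cleaner to check this coefficientwise, using the operator $\partial_{\bm k}:=\sum_j k_j(\text{raise }k_j)$. Consider the right-hand side of \Cref{thm:reduction}, namely $\delta+\sum\zeta^\ast(\bm a\rfloor)T(z)(\bm b)\zeta^\ast_-(\lfloor\bm c)$.
\begin{itemize}
    \item $\delta$ is constant, and only $T(z)$ depends on $z$.
    \item By the depth-one case, $\frac{d}{dz}T(z)(v)=-\partial_v(v\,T(z)(v))$, since $\frac{d}{dz}\Psi_k=-k\Psi_{k+1}$.
\end{itemize}
One then has to show that the terms produced by applying the raising operator to the $\bm a$- and $\bm c$-parts cancel. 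Recall that $\bm a\rfloor$ has variables $v_i-v_b$, and the operator $\sum_j\partial_{v_j}v_j$ acts on a product of homogeneous pieces. I would try to reduce this cancellation to a translation-invariance statement: the constant moulds $\zeta^\ast(\bm a\rfloor)$ and $\zeta^\ast_-(\lfloor\bm c)$ depend only on differences $v_i-v_b$, so they are annihilated by $\sum_j\partial_{v_j}$. The Euler part $\sum_j v_j\partial_{v_j}$ then acts on each factor as its degree. Carrying this bookkeeping through so that only the $T$-contribution survives is the main obstacle. If it becomes messy, I would switch to an alternative route. Bouillot's harmonic regularization is a polynomial in the regularization parameter attached to the truncated sums $\sum_{N>n_1>\dots>n_r>-N}$. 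Differentiating these truncated sums term by term gives the identity for every $N$ and every index, and the identity survives taking the constant term of the asymptotic expansion, because the differential operator commutes with that extraction.

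\textbf{Case $\bullet=+$.} This follows by \eqref{eq:MTF plus}: $\Psi^+(z)=\Psi^\ast(z)\times\delta_{\pi i}$, where $\delta_{\pi i}$ is constant in $z$ and supported on all-ones indices. Write an index as $(\bm k,\{1\}^n)$ with $k_r\ge2$ and differentiate each summand $\Psi^\ast_{\bm k,\{1\}^j}(\pi i)^l/l!$ using the $\ast$ case. Raising an entry of $\bm k$ stays in the same decomposition. Raising one of the trailing $1$'s at position $p$ gives $\Psi^\ast_{\bm k,\{1\}^{p-1},2,\{1\}^{j-p}}(\pi i)^l/l!$. Regrouping by the new index $(\bm k,\{1\}^{p-1},2,\{1\}^{j-p},\{1\}^l)$, the decomposition is forced to split after the last entry $\ge2$, and the coefficients match those of $\Psi^+$. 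Summing over $j+l=n$ gives exactly $-\sum_j k_j\Psi^+_{\dots,k_j+1,\dots}$.
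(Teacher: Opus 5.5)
Your case $\bullet=+$ is correct and is exactly the ``direct calculation'' the paper intends. For $\bullet=\ast$ the paper simply cites Bouillot, so the issue is only in your own argument via \Cref{thm:reduction}: the translation into generating series is wrong. If $M(\bm v)=\sum_{\bm k}m(\bm k)v_1^{k_1-1}\cdots v_r^{k_r-1}$, then the coefficient of $v_1^{k_1-1}\cdots v_r^{k_r-1}$ in $\partial_{v_j}M$ is $k_j\,m(\bm k+e_j)$. In $\partial_{v_j}(v_jM)$ it is $k_j\,m(\bm k)$, so there is no raising at all. Hence the statement is exactly $\frac{d}{dz}\Psi^\ast(z)(\bm v)=-\sum_j\partial_{v_j}\Psi^\ast(z)(\bm v)$, with no boundary terms. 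In depth one it reads $\frac{d}{dz}T(z)(v)=-\partial_vT(z)(v)$; your formula $-\partial_v(vT(z)(v))$ would assert $\Psi_k'=-k\Psi_k$. With your operator $\sum_j\partial_{v_j}v_j=r+\sum_jv_j\partial_{v_j}$, the Euler part rescales every homogeneous component of $\zeta^\ast(\bm a\rfloor)$ and $\zeta^\ast_-(\lfloor\bm c)$ by its degree, and nothing cancels this. So the bookkeeping you single out as the main obstacle cannot be completed; it is an artifact of the mistranslation.

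Once corrected, your own translation-invariance remark finishes the proof in one line. Set $\nabla\coloneqq\sum_j\partial_{v_j}$. It is a derivation, and it kills the constant mould $\delta$. It also kills every power series in the differences $v_i-v_q$, hence both $\zeta^\ast(\bm a\rfloor)$ and $\zeta^\ast_-(\lfloor\bm c)$ when $\bm b=(v_q)$. On $T(z)(v_q)$ it acts as $(\partial_vT(z))(v_q)$. Since $\frac{d}{dz}$ likewise only hits $T(z)$, \Cref{thm:reduction} together with $\Psi_k'=-k\Psi_{k+1}$ for all $k\ge1$ (including $\Psi_1=\pi\cot\pi z$) gives $\frac{d}{dz}\Psi^\ast=-\nabla\Psi^\ast$ for all indices at once. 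This is precisely the form the paper uses in the proof of \Cref{prop:der bMES}.

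This makes your separate convergent-case step and your fallback unnecessary. The fallback would in any case need an identification of $\Psi^\ast$ with a constant-term regularization of the symmetric truncations. The paper never states such an identification, and you would have to reconcile it with \Cref{thm:reduction}.

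The case $\bullet=+$ also follows in one line. Use $\nabla(A\times B)=\nabla A\times B+A\times\nabla B$ (and the same for $\frac{d}{dz}$), together with $\nabla\delta_{\pi i}=0$, applied to $\Psi^+=\Psi^\ast\times\delta_{\pi i}$.
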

\begin{proof}
    The case for $\bullet=\ast$ is already proven by \cite{Bo}. It is enough to show the case for $\Psi^+_{\bm{k},\{1\}^n}(z)$ with $\bm{k}=(k_1,\dots,k_r)$, $k_r\geq2$ and $n\geq1$, which can be obtained by using the differential equation for $\Psi^\ast(z)$ and a direct calculation.
\end{proof}
Using \Cref{thm:reduction}, we can obtain the depth one reduction formula for $\Psi^+(z)$.
\begin{lem}
    The following factorization of mould holds
    \begin{align}\label{eq:reduction plus}
        \Psi^+(z)(\bm{v})=\sum_{\bm{v}=\bm{a}\bm{b}\bm{c}\bm{d}}\zeta^\ast(\bm{a}\rfloor)T^+(z)(\bm{b})\zeta^\ast_-(\lfloor\bm{c})\delta_{\pi i}(\bm{d}),
    \end{align}
    where the $v$-mould $T^+(z)$ is defined by 
    \begin{align}
        T^+(z)(v_1,\dots,v_r)\coloneqq\begin{cases}\sum_{k\geq1}\left(\mathbf{1}_{k=1}\pi i+\Psi_k(z)\right)v_1^{k-1}&r=1\\0&\otherwise\end{cases},
    \end{align}
    and $B_{-}$ denotes $\nega\circ\anti\circ\pari(B)$ for a bimould $B$ as defined in \Cref{subsec:Goncharov}.
\end{lem}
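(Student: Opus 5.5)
The factorization should follow by multiplying Bouillot's reduction (\Cref{thm:reduction}) on the right by $\delta_{\pi i}$, using $\Psi^+(z)=\Psi^\ast(z)\times\delta_{\pi i}$. This gives
\begin{align}
\Psi^+(z)(\bm{v})=\sum_{\bm{v}=\bm{x}\bm{d}}\Bigl(\delta(\bm{x})+\sum_{\bm{x}=\bm{a}\bm{b}\bm{c}}\zeta^\ast(\bm{a}\rfloor)T(z)(\bm{b})\zeta^\ast_-(\lfloor\bm{c})\Bigr)\delta_{\pi i}(\bm{d}).
\end{align}
The flexion markers $\bm{a}\rfloor$ and $\lfloor\bm{c}$ refer only to the neighbouring block $\bm{b}$, which has length one since $T(z)$ is concentrated in depth $1$. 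Hence appending $\bm{d}$ does not interact with these markers. The right-hand side of \eqref{eq:reduction plus} splits the same way according to the two summands of $T^+(z)$, namely $T(z)$ and the constant $\pi i$ in depth one with $k=1$. The $T(z)$-part of $T^+$ reproduces exactly the $T(z)$-terms above. So it remains to prove the constant identity
\begin{align}
\sum_{\bm{v}=\bm{x}\bm{d}}\delta(\bm{x})\delta_{\pi i}(\bm{d})=\sum_{\bm{v}=\bm{a}\,w\,\bm{c}\,\bm{d}}\zeta^\ast(\bm{a}\rfloor)\,\pi i\,\zeta^\ast_-(\lfloor\bm{c})\,\delta_{\pi i}(\bm{d}),
\end{align}
where $w$ is a single letter.

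To prove it I would evaluate both sides by taking $z\to i\infty$ in the original factorization. Since $\Psi_k(z)\to0$ for $k\ge2$ and $\Psi_1(z)\to-\pi i$ as $\Im z\to\infty$, the limit of \Cref{thm:reduction} gives a formula for the constant mould $\Psi^\ast(i\infty)$. Comparing it with \eqref{eq:reduction ones} shows that $\Psi^\ast(i\infty)$ is supported on indices $\{1\}^n$, where it equals $(-\pi i)^n/n!$. Equivalently, the limit yields
\begin{align}
\delta(\bm{v})-\pi i\sum_{\bm{v}=\bm{a}w\bm{c}}\zeta^\ast(\bm{a}\rfloor)\zeta^\ast_-(\lfloor\bm{c})=\delta_{-\pi i}(\bm{v}).
\end{align}
Multiplying on the right by $\delta_{\pi i}$ and using $\delta_{-\pi i}\times\delta_{\pi i}=\delta_0=\mathbf 1$ (the moulds $\delta_T$ are exponentials), I get
\begin{align}
\delta\times\delta_{\pi i}=\mathbf 1+\pi i\,(\text{the }\zeta^\ast\text{-sandwich})\times\delta_{\pi i}.
\end{align}

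This is the required identity up to a sign and an extra unit term. So the main obstacle is bookkeeping. First, I must check that the depth-zero term of the factorization also absorbs $\mathbf 1$, i.e.\ that $\delta$ in the paper includes the $r=0$ value $1$. Second, I must fix the sign convention of the $\pi i$ in $T^+$ against the limit $\Psi_1\to-\pi i$ by testing depth one, where $\Psi^+_1=\Psi_1+\pi i$. If the sign comes out opposite, I would instead take $z\to -i\infty$, where $\Psi_1\to\pi i$, and the identity is analytic in $z$ on $\CC\setminus\ZZ$. Finally, I would verify the result on the explicit values $\Psi^+_{\{1\}^n}$ coming from \eqref{eq:MTF plus} and \eqref{eq:reduction ones}.
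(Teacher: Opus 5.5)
Your argument is correct, but it is organized differently from the paper's. The paper splits by the shape of the index. For $\bm k=(k_1,\dots,k_r,\{1\}^n)$ with $k_r\ge2$, the $\delta$-terms cannot contribute, and $T(z)$ may be replaced by $T^+(z)$ because $\Psi_1$ does not occur in the reduction of $\Psi^\ast_{k_1,\dots,k_r,\{1\}^{n-j}}$. For $\bm k=\{1\}^n$, it checks both sides by generating series: it uses $e^{\pi it}(\cos\pi t+\sin\pi t\cot\pi z)$ on the left and, implicitly, $\bigl(\sum_i\zeta^\ast(\{1\}^i)t^i\bigr)\bigl(\sum_j\zeta^\ast(\{1\}^j)(-t)^j\bigr)=\frac{\sin\pi t}{\pi t}$ on the right.

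You instead split by the two summands of $T^+(z)$. With $\Sigma(\bm v)\coloneqq\sum_{\bm v=\bm a w\bm c}\zeta^\ast(\bm a\rfloor)\zeta^\ast_-(\lfloor\bm c)$, everything then reduces to one $z$-independent mould identity,
\begin{align}
(\delta-\pi i\,\Sigma)\times\delta_{\pi i}=\delta_{-\pi i}\times\delta_{\pi i}=\mathbf 1,
\end{align}
obtained from the limit $z\to i\infty$ of Bouillot's reduction. This buys you several things: all indices are treated uniformly, you never need the generating series of $\zeta^\ast(\{1\}^n)$, and you see directly that $\Psi^+(i\infty)=\mathbf 1$. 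That is, $\Psi^+$ has vanishing constant term in positive depth, which is what the paper later needs to define $\bMEI(\tau)$.

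The substantive input is the same in both routes, and you should cite it precisely. The support of $\Psi^\ast(i\infty)$ on indices $\{1\}^n$ does not follow from \eqref{eq:reduction ones} alone. It follows from the accompanying statement that $\Psi_1$ appears in the reduction of $\Psi^\ast_{\bm k}$ only for $\bm k=\{1\}^{2n+1}$. This input cannot be avoided, since your identity is equivalent to $\pi i\Sigma$ being the odd-depth part of $\delta_{\pi i}$.

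Two of your bookkeeping worries resolve cleanly.

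\emph{Sign.} There is no sign discrepancy: $\delta\times\delta_{\pi i}=\mathbf 1+\pi i\,\Sigma\times\delta_{\pi i}$ has exactly the sign demanded by $T^+$. In depth one, both sides give $\Psi^+_1=\Psi_1+\pi i$. So drop the fallback $z\to-i\infty$. It would not be a fix anyway: that limit gives $\delta+\pi i\Sigma=\delta_{\pi i}$, which is the same information about $\Sigma$, and the sign of $\pi i$ in $T^+$ is forced by $\Psi^+=\Psi^\ast\times\delta_{\pi i}$.

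\emph{Unit term.} The extra $\mathbf 1$ sits only in depth $0$, where the displayed factorization is literally false: $\Psi^+(\varnothing)=1$, while $T^+$ vanishes in depth $0$, so the right-hand side is $0$. This has nothing to do with $\delta(\varnothing)$, which is indeed $1$. The lemma holds in every depth $r\ge1$, and the paper tacitly uses the same convention, since its right-hand-side computation starts with ``$1+$''.
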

\begin{proof}
    By definition of $\Psi^+(z)$ and \Cref{thm:reduction}, we have
    \begin{align}
        \Psi^+(z)(\bm{v})=\sum_{\bm{v}_1\bm{v}_2=\bm{v}}\delta(\bm{v}_1)\delta_{\pi i}(\bm{v}_2)+\sum_{\bm{v}_1\cdots\bm{v_4}=\bm{v}}\zeta^\ast(\bm{v}_1\rfloor)T(z)(\bm{v}_2)\zeta^\ast_-(\lfloor\bm{v}_3)\delta_{\pi i}(\bm{v}_4).
    \end{align}
    In the case $\bm{k}=(k_1,\dots,k_r,\{1\}^n)$ with $k_r\geq2$ and $n\geq0$, we have
    \begin{align}
        \Psi^+_{\bm{k}}(z)&=\left\langle\sum_{\bm{v}_1\cdots\bm{v_4}=\bm{v}}\zeta^\ast(\bm{v}_1\rfloor)T(z)(\bm{v}_2)\zeta^\ast_-(\lfloor\bm{v}_3)\delta_{\pi i}(\bm{v}_4)\relmid\bm{k}\right\rangle\\
        &=\sum_{j=0}^n\left\langle\sum_{\bm{a}\bm{b}\bm{c}=\bm{v}}\zeta^\ast(\bm{a}\rfloor)T(z)(\bm{b})\zeta^\ast_-(\lfloor\bm{c})\relmid(k_1,\dots,k_r,\{1\}^{n-j})\right\rangle\frac{(\pi i)^j}{j!}.
    \end{align}
    We can replace $T(z)$ by $T^+(z)$ since $\Psi_1(z)$ does not appear in the reduction of $\Psi^\ast_{k_1,\dots,k_r,\{1\}^{n-j}}(z)$. In the case $\bm{k}=\{1\}^n$, we have
    \begin{align}
        \sum_{n\geq0}\Psi^+_{\{1\}^n}(z)t^n&=e^{\pi it}\sum_{n\geq0}\Psi^\ast_{\{1\}^n}(z)t^n.
    \end{align}
    By \eqref{eq:reduction ones}, this can be computed as follows.
    \begin{align}
        e^{\pi it}\sum_{n\geq0}\Psi^\ast_{\{1\}^n}(z)t^n&=e^{\pi it}(\cos\pi t+\sin\pi t\cdot\cot\pi z)\\
        &=1+e^{\pi it}\frac{\sin\pi t}{\pi}\Psi^+_1(z).
    \end{align}
    On the other hand, the coefficients of the right-hand side of \eqref{eq:reduction plus} can be computed as follows.
    \begin{align}
        &\sum_{n\geq0}\left\langle\sum_{\bm{v}=\bm{a}\bm{b}\bm{c}\bm{d}}\zeta^\ast(\bm{a}\rfloor)T^+(z)(\bm{b})\zeta^\ast_-(\lfloor\bm{c})\delta_{\pi i}(\bm{d})\relmid\{1\}^n\right\rangle\\
        &=1+\left(\sum_{i\geq0}\zeta^\ast(\{1\}^i)t^i\right)\Psi^+_1(z)t\left(\sum_{j\geq0}\zeta^\ast(\{1\}^j)(-t)^j\right)\left(\sum_{k\geq0}\frac{(\pi i)^k}{k!}t^k\right)\\
        &=1+e^{\pi it}\frac{\sin\pi t}{\pi}\Psi^+_1(z).
    \end{align}
    Therefore, we have \eqref{eq:reduction plus}.
\end{proof}

\begin{dfn}
    For a positive integer $m>0$, we define the bimoulds $L_m(\tau)$ and $\mathcal{L}_m(\tau)$ by
    \begin{align}
        L_m(\tau)\binom{u_1,\dots,u_r}{v_1,\dots,v_r}&\coloneqq\begin{cases}e^{-2\pi imu_1}T^+(m\tau)(v_1)&r=1\\0&\otherwise\end{cases},\\
        \mathcal{L}_m(\tau)(\bm{w})&\coloneqq\sum_{\bm{w}=\bm{a}\bm{b}\bm{c}}\bMZV(\bm{a}\rfloor)L_m(\tau)(\lceil\bm{b}\rceil)\widetilde{\bMZV}_-(\lfloor\bm{c}).\label{eq:def bMTF}
    \end{align}
\end{dfn}
The following lemma shows that $\mathcal{L}_m(\tau)$ is symmetril.
\begin{lem}\label{lem:reduction bi MTF}
    The following factorization of mould holds
    \begin{align}
        \mathcal{L}_m(\tau)(\bm{w})=e^{-2\pi im|\bm{u}|}(\zeta^\natural\times\Psi^+(m\tau)\times\anti\circ\pari(\zeta^\natural))(\bm{w}),
    \end{align}
    where $|\bm{u}|=u_1+\cdots+u_r$ and $\bm{w}=\binom{u_1,\dots,u_r}{v_1,\dots,v_r}$.
\end{lem}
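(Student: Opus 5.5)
The plan is to expand both factors of the bi-multiple zeta values appearing in \eqref{eq:def bMTF}, reorganise the sum, and recognise the $v$-dependent middle part as the reduction formula \eqref{eq:reduction plus} for $\Psi^+(m\tau)$. Write $\bMZV=\zeta^\natural\times\zeta^\ast$ and $\widetilde{\bMZV}=\nega(\zeta^\natural)\times\pari(\delta_{\pi i})\times\zeta^\ast$, and compute
\[
\widetilde{\bMZV}_-=\nega\circ\anti\circ\pari(\widetilde{\bMZV})=\zeta^\ast_-\times\anti(\delta_{\pi i}\circ\nega)\times\anti\circ\pari(\zeta^\natural).
\]
Here I use that $\nega\circ\anti\circ\pari$ reverses $\times$-products. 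I also use that $\delta_{\pi i}$ is constant, so after the sign bookkeeping it contributes exactly the factor $\delta_{\pi i}$ appearing in \eqref{eq:reduction plus}. The signs $(-1)^r$ from $\pari$ on the two pieces must combine correctly, and I would check this in depth one first.

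Substituting into \eqref{eq:def bMTF}, each term is indexed by a decomposition $\bm{w}=\bm{a}_1\bm{a}_2\,b\,\bm{c}_1\bm{c}_2\bm{c}_3$, with $b$ a single letter. The flexion markers $\bm{a}\rfloor$, $\lceil b\rceil$, $\lfloor\bm{c}$ then distribute over the sub-blocks as follows.
\begin{itemize}
\item For the $u$-mould $\zeta^\natural$ on $\bm{a}_1$, the lower flexion changes only $v$'s, so it contributes $\zeta^\natural(\bm{a}_1)$ in the original $u$-variables.
\item The piece $\zeta^\ast(\bm{a}_2\rfloor)$ depends only on $v$'s and keeps its $v$-differences relative to $v_b$.
\item For $L_m(\tau)(\lceil b\rceil)$, the upper flexion replaces $u_b$ by the sum of all $u$'s in $\bm{w}$. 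This gives the global factor $e^{-2\pi i m|\bm{u}|}$ times $T^+(m\tau)(v_b)$.
\item On the right, $\zeta^\ast_-(\lfloor\bm{c}_1)$ and the $\delta_{\pi i}$ piece sit in the $v$-world, while $\anti\circ\pari(\zeta^\natural)$ on $\bm{c}_3$ sees only $u$'s and is unaffected by $\lfloor$.
\end{itemize}
The key check is that the $u$-flexions on $\bm{a}_2,\bm{c}_1,\bm{c}_2$ do not matter because those moulds are $v$-moulds. Likewise, the $v$-flexions on $\bm{a}_1,\bm{c}_3$ do not matter because $\zeta^\natural$ is a $u$-mould.

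Grouping the terms gives
\[
e^{-2\pi i m|\bm{u}|}\sum_{\bm{w}=\bm{a}_1\bm{x}\bm{c}_3}\zeta^\natural(\bm{a}_1)\Bigl(\sum_{\bm{x}=\bm{a}_2 b\bm{c}_1\bm{c}_2}\zeta^\ast(\bm{a}_2\rfloor)T^+(m\tau)(b)\zeta^\ast_-(\lfloor\bm{c}_1)\delta_{\pi i}(\bm{c}_2)\Bigr)\anti\circ\pari(\zeta^\natural)(\bm{c}_3).
\]
The flexions in the inner sum must be taken relative to $\bm{x}$ rather than to $\bm{w}$. This is legitimate because $\bm{a}\rfloor$ and $\lfloor\bm{c}$ only subtract $v_b$, which lies inside $\bm{x}$. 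By \eqref{eq:reduction plus}, the inner sum is $\Psi^+(m\tau)(\bm{x})$, and the outer sum is exactly $\zeta^\natural\times\Psi^+(m\tau)\times\anti\circ\pari(\zeta^\natural)$, as claimed.

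The main obstacle is the second step: the sign conventions of $\nega\circ\anti\circ\pari$ applied to $\widetilde{\bMZV}$. In particular, one must get the ordering of the three factors right and show that $\delta_{\pi i}$ reappears with argument $\bm{c}_2$, not reversed or negated. I would first verify this in depths $1$ and $2$ and then argue generally. A further point is that the nonempty-block conventions must match: $b$ is nonempty because $L_m$ vanishes otherwise, and every other block may be empty in both expressions.

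Finally, symmetrility of $\mathcal{L}_m(\tau)$ follows from the resulting formula. The moulds $\zeta^\natural$ and $\Psi^+$ are symmetril, the factor $e^{-2\pi i m|\bm{u}|}$ is multiplicative, and $\anti\circ\pari$ of the symmetril $u$-mould $\zeta^\natural$ is its $\times$-inverse and hence symmetril. By \Cref{rem:symmetril}, their $\times$-product is therefore symmetril.
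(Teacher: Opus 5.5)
Your proposal is correct and follows the paper's proof essentially verbatim: rewrite $\widetilde{\bMZV}_-=\zeta^\ast_-\times\delta_{\pi i}\times\anti\circ\pari(\zeta^\natural)$, substitute into \eqref{eq:def bMTF}, and recognise the middle part via \eqref{eq:reduction plus}. One bookkeeping point, which the paper also glosses over: your block $\bm{x}=\bm{a}_2b\bm{c}_1\bm{c}_2$ is never empty, but in $\zeta^\natural\times\Psi^+(m\tau)\times\anti\circ\pari(\zeta^\natural)$ the middle block can be empty with $\Psi^+(m\tau)(\emptyset)=1$, so you should note that these extra terms sum to $(\zeta^\natural\times\anti\circ\pari(\zeta^\natural))(\bm{w})=0$ for $\bm{w}\neq\emptyset$, by the inverse property you already cite at the end (in depth $0$ the identity needs the convention $\mathcal{L}_m(\tau)(\emptyset)=1$).
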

\begin{proof}
    Since $\widetilde{\bMZV}_-=\nega\circ\anti\circ\pari(\nega(\zeta^\natural)\times\pari(\delta_{\pi i})\times\zeta^\ast)=\zeta^\ast_-\times\delta_{\pi i}\times\pari\circ\anti(\zeta^\natural)$, by \eqref{eq:def bMTF} we have
    \begin{align}
        \mathcal{L}_m(\tau)(\bm{w})
        &=e^{-2\pi im|\bm{u}|}\summ{\binom{\bm{u}_1}{\bm{v}_1}\cdots\binom{\bm{u}_6}{\bm{v}_6}=\bm{w}}\zeta^\natural(\bm{u}_1)\zeta^\ast(\bm{v}_2\rfloor)T^+(m\tau)(\bm{v}_3)\zeta^\ast_-(\lfloor\bm{v}_4)\delta_{\pi i}\binom{\bm{u}_5}{\bm{v}_5}\anti\circ\pari(\zeta^\natural)(\bm{u}_6)\\
        &=e^{-2\pi im|\bm{u}|}(\zeta^\natural\times\Psi^+(m\tau)\times\anti\circ\pari(\zeta^\natural))(\bm{w}).
    \end{align}
\end{proof}

\begin{dfn}
    We define the bimoulds $\bMEI(\tau)$ and $\bMES(\tau)$ by
    \begin{align}
        \bMEI(\tau)(\bm{w})&\coloneqq\lim_{M\to\infty}\summ{1\leq s\leq r\\\bm{w}_1\cdots\bm{w}_s=\bm{w}}\sum_{M>m_1>\cdots>m_s>0}\bMTF{m_1}(\bm{w}_1)\cdots\bMTF{m_s}(\bm{w}_s),\label{eq:def bMEI}\\
        \bMES(\tau)&\coloneqq\bMEI(\tau)\times\bMZV.
    \end{align}
\end{dfn}
We use the same notation $\bMES=\bMES(\tau)$ for the corresponding linear map from $\hha^1$ to $\mathcal{O}(\HH)$, and denote by $\bMES\binom{k_1,\dots,k_r}{d_1,\dots,d_r}(\tau)$ the image of $\ai{k_1,\dots,k_r}{d_1,\dots,d_r}\in\hha^1$ (or equivalently, the coefficient of $\frac{u_1^{d_1}}{d_1!}\cdots\frac{u_r^{d_r}}{d_r!}v_1^{k_1-1}\cdots v_r^{k_r-1}$ in $\bMES(\tau)\binom{u_1,\dots,u_r}{v_1,\dots,v_r}$) and call these the bi-multiple Eisenstein series.
\begin{rem}
    Since the constant term of the Fourier expansion of $\Psi^+(z)$ is zero, the limit of \eqref{eq:def bMEI} exists and thus $\bMEI$ is well-defined.
\end{rem}

\begin{prop}\label{prop:bMES H2}
    $\bMES\mid_{\ha^{\geq2}}=G$, i.e. for any $k_1,\dots,k_r\geq2$, we have
    \begin{align}
        \bMES\binom{k_1,\dots,k_r}{0,\dots,0}(\tau)=G_{k_1,\dots,k_r}(\tau).
    \end{align}
\end{prop}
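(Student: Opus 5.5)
Set $u_1=\cdots=u_r=0$ in $\bMES(\tau)=\bMEI(\tau)\times\bMZV$ and read off the coefficient of $v_1^{k_1-1}\cdots v_r^{k_r-1}$ with all $k_i\ge2$. On the $v$-side, $\bMZV\binom{0,\dots,0}{v_1,\dots,v_r}=\zeta^\ast(v_1,\dots,v_r)$, since $\zeta^\natural$ is a $u$-mould and its value at $u=0$ is the constant term $\zeta^\natural(0,\dots,0)=\zeta^\shuffle(\{1\}^j)=0$ for $j\ge1$. Thus $\bMES\binom{\bm{k}}{\bm{0}}=\sum_i \bMEI(\tau)\binom{0,\dots,0}{k_1,\dots,k_i}\cdot\zeta(k_{i+1},\dots,k_r)$, because the indices $(k_{i+1},\dots,k_r)$ are admissible. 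By \Cref{lem:decom G}, it then suffices to show that $\bMEI(\tau)$ restricted to $u=0$ has coefficient $\MEI_{\bm{k}}(\tau)$ for every $\bm{k}\in\mathbb{I}^{\ge2}$.

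For this, use \Cref{lem:reduction bi MTF}. At $u=0$ the exponential factor is $1$, and the outer factors $\zeta^\natural$, $\anti\circ\pari(\zeta^\natural)$ become constant moulds with value $\zeta^\shuffle(\{1\}^j)=\delta_{j,0}$. Hence $\mathcal{L}_m(\tau)$ restricted to $u=0$ equals $\Psi^+(m\tau)(v_1,\dots,v_r)$. By \eqref{eq:MTF plus}, for any index whose entries are all $\ge2$ (and in particular for every sub-block $w_j$ of $\bm{k}$, whose last entry is $\ge2$), $\Psi^+_{w_j}=\Psi^\ast_{w_j}=\Psi_{w_j}$, the convergent multitangent. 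Therefore the $u=0$, $\bm{k}$-coefficient of \eqref{eq:def bMEI} is
\[
\lim_{M\to\infty}\sum_{w_1\cdots w_s=\bm{k}}\;\sum_{M>m_1>\cdots>m_s>0}\Psi_{w_1}(m_1\tau)\cdots\Psi_{w_s}(m_s\tau)=\MEI_{\bm{k}}(\tau)
\]
by \eqref{eq:def MEI}.

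The main obstacle is to justify that restricting to $u=0$ and extracting the coefficient commutes with the $M$-limit and with the infinite sum over $m$. For $\bm{k}\in\mathbb{I}^{\ge2}$, the summands $\Psi_{w}(m\tau)$ with $w$ convergent decay exponentially in $m$, because they have zero constant Fourier term and are expressible through $\Psi_{n}$, $n\ge2$, by \Cref{thm:reduction}. So the series converges absolutely and the interchange is harmless.

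A second point to check is that $\Psi^\ast_w$ agrees with the convergent multitangent $\Psi_w$ for indices $w$ with first and last entries $\ge2$. This is part of Bouillot's regularization and will be quoted. If a direct identification of $\Psi_w$ on interior $1$'s is needed, it is not, since all entries of $\bm{k}$ are $\ge2$.
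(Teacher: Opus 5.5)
Your proposal is correct and follows the same route as the paper's proof. You use \Cref{lem:decom G} to reduce to the $u$-degree-$0$ coefficients of $\bMEI$ and $\bMZV$. Then \Cref{lem:reduction bi MTF} and \eqref{eq:MTF plus} show that $\mathcal{L}_m(\tau)$ at $u=0$ has coefficient $\Psi^+_{\bm{k}}(m\tau)=\Psi_{\bm{k}}(m\tau)$ for $\bm{k}\in\mathbb{I}^{\ge2}$. Your extra remarks, that $\zeta^\shuffle(\{1\}^j)=0$ for $j\ge1$ and that the $M$-limit is taken coefficientwise, only spell out what the paper leaves under ``by construction of $\bMZV$''.
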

\begin{proof}
    By \Cref{lem:decom G}, it suffices to show $\langle\bMEI(\tau)\mid\bm{k}\rangle=\MEI_{\bm{k}}(\tau)$ (the right-hand side is defined in \eqref{eq:def MEI}) and $\bMZV\binom{\bm{k}}{\bm{0}}=\zeta(\bm{k})$ for any $\bm{k}\in\mathbb{I}^{\geq2}$. By \Cref{lem:reduction bi MTF}, for any index $\bm{k}=(k_1,\dots,k_r)\in(\ZZ_{>0})^r$, the coefficient $\langle\mathcal{L}_m(\tau)\mid \bm{k}\rangle$ of $v_1^{k_1-1}\cdots v_r^{k_r-1}$ (in $u$-degree $0$) in the bimould $\mathcal{L}_m(\tau)$ coincides with $\Psi^+_{k_1,\dots,k_r}(m\tau)$. And by definition of $\Psi^+$ \eqref{eq:MTF plus}, we have $\langle\mathcal{L}_m(\tau)\mid\bm{k}\rangle=\Psi_{\bm{k}}(m\tau)$ for $\bm{k}\in\mathbb{I}^{\geq2}$, and thus $\langle\bMEI(\tau)\mid\bm{k}\rangle=\MEI_{\bm{k}}(\tau)$. By construction of $\bMZV$, we have $\bMZV\binom{k_1,\dots,k_r}{0,\dots,0}=\zeta(k_1,\dots,k_r)$ for any admissible index $(k_1,\dots,k_r)$. 
\end{proof}

\begin{thm}\label{thm:symmetril bMES}
    The bimould $\bMES$ is symmetril.
\end{thm}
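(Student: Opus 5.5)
Since $\bMES(\tau)=\bMEI(\tau)\times\bMZV$ and $\bMZV$ is already known to be symmetril, \Cref{rem:symmetril} reduces the claim to showing that $\bMEI(\tau)$ is symmetril. We do this in two steps. First, each $\mathcal{L}_m(\tau)$ is symmetril. Second, the ``ordered product'' construction in \eqref{eq:def bMEI} preserves symmetrility, uniformly in the cutoff $M$, after which we pass to the limit.

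\textbf{Step 1: each $\mathcal{L}_m(\tau)$ is symmetril.} By \Cref{lem:reduction bi MTF}, $\mathcal{L}_m(\tau)=e^{-2\pi i m|\bm{u}|}\,(\zeta^\natural\times\Psi^+(m\tau)\times\anti\circ\pari(\zeta^\natural))$. The factors are handled as follows.
\begin{enumerate}[(a)]
\item $\zeta^\natural$ is symmetril, as noted in the paper.
\item $\anti\circ\pari(\zeta^\natural)$ is the $\times$-inverse of $\zeta^\natural$ up to $\nega$. More directly, for a symmetral $u$-mould $A$, the mould $\anti\circ\pari(A)$ is again symmetral, being the antipode image for the shuffle Hopf structure. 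Symmetrality is equivalent to symmetrility for $u$-moulds (\cite{BB}), so it is symmetril.
\item $\Psi^+(m\tau)=\Psi^\ast(m\tau)\times\delta_{\pi i}$ is a $\times$-product of symmetril $v$-moulds. Indeed, $\Psi^\ast$ satisfies the harmonic product by Bouillot's construction, and $\delta_{\pi i}$ is exponential, hence symmetril.
\end{enumerate}
By \Cref{rem:symmetril} the triple product is symmetril. Multiplying a bimould by $e^{-2\pi i m|\bm{u}|}$ preserves symmetrility, because in each term of the harmonic product the total sum of the $u$-variables is the same: the contraction terms merge $u_i+u_j$. The character $e^{-2\pi i m|\bm u|}$ therefore factors out compatibly.

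\textbf{Step 2: the ordered product preserves symmetrility.} Fix $M$. Define $\bMEI^M(\tau)(\bm{w})=\sum_{\bm{w}_1\cdots\bm{w}_s=\bm{w}}\sum_{M>m_1>\cdots>m_s>0}\prod_j\mathcal{L}_{m_j}(\tau)(\bm{w}_j)$. This equals the finite ordered $\times$-product $\mathcal{L}'_{M-1}\times\cdots\times\mathcal{L}'_{1}$, where $\mathcal{L}'_m\coloneqq 1+\mathcal{L}_m$ in positive depth and $1$ in depth $0$. Each $\mathcal{L}'_m$ is the bimould attached to the algebra homomorphism of Step 1, with the depth-$0$ value normalised to $1$. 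Hence $\bMEI^M(\tau)$ is symmetril, again by \Cref{rem:symmetril} applied repeatedly. The depth-$0$ part of $\mathcal{L}_m$ must be treated with care. Since $\mathcal{L}_m$ is symmetril, its depth-$0$ component must equal $1$, which is exactly what makes the product formula in \eqref{eq:def bMEI} (with $\bm{w}_i$ nonempty) coincide with the $\times$-product.

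\textbf{Step 3: limit.} The symmetrility relations are polynomial identities in finitely many coefficients, with $\QQ$-coefficients that do not depend on $M$. Each coefficient of $\bMEI^M(\tau)$ converges as $M\to\infty$, by the remark following the definition of $\bMEI$. Passing to the limit, we conclude that $\bMEI(\tau)$ is symmetril, and hence so is $\bMES(\tau)$.

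\textbf{Main obstacle.} We expect the main obstacle to be Step 3, specifically justifying convergence of every coefficient, including those with entries $k_i=1$ and $u$-powers. The point is that the product of limits equals the limit of products. We would argue this using the vanishing constant term of $\Psi^+$ together with exponential decay in $m$ of the $q$-expansions of each coefficient of $\mathcal{L}_m(\tau)$. This decay comes from the factor $e^{-2\pi i m u}$ combined with $\Psi_k(m\tau)=O(|q|^m)$ for $k\ge2$, and $\pi i+\Psi_1(m\tau)=O(|q|^m)$. It gives absolute convergence of the nested sums, and hence allows the termwise manipulation.
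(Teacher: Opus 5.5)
Your proposal is correct and follows the paper's argument: it also reduces to $\bMEI(\tau)$ via \Cref{rem:symmetril}, obtains symmetrility of each $\mathcal{L}_m(\tau)$ from \Cref{lem:reduction bi MTF}, and then invokes ``a similar fact to \Cref{lem:f to F}''. That fact is exactly your observation that the truncated sum is the iterated product $\mathcal{L}'_{M-1}\times\cdots\times\mathcal{L}'_1$, after which one passes to the limit coefficientwise. One small correction: $\mathcal{L}'_m$ should equal $\mathcal{L}_m$ (not $1+\mathcal{L}_m$) in positive depth and $1$ in depth $0$. This normalisation is genuinely needed, because the defining formula for $\mathcal{L}_m$ literally gives depth-$0$ value $0$, whereas the right-hand side of \Cref{lem:reduction bi MTF} gives $\Psi^+(m\tau)(\emptyset)=1$.
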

\begin{proof}
    It suffices to prove $\bMEI$ is symmetril. By \Cref{lem:reduction bi MTF}, $\mathcal{L}_m(\tau)$ is symmetril for any $m>0$. By a similar fact to \Cref{lem:f to F}, one can check that $\bMEI(\tau)$ is symmetril.
\end{proof}

\begin{thm}\label{thm:swap bMES}
    The bimould $\bMES$ is $\swap$-invariant, i.e. we have $\bMES(\sigma(w))=\bMES(w)$ for any $w\in\hha^1$.
\end{thm}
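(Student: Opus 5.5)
We have to show $\swap(\bMES(\tau))=\bMES(\tau)$ as bimoulds. Since $\bMES(\tau)=\bMEI(\tau)\times\bMZV$ with $\swap(\bMZV)=\bMZV$ (\cite{BI}), swap does not distribute over the mould product $\times$, so we cannot argue factor by factor. Instead we rewrite $\bMES$ as a single limit of mould products of symmetril pieces that are ``locally'' swap-compatible.

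\textbf{Step 1: one big product.} By \eqref{eq:def bMEI} and \Cref{lem:reduction bi MTF}, each $\bMTF{m}$ has the form $\bMZV\cdot L_m\cdot\widetilde{\bMZV}_-$ glued by flexions. Between consecutive $m_j>m_{j+1}$ a factor $\widetilde{\bMZV}_-$ meets a factor $\bMZV$, and after $m_s$ comes the final $\bMZV$. I would regroup $\bMES$ as
\[
\lim_{M\to\infty}\sum_{M>m_1>\dots>m_s>0}\bMZV\,\circ\,L_{m_1}\,\circ\,(\widetilde{\bMZV}_-\bMZV)\,\circ\,L_{m_2}\,\circ\cdots\circ\,L_{m_s}\,\circ\,(\widetilde{\bMZV}_-\bMZV),
\]
where $\circ$ denotes the flexion-glued concatenation, in the shape of $\gaxit$ from \Cref{subsec:Goncharov}. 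I also use the expansion $e^{-2\pi i m u}T^+(m\tau)(v)$, with $T^+(m\tau)(v)=\pi i+\sum_{k\ge1}\Psi_k(m\tau)v^{k-1}$. Writing $\Psi_k(m\tau)$ via the Lipschitz formula as $q$-series in $d$ (sums of $d^{k-1}q^{md}$) turns $L_m$ into a sum over $d>0$ of terms $e^{-2\pi i m u}e^{2\pi i d v}q^{md}$, up to the constant and the $\pi i$ correction. That correction is exactly what $\delta_{\pi i}$ in $\Psi^+$ and $\widetilde{\bMZV}$ is designed to absorb.

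\textbf{Step 2: local swap symmetry.} The key observation is that $\swap$ exchanges the roles of $u$ and $v$, so it exchanges $(m,d)$ in the exponential $e^{-2\pi i(mu-dv)}$ while reversing the order. In the limit, the summation over the chain $m_1>\dots>m_s$ with free $d_j$ then becomes a summation over the chain in the $d$'s with free $m$'s. This is the same mechanism as the conjugation-of-partitions proof of swap invariance of combinatorial bi-multiple Eisenstein series in \cite{BB}. Concretely, I would prove an identity on the level of generating series:
\[
\swap\big(\text{regrouped product}\big)=\text{same product with }(m_j,d_j)\text{ roles interchanged},
\]
using $\swap(\bMZV)=\bMZV$ for the outer factors and the previous lemma $\swap(\widetilde{\bMZV})=\nega(\widetilde{\bMZV})$ for the inner glue factors $\widetilde{\bMZV}_-\bMZV$. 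The $\nega$ and $\pari$ signs should be exactly compensated by the sign change of $u,v$ in the exponentials. Then I re-sum and take the limit.

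\textbf{Main obstacle.} The main obstacle is convergence and the order of limits. Interchanging the $m$-chain and the $d$-chain is only legitimate on absolutely convergent pieces, while $\Psi_1$ and the terms with $k=1$ are only conditionally convergent. The constant $\pi i$ in $T^+$ is what makes each $L_m$ have no constant Fourier term (cf.\ the remark after the definition of $\bMEI$). I would first prove the identity coefficientwise for a truncated version with $M$ and a cutoff $N$ on the $d$'s, where both sides are finite sums. I would then show the boundary terms vanish as $M,N\to\infty$ along the diagonal, using the $q$-decay of $q^{md}$ together with the explicit $e^{\pi i t}\sin(\pi t)/\pi$ form of the $\{1\}^n$ terms computed in the proof of \eqref{eq:reduction plus}. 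If the glue identity in Step 2 fails to close up exactly, I would fall back on proving it in depth $\le2$ by hand and then inducting on depth, using symmetrility (\Cref{thm:symmetril bMES}) together with the fact that swap commutes with the harmonic structure up to lower-depth terms.
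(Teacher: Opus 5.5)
Your plan is essentially the paper's argument: the paper writes $\bMES=\sum_{j}\bMES_j$ according to the number $j$ of $\mathcal{L}_{m}$-blocks and proves $\swap(\bMES_j)=\bMES_j$ for each $j$ by transplanting the manipulation of \cite[Theorem 6.26]{BB}. That is the conjugation-of-partitions mechanism your Step 2 invokes for fixed $s$, with the same inputs $\swap(\bMZV)=\bMZV$ and $\swap(\widetilde{\bMZV})=\nega(\widetilde{\bMZV})$. Three corrections: the Lipschitz expansion actually gives $L_m(\tau)\binom{u}{v}=-2\pi i\sum_{d>0}e^{-2\pi i(mu+dv)}q^{md}$, which is symmetric under $(m,u)\leftrightarrow(d,v)$ (so the $\nega$ in $\swap(\widetilde{\bMZV})=\nega(\widetilde{\bMZV})$ is matched by the $\nega$ built into $\widetilde{\bMZV}_-$, not by sign changes in the exponentials); this series is absolutely convergent coefficientwise for $\tau\in\HH$, so your truncation and boundary-term step is unnecessary; and your depth-induction fallback would not work, since a symmetril bimould can be $\swap$-invariant in depths $\le2$ without being $\swap$-invariant in depth $3$.
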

\begin{proof}
    For $j\geq0$, we define the bimould $\bMES_j=(\bMES_{j,r})_{r\geq0}$ as follows. In the case $j=0$ we set $\bMES_0=\bMZV$ and in general $\bMES_{j,r}=0$ for $j>r$. If $1\leq j\leq r$ we define
    \begin{align}
        \bMES_j\binom{u_1,\dots,u_r}{v_1,\dots,v_r}\coloneqq\summ{0=r_0<r_1<\cdots<r_j\leq r\\m_1>\cdots>m_j>0}\prod_{i=1}^j\mathcal{L}_{m_i}\binom{u_{r_{i-1}+1},\dots,u_{r_i}}{v_{r_{i-1}+1},\dots,v_{r_i}}\bMZV\binom{u_{r_j+1},\dots,u_r}{v_{r_j+1},\dots,v_r}.
    \end{align}
    This bimould is inspired by the bimould $\mathfrak{G}_j$ defined in \cite{BB}, which is defined to prove the $\swap$-invariance of the combinatorial multiple Eisenstein series. In our setting, we can show that $\swap(\bMES_j)=\bMES_j$ for any $j\geq0$ by the similar manipulation to \cite[Theorem 6.26]{BB}. Furthermore, by definitions of $\bMEI$, $\bMES$ and $\bMES_j$, we have
    \begin{align}
        \bMES\binom{u_1,\dots,u_r}{v_1,\dots,v_r}=\sum_{j=0}^r\bMES_j\binom{u_1,\dots,u_r}{v_1,\dots,v_r}.
    \end{align}
    Therefore, $\bMES$ is $\swap$-invariant.
\end{proof}

Let $\bmes$ be the space spanned by all bi-multiple Eisenstein series. By combining \Cref{thm:symmetril bMES} and \Cref{thm:swap bMES}, we know that the map $\bMES:\hha^1\rightarrow\mathcal{O}(\HH)$ factors through the quotient $\bmes^f$, i.e. we have the well-defined surjective algebra homomorphism $\bMES:\bmes^f\rightarrow\bmes$. This map commutes with the derivation $D$ in $\bmes^f$ and the derivative operator $2\pi i\frac{d}{d\tau}$.

\begin{prop}\label{prop:der bMES}
    For $k_1,\dots,k_r\geq1,d_1,\dots,d_r\geq0$, we have
    \begin{align}
        2\pi i\frac{d}{d\tau}G\binom{k_1,\dots,k_r}{d_1,\dots,d_r}(\tau)=\sum_{j=1}^rk_jG\binom{k_1,\dots,k_j+1,\dots,k_r}{d_1,\dots,d_j+1,\dots,d_r}(\tau).
    \end{align}
\end{prop}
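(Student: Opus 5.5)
The identity will be proved at the level of generating functions. In bimould language, the right-hand side of \Cref{prop:der bMES} corresponds to applying the first-order operator
\begin{align}
\mathcal{D}_{\bm{w}} \coloneqq \sum_{j=1}^r \partial_{u_j}\partial_{v_j}
\end{align}
to $\bMES(\tau)\binom{u_1,\dots,u_r}{v_1,\dots,v_r}$. To see this, write the generating coefficient as $\frac{u_j^{d_j}}{d_j!}v_j^{k_j-1}$. Applying $\partial_{u_j}\partial_{v_j}$ to $\frac{u^{d+1}}{(d+1)!}v^{k}$ produces the factor $k$ and lowers the indices back to $(d,k-1)$. Comparing coefficients therefore reduces the statement to the mould identity
\begin{align}
2\pi i\frac{d}{d\tau}\bMES(\tau)(\bm{w})=\sum_{j=1}^r\partial_{u_j}\partial_{v_j}\bMES(\tau)(\bm{w}).
\end{align}
Since $\bMES=\bMEI\times\bMZV$ and $\bMZV$ is independent of $\tau$, I would first check that $\mathcal{D}$ kills $\bMZV$. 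This should follow because $\bMZV=\zeta^\natural\times\zeta^\ast$, where $\zeta^\natural$ is a $u$-mould and $\zeta^\ast$ is a $v$-mould. On each block of the product, every summand depends only on the $u$'s or only on the $v$'s, so each mixed derivative $\partial_{u_j}\partial_{v_j}$ vanishes. Next, $\mathcal{D}$ acts as a derivation on the mould product $\times$ with the variable split. The cross terms vanish too, because each index $j$ lies in exactly one factor. Hence it suffices to prove the identity for $\bMEI$.

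By \eqref{eq:def bMEI}, $\bMEI$ is a (limit of) sums of products $\bMTF{m_1}(\bm{w}_1)\cdots\bMTF{m_s}(\bm{w}_s)$ over disjoint consecutive blocks. The operators $\frac{d}{d\tau}$ and $\mathcal{D}$ are both derivations on such products, and $\mathcal{D}$ splits blockwise. The problem therefore reduces to showing, for each $m$,
\begin{align}
2\pi i\frac{d}{d\tau}\bMTF{m}(\bm{w})=\mathcal{D}\,\bMTF{m}(\bm{w}).
\end{align}
Here I use \Cref{lem:reduction bi MTF}:
\begin{align}
\bMTF{m}(\bm{w})=e^{-2\pi i m|\bm{u}|}\bigl(\zeta^\natural\times\Psi^+(m\tau)\times\anti\circ\pari(\zeta^\natural)\bigr)(\bm{w}).
\end{align}
The outer factors $\zeta^\natural$ are pure $u$-moulds, and $\Psi^+(m\tau)$ is a pure $v$-mould. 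Apply $\mathcal{D}$ to a summand $\zeta^\natural(\bm{u}_1)\Psi^+(m\tau)(\bm{v}_2)\anti\pari\zeta^\natural(\bm{u}_3)$ multiplied by $e^{-2\pi i m|\bm{u}|}$. The only surviving mixed derivatives are those where $\partial_{u_j}$ hits the exponential, giving $-2\pi i m$, and $\partial_{v_j}$ hits $\Psi^+$ with $j$ in the middle block. The result is
\begin{align}
-2\pi i m\, e^{-2\pi i m|\bm{u}|}\,\zeta^\natural(\bm{u}_1)\Bigl(\sum_{j\in \text{block }2}\partial_{v_j}\Psi^+(m\tau)(\bm{v}_2)\Bigr)\anti\pari\zeta^\natural(\bm{u}_3).
\end{align}
On the other side, $2\pi i\frac{d}{d\tau}$ acts only on $\Psi^+(m\tau)$ and gives $2\pi i m\,(\Psi^+)'(m\tau)$. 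By \Cref{lem:der MTF} in generating-function form, $\frac{d}{dz}\Psi^+(z)(\bm{v})=-\sum_j\partial_{v_j}\Psi^+(z)(\bm{v})$, because $\partial_{v_j}v_j^{k_j}=k_jv_j^{k_j-1}$. The two sides therefore agree.

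The main obstacle is analytic: justifying the interchange of $\frac{d}{d\tau}$, and of coefficient extraction, with the limit $M\to\infty$ and the infinite sums over $m$ in \eqref{eq:def bMEI}. I would argue coefficientwise. Each coefficient of $\bMTF{m}$ is a finite combination of $\Psi^+_{\bm{k}}(m\tau)$ times polynomials in $m$. These functions have Fourier expansions in $q^m$ with zero constant term, so they decay like $O(m^{N}|q|^{m})$ locally uniformly on $\HH$. The nested sums then converge locally uniformly together with their derivatives, and term-by-term differentiation is legitimate by Weierstrass' theorem.
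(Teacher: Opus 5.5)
Your proof is correct and follows the paper's argument. You reduce to the generating-function identity $2\pi i\frac{d}{d\tau}\bMEI(\tau)=\sum_j\partial_{u_j}\partial_{v_j}\bMEI(\tau)$, using that $\bMZV$ is annihilated by this operator, and then verify it blockwise on $\mathcal{L}_m(\tau)$ via \Cref{lem:reduction bi MTF} and \Cref{lem:der MTF}. Your justifications of the derivation property and of term-by-term differentiation spell out steps the paper leaves implicit.
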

\begin{proof}
    It suffices to show
    \begin{align}\label{eq:bMEI der}
        2\pi i\frac{d}{d\tau}\bMEI(\tau)\binom{u_1,\dots,u_r}{v_1,\dots,v_r}=\sum_{j=1}^r\frac{\partial}{\partial u_j}\frac{\partial}{\partial v_j}\bMEI(\tau)\binom{u_1,\dots,u_r}{v_1,\dots,v_r}.
    \end{align}
    By \Cref{lem:der MTF}, we have
    \begin{align}
        2\pi i\frac{d}{d\tau}\Psi^+(m\tau)(v_1,\dots,v_r)=(-2\pi im)\sum_{j=1}^r\frac{\partial}{\partial v_j}\Psi^+(m\tau)(v_1,\dots,v_r),
    \end{align}
    and by \Cref{lem:reduction bi MTF}, we have
    \begin{align}
        2\pi i\frac{d}{d\tau}\mathcal{L}_m(\tau)\binom{u_1,\dots,u_r}{v_1,\dots,v_r}
        &=\sum_{0\leq i\leq j\leq r}\sum_{l=i+1}^j\frac{\partial}{\partial u_l}\frac{\partial}{\partial v_l}e^{-2\pi im|\bm{u}|}(-1)^{r-j}\zeta^\natural(u_1,\dots,u_i)\\
        &\quad\times\Psi^+(m\tau)(v_{i+1},\dots,v_j)\zeta^\natural(u_r,\dots,u_{j+1})\\
        &=\sum_{l=1}^r\frac{\partial}{\partial u_l}\frac{\partial}{\partial v_l}\mathcal{L}_m(\tau)\binom{u_1,\dots,u_r}{v_1,\dots,v_r}.
    \end{align}
    By definition of $\bMEI$ \eqref{eq:def bMEI}, we have \eqref{eq:bMEI der}.
\end{proof}

\section{Explicit formula for the derivative}\label{sec:der}

In this section, we provide an explicit closed formula for the derivative of $\mes^f$ and $\mes$ by using the Drop1 operator, which was introduced by Hirose, Maesaka, Seki and Watanabe in \cite{HMSW}.

\subsection{Drop1 operator}\label{subsec:Drop1}

In this subsection, we briefly review the results from \cite{HMSW}. Then, we provide some explicit formulas for the Drop1 operator. 

\begin{dfn}[\cite{HMSW}]\label{def:drop1operator}
    Define the Drop1 operator as the $\QQ$-linear map $\mathcal{D}:\ha^0\to\ha^{\ge2}$ defined for a word $w=x^{c_1}y^{c_2}\cdots x^{c_{2s-1}}y^{c_{2s}}\in\ha^0$ by $\mathcal{D}(w)=\mathfrak{D}(\bm{c})=\mathfrak{D}(c_1,\dots,c_{2s})$. Here $\mathfrak{D}$ is recursively given by
    \begin{align}
        \mathfrak{D}(\bm{c})&=\summ{A\subset\lbrack2s\rbrack^1_{\bm{c}}\\A:\even-\odd}\summ{B\subset\lbrack2s\rbrack^{>1}_{\bm{c}}\\\#A+\#B\ge1\\\{2r,2r+1\}\not\subset B\;(r\in\lbrack s-1\rbrack)}(-1)^{\#B-1}x^{\#A+\#B}\mathfrak{D}(\bm{c}_{(-A)}-\delta_B)\\
        &+\quad\summ{A\subset\lbrack2s\rbrack^1_{\bm{c}}\\A:\even-\odd}\summ{B\subset\lbrack2s\rbrack^{>1}_{\bm{c}}\\\#A+\#B\ge2\\\{2r,2r+1\}\not\subset B\;(r\in\lbrack s-1\rbrack)}(-1)^{\#B-1}x^{\#A+\#B-1}y\mathfrak{D}(\bm{c}_{(-A)}-\delta_B)\\
        &\quad+\summ{A\subset\lbrack2s\rbrack^1_{\bm{c}}\\A:\odd-\even}\summ{B\subset\lbrack2s\rbrack^{>1}_{\bm{c}}\\\#A+\#B\ge2\\\{2r-1,2r\}\not\subset B\;(r\in\lbrack s\rbrack)}(-1)^{\#B}x^{\#A+\#B-1}y\mathfrak{D}(\bm{c}_{(-A)}-\delta_B),
    \end{align}
    and $\mathcal{D}(1)=\mathfrak{D}(\varnothing)=1$. 
\end{dfn}
We describe the notation used here, which follows that of the original definition in \cite{HMSW}. 
\begin{enumerate}[(i)]
    \item For a positive integer $n$ we write  $\lbrack n\rbrack=\{1,2,\dots,n\}$ and set $\lbrack0\rbrack=\varnothing$.
    \item Given a tuple of positive integers $\bm{c}=(c_1,c_2,\dots,c_{2s})$ we set $$\lbrack 2s\rbrack^1_{\bm{c}}=\{i\in\lbrack 2s\rbrack\mid c_i=1\} \quad  \text{and}\quad  \lbrack 2s\rbrack^{>1}_{\bm{c}}=\{i\in\lbrack 2s\rbrack\mid c_i>1\}.$$
    \item We say that a subset $A\subset\lbrack 2s\rbrack$ is \emph{even-odd} (resp. \emph{odd-even}) if whenever an even (resp. odd) integer $i$ belongs to $A$, then $i+1$ also belongs to $A$, and whenever an odd (resp. even) integer $i$ belongs to $A$, then $i-1$ also belongs to $A$.
    \item For a tuple of positive integers $\bm{c}=(c_1,\dots,c_{2s})$ and a subset $A\subset\lbrack 2s\rbrack$, we define $\bm{c}_{(-A)}$ by $(c_{i_1},\dots,c_{i_t})$, where $i_1,\dots,i_t$ are the elements of $\lbrack2s\rbrack\setminus A$ ordered in increasing order.
    \item Furthermore, for a subset $B\subset\lbrack2s\rbrack$, we define $\bm{c}_{(-A)}-\delta_B$ by $(c_{i_1}-\delta_{i_1\in B},\dots,c_{i_t}-\delta_{i_t\in B})$, where $\delta_{i\in B}=1$ if $i\in B$ and $\delta_{i\in B}=0$ if $i\notin B$.
\end{enumerate}

This operator arises from a calculation of a sequence of rational numbers $(\zeta^\diamondsuit_N(\bm{k}))_{N>0}$, called \emph{multiple zeta diamond values}, defined for an admissible index $\bm{k}=(k_1,\dots,k_r)$ by
\begin{align}\label{eq:def dia}
    \zeta^\diamondsuit_N(\bm{k})\coloneqq\sum_{A\subset\lbrack r\rbrack^1_{\bm{k}}}\sum_{(n_1,\dots,n_r)\in S_{r,N}(A)}\left(\prod_{i\in A}\frac{1}{N-n_i}\right)\left(\prod_{i\in\lbrack r\rbrack\setminus A}\frac{1}{n_i^{k_i}}\right),
\end{align}
where 
\begin{align}
    S_{r,N}(A)\coloneqq\left\{(n_1,\dots,n_r)\in\lbrack N-1\rbrack^r\relmid\ \begin{array}{cc} 
n_{i-1}\geq n_{i}  & \text{ if } i\in A, \\ 
n_{i-1} > n_i & \text{ if } i\in \{2,\dots,r\}\setminus A
\end{array} \right\}.
\end{align}
This sequence converges to a multiple zeta value, i.e. $\lim_{N\to\infty}\zeta^\diamondsuit_N(\bm{k})=\zeta(\bm{k})$. Let $\zeta^\diamondsuit:\ha^0\to\QQ^\NN$ be the evaluation map that sends $z_{k_1}\cdots z_{k_r}$ to $(\zeta^\diamondsuit_N(\bm{k}))_{N>0}$. We review some of the results obtained for these objects.
\begin{thm}[\cite{HMSW}]\label{thm:Drop1}
    For any $w\in\ha^0$, it holds $(\mathcal{D}-\id)(w)\in\ker\zeta^\diamondsuit$. In particular, $\zeta(\mathcal{D}(w))=\zeta(w)$ and thus every (admissible) multiple zeta value can be written as a $\ZZ$-linear combination of $\zeta(\bm{k})$ with $\bm{k}\in\mathbb{I}^{\ge2}$.
\end{thm}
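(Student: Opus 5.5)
The plan is to deduce the ``in particular'' part from the kernel statement, and to prove the kernel statement by induction along the recursion defining $\mathfrak{D}$.

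\textbf{The consequence.} Suppose $(\mathcal{D}-\id)(w)\in\ker\zeta^\diamondsuit$ for every $w\in\ha^0$. Then $\zeta^\diamondsuit_N(\mathcal{D}(w))=\zeta^\diamondsuit_N(w)$ for all $N$.
\begin{itemize}
\item Both $w$ and $\mathcal{D}(w)\in\ha^{\ge2}$ are combinations of admissible words.
\item Hence we may let $N\to\infty$ and use $\lim_{N\to\infty}\zeta^\diamondsuit_N(\bm{k})=\zeta(\bm{k})$ on each word. This gives $\zeta(\mathcal{D}(w))=\zeta(w)$.
\item Every coefficient in the recursion for $\mathfrak{D}$ is $\pm1$ and the recursion terminates with $\mathfrak{D}(\varnothing)=1$. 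So $\mathcal{D}(w)$ is a $\ZZ$-linear combination of words in $\ha^{\ge2}$.
\end{itemize}
This yields the $\ZZ$-linear expression for every admissible multiple zeta value in terms of $\zeta(\bm{k})$ with $\bm{k}\in\mathbb{I}^{\ge2}$.

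\textbf{Setting up the induction.} For the kernel statement I would induct on the weight, or equivalently on the length $\sum c_i$ of the word $x^{c_1}y^{c_2}\cdots y^{c_{2s}}$. This mirrors the recursive definition of $\mathfrak{D}$, in which every recursive call is on $\bm{c}_{(-A)}-\delta_B$, a word of strictly smaller length.
\begin{itemize}
\item Assume $\zeta^\diamondsuit(\mathcal{D}(w'))=\zeta^\diamondsuit(w')$ for all shorter $w'$.
\item Applying $\zeta^\diamondsuit$ to the recursion, it suffices to prove a \emph{one-step identity} for each $N$. It states that $\zeta^\diamondsuit_N(w)$ equals the signed sum, over the admissible pairs $(A,B)$, of $\zeta^\diamondsuit_N$ of the words $x^{\#A+\#B}w'$ and $x^{\#A+\#B-1}yw'$, where $w'$ is the word of $\bm{c}_{(-A)}-\delta_B$.
\item The words $x^{\#A+\#B}w'$ and $x^{\#A+\#B-1}yw'$ are exactly those produced by the recursion. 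The induction hypothesis then lets us replace $w'$ by $\mathcal{D}(w')$.
\item One point needs care at this step. The hypothesis gives $\zeta^\diamondsuit(w')=\zeta^\diamondsuit(\mathcal{D}(w'))$, but we need this equality after prefixing $x^m$ or $x^{m-1}y$. So I would strengthen the induction to the statement that $\ker\zeta^\diamondsuit$ is stable under these left prefixings, which follows by writing the diamond sum with the outermost variable $n_1$ summed last.
\end{itemize}

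\textbf{The one-step identity.} I would work at the level of the finite sums \eqref{eq:def dia}, in three stages.
\begin{itemize}
\item The entries $k_i=1$ are the only places where the factor $1/(N-n_i)$ and a non-strict inequality appear. The blocks $y^{c}$ with $c>1$ correspond to runs of ones, the blocks $x^{c}$ to the exponents.
\item I would apply the partial fraction identity $\frac{1}{n(N-n)}=\frac1N\bigl(\frac1n+\frac1{N-n}\bigr)$ together with telescoping over the non-strict chains $n_{i-1}\ge n_i$.
\item This expresses the contribution of each run as an alternating sum over ways of removing one letter from selected blocks. These removals are the $\delta_B$, and the constraint $\{2r,2r+1\}\not\subset B$ excludes double counting at a boundary between adjacent blocks. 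Deleting entire blocks of length one gives the even-odd and odd-even sets $A$.
\end{itemize}

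\textbf{Main obstacle.} The combinatorial bookkeeping in the one-step identity is where I expect the difficulty. One must show that after the telescoping every boundary term cancels except those indexed exactly by the pairs $(A,B)$ with the stated parity and adjacency conditions, and with the stated signs $(-1)^{\#B-1}$ and $(-1)^{\#B}$. I would first verify this in depth at most $2$ and for words with $s=1$, to fix the signs, and then organise the general case by conditioning on the first block of the word.
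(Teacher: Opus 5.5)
Your deduction of the ``in particular'' part from the kernel statement is fine: you pass to the limit word by word, the recursion has coefficients $\pm1$, and its outputs lie in $\ha^{\ge2}$. The gap is in the kernel statement itself. Your induction rests on two auxiliary claims, and both already fail for $w=xy^3=z_2z_1z_1$.

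\textbf{The example.} Here $\bm{c}=(1,3)$, and the only admissible pair is $(A,B)=(\varnothing,\{2\})$. So $\mathfrak{D}(1,3)=x\,\mathfrak{D}(1,2)$, with $w'=xy^2=z_2z_1$ and $\mathcal{D}(w')=x^2y=z_3$.
\begin{itemize}
\item Your one-step identity would give $\zeta^\diamondsuit_N(2,1,1)=\zeta^\diamondsuit_N(x\cdot xy^2)=\zeta^\diamondsuit_N(3,1)$.
\item Your prefix-stability would give $\zeta^\diamondsuit_N(3,1)=\zeta^\diamondsuit_N(x\cdot x^2y)=\zeta^\diamondsuit_N(4)$.
\item A direct computation from the definition at $N=3$ gives $\zeta^\diamondsuit_3(2,1,1)=\zeta^\diamondsuit_3(4)=17/16$ but $\zeta^\diamondsuit_3(3,1)=13/16$. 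In the limit, $\zeta(3,1)=\zeta(4)/4$ while $\zeta(2,1,1)=\zeta(4)$.
\end{itemize}
So each of your two steps is false. Only their composite, $\zeta^\diamondsuit_N(2,1,1)=\zeta^\diamondsuit_N(4)$, holds, and that is the statement itself. The recursion for $\mathfrak{D}$ is valid only with the tails already reduced. There is no identity among the values $\zeta^\diamondsuit_N$ of the unreduced words $x^{m}w'$ and $x^{m-1}yw'$ to prove, so sharper bookkeeping in the ``one-step identity'' cannot rescue the scheme.

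\textbf{Why prefix-stability fails.} The failure occurs exactly where you hoped summing $n_1$ last would help. The relation $\zeta^\diamondsuit_N(2,1)=\zeta^\diamondsuit_N(3)$ holds only after summing over $n_1$. At $N=3$ the $n_1$-summands are $1/2$ and $5/8$ on one side, versus $1$ and $1/8$ on the other; both totals are $9/8$. Prefixing $x$ multiplies the $n_1$-summand by $1/n_1$, which gives $1/2+5/16=13/16$ versus $1+1/16=17/16$.

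The inner sums depend jointly on $n_1$ and $N$, through the factors $1/(N-n_i)$ and the range $[N-1]$. Hence vanishing of a combination of $\zeta^\diamondsuit_N$ for every $N$ says nothing about the individual summands. An induction along the recursion would need a refined hypothesis on the tails, finer than equality of their $\zeta^\diamondsuit$-values and genuinely compatible with prefixing. The naive refinement, fixing $n_1$, is false by the same example. Supplying that mechanism is the real content of the theorem, which the paper does not prove but cites from \cite{HMSW}.
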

Although no algebraic interpretation of the Drop1 operator $\mathcal{D}$ is known so far, the following result is shown in \cite{HMSW}.
\begin{lem}[Corollary of {\cite[Proposition 3.1]{HMSW}}]\label{lem:Dhom}
    For $u\in\ha^0$ and $v\in\ha^{\ge2}$, we have
    \begin{align}
        \mathcal{D}(u\ast v)=\mathcal{D}(u)\ast v.
    \end{align}
\end{lem}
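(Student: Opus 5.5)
The statement is a corollary of \cite[Proposition 3.1]{HMSW}, so I will deduce it from that proposition rather than from the recursive definition of $\mathfrak{D}$. By bilinearity of $\ast$ and linearity of $\mathcal{D}$, it suffices to treat words: $u=z_{k_1}\cdots z_{k_r}$ with $k_1\ge2$, and $v=z_{l_1}\cdots z_{l_t}$ with all $l_j\ge2$. Moreover, since $\ast$ is associative and $v$ is (up to harmonic products and words of smaller depth) generated by single letters $z_l$ with $l\ge2$, I will first reduce to the case $v=z_l$ with $l\ge2$. Concretely, I will use induction on the depth of $v$ via the recursive formula $z_l v' = z_l\ast v' - (\text{terms of the form } z_{l+l'}\cdots \text{ and shuffles inside})$. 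Because all letters of $v$ are $\ge2$, every word produced in this inductive rewriting again lies in $\ha^{\ge2}$. This preserves the hypothesis, so linearity together with the identity $\mathcal{D}(u\ast z_l)=\mathcal{D}(u)\ast z_l$ yields the general case. Alternatively, if \cite[Proposition 3.1]{HMSW} is already stated for general $v\in\ha^{\ge2}$, this reduction is unnecessary.

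The core step is to identify what \cite[Proposition 3.1]{HMSW} actually asserts. I expect it to say that the diamond evaluation $\zeta^\diamondsuit$ is compatible with the harmonic product when one factor has all entries $\ge2$ (the sets $\lbrack r\rbrack^1_{\bm k}$ only see entries equal to $1$, so letters $\ge2$ enter the stuffle in the ordinary way), and that $\mathfrak{D}$ acts only on the blocks $x^{c_i}y^{c_j}$ involving ones. The plan is to compare the recursion of \Cref{def:drop1operator} on $u\ast v$ with that on $u$. Each word in $u\ast v$ is obtained by inserting or merging letters $z_{l}$ ($l\ge2$) into $u$. In the $x,y$-encoding, inserting $x^{l-1}y$ creates no new run of $y$'s of length $>1$ and no new $c_i=1$ in odd position that would be dropped. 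Therefore the sets $A,B$ in the recursion correspond bijectively to those for $u$, extended by positions coming from $v$ that are never selected, or are selected in a way that cancels. Under this correspondence, the recursion for $\mathfrak{D}(u\ast v)$ becomes the stuffle of the recursion for $\mathfrak{D}(u)$ with $v$. I would then prove this by induction on the weight (or on the number of $1$'s) of $u$, using the recursion.

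The main obstacle is the merging terms $z_{k+l}$ in the stuffle, where a letter $z_1$ of $u$ merges with $z_l$ to give $z_{l+1}$. This changes the set of positions with $c_i=1$, so the naive bijection of $(A,B)$ breaks. I would handle it by grouping the terms of $u\ast v$ according to the fate of each $z_1$ in $u$: it either survives (and is treated by $\mathfrak{D}$ as before) or merges (and becomes an ordinary letter $\ge2$ that $\mathfrak{D}$ leaves alone). I would then check, using the sign $(-1)^{\#B}$ bookkeeping, that these two contributions recombine into $\mathfrak{D}(u)\ast v$. If this direct check proves too messy, a fallback is to invoke \cite[Proposition 3.1]{HMSW} in its stated form and verify only that its hypotheses cover $v\in\ha^{\ge2}$.
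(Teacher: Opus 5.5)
There is a genuine gap: the step that actually proves the identity is missing. Your core argument, a term-by-term comparison of the recursion for $\mathfrak{D}$ on $u\ast v$ with the recursion on $u$, is only announced, and the premises you state for it are false. For $u=z_2z_1$ one has $\bm{c}=(1,2)$. The term $z_2z_3z_1$ of $u\ast z_3$ has $\bm{c}=(1,1,2,2)$: the run $y^2$ is split, which produces a new even-position entry $1$. The term $z_2z_2z_1$ of $u\ast z_2$ has $\bm{c}=(1,1,1,2)$, with a new odd-position entry $1$ coming from $x^1$. So positions coming from $v$ can enter $A$ or $B$, and there is no evident bijection with the pairs $(A,B)$ for $u$. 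The sign bookkeeping you postpone is therefore the whole difficulty.

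Your fallback does not help either, because \cite[Proposition 3.1]{HMSW} is not an identity of words. As used in the paper, it only yields $\mathcal{D}(u\ast v)-\mathcal{D}(u)\ast v\in\ker\zeta^\diamondsuit$. Your own guess about that proposition would give the same thing: if $\zeta^\diamondsuit$ respects $\ast$ against factors in $\ha^{\ge2}$, then \Cref{thm:Drop1} shows that both sides have the same image under $\zeta^\diamondsuit$. This uses that on $\ha^{\ge2}$ the map $\zeta^\diamondsuit$ is the multiple harmonic sum $(\zeta_N(\bm{k}))_{N>0}$, which respects $\ast$. What you are missing is the passage from equal diamond values to equal words. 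Both $\mathcal{D}(u\ast v)$ and $\mathcal{D}(u)\ast v$ lie in $\ha^{\ge2}$, and the sequences $\zeta^\diamondsuit(\bm{k})$, $\bm{k}\in\mathbb{I}^{\ge2}$, are linearly independent \cite[Chapter 2]{HMSW}. Hence $\ker\zeta^\diamondsuit\cap\ha^{\ge2}=\{0\}$ and the difference vanishes. With this injectivity, no manipulation of the recursion for $\mathfrak{D}$ is needed.

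Your preliminary reduction to $v=z_l$ is also invalid. Associativity does show that the identity for $v_1$ and for $v_2$ (for all $u$) implies it for $v_1\ast v_2$, since $u\ast v_1\in\ha^0$. But $\ha^{\ge2}_\ast$ is not generated by its letters. Already in weight $5$, the products of letters span only $z_5$ and $z_2\ast z_3=z_2z_3+z_3z_2+z_5$, so $z_2z_3$ is never reached. (Under the identification with quasi-symmetric functions, the letters generate only the symmetric functions.) Correspondingly, your induction on depth does not close. The difference $z_l\ast v'-z_lv'$ contains the insertion terms $v_1'z_lv_2'$ with $v_1'$ nonempty, and these have the same depth as $z_lv'$.
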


\begin{proof}
    By \cite[Proposition 3.1]{HMSW}, for any $u\in\ha^0$ and $v\in\ha^{\ge2}$, we have
    \begin{align}
        \mathcal{D}(u\ast v)-\mathcal{D}(u)\ast v\in\ker\zeta^\diamondsuit\cap\ha^{\ge2}.
    \end{align}
    Since it is shown that $\{\zeta^\diamondsuit(k_1,\dots,k_r)\mid r\ge1,k_1,\dots,k_r\ge2\}$ are linearly independent (see \cite[Chapter 2]{HMSW}), we have $\ker\zeta^\diamondsuit\cap\ha^{\ge2}=\{0\}$.
\end{proof}

\begin{rem}\label{rem:drop1 identity}
    The Drop1 operator acts as the identity on $\ha^{\ge2}$, i.e. $\mathcal{D}(w)=w$ for any $w\in\ha^{\ge2}$. Indeed, by \Cref{thm:Drop1} we have $(\mathcal{D}-\id)(w)\in\ker\zeta^\diamondsuit\cap\ha^{\ge2}$, and this space is $\{0\}$ as seen in the proof of \Cref{lem:Dhom}. We will use this fact frequently in the following.
\end{rem}

Giving a non-recursive expression for the Drop1 operator seems to be difficult, however, simple expressions exist in some cases.

\begin{lem}[\cite{Se}]\label{lem:Seki}
    For any $2\le j\le i\le r$, we have
    \begin{align}
        \mathcal{D}(z_2^{j-1}z_1z_2^{i-j}z_3 z_2^{r-i})=z_2^{i-j}z_3z_2^{j-2}z_3z_2^{r-i}+z_2^{i-j}z_3z_2^{r-i}z_3z_2^{j-2}+z_2^{r+1}.
    \end{align}
\end{lem}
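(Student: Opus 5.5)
Since $\mathcal{D}$ is defined by the recursion in \Cref{def:drop1operator}, I would prove the formula by evaluating that recursion directly on $w=z_2^{j-1}z_1z_2^{i-j}z_3z_2^{r-i}$. I would not look for any structural shortcut. In the $x,y$ alphabet we have $z_2=xy$, $z_1=y$ and $z_3=x^2y$, so
\[
w=(xy)^{j-1}\,y\,(xy)^{i-j}\,x^2y\,(xy)^{r-i}.
\]
The block $y\,y$ merges into $y^2$, which gives the exponent tuple
\[
\bm{c}=(1,1,\dots,1,2,1,\dots,1,2,1,\dots,1),
\]
with $c_{2(j-1)}=2$ and $c_{2i-1}=2$ (the $x^2$ of $z_3$). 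All other entries equal $1$, and $s=r$. Hence $[2s]^{>1}_{\bm{c}}=\{2j-2,\,2i-1\}$, so $B$ ranges over at most four subsets. The sets $A$ range over even--odd or odd--even subsets of the remaining positions. The first step is to list all admissible pairs $(A,B)$ in each of the three sums and record the resulting tuple $\bm{c}_{(-A)}-\delta_B$ together with the prefix $x^{\#A+\#B}$ or $x^{\#A+\#B-1}y$.

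The key simplification should be this. Whenever $\bm{c}_{(-A)}-\delta_B$ corresponds to a word in $\ha^{\ge2}$, \Cref{rem:drop1 identity} says $\mathfrak{D}$ acts on it as the identity. In every other case, the tuple obtained by deleting the pairs in $A$ and lowering the entries in $B$ should again have the shape $z_2^{a}z_1z_2^{b}z_3z_2^{c}$ of smaller size, or a degenerate shape such as $z_2^{a}z_1z_2^{b}$. So I would run an induction on $r$ (or on the weight) with the statement of the lemma itself as the induction hypothesis. I would supplement it with auxiliary values like $\mathcal{D}(z_2^{a}z_1z_2^{b})$, computed by the same recursion (I expect something like $z_2^{b}z_3z_2^{a-1}$-type terms), and handle the case $j=2$ or $i=j$ separately.

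After substitution, each contribution is a concatenation of an $x$-power prefix with a known $\ha^{\ge2}$ word. The final step is bookkeeping: collect the terms and show that everything cancels except $z_2^{i-j}z_3z_2^{j-2}z_3z_2^{r-i}$, $z_2^{i-j}z_3z_2^{r-i}z_3z_2^{j-2}$ and $z_2^{r+1}$.

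The main obstacle is the enumeration of the even--odd and odd--even sets $A$ among the long runs of $1$'s. The number of such $A$ grows with $r$, so their contributions must telescope with alternating signs $(-1)^{\#B}$. My first attempt would be to group the terms by the resulting word, and to show that for each fixed word the signed count of pairs $(A,B)$ producing it is $0$ or $1$, using the inductive hypothesis for the inner $\mathfrak{D}$. If the telescoping proves unmanageable, I would fall back on \Cref{thm:Drop1} together with the injectivity of $\zeta^\diamondsuit$ on $\ha^{\ge2}$ from the proof of \Cref{lem:Dhom}. By that injectivity, the lemma is equivalent to the identity
\[
\zeta^\diamondsuit_N(w)=\zeta^\diamondsuit_N\bigl(z_2^{i-j}z_3z_2^{j-2}z_3z_2^{r-i}+z_2^{i-j}z_3z_2^{r-i}z_3z_2^{j-2}+z_2^{r+1}\bigr)\quad\text{for all }N,
\]
and I would attempt to prove it by partial fractions in the variable $n$ attached to the $z_1$ position of \eqref{eq:def dia}.
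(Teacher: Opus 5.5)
Your preliminary analysis is correct. With $z_2=xy$, $z_1=y$, $z_3=x^2y$ one gets $s=r$ and $[2s]^{>1}_{\bm c}=\{2j-2,2i-1\}$. Every inner argument $\bm c_{(-A)}-\delta_B$ is of one of three kinds:
\begin{itemize}
\item in $\ha^{\ge2}$, if $2j-2\in B$;
\item of the form $z_2^az_1z_2^b$ with $a\ge1$, if $B=\{2i-1\}$;
\item of the lemma's shape with smaller weight, if $B=\emptyset$.
\end{itemize}
The reformulation through $\zeta^\diamondsuit$ and its injectivity on $\ha^{\ge2}$ is also a valid equivalence.

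\textbf{The gap.} The whole content of the lemma is the step you call bookkeeping, and the proposal gives no mechanism for it. The cancellation in the recursion of \Cref{def:drop1operator} is not local in $(A,B)$. It runs across the three sums, and it only shows up after the inductive values are substituted and the prefixes $x^{\#A+\#B}$ are merged with the first letter of the inner words.

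For example, take $w=z_2z_1z_3z_2$, i.e.\ $\bm c=(1,2,2,1,1,1)$.
\begin{itemize}
\item The first-sum terms with $A=\{4,5\}$ are $-x^2\mathcal{D}(z_2z_1z_3)$, $x^3\mathcal{D}(z_2z_3)$ and $x^3\mathcal{D}(z_2z_1z_2)$. They add up to $-z_4z_2z_2$.
\item This is cancelled by the third-sum term with $A=\{5,6\}$, $B=\{2,3\}$.
\item For the same $B$, the second-sum terms with even--odd $A=\{4,5\}$ cancel the third-sum terms with odd--even $A=\{5,6\}$, because both deletions give the same tuple.
\end{itemize}

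Which terms survive also depends on the case. For $i=j$ the $A=\emptyset$ terms give the right-hand side. For $i>j$ the single third-sum term $xy\,\mathfrak{D}(\bm c_{(-\{2j-1,2j\})})$ already equals it by induction. So in that case all other terms, the $A=\emptyset$ ones included, must cancel in total.

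A proof along your lines therefore needs an explicit sign-reversing matching of all pairs $(A,B)$, valid for arbitrary run lengths $j-1$, $i-j$, $r-i$. That matching is exactly what is missing. Two further problems:
\begin{itemize}
\item Your proposed target (``the signed count for each word is $0$ or $1$'') is not right as stated. When $r-i=j-2$ the two $z_3$-words coincide; for instance $\mathcal{D}(z_2z_1z_3)=2z_3z_3+z_2^3$.
\item You also need the exact value of $\mathcal{D}(z_2^az_1z_2^b)$. Small cases give $z_2^bz_3z_2^{a-1}$, but this is a statement of the same kind and is left open.
\end{itemize}

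\textbf{The fallback.} It does not close the gap either. The identity $\zeta^\diamondsuit_N(w)=\zeta_N(\text{right-hand side})$ for all $N$ is just the lemma restated. Carrying out the harmonic-sum computation at the $z_1$ position is exactly how the paper proves \Cref{thm:drop only one 1}. That computation gives $\mathcal{D}(w)$ as an alternating sum of $\ast$- and $\shuffle$-products containing many words with $z_1$ that must cancel. The authors explicitly note that \Cref{lem:Seki} does not naively follow from that formula.

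\textbf{The paper's route.} The paper does no such computation. It obtains the lemma by specializing Seki's general result \cite[Theorem 1.3]{Se} to $w_1=z_2^{r-i}z_3z_2^{i-j}$, $w_2=z_2^{j-2}z_3$ (with the order of indices reversed) and evaluating $w_1\star w_2$. A completed version of your approach would give a self-contained proof from the definition of $\mathcal{D}$. As it stands, though, it identifies the difficulty rather than resolving it.
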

\begin{proof}
This follows as a special case of \cite[Theorem 1.3]{Se} by setting $w_1=z_2^{r-i} z_3 z_2^{i-j}$ and $w_2= z_2^{j-2}z_3$. We note that the order in \cite{Se} is reversed compared to the convention used here. The result is then obtained by evaluating $w_1 \star w_2$, which yields the right-hand side of the formula above.
\end{proof}


\begin{lem}\label{lem:drop1shufflez2}
For $r\geq 0$ we have
    \begin{align}
    \mathcal{D}(z_2^r\shuffle z_2)=4z_2^r\ast z_2-(r+1)(2r+3)z_2^{r+1}.
\end{align}
\end{lem}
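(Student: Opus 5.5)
Compute $z_2^r\shuffle z_2$ explicitly, apply $\mathcal{D}$ term by term using \Cref{lem:Seki} and \Cref{rem:drop1 identity}, and compare with $z_2^r\ast z_2$.

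\textbf{Step 1: expand the shuffle.} Write $z_2=xy$ and $z_2^r=(xy)^r$. The shuffle $(xy)^r\shuffle xy$ is obtained by inserting the letters $x$ and $y$ of the second factor into $(xy)^r$ with the $x$ before the $y$. The resulting words are of two kinds:
\begin{itemize}
\item words $z_2^{r+1}$, coming from interleavings that recreate only $xy$ blocks;
\item words $z_2^{a}z_1z_2^{b}z_3z_2^{c}$ with $a\ge1$ and $a+b+c=r-1$, arising when the inserted $x$ lands after some $y$ (creating a $z_1$) and the inserted $y$ lands later (creating a $z_3$).
\end{itemize}
Words in which $z_3$ precedes $z_1$ cannot occur, because the new $x$ comes first. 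Counting placements (insert $x$ into one of $2r+1$ gaps, then $y$ into a gap at or after it) gives a multiplicity for each word. I expect
\begin{align}
z_2^r\shuffle z_2=\frac{(r+1)(r+2)}{2}\cdot\mathrm{const}\cdot z_2^{r+1}+\sum_{\substack{a+b+c=r-1\\ a\ge1}} m_{a,b,c}\, z_2^{a}z_1z_2^{b}z_3z_2^{c},
\end{align}
where a careful count yields explicit constants. Most likely $m_{a,b,c}=1$, and the total number of terms is $\binom{2r+2}{2}$.

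\textbf{Step 2: apply $\mathcal{D}$.} On $z_2^{r+1}$, \Cref{rem:drop1 identity} gives the identity. On $z_2^{a}z_1z_2^{b}z_3z_2^{c}$, put $j=a+1$, $i=a+b+1$, $r\to r$ in \Cref{lem:Seki}. This gives
\begin{align}
\mathcal{D}(z_2^{a}z_1z_2^{b}z_3z_2^{c})=z_2^{b}z_3z_2^{a-1}z_3z_2^{c}+z_2^{b}z_3z_2^{c}z_3z_2^{a-1}+z_2^{r+1}.
\end{align}

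\textbf{Step 3: compare with the harmonic product.} We have
\begin{align}
z_2^r\ast z_2=(r+1)z_2^{r+1}+\sum_{p+q=r-1} z_2^pz_4z_2^q.
\end{align}
This contains $z_4$, whereas the right-hand side of Step 2 contains no $z_4$. So before comparing, I must double-check the stated formula. Indeed, $4z_2^r\ast z_2$ has $z_4$-terms, so the words produced in Steps 1--2 must contain $z_3z_3$ adjacencies turning into... no.

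The correct route is therefore to recheck Step 1. The inserted $x$ and $y$ can also land adjacent to each other, in a position that creates $x^2y\cdot$ patterns. This produces words $z_2^a z_3 z_1 z_2^c$ (inserted $x$ before an existing $x$) and words $z_2^a z_1 z_3 z_2^c$; these generate $z_4$ in the image of $\mathcal{D}$ through the Drop1 recursion. Such words satisfy $b=0$ in \Cref{lem:Seki}, and that lemma gives $z_3z_2^{a-1}z_3\ldots$ rather than $z_4$. Words of the form $z_3z_1$, however, fall outside \Cref{lem:Seki} and must be handled separately. For these I would run the recursion of \Cref{def:drop1operator} on $\bm c=(1,1,\dots,2,1,0,\dots)$-type exponent vectors, where only $A$ of size $2$ around the single $y$-run of $z_1$ contributes. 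I expect this to yield $\mathcal{D}(z_2^az_3z_1z_2^c)=z_2^az_4z_2^c+(\text{Seki-type terms})+\dots$.

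The main obstacle is this bookkeeping: getting the exact multiplicities in Step 1 and the Drop1 value of $z_3z_1$-type words. Once these are in hand, I sum everything, group the $z_3z_2^{\cdot}z_3$ terms, and check that they combine via $z_2^{r-1}\ast z_3$-type identities into $4\sum z_2^pz_4z_2^q$. The coefficient of $z_2^{r+1}$ should come out as $4(r+1)-(r+1)(2r+3)$. I would verify the final identity first for $r=0,1$ as a sanity check.
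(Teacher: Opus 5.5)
There is a genuine gap, and it begins in Step 1, where you have the orientation reversed. In $(xy)^r\shuffle xy$ the inserted $x$ does not create a $z_1$. It merges with the following $x$-block and produces $x^2y=z_3$. A $z_1$ arises only when the inserted $y$ lands directly after an existing $y$. Since the inserted $x$ precedes the inserted $y$, the $z_3$ always comes \emph{before} the $z_1$, which is exactly the case you ruled out. The correct expansion is
\begin{align}
z_2^r\shuffle z_2=(r+1)z_2^{r+1}+4\sum_{1\le i\le j\le r}z_2^{i-1}z_3z_2^{j-i}z_1z_2^{r-j}.
\end{align}
This is already visible at $r=1$, where $z_2\shuffle z_2=2z_2z_2+4z_3z_1$. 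So the multiplicities are $4$, not $1$, and the coefficient of $z_2^{r+1}$ is $r+1$. More importantly, none of the words $z_2^{a}z_1z_2^{b}z_3z_2^{c}$ to which you apply \Cref{lem:Seki} in Step 2 actually occurs, so Step 2 computes $\mathcal{D}$ of the wrong words. Your fallback is to run the recursion of \Cref{def:drop1operator} on the $z_3\cdots z_1$ words and ``expect'' a $z_4$ plus Seki-type terms. That is precisely the hard part of the lemma, and it is left as a guess.

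The missing idea is that you never need $\mathcal{D}$ of the individual words $z_2^{i-1}z_3z_2^{j-i}z_1z_2^{r-j}$. You only need their sum over $j$ for fixed $i$, and that sum is controlled by a Hoffman relation. Since $\ker\mathcal{D}$ contains the linear part of Kawashima's relations (\cite{HMSW}), we have $\Ds(w,z_1)\in\ker\mathcal{D}$ for all $w\in\ha^0$. Taking $w=z_2^{i-1}z_3z_2^{r-i}$ gives
\begin{align}
\Ds(w,z_1)=-\sum_{j=i}^{r}z_2^{i-1}z_3z_2^{j-i}z_1z_2^{r-j}-\sum_{j=2}^{i}z_2^{j-1}z_1z_2^{i-j}z_3z_2^{r-i}+z_2^{i-1}z_4z_2^{r-i}+(\text{terms } z_2^{a}z_3z_2^{b}z_3z_2^{c})-z_2^{r+1}.
\end{align}
This trades the $z_3\cdots z_1$ words for three kinds of terms:
\begin{enumerate}[(i)]
\item Seki-type words with $z_1$ before $z_3$, to which \Cref{lem:Seki} now does apply;
\item words in $\ha^{\ge2}$, which $\mathcal{D}$ fixes;
\item multiples of $z_2^{r+1}$.
\end{enumerate}
The $z_4$-words in this relation are exactly where the $z_4$-terms of $4z_2^r\ast z_2$ come from, which resolves the mismatch you noticed in Step 3. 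After reindexing, the $z_3z_2^{\cdot}z_3$ outputs of \Cref{lem:Seki} cancel against the $z_3z_2^{\cdot}z_3$ terms of the Hoffman relations. The leftover coefficient of $z_2^{r+1}$ in $\mathcal{D}(z_2^r\shuffle z_2)-4z_2^r\ast z_2$ is then $-4\binom{r}{2}-(7r+3)=-(r+1)(2r+3)$. Alternatively, the words $z_2^{a}z_3z_2^{b}z_1z_2^{c}$ lie in $\halm$, so \Cref{thm:drop only one 1} does give their images under $\mathcal{D}$. However, extracting the closed form from that formula takes substantial extra combinatorics, which your proposal does not supply either.
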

\begin{proof}
By direct calculation one has
\begin{align}
    z_2^r\shuffle z_2=(r+1)z_2^{r+1}+4\sum_{1\le i\le j\le r}z_2^{i-1}z_3z_2^{j-i}z_1z_2^{r-j}.
\end{align}
Thus we get
\begin{align}\label{eq:Jinbo}
    z_2^r\shuffle z_2-4z_2^r\ast z_2=-3(r+1)z_2^{r+1}+4\sum_{1\le i\le j\le r}z_2^{i-1}z_3z_2^{j-i}z_1z_2^{r-j}-4\sum_{1\le i\le r}z_2^{i-1}z_4z_2^{r-i}.
\end{align}
Now consider the Hoffman relation for $4\sum_{1\le i\le r}z_2^{i-1}z_3z_2^{r-i}$, i.e.
\begin{align}\label{eq:Jinbohoffman}
    4\sum_{1\le i\le r}\Ds(z_2^{i-1}z_3z_2^{r-i},z_1)&=-4\sum_{2\le j\le i\le r}z_2^{j-1}z_1z_2^{i-j}z_3z_2^{r-i}-4\sum_{1\le i\le j\le r}z_2^{i-1}z_3z_2^{j-i}z_1z_2^{r-j}\\
    &\quad+4\sum_{1\le i\le r}z_2^{i-1}z_4z_2^{r-i}+8\sum_{1\le j\le i\le r-1}z_2^{j-1}z_3z_2^{i-j}z_3z_2^{r-1-i}-4rz_2^{r+1}.
\end{align}
By using this, we can write \eqref{eq:Jinbo} as follows:
\begin{align}\label{eq:Jinbo2}
    z_2^r\shuffle z_2-4z_2^r\ast z_2&=-4\sum_{1\le i\le r} \Ds(z_2^{i-1}z_3z_2^{r-i},z_1)-4\sum_{2\le j\le i\le r}z_2^{j-1}z_1z_2^{i-j}z_3z_2^{r-i}\\&\quad+8\sum_{1\le j\le i\le r-1}z_2^{j-1}z_3z_2^{i-j}z_3z_2^{r-1-i}-(7r+3)z_2^{r+1}.
\end{align}
In \cite{HMSW}, it is shown that $\ker \mathcal{D}$ contains the linear part of Kawashima’s relations (\cite{Kaw}), which are known to contain the Hoffman relation. Consequently, we have $\Ds(w,z_1) \in \ker \mathcal{D}$ for any $w \in \ha^0$. We now apply the Drop1 operator $\mathcal{D}$ to \eqref{eq:Jinbo2}. The first sum on the right-hand side lies in $\ker\mathcal{D}$, and the words in the third sum are fixed by $\mathcal{D}$ (\Cref{rem:drop1 identity}). For the second sum, \Cref{lem:Seki} gives
\begin{align}
    \mathcal{D}\Bigg(\sum_{2\le j\le i\le r}z_2^{j-1}z_1z_2^{i-j}z_3z_2^{r-i}\Bigg)=\sum_{2\le j\le i\le r}\left(z_2^{i-j}z_3z_2^{j-2}z_3z_2^{r-i}+z_2^{i-j}z_3z_2^{r-i}z_3z_2^{j-2}\right)+\binom{r}{2}z_2^{r+1}.
\end{align}
As $(j,i)$ runs over $2\le j\le i\le r$, the triple of exponents $(i-j,j-2,r-i)$ (resp. $(i-j,r-i,j-2)$) runs exactly once over all $(a,b,c)\in\ZZ_{\geq0}^3$ with $a+b+c=r-2$, and the same holds for the triple $(j-1,i-j,r-1-i)$ as $(j,i)$ runs over $1\le j\le i\le r-1$. Therefore
\begin{align}
    \sum_{2\le j\le i\le r}\left(z_2^{i-j}z_3z_2^{j-2}z_3z_2^{r-i}+z_2^{i-j}z_3z_2^{r-i}z_3z_2^{j-2}\right)=2\sum_{1\le j\le i\le r-1}z_2^{j-1}z_3z_2^{i-j}z_3z_2^{r-1-i},
\end{align}
and applying $\mathcal{D}$ to \eqref{eq:Jinbo2} yields
\begin{align}
    \mathcal{D}(z_2^r\shuffle z_2)-4z_2^r\ast z_2=\left(-4\binom{r}{2}-(7r+3)\right)z_2^{r+1}=-(r+1)(2r+3)z_2^{r+1}.
\end{align}
\vspace{-5pt}
\end{proof}

In the case for a word in $\ha^{\ge2,\mathrm{alm}}$, an explicit formula for $\mathcal{D}$ is given as follows. Note that the previous lemmas (\Cref{lem:Seki}, \Cref{lem:drop1shufflez2}) do not naively follow from this formula.

\begin{thm}\label{thm:drop only one 1}
    For words $z_{\bm{k}}=z_{k_1}\cdots z_{k_r},z_{\bm{l}}=z_{l_1}\cdots z_{l_s}\in\ha^{\geq2}\;(r\geq1,s\geq0)$, we have
    \begin{align}
        \mathcal{D}(z_{\bm{k}}z_1z_{\bm{l}})=z_{\bm{k}}z_1z_{\bm{l}}+\sum_{j=0}^r(-1)^{r-j}z_{k_1}\cdots z_{k_j}\ast\big(((z_{k_r}\cdots z_{k_{j+1}})^\star\ast z_{\bm{l}})\shuffle z_1\big)
    \end{align}
    where the star operator $(\cdot)^\star:\hha^1\rightarrow\hha^1$ is defined by
    \begin{align}
        \ai{k_1,\dots,k_r}{d_1,\dots,d_r}^\star\coloneqq\sum_{\circ_1,\dots,\circ_{r-1} = ,\text{ or }+}\ai{k_1\circ_1\dots\circ_{r-1}k_r}{d_1\circ_1\dots\circ_{r-1} d_r}.
    \end{align}
\end{thm}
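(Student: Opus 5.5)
The plan is to use the uniqueness principle behind \Cref{rem:drop1 identity}. By \Cref{thm:Drop1} we have $\mathcal{D}(w)-w\in\ker\zeta^\diamondsuit$ for every $w\in\ha^0$. Moreover $\ker\zeta^\diamondsuit\cap\ha^{\ge2}=\{0\}$, because the $\zeta^\diamondsuit(\bm{k})$ with $\bm{k}\in\mathbb{I}^{\ge2}$ are linearly independent.

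Write $X$ for the right-hand side of the claimed identity. It therefore suffices to show two things:
\begin{enumerate}[(a)]
\item $X\in\ha^{\ge2}$;
\item $X-z_{\bm{k}}z_1z_{\bm{l}}\in\ker\zeta^\diamondsuit$.
\end{enumerate}
Together these give $\mathcal{D}(z_{\bm{k}}z_1z_{\bm{l}})-X\in\ker\zeta^\diamondsuit\cap\ha^{\ge2}=\{0\}$.

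For (a), I would expand each shuffle $((z_{k_r}\cdots z_{k_{j+1}})^\star\ast z_{\bm{l}})\shuffle z_1$ in words. Shuffling $z_1=y$ into a word $x^{a_1-1}y\cdots x^{a_m-1}y$ either inserts a new letter $z_1$ between two letters or splits some $z_{a}$ into $z_bz_1z_{a-b}$-type pieces. So the only words containing $z_1$ are those with a single $z_1$ in some position. I would then show that these $z_1$-terms cancel telescopically in the alternating sum over $j$, together with the extra summand $z_{\bm{k}}z_1z_{\bm{l}}$. The mechanism I expect is this: the $\ast$-product with $z_{k_1}\cdots z_{k_j}$ produces terms where $z_{k_j}$ is concatenated in front, and these match the ``$\star$-separated'' terms of index $j-1$ with opposite sign. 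The term $j=r$ gives $z_{\bm{k}}\ast(z_{\bm{l}}\shuffle z_1)$, whose leading concatenation $z_{\bm{k}}z_1z_{\bm{l}}$ cancels the first summand. A cleaner route to (a) is through generating functions. Identify words with $v$-moulds as in \Cref{rem:identification}, and encode $\star$ by the standard identity $w^\star=\sum$ over contractions. Then check that the coefficient of any monomial with some exponent equal to $0$ vanishes, as a formal power-series identity.

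For (b), I would work at finite $N$ with an analytic model. Let $\zeta_N$ denote truncated sums with $n_1<N$, which satisfy the harmonic product exactly. The quantity $\zeta^\diamondsuit_N(\bm{k},1,\bm{l})$ differs from $\zeta_N(\bm{k},1,\bm{l})$ by the term in which the $1$-slot carries $1/(N-n)$ and the inequality becomes weak. That term can be rewritten using the partial fractions $\frac{1}{n(N-n)}=\frac1N\left(\frac1n+\frac1{N-n}\right)$ and the symmetry $n\mapsto N-n$. This symmetry reverses the order of the $\bm{k}$ part, which is exactly where $(z_{k_r}\cdots z_{k_{j+1}})^\star$ should come from: reversing a strict chain and combining with the weak inequality produces star sums. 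The $\shuffle z_1$ should arise from the iterated-integral (Riemann-sum) form of the $1/(N-n)$ factor.

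Since $\zeta^\diamondsuit$ is not multiplicative for $\ast$ in general, I would instead aim to verify (b) via \Cref{lem:Dhom}. The claimed formula is compatible with $\mathcal{D}(u\ast v)=\mathcal{D}(u)\ast v$ in the variable $\bm{l}$, which reduces to the case $\bm{l}=\varnothing$ after moving $z_{\bm{l}}$ out by harmonic products modulo lower-depth words. The case $s=0$ can then be proved by induction on $r$. The induction uses the recursive \Cref{def:drop1operator}, in which only $A\subset\{$position of the single $1\}$-type choices occur, so the recursion simplifies drastically.

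I expect step (b) to be the main obstacle. Matching the recursive Drop1 definition, or the diamond sums, with the alternating $\ast/\shuffle/\star$ expression requires careful bookkeeping of sign and reversal. As a first attempt I would try the induction on $r$ directly from the recursion with a single $c_i=1$, and check small cases ($r=1$, $s=0$: $\mathcal{D}(z_kz_1)=z_kz_1-z_1\shuffle z_k$-type identities against known results) to calibrate signs.
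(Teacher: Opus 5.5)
Your overall framework is the paper's: compute $\zeta^\diamondsuit_N(\bm{k},1,\bm{l})$ through the truncated sums $\zeta_N$, then conclude from $\mathcal{D}(w)-w\in\ker\zeta^\diamondsuit$ and $\ker\zeta^\diamondsuit\cap\ha^{\ge2}=\{0\}$. Your step (a) is a sensible way to make that final uniqueness step airtight. But step (b), which is the entire content of the theorem, is not carried out, and the mechanisms you suggest would not produce it.

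\begin{itemize}
\item The substitution $n\mapsto N-n$ turns the factors $n_i^{-k_i}$ into $(N-n_i)^{-k_i}$, so you leave the world of harmonic sums. It has nothing to do with the reversal.
\item The correction term carries only $\frac{1}{N-m}$ at the $1$-slot, with no factor $\frac1m$. The partial fraction $\frac{1}{n(N-n)}$ therefore has nothing to act on.
\item A Riemann-sum or iterated-integral argument only controls $N\to\infty$. The uniqueness argument needs an identity of sequences for every $N$, because the limit map $\zeta$ is far from injective on $\ha^{\ge2}$ (e.g.\ $4\zeta(2,2)=3\zeta(4)$).
\end{itemize}

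The two missing ingredients are the following.
\begin{enumerate}
\item Inclusion--exclusion on the chain in the correction term:
\[
\sum_{N>n_1>\cdots>n_r\ge m}\prod_{i=1}^{r}\frac{1}{n_i^{k_i}}=\sum_{j=0}^{r}(-1)^{r-j}\,\zeta_N(k_1,\dots,k_j)\sum_{m>n_r\ge\cdots\ge n_{j+1}>0}\prod_{i=j+1}^{r}\frac{1}{n_i^{k_i}}.
\]
The inner sum is $\zeta_m\bigl((k_r,\dots,k_{j+1})^\star\bigr)$, and it combines with $\zeta_m(\bm{l})$ into $\zeta_m\bigl((k_r,\dots,k_{j+1})^\star\ast\bm{l}\bigr)$. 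This inclusion--exclusion (the antipode of $\ha^1_\ast$), not a symmetry, is where the reversed star comes from.
\item The exact finite identity $\sum_{N>m>0}\frac{1}{N-m}\zeta_m(\bm{a})=\zeta_N(\bm{a}\shuffle(1))$.
\end{enumerate}
With these, $\zeta^\diamondsuit_N(\bm{k},1,\bm{l})=\zeta_N(X)$ for every $N$ and all $s$ at once, where $X$ is your right-hand side. Multiplicativity of $\zeta^\diamondsuit$ is never needed, only that of $\zeta_N$, which holds exactly. So your reason for abandoning the direct computation does not apply.

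Your fallback does not close the gap either.
\begin{itemize}
\item Compatibility of $X$ with \Cref{lem:Dhom} in the variable $\bm{l}$ is only asserted, never checked.
\item The reduction is really an induction on $s$, not a step \emph{modulo lower depth}. Besides $z_{\bm{k}}z_1z_{\bm{l}}$, the product $z_{\bm{k}}z_1\ast z_{\bm{l}}$ contains words $z_{\bm{a}}z_1z_{l_{i+1}}\cdots z_{l_s}$ with $i\ge1$ and $\bm{a}$ a quasi-shuffle of $\bm{k}$ and $(l_1,\dots,l_i)$. Some of these have the same depth.
\item The recursion of \Cref{def:drop1operator} does not reduce to choices at the position of $z_1$. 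For $\bm{c}=(k_1-1,1,\dots,k_{r-1}-1,1,k_r-1,2)$, the set $[2r]^1_{\bm{c}}$ contains every interior $y$-exponent and the $x$-exponent of every letter $z_2$. So $A$ runs over all even--odd or odd--even deletions of letters $z_2$, and $B$ runs over subsets of the $x$-exponents $\ge2$ together with the final $2$.
\end{itemize}
The recursion does stay inside $\{z_{\bm{k}'}z_1\}\cup\ha^{\ge2}$, so an induction on weight can be set up. But proving that your closed form satisfies this signed recursion over all $(A,B)$ is a substantial identity, and you have not supplied it. The direct harmonic-sum computation above makes both the reduction and the recursion unnecessary.
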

\begin{proof}
    For $N>0$ and an index $\bm{k}=(k_1,\dots,k_r)$, define the multiple harmonic sum $\zeta_N(\bm{k})$ by
    \begin{align}
        \zeta_N(\bm{k})\coloneqq\sum_{N>n_1>\cdots>n_r>0}\frac{1}{n_1^{k_1}\cdots n_r^{k_r}}.
    \end{align}
    Note that the multiple harmonic sums satisfy the harmonic product $\ast$. By definition of $\zeta^\diamondsuit_N(\bm{k})$ \eqref{eq:def dia}, for $\bm{k}=(k_1,\dots,k_r),\bm{l}=(l_1,\dots,l_s)\in\mathbb{I}^{\geq2}$, we have
    \begin{align}
        &\zeta^\diamondsuit_N(\bm{k},1,\bm{l})\\
        &=\zeta_N(\bm{k},1,\bm{l})+\sum_{N>n_1>\cdots>n_r\geq m>m_1>\cdots>m_s>0}\frac{1}{n_1^{k_1}\cdots n_r^{k_r}(N-m)m_1^{l_1}\cdots m_s^{l_s}}\\
        &=\zeta_N(\bm{k},1,\bm{l})+\sum_{N>m>m_1>\cdots>m_s>0}\sum_{j=0}^r(-1)^{r-j}\summ{N>n_1>\cdots>n_j>0\\m>n_r\geq\cdots\geq n_{j+1}>0}\frac{1}{n_1^{k_1}\cdots n_r^{k_r}(N-m)m_1^{l_1}\cdots m_s^{l_s}}\\
        &=\zeta_N(\bm{k},1,\bm{l})+\sum_{j=0}^{r}(-1)^{r-j}\zeta_N(k_1,\dots,k_j)\sum_{N>m>0}\frac{1}{N-m}\zeta_m((k_r,\dots,k_{j+1})^\star\ast\bm{l}).
    \end{align}
    Since we have $\sum_{N>m>0}\frac{1}{N-m}\zeta_m(k_1,\dots,k_r)=\zeta_N((k_1,\dots,k_r)\shuffle(1))$ (cf. \cite{Se2}), we have
    \begin{align}
        \zeta^\diamondsuit_N(\bm{k},1,\bm{l})=\zeta_N(\bm{k},1,\bm{l})+\sum_{j=0}^r(-1)^{r-j}\zeta_N\left((k_1,\dots,k_j)\ast(((k_r,\dots,k_{j+1})^\star\ast\bm{l})\shuffle(1))\right).
    \end{align}
    In \cite{HMSW}, it is shown that $\{\zeta^H(\bm{k})\coloneqq(\zeta_N(\bm{k}))_{N>0}\mid\bm{k}\in\mathbb{I}^{\geq2}\}$ forms a basis of the space spanned by all admissible $\zeta^\diamondsuit(\bm{k})$, and $\ker\zeta^\diamondsuit=\Span_\QQ\{(\mathcal{D}-\id)(w)\mid w\in\ha^0\}$. Therefore, we have the conclusion.
\end{proof}

This formula can be written more simply by using the formal multiple Eisenstein series, i.e. in the space $\hha^1$ modulo $\I$.

\begin{lem}\label{lem:drop1 mod I}
    For words $z_{\bm{k}}=z_{k_1}\cdots z_{k_r},z_{\bm{l}}=z_{l_1}\cdots z_{l_s}\in\ha^{\geq2}\;(r\geq1,s\geq0)$, we have in $\hha^1$
    \begin{align}
        \mathcal{D}(z_{\bm{k}}z_1z_{\bm{l}})\equiv z_{\bm{k}}z_1z_{\bm{l}}+\ai{\bm{k},\bm{l}}{\{0\}^{r-1},1,\{0\}^{s}}-\ai{\bm{k},\bm{l}}{\{0\}^{r},1,\{0\}^{s-1}} \mod\I,
    \end{align}
    where the last term is understood as $0$ in the case $s=0$.
\end{lem}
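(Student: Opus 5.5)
Starting from \Cref{thm:drop only one 1}, it suffices to show that in $\hha^1$ modulo $\I$ we have
\begin{align}
\sum_{j=0}^r(-1)^{r-j}z_{k_1}\cdots z_{k_j}\ast\big(((z_{k_r}\cdots z_{k_{j+1}})^\star\ast z_{\bm{l}})\shuffle z_1\big)\equiv\ai{\bm{k},\bm{l}}{\{0\}^{r-1},1,\{0\}^{s}}-\ai{\bm{k},\bm{l}}{\{0\}^{r},1,\{0\}^{s-1}}.
\end{align}
The tool for the inner shuffle products is \Cref{lem:shuffle with z_1}. Modulo $\I$ it gives, for any word $w\in\ha^1$,
\begin{align}
w\shuffle z_1\equiv \sigma(w)\ast\sigma(z_1)-\ai{\text{first letter of }w\text{ raised to }d=1,\ \text{rest}}{},
\end{align}
using $\sigma(\sigma(w)\ast\sigma(z_1))\equiv \sigma(w)\ast\sigma(z_1)$ mod $\I$. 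Because $\I$ is an ideal and $\sigma(x)\equiv x$, this becomes $w\shuffle z_1\equiv w\ast z_1-\partial(w)$. Here $\partial(z_{n_1}z_{n_2}\cdots)=\ai{n_1,n_2,\dots}{1,0,\dots}$ denotes the operation that raises $d$ in the first letter; for $w=1$ we interpret $\partial(1)=0$, with $1\shuffle z_1=z_1$.

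Applying this with $w=(z_{k_r}\cdots z_{k_{j+1}})^\star\ast z_{\bm{l}}$, the $j$-th summand becomes
\begin{align}
z_{k_1}\cdots z_{k_j}\ast(z_{k_r}\cdots z_{k_{j+1}})^\star\ast z_{\bm{l}}\ast z_1-z_{k_1}\cdots z_{k_j}\ast\partial\big((z_{k_r}\cdots z_{k_{j+1}})^\star\ast z_{\bm{l}}\big).
\end{align}
For the first part, the classical antipode identity for the harmonic (quasi-shuffle) Hopf algebra with deconcatenation is $\sum_{j=0}^r(-1)^{r-j}z_{k_1}\cdots z_{k_j}\ast(z_{k_r}\cdots z_{k_{j+1}})^\star=0$ for $r\ge1$. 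So the first parts sum to zero identically.

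What remains to prove is
\begin{align}
-\sum_{j=0}^r(-1)^{r-j}z_{k_1}\cdots z_{k_j}\ast\partial\big((z_{k_r}\cdots z_{k_{j+1}})^\star\ast z_{\bm{l}}\big)=\ai{\bm{k},\bm{l}}{\{0\}^{r-1},1,\{0\}^{s}}-\ai{\bm{k},\bm{l}}{\{0\}^{r},1,\{0\}^{s-1}}.
\end{align}
It may hold exactly in $\hha^1$, or only after one more use of swap invariance. I would attack it with generating functions. Write $\partial$ in mould language as the operator $\partial_{u_1}$ at $u=0$ on the first variable. The sum $\sum_j(-1)^{r-j}A(\bm{a})\ast\big(\mathrm{anti}\,\mathrm{pari}\,A^\star\big)(\bm{b})$ is then a convolution of the identity with the antipode in which the marked letter is placed at the boundary between the $j$ kept letters and the reversed block. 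The antipode cancellation then telescopes, leaving only the terms where the marked first letter of the right factor is $k_r$ itself (which yields $\ai{\bm{k},\bm{l}}{\dots,1_{(r)},\dots}$) or $l_1$ (which yields $-\ai{}{}$ with the $1$ at position $r+1$).

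A cleaner route for this last identity is induction on $r$. One uses the recursion $(z_{k_r}\cdots z_{k_{j+1}})^\star=z_{k_r}(\cdots)^\star+z_{k_r}\circ_+(\cdots)^\star$ together with the recursive definition of $\ast$ on the first letters, and checks that $\partial$ commutes appropriately with the first-letter splitting of $\ast$. The main obstacle is this bookkeeping, in particular the case $j=r$, where $\partial(z_{\bm{l}})$ marks $l_1$, and the case $s=0$. I expect it to be manageable by a direct depth-by-depth comparison of the first-letter expansions.
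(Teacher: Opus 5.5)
Up to your last display, your argument is the paper's proof. You start from \Cref{thm:drop only one 1} and read \Cref{lem:shuffle with z_1} modulo $\I$, so that $w\shuffle z_1\equiv w\ast z_1-\rho_1(w)$; your $\partial$ is the paper's $\rho_1$. You then use Hoffman's antipode $S(z_{k_{j+1}}\cdots z_{k_r})=(-1)^{r-j}(z_{k_r}\cdots z_{k_{j+1}})^\star$ together with $\sum_{w_1w_2=z_{\bm k}}w_1\ast S(w_2)=0$ to remove the $\ast z_1$ part. What is left is exactly \Cref{lem:cancel harmonic}, and this is where the proposal has a gap. It is the only genuinely combinatorial step, and you neither prove it nor decide whether it holds exactly or only modulo $\I$. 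The telescoping and star-recursion remarks describe what you expect to happen; they are not an argument.

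The identity holds exactly in $\hha^1$. The missing ingredient is how $\rho_1$ interacts with the first-letter recursion of $\ast$. For a letter $a=\ai{k}{d}$ put $\rho_1(a)=\ai{k}{d+1}$. Then for any word $u'$ and any $v\in\hha^1$ (with $\rho_1(1)=0$),
\begin{align}
(au')\ast\rho_1(v)-\rho_1\bigl((au')\ast v\bigr)=a\bigl(u'\ast\rho_1(v)\bigr)-\rho_1(a)\bigl(u'\ast v\bigr).
\end{align}
To see this, expand both products by the recursive definition of $\ast$. The terms beginning with $\rho_1(b)$, where $b$ is the first letter of $v$, cancel. So do the terms beginning with the merged letter, which is $\ai{k+k'}{d+d'+1}$ in both products.

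Now set $\Phi_{\bm k}(X)\coloneqq\sum_{w_1w_2=z_{\bm k}}w_1\ast\rho_1(S(w_2)\ast X)$. Subtract $\rho_1\bigl(\sum_{w_1w_2=z_{\bm k}}w_1\ast S(w_2)\ast X\bigr)=0$ and apply the rule to every term with $w_1\neq1$. This gives
\begin{align}
\Phi_{(k_1,\dots,k_r)}(X)=z_{k_1}\,\Phi_{(k_2,\dots,k_r)}(X)-\varepsilon(z_{k_2}\cdots z_{k_r})\ai{k_1}{1}X,\qquad\Phi_{\varnothing}=\rho_1,
\end{align}
where $\varepsilon$ is the counit. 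Hence $\Phi_{\bm k}(X)=z_{k_1}\cdots z_{k_{r-1}}\bigl(z_{k_r}\rho_1(X)-\ai{k_r}{1}X\bigr)$. For $X=z_{\bm l}$ this equals $\ai{\bm{k},\bm{l}}{\{0\}^{r},1,\{0\}^{s-1}}-\ai{\bm{k},\bm{l}}{\{0\}^{r-1},1,\{0\}^{s}}$, which is precisely your remaining identity. The case $s=0$ is included, since $\rho_1(1)=0$.

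The paper reaches the same statement by induction on $r$, using the recursion $S(z_{k_1}\cdots z_{k_r})=-\sum_{i<r}S(z_{k_1}\cdots z_{k_i})\ast z_{k_{i+1}}\cdots z_{k_r}$. Either way, you need no further use of swap invariance and no recursion on the star word.
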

\begin{proof}
    As mentioned in \Cref{rem:symmetril}, in \cite{Ho}, it is shown that the algebra $\hha^1_\ast$ has a Hopf algebra structure and its antipode $S:\hha^1\rightarrow\hha^1$ is given by
    \begin{align}
        S\left(\ai{k_1,\dots,k_r}{d_1,\dots,d_r}\right)=(-1)^r\ai{k_r,\dots,k_1}{d_r,\dots,d_1}^\star.
    \end{align}
    By \Cref{thm:drop only one 1} and \Cref{lem:shuffle with z_1}, we have
    \begin{align}
        \mathcal{D}(z_{\bm{k}}z_1z_{\bm{l}})
        &=z_{\bm{k}}z_1z_{\bm{l}}+\sum_{w_1w_2=z_{\bm{k}}}w_1\ast((S(w_2)\ast z_{\bm{l}})\shuffle z_1)\\
        &=z_{\bm{k}}z_1z_{\bm{l}}+\sum_{w_1w_2=z_{\bm{k}}}w_1\ast\sigma((\sigma(S(w_2)\ast z_{\bm{l}}))\ast \sigma(z_1))-w_1\ast\ai{S(w_2)\ast z_{\bm{l}}}{1,0,\dots,0}.
    \end{align}
    Taking modulo $\I$, the first term inside the sum vanishes since we have the antipode relation:
    \begin{align}
        \sum_{w_1w_2=z_{\bm{k}}}w_1\ast S(w_2)=0.
    \end{align}
    By \Cref{lem:cancel harmonic}, we have
    \begin{align}
        \mathcal{D}(z_{\bm{k}}z_1z_{\bm{l}})\equiv z_{\bm{k}}z_1z_{\bm{l}}+\ai{\bm{k},\bm{l}}{\{0\}^{r-1},1,\{0\}^s}-\ai{\bm{k},\bm{l}}{\{0\}^r,1,\{0\}^{s-1}}\mod\I.
    \end{align}
\end{proof}


More generally, we expect that the Drop1 operator in the formal multiple Eisenstein series can be written as follows.

\begin{conj}\label{conj:drop1 generating}
    For $k_1,\dots,k_r\geq2$, we have
    \begin{align}
        &\sum_{n_1,\dots,n_r\geq0}\mathcal{D}(z_{k_1}z_1^{n_1}\cdots z_{k_r}z_1^{n_r})X_1^{n_1}\cdots X_r^{n_r}\\
        &\equiv\summ{n_1,\dots,n_r\geq0\\j_1,\dots,j_r\geq0}\ai{k_1,\{1\}^{n_1},\dots,k_r,\{1\}^{n_r}}{j_1,\{0\}^{n_1},\dots,j_r,\{0\}^{n_r}}X_1^{n_1}\cdots X_r^{n_r}\frac{X_1^{j_1}}{j_1!}\frac{(X_2-X_1)^{j_2}}{j_2!}\cdots\frac{(X_r-X_{r-1})^{j_r}}{j_r!}\mod\I.
    \end{align}
\end{conj}

\subsection{Explicit formula for the derivative}\label{subsec:derivative}
 
We define the $\QQ$-linear map $\der:\ha^{0}\to\ha^{\ge2}$ by $\der=-\mathcal{D}\circ \dsht$. Note that the restriction of $\der$ onto $\ha^{\ge2}$ is not a derivation on $\ha^{\ge2}_\ast$, and the failure of $\der$ being a derivation can be computed as follows
\begin{align}
    \der(u\ast v)-\der(u)\ast v-u\ast\der(v)=-R(u,v)\qquad(u,v\in\ha^{\ge2}).
\end{align}

The following theorem implies that the map $\der$ provides a closed formula for the derivation $D$ on $\mes^f$ and the derivation $2\pi i\frac{d}{d\tau}$ on $\mes$.

\newtheorem*{mainD}{\Cref{thm:der}}
\begin{mainD}
\begin{enumerate}
    \item For $w\in\ha^{\geq2}$, we have
        \begin{align}
            D(w)\equiv\der(w)\mod{\I}.
        \end{align}
    \item For $w\in\ha^{\geq2}$, we have
        \begin{align}
            2\pi i\frac{d}{d\tau}G(w)=G(\der(w)).
        \end{align}
\end{enumerate}
\end{mainD}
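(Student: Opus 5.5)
Part (ii) will follow from part (i) together with \Cref{thm:biMES}: by \Cref{prop:bMES H2} and \Cref{prop:der bMES}, $2\pi i\frac{d}{d\tau}G(w)=\bMES(D(w))$, and by (i) this equals $\bMES(\der(w))=G(\der(w))$, since $\der(w)\in\ha^{\ge2}$ and $\bMES$ factors through $\bmes^f=\hha^1/\I$. So the work is in (i).

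For (i) I will start from \Cref{thm:sltwo for gf0}, which gives $D(w)\equiv\dsht(w)\bmod\I$ for $w\in\ha^1$. Since $\der=-\mathcal{D}\circ\dsht$, it then suffices to show $-\mathcal{D}(\dsht(w))\equiv\dsht(w)\bmod\I$. This is not an identity $\mathcal{D}=-\id$; it relies on the special shape of $\dsht(w)$. For $w=z_{\bm{k}}\in\ha^{\ge2}$ I will expand $\dsht(w)=w\ast z_2-w\shuffle z_2$. The harmonic part lies in $\ha^{\ge2}$. The shuffle $w\shuffle z_2$ ($z_2=xy$) produces words in $\ha^{\ge2}$ together with words in $\halm$ having exactly one $z_1$, arising when the inserted $x$ and $y$ are split so that a letter $y$ closes a block $z_1$. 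Concretely, $w\shuffle z_2=c\cdot(\text{words in }\ha^{\ge2})+\sum(\text{coefficient})\,z_{\bm{a}}z_1z_{\bm{b}}$ with $\bm{a},\bm{b}\in\mathbb{I}^{\ge2}$ and $\bm{a}$ nonempty. By \Cref{rem:drop1 identity}, $\mathcal{D}$ fixes the $\ha^{\ge2}$ part, and \Cref{lem:drop1 mod I} evaluates $\mathcal{D}$ on each $z_{\bm{a}}z_1z_{\bm{b}}$ modulo $\I$ as the word itself plus two bi-indexed words with a $1$ in the $d$-row.

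The identity to establish then reads
\begin{align}
\mathcal{D}(\dsht(w))+\dsht(w)\equiv 2\,(w\ast z_2-(w\shuffle z_2)|_{\ha^{\ge2}})-2\,(w\shuffle z_2)|_{z_1\text{-part}}-E\equiv0\mod\I,
\end{align}
where $E$ is the sum of the correction terms $\ai{\bm{a},\bm{b}}{\dots,1,\dots}$. To check this I plan to use \Cref{lem:shuffle with z_1} and the relation $D(w)-\dsht(w)=\sigma(\sigma(w)\ast\sigma(z_2))-w\ast z_2$ from the proof of \Cref{lem:commutator}. Together these express $D(w)$ modulo $\I$ directly as $\sigma(\sigma(w)\ast\sigma(z_2))$, which I will expand in the same way as in \Cref{lem:shuffle with z_1}, now with $\sigma(z_2)=\ai{1}{1}$. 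The resulting explicit formula for $D(w)=\sum_jk_j\ai{\dots,k_j+1,\dots}{\dots,1,\dots}$ should then be matched term by term against $\der(w)$ computed as above. The bi-indexed corrections $\ai{\bm{a},\bm{b}}{\{0\},1,\{0\}}$ from \Cref{lem:drop1 mod I} telescope in $j$. Summed over the positions where the $z_1$ is created, they should reproduce $\sum_jk_j\ai{\dots,k_j+1,\dots}{\dots,1,\dots}$, with the word parts cancelling against $2\dsht(w)$ (via $D\equiv\dsht$).

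The main obstacle will be this combinatorial matching: carefully tracking the coefficients in $w\shuffle z_2$ of the words $z_{\bm{a}}z_1z_{\bm{b}}$ (the binomial-type weights from splitting $x^{k_i-1}$) and verifying the telescoping of the $d=1$ corrections. I would first verify the claim in depth one, where $\dsht(z_k)$ involves $z_{k-j+1}z_{j+1}$ and $z_{k+1}z_1$, and then in depth two, before organizing the general case by generating functions in the $v$-variables.
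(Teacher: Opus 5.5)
Your proposal is correct and is essentially the paper's proof: the $z_1$-part of $w\shuffle z_2$ is $2\sum_{1\le j<i\le r+1}k_j\,z_{k_1}\cdots z_{k_j+1}\cdots z_{k_{i-1}}z_1z_{k_i}\cdots z_{k_r}$, the correction terms from \Cref{lem:drop1 mod I} telescope in $i$ to exactly $2D(w)$ (note the factor $2$, which is what cancels your $2\varphi(w)$ via $D\equiv\varphi\bmod\I$), and (ii) then follows exactly as you describe. The detour through $\sigma(\sigma(w)\ast\sigma(z_2))$ and \Cref{lem:shuffle with z_1} is unnecessary, since $D(w)$ is explicit by definition and \Cref{lem:shuffle with z_1} is already used inside \Cref{lem:drop1 mod I}. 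Your claim that $D(w)\equiv\sigma(\sigma(w)\ast\sigma(z_2))\bmod\I$ is also wrong as stated: the exact identity is $D(w)=\sigma(\sigma(w)\ast\sigma(z_2))-w\shuffle z_2$.
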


\begin{proof}
\begin{enumerate}
    \item
    Write $z_{\bm{k}}=z_{k_1}\cdots z_{k_r}$. We have
    \begin{align}
        z_{\bm{k}}\shuffle z_2\equiv2\sum_{1\leq j<i\leq r+1}k_j\,z_{k_1}\cdots z_{k_j+1}\cdots z_{k_{i-1}}z_1z_{k_i}\cdots z_{k_r}\mod{\Span_\QQ\{z_{\bm{l}}\mid\bm{l}\in\mathbb{I}^{\ge2}\}}
    \end{align}
    (see the proof of \Cref{lem:R in ha2}). Since $\mathcal{D}$ acts as the identity on $\ha^{\ge2}$ (\Cref{rem:drop1 identity}), by \Cref{lem:drop1 mod I}, $\mathcal{D}(z_{\bm{k}}\shuffle z_2)-z_{\bm{k}}\shuffle z_2$ can be computed as follows
    \begin{align}
        &2\sum_{1\leq j<i\leq r+1}k_j(\mathcal{D}-\id)(z_{k_1}\cdots z_{k_j+1}\cdots z_{k_{i-1}}z_1z_{k_i}\cdots z_{k_r})\\
        &\equiv2\sum_{1\leq j<i\leq r+1}k_j\left(\ai{k_1,\dots,k_j+1,\dots,k_r}{\{0\}^{i-2},1,\{0\}^{r-i+1}}-\ai{k_1,\dots,k_j+1,\dots,k_r}{\{0\}^{i-1},1,\{0\}^{r-i}}\right)\mod{\I}\\
        &=2\sum_{1\leq j\leq r}k_j\ai{k_1,\dots,k_j+1,\dots,k_r}{\{0\}^{j-1},1,\{0\}^{r-j}}\\
        &=2D(z_{\bm{k}}).
    \end{align}
    Since $D(z_{\bm{k}})\equiv\dsht(z_{\bm{k}})\mod{\I}$ (\Cref{thm:sltwo for gf0}), we have
    \begin{align}
        \mathcal{D}(z_{\bm{k}}\shuffle z_2)-z_{\bm{k}}\shuffle z_2\equiv D(z_{\bm{k}})+z_{\bm{k}}\ast z_2-z_{\bm{k}}\shuffle z_2.
    \end{align}
    Therefore we have $\der(z_{\bm{k}})=\mathcal{D}(z_{\bm{k}}\shuffle z_2)-z_{\bm{k}}\ast z_2\equiv D(z_{\bm{k}})\mod{\I}$, where we used $\mathcal{D}(z_{\bm{k}}\ast z_2)=z_{\bm{k}}\ast z_2$ (\Cref{rem:drop1 identity}).
    \item 
    Since $\I\subset\ker\bMES$, by applying $\bMES$ to $D(w)\equiv\der(w)$, we have
    \begin{align}
        \bMES(D(w))=\bMES(\der(w)).
    \end{align}
    By \Cref{prop:der bMES}, it holds $\bMES(D(w))=2\pi i\frac{d}{d\tau}\bMES(w)$ and by \Cref{prop:bMES H2}, we have $\bMES(w)=G(w)$ and $\bMES(\der(w))=G(\der(w))$.
\end{enumerate}
\end{proof}

We give some examples of special cases of the derivative formula.

\begin{ex}
    Let $k\geq2$. In the depth one case, $\der(z_k)$ can be computed as follows. 
    \begin{align}
        \der(z_k) = \mathcal{D}(z_k \shuffle z_2 - z_k \ast z_2)
        &= \mathcal{D}(2k\Ds(z_{k+1},z_1) -  \Ds(z_{k},z_2))\\
        &= (2k-1) z_{k+2} -\sum_{j=2}^k(k+j-1)z_{k-j+2} z_j -z_2 z_k.
    \end{align}
    This agrees with the formulas for the derivative first proven in \cite{BIM} in the formal setting and \cite{Tu} for the classical Eisenstein series.
    \begin{align}
        D(G^f(k))&=(2k-1)G^f(k+2)-\sum_{j=2}^k(k+j-1)G^f(k-j+2,j)-G^f(2,k),\\
        2\pi i\frac{d}{d\tau}G_k(\tau)&=(2k-1)G_{k+2}(\tau)-\sum_{j=2}^k(k+j-1)G_{k-j+2,j}(\tau)-G_{2,k}(\tau).\label{eq:turanderiv}
    \end{align}
\end{ex}


Notice that for any $k\geq 2$ one obtains as a direct consequence of the harmonic product that for any $r\geq 0$ we have $G_{\{k\}^r} \in \QQ[G_{a k} \mid a\geq 1]$, where we write $\{k\}^r = \underbrace{k,\dots,k}_r$. This can be made explicit via generating series, as it is a well-known result for quasi-shuffle algebras (\cite{HI}) that we can write
\begin{align}\label{eq:expG}
1+ \sum_{n=1}^{\infty} G_{\{k\}^n}(\tau) T^n = \exp\left( \sum_{n=1}^{\infty} (-1)^{n-1} G_{nk}(\tau) \frac{T^n}{n} \right).
\end{align}
Applying \eqref{eq:turanderiv} to this would give an explicit, but probably complicated, formula for $2\pi i\frac{d}{d\tau}G_{\{k\}^r}$. It turns out that in the case $k=2$ this can be simplified in the following nice way. 

\begin{ex}
    For any $r\geq1$, we have
    \begin{align}
        \der(z_2^r)=3z_2^r\ast z_2-(r+1)(2r+3)z_2^{r+1}.
    \end{align}
    This follows directly from \Cref{lem:drop1shufflez2}. This agrees with the formula for the derivative first proven in \cite{BY}.
    \begin{align}
        2\pi i\frac{d}{d\tau}G_{\{2\}^r}(\tau)=3G_{\{2\}^r}(\tau)G_2(\tau)-(r+1)(2r+3)G_{\{2\}^{r+1}}(\tau).
    \end{align}
\end{ex}

\subsection{Derivatives of Okounkov's $q$-analogues}\label{subsec:okounkov}
In \cite{O} Okounkov introduces a family of $q$-analogues of multiple zeta values and denotes the $\QQ$-vector space spanned by these by $\qmzv$. As it was shown in \cite{BK}, this space can also be defined by using the $q$-series \begin{align}
    \tilde{g}_{k_1,\dots,k_r}(\tau) &\coloneqq (-2\pi i)^{-(k_1+\cdots+k_r)}g_{k_1,\dots,k_r}(\tau) \\
    &= \frac{1}{(k_1-1)!\cdots(k_r-1)!}\summ{c_1>\cdots>c_r>0\\d_1,\dots,d_r>0}d_1^{k_1-1}\cdots d_r^{k_r-1}q^{c_1 d_1+\cdots+c_r d_r}
\end{align}
as we have
\begin{align}
    \qmzv = \QQ + \Span_\QQ\{\tilde{g}_{k_1,\dots,k_r}(\tau)\mid r\ge1,k_1,\dots,k_r\ge2\}.
\end{align}
In \cite[Conjecture 1]{O} Okounkov gives a conjecture for the dimension of the associated graded algebra with respect to the weight filtration and predicts that $\qmzv$ is closed under $q \frac{d}{dq}$. Using our results from above, we are able to prove the latter claim (\Cref{cor:okounkov}).

The key ingredient is another realization of the formal multiple Eisenstein series, the \emph{combinatorial bi-multiple Eisenstein series} $\cmes\binom{k_1,\dots,k_r}{d_1,\dots,d_r}\in\QQ\llbracket q\rrbracket$ ($k_1,\dots,k_r\geq1$, $d_1,\dots,d_r\geq0$), constructed by the first author and Burmester in \cite[Definition 6.4]{BB}. By \cite[Theorem 6.5]{BB}, their generating series is a symmetril and swap invariant bimould, i.e. the assignment $G^f\binom{k_1,\dots,k_r}{d_1,\dots,d_r}\mapsto\cmes\binom{k_1,\dots,k_r}{d_1,\dots,d_r}$ defines an algebra homomorphism
\begin{align}
    \cmes:\bmes^f\longrightarrow\QQ\llbracket q\rrbracket.
\end{align}
Moreover, by \cite[Proposition 6.29]{BB}, this realization intertwines the derivation $D$ with the operator $q\frac{d}{dq}$, i.e. for any $f\in\bmes^f$ we have
\begin{align}\label{eq:qddq and D}
    q\frac{d}{dq}\,\cmes(f)=\cmes(D(f)).
\end{align}
As before, we write $\cmes(k_1,\dots,k_r)\coloneqq\cmes\binom{k_1,\dots,k_r}{0,\dots,0}$ and call these the \emph{combinatorial multiple Eisenstein series}.

\begin{lem}\label{lem:qmzv cmes}
    We have
    \begin{align}
        \qmzv=\Span_\QQ\{\cmes(k_1,\dots,k_r)\mid r\geq0,\,k_1,\dots,k_r\geq2\}.
    \end{align}
\end{lem}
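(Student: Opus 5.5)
The plan is to prove the two inclusions simultaneously by a triangularity argument in the depth filtration. The target shape is that, for every $\bm{k}=(k_1,\dots,k_r)\in\mathbb{I}^{\ge2}$,
\begin{align}
    \cmes(k_1,\dots,k_r)=\tilde{g}_{k_1,\dots,k_r}(\tau)+\summ{\bm{n}\in\mathbb{I}^{\ge2}\\\dep(\bm{n})<r}c_{\bm{k},\bm{n}}\,\tilde{g}_{\bm{n}}(\tau)+c_{\bm{k}},\qquad c_{\bm{k},\bm{n}},c_{\bm{k}}\in\QQ,
\end{align}
where $\dep(\bm{n})$ denotes the depth of $\bm{n}$. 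Given this, an induction on $r$ shows that each $\tilde{g}_{\bm{k}}$ lies in the span of the $\cmes(\bm{n})$ with $\bm{n}\in\mathbb{I}^{\ge2}$ and depth $\le r$, together with $\cmes(\varnothing)=1$. Conversely, each $\cmes(\bm{k})$ lies in $\QQ+\Span_\QQ\{\tilde{g}_{\bm{n}}\mid\bm{n}\in\mathbb{I}^{\ge2}\}$, which equals $\qmzv$ by the description from \cite{BK}. Together these give the claimed equality.

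To obtain the displayed shape, I would unwind \cite[Definition 6.4]{BB}. In \cite{BB}, the combinatorial multiple Eisenstein series is built exactly like the Fourier expansion in \Cref{thm:Fourier exp}, with two substitutions. First, the constant-term mould $\zeta^\shuffle$ is replaced by a rational solution $\beta$ of the extended double shuffle equations; I would use the swapped/bi-version of this, which restricts on $d=0$ to a shuffle-type mould with values in $\QQ$. Second, the $q$-part is replaced by the generating mould of the normalized $q$-series $\tilde{g}$, with the powers of $-2\pi i$ stripped off. On $d_1=\cdots=d_r=0$, the coefficient of $v_1^{k_1-1}\cdots v_r^{k_r-1}$ is then given by the explicit formula of \Cref{lem:gila}, with $a=\tilde{g}$ and $b=\beta$. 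The term $s=r$ contributes $\tilde{g}_{\bm{k}}$. Every term with $s<r$ has a $\tilde{g}$ of depth $s<r$, multiplied by products of binomial coefficients and values of $\beta$, so its coefficient is rational. The term $s=0$ is the rational constant $\beta(\bm{k})$.

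It remains to check that only indices with all entries $\ge2$ occur in the $\tilde{g}$-factors. In \Cref{lem:gila}, the summation runs over $n_p$ with $n_{t_{j-1};t_j}=k_{t_{j-1};t_j}$, and each $n_p$ with $p\ne q_j$ carries the factor $\binom{n_p-1}{k_p-1}$. This factor vanishes unless $n_p\ge k_p$, so $n_{q_j}\le k_{q_j}$, but it does not immediately force $n_{q_j}\ge2$. This is precisely the point where the paper's example expansions show that, for $\bm{k}\in\mathbb{I}^{\ge2}$, only $g_{n}$ with $n\ge2$ survive. The terms with $n_{q_j}=1$ cancel by the shuffle antipode relation for the constant-term mould, exactly as in the last step of the proof of \Cref{thm:comparison of regularization}~(i).

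I expect this step to be the main obstacle. The cancellation must be shown for $\beta$ rather than $\zeta^\shuffle$, so I would first isolate the $n_{q_j}=1$ part of the sum exactly as in that proof. I would then observe that the only input used there is that the mould is shuffle-regularized, i.e.\ $\swap$ of it is symmetral, so that the shuffle antipode identity holds. Since $\beta$ is shuffle-regularized by construction in \cite{BB}, the same vanishing applies. If the normalization in \cite{BB} instead involves the bimould $\tilde{g}$ with extra $d$-variables, I would first set $u_i=0$ and reduce to the $v$-mould statement above before running this argument.
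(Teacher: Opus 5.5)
Your argument is correct and is essentially the paper's. Both rest on the depth-triangular expansion $\cmes(\bm{k})=\tilde{g}_{\bm{k}}+(\text{lower-depth }\tilde{g}_{\bm{l}}\text{ with }\bm{l}\in\mathbb{I}^{\ge2},\text{ plus constants})$, followed by inversion by induction on depth. The only difference is that the paper takes this shape directly from the construction in \cite{BB}, while you sketch it via the $\gila$-expansion and the antipode cancellation from the proof of \Cref{thm:comparison of regularization}~(i).

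One small correction to that sketch concerns the regularization of the constant-term mould. For a bi-mould built like $\bMZV=\zeta^\natural\times\zeta^\ast$, the restriction to $d=0$ is of harmonic type, not shuffle type: here $\bMZV\binom{\bm{k}}{\bm{0}}=\zeta^\ast(\bm{k})$. So the antipode step also needs your own observation that the binomial factors force every $\beta$-argument to have all entries $\ge2$, where the harmonic and shuffle versions agree. This is exactly how the paper replaces $\zeta^\ast$ by $\zeta^\shuffle$ in that proof.
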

\begin{proof}
    It follows from the construction of the combinatorial multiple Eisenstein series (see \cite[Section 6]{BB}, cf.\ the proof of \cite[Proposition 6.15]{BB}) that for $k_1,\dots,k_r\geq2$ we have
    \begin{align}\label{eq:cmes structure}
        \cmes(k_1,\dots,k_r)=\tilde{g}_{k_1,\dots,k_r}(\tau)+\summ{0\le s<r\\l_1,\dots,l_s\ge2}\lambda^{(k_1,\dots,k_r)}_{l_1,\dots,l_s}\,\tilde{g}_{l_1,\dots,l_s}(\tau)
    \end{align}
    for some $\lambda^{(k_1,\dots,k_r)}_{l_1,\dots,l_s}\in\QQ$, where we set $\tilde{g}_\varnothing(\tau)\coloneqq1$. In other words, in $\cmes(k_1,\dots,k_r)$ only $\tilde{g}_{l_1,\dots,l_s}$ with $l_1,\dots,l_s\geq2$ (and constants) appear, and the only term in depth $r$ is exactly $\tilde{g}_{k_1,\dots,k_r}$. Since $\cmes(\varnothing)=1$, the expression \eqref{eq:cmes structure} gives the inclusion $\supseteq$. Conversely, inverting \eqref{eq:cmes structure} inductively on the depth $r$ shows that every $\tilde{g}_{k_1,\dots,k_r}$ with $k_1,\dots,k_r\geq2$ is a $\QQ$-linear combination of $\cmes(l_1,\dots,l_s)$ with $0\le s\le r$ and $l_1,\dots,l_s\geq2$, giving the inclusion $\subseteq$.
\end{proof}

\newtheorem*{okounkovcor}{\Cref{cor:okounkov}}
\begin{okounkovcor}
    The space $\qmzv$ is closed under the operator $q\frac{d}{dq}$.
\end{okounkovcor}
\begin{proof}
    By \Cref{lem:qmzv cmes}, it suffices to show $q\frac{d}{dq}\,\cmes(w)\in\qmzv$ for any $w\in\ha^{\geq2}$. By \eqref{eq:qddq and D} and \Cref{thm:der}~(i), we obtain
    \begin{align}
        q\frac{d}{dq}\,\cmes(w)=\cmes(D(w))=\cmes(\der(w)).
    \end{align}
    Since $\der(w)\in\ha^{\geq2}$, the claim follows again from \Cref{lem:qmzv cmes}.
\end{proof}

\section{Another formulation for formal multiple Eisenstein series}\label{sec:another formal MES}

\subsection{Candidate of all linear relations}

In this subsection, we present the family of linear relations that is expected to give all linear relations among multiple Eisenstein series, and some numerical evidence. We define the subspace $\mathsf{R},\mathsf{DR}$ of $\ha^{\ge2}$ by
\begin{align}
    \mathsf{R}&\coloneqq\Span_\QQ\{R(u,v)\mid u,v\in\ha^{\ge2}\},\\
    \mathsf{DR}&\coloneqq\Span_\QQ\{\der^n(R(u,v))\mid n\ge0,u,v\in\ha^{\ge2}\},
\end{align}
and denote by $\mathsf{R}_\ast,\mathsf{DR}_\ast$ the ideals in $\ha^{\ge2}_\ast$ generated by $\mathsf{R}$ and $\mathsf{DR}$, respectively. Note that $\mathsf{R}_\ast,\mathsf{DR}_\ast\subset\ker{G}$ by \Cref{cor:R} and \Cref{thm:der}.
The following table shows the number of linearly independent relations included in $\mathsf{R}_\ast$ and $\mathsf{DR}_\ast$, obtained by the numerical computation. Both families conjecturally yield all relations up to weight $16$; however, $\mathsf{R}_\ast$ fails at weight $17$. In contrast, the number of relations in $\mathsf{DR}_\ast$ coincides with the conjectured number of relations.
\begin{align}
\begin{array}{c|ccccccccccccccccc}
\text{weight }k & 6 & 7 & 8 & 9 & 10 & 11 & 12 & 13 & 14 & 15 & 16 & 17 & 18 & 19 & 20 &
\\\hline
\text{$\#\text{ of conj. relations}=$} & 1 & 1 & 4 & 6 & 13 & 23 & 42 & 74 & 129 & 224 & 382 & 651 & 1098 & 1847 & 3089 &
\\
\text{$\#\text{ of relations in }\mathsf{R}_\ast=$} & 1 & 1 & 4 & 6 & 13 & 23 & 42 & 74 & 129 & 224 & 382 & 650 & 1098 & 1845 & 3086 &
\\
\text{$\#\text{ of relations in }{\mathsf{DR}_\ast}=$} & 1 & 1 & 4 & 6 & 13 & 23 & 42 & 74 & 129 & 224 & 382 & 651 & 1098 & 1847 & 3089 &
\end{array}
\end{align}

\subsection{Another formal multiple Eisenstein space $\widetilde{\mes^f}$}
We introduce an alternative formal multiple Eisenstein space, different from $\mes^f$. We define the $\QQ$-algebra $\widetilde{\mes^f}$ by
\begin{align}
\widetilde{\mes^f}\coloneqq\quotient{\ha^{\ge2}_\ast}{\mathsf{DR}_\ast}.
\end{align}
Based on the results from the previous sections, we have surjective algebra homomorphisms $\widetilde{\mes^f}\twoheadrightarrow\mes^f\twoheadrightarrow\mes$. Indeed, for the first map one has to show that every element of $\mathsf{DR}_\ast$ vanishes in $\mes^f$, i.e. lies in $\I$. By \Cref{thm:sltwo for gf0} we have $\dsht(w)\equiv D(w)\mod\I$ for any $w\in\ha^1$, and since $D$ is a derivation on $\hha^1_\ast$, we get $R(u,v)\in\I$ for any $u,v\in\ha^{\ge2}$. Moreover, the $\sigma$-equivariant derivation $D$ satisfies $D(\I)\subset\I$, so that \Cref{thm:der} inductively yields $\der^n(R(u,v))\in\I$ for all $n\ge0$. The second map is induced by the realization $\bMES$ (\Cref{prop:bMES H2}).


By \Cref{thm:der}, one can confirm the commutator relation $\lbrack\delta,\der\rbrack=W$ in $\mes^f$. Although the space $\mes^f$ is a quotient of $\ha^{\geq2}$, the commutator relation actually holds in $\ha^{\geq2}$.
\begin{lem}\label{lem:commutator2}
    For $w\in\ha^{\ge2}$, we have
    \begin{align}
        \lbrack\delta,\der\rbrack(w)=W(w).
    \end{align}
\end{lem}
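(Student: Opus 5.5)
By \Cref{rem:drop1 identity} and \Cref{lem:Dhom} we have $\mathcal{D}(w\ast z_2)=w\ast z_2$, so for $w\in\ha^{\ge2}$
\begin{align}
    \der(w)=\mathcal{D}(w\shuffle z_2)-w\ast z_2 .
\end{align}
The plan is to compute both sides of $[\delta,\der](w)=W(w)$ directly inside $\ha^{\ge2}$, using this expression.

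\emph{Step 1 (action of $\delta$ on $\ha^{\ge2}$).} First I would restrict \eqref{eq:delta in H1} to $\ha^{\ge2}$. A word in $\ha^{\ge2}$ contains no factor $yy$, so the terms indexed by $yyx$, $xyy$ and $(yy)$ vanish. The term indexed by $(yx)$ also vanishes, since such a word starts with $x$. Hence $\delta(z_2u)=-\tfrac12 u$ and $\delta(z_ku)=0$ for $k\ge3$; in particular $\delta$ preserves $\ha^{\ge2}$. Since $\delta$ is a derivation for $\ast$ with $\delta(z_2)=-\tfrac12$, this gives
\begin{align}
    \delta(w\ast z_2)=\delta(w)\ast z_2-\tfrac12 w,
\end{align}
as already used in the proof of \Cref{lem:commutator}.

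\emph{Step 2 (reduction).} Next I would expand $w\shuffle z_2$ for $w=z_{\bm k}$. As in the proof of \Cref{thm:der}, it is an element of $\ha^{\ge2}$ plus $2\sum_{j<i}k_j\,z_{k_1}\cdots z_{k_j+1}\cdots z_{k_{i-1}}z_1z_{k_i}\cdots z_{k_r}$, and every word occurring lies in $\halm$. For these words, \Cref{thm:drop only one 1} gives $\mathcal{D}$ explicitly in terms of $\ast$, $\shuffle z_1$ and the star operator. Substituting, $[\delta,\der](w)-W(w)$ becomes an explicit combination of $\ast$, $\shuffle z_1$ and star expressions. Only two derivation-type rules are then needed. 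The first is the $\ast$-Leibniz rule for $\delta$. The second is a rule for $\delta(u\shuffle z_1)$ for $u\in\ha^{\ge2}$, which I would read off from \eqref{eq:delta in H1}: the factors $yyx$ and $xyy$ arise exactly where $y$ is inserted next to a $y$. I would also need a rule for $\delta$ of star expressions, obtained from the same leading-$z_2$ rule of Step 1.

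\emph{Step 3 (main obstacle).} The main obstacle is that $\delta$ does \emph{not} commute with $\mathcal{D}$. Indeed $\der\equiv D\equiv\dsht \bmod \I$ while $\der=-\mathcal{D}\circ\dsht$, so the naive argument using \Cref{lem:commutator} produces $-W$ instead of $W$. One must therefore compute the defect $\delta\mathcal{D}-\mathcal{D}\delta$ on the one-$z_1$ part of $w\shuffle z_2$ and show that it contributes exactly $2W(w)$. I would first try to prove this defect in the form
\begin{align}
    \delta\big(\mathcal{D}(z_{\bm k}z_1z_{\bm l})\big)-\mathcal{D}\big(\delta(z_{\bm k}z_1z_{\bm l})\big)=(\text{explicit simple term}),
\end{align}
using the formula of \Cref{thm:drop only one 1}. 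The sum over $j$ there is an antipode-type sum, so most $\delta$-contributions should telescope. Summing the surviving boundary terms with weights $2k_j$ should produce $2W(w)$, because $\sum_j k_j$ is the weight.

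\emph{Fallback.} If this bookkeeping becomes unmanageable, I would instead use injectivity. Since $\ker\zeta^\diamondsuit\cap\ha^{\ge2}=0$, it suffices to show $\zeta^\diamondsuit\big([\delta,\der](w)-W(w)\big)=0$. I would check this on multiple harmonic sums $\zeta_N$, realizing $\delta$ on the one-$z_1$ words through the truncated sums in the proof of \Cref{thm:drop only one 1}.
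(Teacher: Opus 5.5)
Your preliminary steps are correct: $\der(w)=\mathcal{D}(w\shuffle z_2)-w\ast z_2$, the description of $\delta$ on $\ha^{\ge2}$, the fact that every word of $w\shuffle z_2$ lies in $\halm$, and the observation that $\mathcal{D}$ and $\delta$ cannot commute. Let $Y$ denote the one-$z_1$ part of $w\shuffle z_2$. Your argument then correctly reduces the lemma to $[\delta,\mathcal{D}](Y)=2W(w)$. \emph{But this identity is the entire content of the lemma, and Step 3 only predicts it} (``should telescope'', ``should produce $2W(w)$''). You derive no formula for the defect and exhibit no cancellation.

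Nor is the defect a simple boundary term word by word. From \Cref{thm:drop only one 1} one gets, for example:
\begin{itemize}
\item $[\delta,\mathcal{D}](z_3z_1)=z_2$;
\item $[\delta,\mathcal{D}](z_pz_qz_1)=\mathcal{D}(z_pz_{q-1})$;
\item $[\delta,\mathcal{D}](z_az_1z_b)=z_{a-1}z_b-z_az_{b-1}$ for $a,b\ge3$.
\end{itemize}
For $w=z_{k_1}z_{k_2}$ with $k_2\ge3$, the total $2(k_1+k_2)z_{k_1}z_{k_2}$ appears only after the terms $\mp2k_1z_{k_1+1}z_{k_2-1}$ cancel. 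These come from the two different words $z_{k_1+1}z_1z_{k_2}$ and $z_{k_1+1}z_{k_2}z_1$. So a global summation argument over all pairs $(j,i)$ is required, and it is missing.

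The fallback does not supply it. Since $\zeta^\diamondsuit$ is injective on $\ha^{\ge2}$, checking $\zeta^\diamondsuit(\cdot)=0$ is the same problem as the word identity. Moreover, $\delta$ cannot be realized on truncated sums, because it does not preserve $\ker\zeta^\diamondsuit$: for instance $\delta\bigl(\mathcal{D}(z_3z_1)-z_3z_1\bigr)=\delta(z_4-z_2z_2-z_3z_1)=z_2$.

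The idea that makes the computation manageable, and which the paper uses, is that $\mathcal{D}(v)\in\ha^{\ge2}$. There $\delta$ merely removes a leading $z_2$ with factor $-\tfrac12$. Hence $\delta\mathcal{D}(v)$ depends only on the part of $\mathcal{D}(v)$ beginning with $z_2$, and you need no rules for $\delta(u\shuffle z_1)$ or for $\delta$ of star expressions on words outside $\ha^{\ge2}$.

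The paper reads this part off the first step of the recursion for $\mathfrak{D}$. Only the terms $x^{\#A+\#B-1}y\,\mathfrak{D}(\cdots)$ with $\#A+\#B=2$ contribute: either $A$ is a single pair $\{2l,2l+1\}$ or $\{2l-1,2l\}$ with $B=\varnothing$, or $A=\varnothing$ and $\#B=2$. The paper reduces to an identity for $\sum_{i\le j}k_i\,\delta\circ\mathfrak{D}(\bm{c}_{i,j})$, where $\bm{c}_{i,j}=(k_1-1,1,\dots,k_i,1,\dots,k_j-1,2,\dots,k_r-1,1)$. It then shows that, for each fixed $i$, these terms cancel in the sum over $j$, with two exceptions:
\begin{itemize}
\item $A=\{1,2\}$, which gives the $\mathbf{1}_{k_1=2}$ contribution coming from $u\shuffle z_2$;
\item $B=\{2i-1,2i\}$ at $j=i$, which returns $w$ with weight $k_i$ and hence $\tfrac12W(w)$.
\end{itemize}
If you prefer to stay with \Cref{thm:drop only one 1}, apply the same ``strip a leading $z_2$'' map termwise to the closed formula, and then carry out the summation over $(j,i)$ explicitly. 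Without that, the proposal is an outline rather than a proof.
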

\begin{proof}
    Let $w=x^{k_1-1}y\cdots x^{k_r-1}y=x^{k_1-1}yu$. Then, using \Cref{rem:drop1 identity}, we get
    \begin{align}
        \lbrack\delta,\der\rbrack(w)&=-\delta\circ\mathcal{D}\circ \dsht(w)+\mathcal{D}\circ \dsht\circ\delta(w)\\
        &=-\delta(w\ast z_2)+\delta\circ\mathcal{D}(w\shuffle z_2)-\frac{1}{2}\mathbf{1}_{k_1=2}u\ast z_2+\frac{1}{2}\mathbf{1}_{k_1=2}\mathcal{D}(u\shuffle z_2)\\
        &=\frac{1}{2}w+\delta\circ\mathcal{D}(w\shuffle z_2)+\frac{1}{2}\mathbf{1}_{k_1=2}\mathcal{D}(u\shuffle z_2).
    \end{align}
    Thus, it suffices to prove
    \begin{align}
        \delta\circ\mathcal{D}(w\shuffle z_2)=W(w)-\frac{1}{2}w-\frac{1}{2}\mathbf{1}_{k_1=2}\mathcal{D}(u\shuffle z_2).
    \end{align}
    By the explicit formula for $w\shuffle z_2$, this equation is equivalent to the following.
    \begin{align}\label{eq:deltaD}
        &\sum_{1\le i\le j\le r}k_i\delta\circ\mathfrak{D}(k_1-1,1,\dots,k_i,1,\dots,k_j-1,2,\dots,k_r-1,1)\\&=-\frac{1}{2}\mathbf{1}_{k_1=2}\sum_{2\le i\le j\le r}k_i\mathfrak{D}(k_2-1,1,\dots,k_i,1,\dots,k_j-1,2,\dots,k_r-1,1)+\frac{1}{2}W(w).
    \end{align}
    We prove this. Writing $\bm{c}_{i,j}=(k_1-1,1,\dots,k_i,1,\dots,k_j-1,2,\dots,k_r-1,1)$ we get
    \begin{align}
        \delta\circ\mathfrak{D}(\bm{c}_{i,j})&=\frac{1}{2}\Bigg(\summ{A\subset\lbrack2r\rbrack^1_{\bm{c}_{i,j}}\\A=\{2l,2l+1\}\;(l\in\lbrack r-1\rbrack)}-\summ{A\subset\lbrack2r\rbrack^1_{\bm{c}_{i,j}}\\A=\{2l-1,2l\}\;(l\in\lbrack r\rbrack)}\Bigg)\mathfrak{D}(\bm{c}_{i,j(-A)})\\
        &\quad+\frac{1}{2}\Bigg(\summ{B\subset\lbrack2r\rbrack^{>1}_{\bm{c}_{i,j}}\\\#B=2,B\ne\{2l,2l+1\}}-\summ{B\subset\lbrack2r\rbrack^{>1}_{\bm{c}_{i,j}}\\\#B=2,B\ne\{2l-1,2l\}}\Bigg)\mathfrak{D}(\bm{c}_{i,j}-\delta_B).
    \end{align}
    It remains to show
    \begin{align}
        &\sum_{1\le i\le j\le r}k_i    \Bigg(\summ{A\subset\lbrack2r\rbrack^1_{\bm{c}_{i,j}}\\A=\{2l,2l+1\}\;(l\in\lbrack r-1\rbrack)}-\summ{A\subset\lbrack2r\rbrack^1_{\bm{c}_{i,j}}\\A=\{2l-1,2l\}\;(l\in\{2,\dots,r\})}\Bigg)\mathfrak{D}(\bm{c}_{i,j(-A)})=0,\label{eq:vanish1}\\
        &\sum_{1\le i<j\le r}k_i\Bigg(\summ{B\subset\lbrack2r\rbrack^{>1}_{\bm{c}_{i,j}}\\\#B=2,B\ne\{2l,2l+1\}}-\summ{B\subset\lbrack2r\rbrack^{>1}_{\bm{c}_{i,j}}\\\#B=2,B\ne\{2l-1,2l\}}\Bigg)\mathfrak{D}(\bm{c}_{i,j}-\delta_B)=0\label{eq:vanish2},
    \end{align}
    since assuming the above identities, we have
    \begin{align}
        \sum_{1\le i\le j\le r}k_i&\delta\circ\mathfrak{D}(\bm{c}_{i,j})=-\frac{1}{2}\sum_{1\le i\le j\le r}k_i\summ{A\subset\lbrack2r\rbrack^1_{\bm{c}_{i,j}}\\A=\{1,2\}}\mathfrak{D}(\bm{c}_{i,j(-A)})+\frac{1}{2}\sum_{1\le i\le r}k_i\mathfrak{D}(\bm{c}_{i,i}-\delta_{\{2i-1,2i\}})\\
        &=-\frac{1}{2}\mathbf{1}_{k_1=2}\sum_{2\le i\le j\le r}k_i\mathfrak{D}(k_2-1,1,\dots,k_i,1,\dots,k_j-1,2,\dots,k_r-1,1)+\frac{1}{2}W(w).
    \end{align}
    We prove \eqref{eq:vanish1} and \eqref{eq:vanish2}. First, the left-hand side of \eqref{eq:vanish1} is equal to
    \begin{align}
        \sum_{i=1}^rk_i\bigg(\sum_{j=i}^r\summ{l\in\lbrack r-1\rbrack\setminus\{i,j+1\}\\k_l=2}\mathfrak{D}(\bm{c}_{i,j(-\{2l,2l+1\})})-\sum_{j=i}^r\summ{l\in\{2,\dots,r\}\setminus\{i,j\}\\k_l=2}\mathfrak{D}(\bm{c}_{i,j(-\{2l-1,2l\})})\bigg).
    \end{align}
    We know that the sum inside vanishes for each $i\in\lbrack r\rbrack$.
    Second, when $i<j$, we have
    \begin{align}
        \Bigg(\summ{B\subset\lbrack2r\rbrack^{>1}_{\bm{c}_{i,j}}\\\#B=2,\\B\ne\{2l,2l+1\}}-\summ{B\subset\lbrack2r\rbrack^{>1}_{\bm{c}_{i,j}}\\\#B=2,\\B\ne\{2l-1,2l\}}\Bigg)\mathfrak{D}(\bm{c}_{i,j}-\delta_B)
        &=\mathbf{1}_{\substack{k_j\ge3\\k_{j+1}=2}}\mathfrak{D}(\bm{c}_{i,j}-\delta_{\{2j-1,2j\}})-\mathbf{1}_{\substack{k_j=2\\k_{j+1}\ge3}}\mathfrak{D}(\bm{c}_{i,j}-\delta_{\{2j,2j+1\}})\\
        &\quad+\mathbf{1}_{k_j,k_{j+1}\ge3}\left(\mathfrak{D}(\bm{c}_{i,j}-\delta_{\{2j-1,2j\}})-\mathfrak{D}(\bm{c}_{i,j}-\delta_{\{2j,2j+1\}})\right)
    \end{align}
    Therefore, the left-hand side of \eqref{eq:vanish2} is equal to
    \begin{align}
        &\sum_{i=1}^{r-1}k_i\bigg(\sum_{j=i+1}^r\mathbf{1}_{\substack{k_j\ge3\\k_{j+1}=2}}\mathfrak{D}(\bm{c}_{i,j}-\delta_{\{2j-1,2j\}})-\mathbf{1}_{\substack{k_j=2\\k_{j+1}\ge3}}\mathfrak{D}(\bm{c}_{i,j}-\delta_{\{2j,2j+1\}})\\
        &\qquad+\mathbf{1}_{k_j,k_{j+1}\ge3}\left(\mathfrak{D}(\bm{c}_{i,j}-\delta_{\{2j-1,2j\}})-\mathfrak{D}(\bm{c}_{i,j}-\delta_{\{2j,2j+1\}})\right)\bigg).
    \end{align}
    We know the sum inside vanishes for each $i\in\lbrack r-1\rbrack$.
\end{proof}

\newtheorem*{mainE}{\Cref{thm:main2}}
\begin{mainE}
    $\widetilde{\mes^f}$ is an $\sltwo$-algebra, whose $\sltwo$-triple is $(W,\der,\delta)$.
\end{mainE}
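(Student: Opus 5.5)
The plan is to define $W$, $\der$, $\delta$ on $\ha^{\ge2}$ and check that each of them preserves the ideal $\mathsf{DR}_\ast$. The structure equations and the derivation property should hold on $\ha^{\ge2}$ modulo $\mathsf{DR}_\ast$, and I would check that they descend to the quotient.

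\emph{Operators are well defined on $\ha^{\ge2}$.} The map $W$ is the weight grading. The map $\der$ sends $\ha^{0}$ to $\ha^{\ge2}$ by construction. For $\delta$, formula \eqref{eq:delta in H1} on a word $w\in\ha^{\ge2}$ only produces the term $-\frac12 u$ when $w=z_2u$: the patterns $yx$, $yy$, $yyx$ cannot occur, since no two $y$'s are adjacent in such a word. So $\delta(z_2u)=-\frac12u$, $\delta(z_ku)=0$ for $k\ge3$, and $\delta$ preserves $\ha^{\ge2}$.

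\emph{Derivations modulo $\mathsf{DR}_\ast$.} $W$ and $\delta$ are derivations of $\ha^1_\ast$ (restrictions of the derivations of $\hha^1_\ast$ from \cite{BIM}), so they are derivations of $\ha^{\ge2}_\ast$. For $\der$ we have the displayed identity $\der(u\ast v)-\der(u)\ast v-u\ast\der(v)=-R(u,v)$ with $R(u,v)\in\mathsf{R}\subset\mathsf{DR}_\ast$. Hence $\der$ is a derivation modulo $\mathsf{DR}_\ast$, provided it preserves $\mathsf{DR}_\ast$.

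\emph{Stability of $\mathsf{DR}_\ast$.}
\begin{itemize}
\item[(a)] $W$: the space $\mathsf{DR}$ is spanned by homogeneous elements, since $R$ and $\der$ preserve weight up to a shift ($\der$ raises weight by $2$). Hence $W(\mathsf{DR}_\ast)\subset\mathsf{DR}_\ast$.
\item[(b)] $\der$: by the derivation property just shown, $\der(\der^n R(u,v)\ast w)\equiv\der^{n+1}R(u,v)\ast w+\der^nR(u,v)\ast\der(w)$ modulo $\mathsf{R}\subset\mathsf{DR}_\ast$. Both terms lie in $\mathsf{DR}_\ast$.
\item[(c)] $\delta$: this is the main step. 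By \Cref{lem:deltaR}, $\delta R(u,v)=R(\delta u,v)+R(u,\delta v)\in\mathsf{R}$, because $\delta u,\delta v\in\ha^{\ge2}$. For higher iterates I would use \Cref{lem:commutator2}, $[\delta,\der]=W$ on $\ha^{\ge2}$, and induct on $n$:
\begin{align}
\delta\der^nR=\der\,\delta\der^{n-1}R+W\der^{n-1}R .
\end{align}
By induction $\delta\der^{n-1}R(u,v)\in\mathsf{DR}_\ast$. Then (b) and (a) give $\delta\der^nR(u,v)\in\mathsf{DR}_\ast$. Since $\delta$ is a derivation, the ideal generated by $\mathsf{DR}$ is preserved as well.
\end{itemize}
One subtlety is that the induction genuinely needs (b) for elements of the full ideal $\mathsf{DR}_\ast$, not just of $\mathsf{DR}$, which is why (b) is proved first.

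\emph{Commutation relations.} On $\ha^{\ge2}$, $[W,\der]=2\der$ and $[W,\delta]=-2\delta$ hold because $\der$ raises weight by $2$ and $\delta$ lowers it by $2$. The relation $[\delta,\der]=W$ is \Cref{lem:commutator2}. All three hold exactly on $\ha^{\ge2}$, hence in the quotient $\widetilde{\mes^f}$. Together with the three derivation properties, this shows that $(W,\der,\delta)$ is an $\sltwo$-triple of derivations of $\widetilde{\mes^f}$.
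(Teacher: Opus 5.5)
Your proof is correct and follows the paper's argument. $W$ and $\der$ descend to $\widetilde{\mes^f}$ by homogeneity and the defect identity $\der(u\ast v)-\der(u)\ast v-u\ast\der(v)=-R(u,v)$, and $\delta$ descends because \Cref{lem:deltaR}, together with $[\delta,\der]=W$ from \Cref{lem:commutator2} and $[W,\der]=2\der$, puts $\delta(\der^n(R(u,v)))$ back into $\mathsf{DR}_\ast$.

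The only difference is that you induct on $n$, whereas the paper writes a closed formula for $\delta\circ\der^i$. That formula's last term should be $-i(i-1)\der^{i-1}$ rather than $-2i\der^{i-1}$, which is immaterial since only membership in $\mathsf{DR}$ matters. Your induction also already stays inside the span $\mathsf{DR}$, because $\der(\mathsf{DR})\subset\mathsf{DR}$ and $W(\mathsf{DR})\subset\mathsf{DR}$ hold trivially. So your remark that the induction ``genuinely needs'' (b) on the full ideal is not accurate, though it is harmless.
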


\begin{proof}
    By \Cref{lem:commutator2}, it suffices to show the three derivations $W,\der,\delta$ are well-defined in $\widetilde{\mes^f}$. It is easy to see $W$ and $\der$ are well-defined. For $\delta$, it remains to show $\delta(\der^i(R(u,v)))\in\mathsf{DR}_\ast$ for any $i\ge0$, $u,v\in\ha^{\ge2}$. Using the commutator relations $\lbrack\delta,\der\rbrack=W$ and $\lbrack W,\der\rbrack=2\der$ repeatedly, we have
    \begin{align}
        \delta\circ \der^i=\der^i\circ\delta+iW\circ \der^{i-1}-2i\der^{i-1}.
    \end{align}
    Therefore, by \Cref{lem:deltaR}, we have
    \begin{align}
        \delta(\der^i(R(u,v)))=\der^i(R(\delta(u),v))+\der^i(R(u,\delta(v)))+iW\der^{i-1}(R(u,v))-2i\der^{i-1}(R(u,v))\in\mathsf{DR}_\ast,
    \end{align}
    and thus, $\delta$ is well-defined on $\widetilde{\mes^f}$.
\end{proof}

We conclude this subsection by summarizing the relationships between the algebras introduced so far and the remaining problems.

\begin{conj}
    $\mes$, $\mes^f$ and $\widetilde{\mes^f}$ are isomorphic to each other as $\sltwo$-algebras.
\end{conj}
If $W^\prime$ and $\delta^\prime$ are both well-defined, then it follows that $\mes$ is an $\sltwo$-algebra. Indeed, the maps $\mes^f\twoheadrightarrow\mes$ and $\widetilde{\mes^f}\twoheadrightarrow\mes$ intertwine the linear maps $(W,\der,\delta)$ and $(W^\prime,D^\prime,\delta^\prime)$ by \Cref{thm:der} for $\der$ and by definition for $W,\delta$. And thus, the maps induce the same $\sltwo$-algebra structure on $\mes$. The well-definedness of $W^\prime$ is equivalent to the direct sum conjecture over $\QQ$, i.e. $\mes=\bigoplus_{k\geq0}\mes_k$. The well-definedness of $\delta^\prime$ is related to the transcendence of the weight $2$ Eisenstein series $G_2$.
\begin{lem}
    The following are equivalent:
    \begin{enumerate}
        \item $\delta^\prime$ is well-defined.
        \item $-\frac{1}{2}\frac{\partial}{\partial G_2}$ is well-defined.
        \item $G_2$ is transcendental over $\ker(\delta^\prime)=\Span_\QQ\{G_{k_1,\dots,k_r}\in\mes\mid k_1\geq3\}$.
    \end{enumerate}
    And if one of them is true, we have $\delta^\prime=-\frac{1}{2}\frac{\partial}{\partial G_2}$.
\end{lem}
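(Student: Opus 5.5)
The plan is to transport everything to the formal level $\ha^{\ge2}_\ast$. There the map $\delta$ from \eqref{eq:delta in H1} is an honest derivation, and $\ha^{\ge2}_\ast$ is a polynomial ring in $z_2$ over the subalgebra of words starting with a letter $\ge3$. Both (i) and (ii) will then be reformulated as the single statement $\delta(\ker G)\subset\ker G$, and (iii) will be linked to (ii) by an argument with minimal polynomials.

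\emph{Step 1 (formal structure).} Let $\mathcal{A}^f\subset\ha^{\ge2}$ be the span of $1$ and of the words $z_{k_1}\cdots z_{k_r}$ with $k_1\ge3$. It is a $\ast$-subalgebra, because the first letter of every word in $u\ast v$ is $z_{k_1}$, $z_{l_1}$ or $z_{k_1+l_1}$. Next I show $\ha^{\ge2}=\bigoplus_{n\ge0}\mathcal{A}^f\ast z_2^{\ast n}$, with $z_2$ algebraically free over $\mathcal{A}^f$, by induction on the number $a$ of leading $z_2$'s. The key identity is
\begin{align}
z_2\ast z_2^{a-1}z_ku=a\,z_2^{a}z_ku+(\text{words with fewer than $a$ leading $z_2$'s}),
\end{align}
where the error terms arise from inserting $z_2$ further right or from merging it into $z_4$ or $z_{k+2}$. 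This gives the spanning. Freeness follows by comparing top-degree parts in this filtration; alternatively one can cite that $\ha^{\ge2}_\ast$ is a free polynomial algebra.

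\emph{Step 2 (identify $\delta$).} On $\ha^{\ge2}$, words contain no factor $yy$. In \eqref{eq:delta in H1} this leaves only the term $-\tfrac12\mathbf{1}_{w=(xy)u}u$, so $\delta(z_2u)=-\tfrac12u$ and $\delta=0$ on the words of $\mathcal{A}^f$. By \cite{BIM}, $\delta$ is a derivation of $\ha^1_\ast$, and it preserves $\ha^{\ge2}$. Hence on $\ha^{\ge2}_\ast$ we have $\delta=-\tfrac12\,\partial/\partial z_2$, computed via the decomposition of Step 1, since both sides are derivations agreeing on the generators $\mathcal{A}^f$ and $z_2$.

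\emph{Step 3 (the equivalences).} Apply $G$. Then $\mes=A[G_2]$ with $A=G(\mathcal{A}^f)$, which is exactly the span of the $G_{k_1,\dots,k_r}$ with $k_1\ge3$. Both $\delta'$ and the map $-\tfrac12\partial/\partial G_2$ (computed from any polynomial representative) are, by construction and Step 2, the maps induced by $\delta$ through the surjection $G$. So each is well-defined if and only if $\delta(\ker G)\subset\ker G$, which gives (i)$\Leftrightarrow$(ii) together with the final equality $\delta'=-\tfrac12\partial/\partial G_2$. It also gives $\ker\delta'=A$, since $\partial/\partial G_2$ kills exactly the degree-zero part.

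(iii)$\Rightarrow$(ii): if $G_2$ is transcendental over $A$, then $\mes\cong A[X]$ and the derivative is tautologically well-defined.

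(ii)$\Rightarrow$(iii): suppose instead that $P(G_2)=0$ for some nonzero $P\in A[X]$ of minimal degree $n\ge1$. Well-definedness then forces $P'(G_2)=0$. The leading coefficient of $P'$ is $n a_n\ne0$, because $A$ consists of holomorphic functions and we are in characteristic $0$. This contradicts minimality, and the case $n=0$ is excluded since $a_0\ne0$.

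I expect the main obstacle to be Step 2. One must check carefully that \eqref{eq:delta in H1} really reduces to $-\tfrac12\partial/\partial z_2$ on $\ha^{\ge2}$, in particular that $\delta$ is a $\ast$-derivation on the subspace $\ha^1$ with these normalizations, so that the lift of $-\tfrac12\partial/\partial G_2$ is literally $\delta$.
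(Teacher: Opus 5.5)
Your proposal is correct and follows the same idea as the paper. In both, the harmonic product gives $\mes=\mes^{\mathrm{adm}}[G_2]$, a derivation killing $\mes^{\mathrm{adm}}$ is determined by its value on $G_2$ (giving (i)$\Leftrightarrow$(ii) and $\delta'=-\frac{1}{2}\frac{\partial}{\partial G_2}$), and (ii)$\Leftrightarrow$(iii) comes from the polynomial structure. What you add is the lift to $\ha^{\ge2}_\ast$, where $\delta=-\frac{1}{2}\partial/\partial z_2$ is a genuine derivation: this justifies two points the paper's proof uses tacitly, namely that a well-defined $\delta'$ is automatically a derivation of $\mes$ and that $-\frac{1}{2}\frac{\partial}{\partial G_2}$ agrees with $\delta'$ on every generator $G_{2,k_2,\dots,k_r}$, while your minimal-polynomial argument spells out the step for (ii)$\Rightarrow$(iii) that the paper calls immediate.
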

\begin{proof}
    Let $\mes^{\mathrm{adm}}$ be the space spanned by $G_{k_1,\dots,k_r}(\tau)$ with $k_1\geq3$, $k_2,\dots,k_r\geq2$. By the harmonic product, we have $\mes=\mes^{\mathrm{adm}}\lbrack G_2\rbrack$. Thus, a derivation $d\in\Der(\mes)$ such that $d(\mes^{\mathrm{adm}})=0$ is uniquely determined by $d(G_2)$ and always in the form $d=d(G_2)\cdot\frac{\partial}{\partial G_2}$. Therefore the equivalence \textit{(i)} $\Leftrightarrow$ \textit{(ii)} follows and $\delta^\prime=-\frac{1}{2}\frac{\partial}{\partial G_2}$ since $\delta^{\prime}(G_2)=-\frac{1}{2}$. The equivalence \textit{(ii)} $\Leftrightarrow$ \textit{(iii)} follows immediately since $\mes$ is a free polynomial ring over $\mes^{\mathrm{adm}}$ generated by $G_2$ if $\delta^\prime$ is well-defined.
\end{proof}

\subsection{Multiple Eisenstein-diamond series}
The connection between the Drop1 operator and multiple Eisenstein series can be elegantly reformulated by introducing the \emph{multiple Eisenstein-diamond series}, inspired by the multiple zeta-diamond values $\zeta^\diamondsuit$ introduced in \cite{HMSW}.

To this end, we define the $\QQ$-linear map $G^\diamondsuit \coloneqq G \circ \mathcal{D}$ by
\begin{align}
    G^\diamondsuit \colon \ha^0 &\longrightarrow \mes \\
       w &\longmapsto G(\mathcal{D}(w)).
\end{align}

\begin{dfn}
    For an integer $r\ge1$ and an index $(k_1,\dots,k_r)\in(\ZZ_{>0})^r$ with $k_1\ge2$, we define the \emph{multiple Eisenstein-diamond series} by
    \begin{align}
        G^\diamondsuit_{k_1,\dots,k_r}(\tau) \coloneqq G^\diamondsuit(z_{k_1}\cdots z_{k_r}). 
    \end{align}
    Note that for indices with $k_1,\dots,k_r \ge 2$, we have $G^\diamondsuit_{k_1,\dots,k_r}(\tau)=G_{k_1,\dots,k_r}(\tau)$.
\end{dfn}

By \eqref{eq:mesfourier} and the fact that multiple zeta values satisfy the Drop1 relations $\zeta(\mathcal{D}(w))=\zeta(w)$, we obtain the following Fourier expansion for $k_1 \ge 2$ and $k_2,\dots,k_r \ge 1$:
\begin{align}
 G^\diamondsuit_{k_1,\dots,k_r}(\tau) = \zeta(k_1,\dots,k_r)+\sum_{n\ge1}a_nq^n\qquad(a_n\in\mz\lbrack2\pi i\rbrack).
\end{align}
A natural question arises as to whether there exists an explicit definition of $G^\diamondsuit$ in terms of lattice sums that does not rely on the operator $\mathcal{D}$.
Based on the results presented in this work and numerical experiments, we propose the following conjecture.
\begin{conj}\label{conj:diamondmes}
    \begin{enumerate}
        \item $G^\diamondsuit:\ha^0_\ast\rightarrow\mes$ is an algebra homomorphism.
            \item  For any $w\in\ha^0$ we have $$2\pi i\frac{d}{d\tau}G^\diamondsuit(w)=G^\diamondsuit(w\shuffle z_2-w\ast z_2).$$
        \item All relations among $G^\diamondsuit$ are a consequence of the Drop1 relations and products, i.e. 
        $$\ker G^\diamondsuit=\mathsf{Drop1}_\ast,$$
    where $\mathsf{Drop1}_\ast\coloneqq\Span_\QQ\{(\mathcal{D}(u)-u)\ast v\mid u,v\in\ha^0\}.$
    \end{enumerate}

\end{conj}
By \Cref{lem:Dhom}, we have $\mathcal{D}(u\ast v)=\mathcal{D}(u)\ast v$ for $u\in\ha^0$ and $v\in\ha^{\ge2}$, which implies $G^\diamondsuit(u \ast v) = G^\diamondsuit(u) G^\diamondsuit(v)$. However, for general $u,v \in \ha^0$, the statement in \Cref{conj:diamondmes} (i) is non-trivial. Consider, for example, the case $u=v=z_2 z_1$. We have
\begin{align}
 G^\diamondsuit(u \ast v) &=  2 G_{6} + 3 G_{4,2} - 4 G_{3,3},\\
 G^\diamondsuit(u)  G^\diamondsuit(v) &= 2 G_{3,3} + G_6.
\end{align}
These two expressions are equal by virtue of the first relation among multiple Eisenstein series in weight $6$, as given in \Cref{ex:wt6and7}. Assuming \Cref{conj:diamondmes} (i), we can uniquely extend $G^\diamondsuit$ to an algebra homomorphism $G^\diamondsuit \colon \ha^1_\ast \to \mes[T]$ by sending $z_1 \mapsto T$. While setting $T=g_1(\tau)$ would be consistent with the choices for $G^\shuffle$ and $G^\ast$, $G^\diamondsuit$ gives a different way of regularization. This difference is evident even for admissible words; for instance, $G^\diamondsuit(z_2 z_1) = G_3$, whereas we have
$
G^\shuffle(z_2 z_1) = G^\ast(z_2 z_1)= G_3 +\pi i \frac{d}{d\tau}G_1.
$

\appendix
\section{Complements}  \label{sec:appendix}
This appendix collects the proofs of auxiliary results and technical computations omitted from the main sections.
\subsection{Omitted proofs}

\begin{lem}\label{lem:R in ha2}
    For any $u,v\in\ha^{\ge2}$, we have $R(u,v)\in\ha^{\ge2}$.
\end{lem}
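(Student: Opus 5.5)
We need to show that $R(u,v)=\dsht(u\ast v)-\dsht(u)\ast v-u\ast\dsht(v)\in\ha^{\ge2}$ for $u,v\in\ha^{\ge2}$. Since $\ast$ preserves $\ha^{\ge2}$ and the harmonic parts $(u\ast v)\ast z_2$, $(u\ast z_2)\ast v$ and $u\ast(v\ast z_2)$ all lie in $\ha^{\ge2}$ (and in fact cancel by associativity and commutativity), we get
\begin{align}
R(u,v)=-(u\ast v)\shuffle z_2+(u\shuffle z_2)\ast v+u\ast(v\shuffle z_2).
\end{align}
So it suffices to show that this combination has no word containing $z_1$, working modulo $\Span_\QQ\{z_{\bm{l}}\mid\bm{l}\in\mathbb{I}^{\ge2}\}$.

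\textbf{Step 1: explicit formula for $w\shuffle z_2$.} For $z_{\bm{k}}=z_{k_1}\cdots z_{k_r}\in\ha^{\ge2}$, write $z_{\bm{k}}=x^{k_1-1}y\cdots x^{k_r-1}y$ and shuffle in $z_2=xy$. If the $x$ and $y$ of $xy$ are inserted into the same block, or the $y$ lands directly after a $y$ of $z_{\bm{k}}$ with the $x$ elsewhere, no $z_1$ is created. A $z_1$ arises exactly when the inserted $y$ separates a $y$ from the next block, i.e.\ sits immediately before $x^{k_i-1}y$, while the inserted $x$ sits in an earlier block $j<i$, raising $k_j$ to $k_j+1$. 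Counting the $k_j$ insertion positions of $x$ in block $j$, and noting that each $z_1$-word carries multiplicity $2$ from the two $y$'s, we obtain
\begin{align}
z_{\bm{k}}\shuffle z_2\equiv 2\sum_{1\le j<i\le r+1}k_j\,z_{k_1}\cdots z_{k_j+1}\cdots z_{k_{i-1}}z_1z_{k_i}\cdots z_{k_r}
\end{align}
modulo $\ha^{\ge2}$. This is the formula quoted in the proof of \Cref{thm:der}. The multiplicity bookkeeping is routine but must be checked carefully. Define the linear operator $P(z_{\bm{k}})$ to be the right-hand side, so that $w\shuffle z_2\equiv P(w)$.

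\textbf{Step 2: reduce to an identity for $P$.} We must show
\begin{align}
P(u\ast v)\equiv P(u)\ast v+u\ast P(v)
\end{align}
modulo $\ha^{\ge2}$. Observe that $P(w)=2\sum_{j<i}$ (raise entry $j$ with weight $k_j$, then insert $z_1$ somewhere after position $j$). The plan is to encode $P$ as a composition: first the derivation $\partial(z_{\bm{k}})=\sum_j k_j z_{k_1}\cdots z_{k_j+1}\cdots z_{k_r}$, then the insertion of a marker $z_1$ at any position to the right of the raised letter. To handle this uniformly, use the letter-indexed description: $P$ sends a word to the sum over pairs (marked letter $a$, a gap to its right), replacing $a=z_k$ by $k z_{k+1}$ and placing $z_1$ in the gap.

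\textbf{Step 3: compare with quasi-shuffles.} Terms of $P(u\ast v)$ involving $z_1$, taken modulo $\ha^{\ge2}$, must match $P(u)\ast v+u\ast P(v)$. In $P(u)\ast v$, the letter $z_1$ may merge with a letter of $v$ to produce $z_{l+1}$, giving no $z_1$; such terms are discarded. So mod $\ha^{\ge2}$ we only keep quasi-shuffles in which $z_1$ stays a lone letter. A quasi-shuffle word of $u\ast v$ with a marked letter $a$ (coming from $u$, from $v$, or a merged letter $z_{k+l}$) and a gap to its right corresponds bijectively to the kept terms on the right-hand side.

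\textbf{Main obstacle.} The merged letters are the difficulty. When $a=z_{k+l}$ is raised with coefficient $k+l$, the terms on the right split as $k$ (raising the $u$-letter, then merging to $z_{k+l+1}$) plus $l$ (raising the $v$-letter). This linearity of the coefficient is what makes the identity hold, and one also has to check that the gap positions correspond. I would verify this via a generating-function or mould formulation: $P$ corresponds to $\sum_j\partial_{v_j}v_j$-type operators combined with insertion of a zero variable. Alternatively, one can argue by induction on depth using the recursive definition of $\ast$, peeling off first letters and checking the three recursive cases.
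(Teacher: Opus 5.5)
Your proposal is correct and is the paper's argument: the same reduction to $P(u\ast v)\equiv P(u)\ast v+u\ast P(v)$ modulo $\Span_\QQ\{z_{\bm l}\mid\bm l\in\mathbb{I}^{\ge2}\}$, the same formula for $z_{\bm k}\shuffle z_2$, and the same quasi-shuffle matching, in which the lone $z_1$ may occupy any gap to the right of the raised letter and the coefficient of a merged letter splits additively (this is the paper's $\sum_{i\in\sigma^{-1}(j)}2k_i$). Two small slips do not affect the argument: the harmonic parts do not cancel but sum to $-u\ast v\ast z_2\in\ha^{\ge2}$, so your displayed expression for $R(u,v)$ holds only modulo $\ha^{\ge2}$ (which is all you use); and in Step 1, the inserted $y$ landing directly after an old $y$ is exactly what creates a $z_1$, not what avoids it.
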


\begin{proof}
    It suffices to show
    \begin{align}\label{eq:shffletwo}
        (\bm{k}\ast\bm{k}^\prime)\shuffle(2)\equiv(\bm{k}\shuffle(2))\ast\bm{k}^\prime+(\bm{k}^\prime\shuffle(2))\ast\bm{k}\pmod{\Span_\QQ\mathbb{I}^{\ge2}}
    \end{align}
    for $\bm{k}=(k_1,\dots,k_r),\bm{k}^\prime=(k_{r+1},\dots,k_{r+s})\in\mathbb{I}^{\ge2}$, where here and in the following congruences are taken modulo $\Span_\QQ\mathbb{I}^{\ge2}$.
    Since it holds
    \begin{align}
        (k_1,\dots,k_r)\shuffle(2)\equiv\sum_{1\le{}i\le{}j\le{}r}2k_i(k_1,\dots,k_i+1,\dots,k_j,1,k_{j+1},\dots,k_r),
    \end{align}
    we have the following congruence
    \begin{align}
        (\bm{k}\ast\bm{k}^\prime)\shuffle(2)\equiv\sum_{\sigma\in Sh^{\le}(r,s)}\!\sum_{1\le i\le j\le N}\!2k_{\sigma^{-1}(i)}(k_{\sigma^{-1}(1)},\dots,k_{\sigma^{-1}(i)+1},\dots,k_{\sigma^{-1}(j)},1,k_{\sigma^{-1}(j+1)},\dots,k_{\sigma^{-1}(N)}),
    \end{align}
    where $k_{\sigma^{-1}(i)}=\sum_{j\in\sigma^{-1}(i)}k_j$ and
    \begin{align}
    Sh^{\le}(r,s)=\left\{\sigma:\{1,\dots,r+s\}\twoheadrightarrow\{1,\dots,N\}\relmid\;\begin{matrix}1\le N\le r+s\\\sigma(1)<\cdots<\sigma(r)\\\sigma(r+1)<\cdots<\sigma(r+s)\end{matrix}\;\right\}.
    \end{align}
    The depth $N+1$ $(0<N<r+s)$ part that includes 1 in $n$-th component $(2\le n\le N+1)$ can be computed as follows:
    \begin{align}\label{eq:astshuffle2}
        \sum_{\substack{\sigma:\{1,\dots,r+s\}\twoheadrightarrow\{1,\dots,N\}\\\sigma(1)<\cdots<\sigma(r)\\\sigma(r+1)<\cdots<\sigma(r+s)}}\sum_{1\le i<n}2k_{\sigma^{-1}(i)}(k_{\sigma^{-1}(1)},\dots,k_{\sigma^{-1}(i)+1},\dots,k_{\sigma^{-1}(n-1)},1,k_{\sigma^{-1}(n)},\dots,k_{\sigma^{-1}(N)}).
    \end{align}
    On the other hand, the right-hand side of \eqref{eq:shffletwo} is congruent to
    \begin{align}
        \sum_{1\le i\le j\le r}2k_i\sum_{\substack{\sigma\in Sh^\le(r+1,s)\\\sigma^{-1}(^\exists n)=\{j\}}}(k^\prime_{\sigma^{-1}(1)},\dots,k^\prime_{\sigma^{-1}(N)})
        +\sum_{r+1\le i\le j\le r+s}2k_i\sum_{\substack{\sigma\in Sh^\le(r,s+1)\\\sigma^{-1}(^\exists n)=\{j+1\}}}(k^{\prime\prime}_{\sigma^{-1}(1)},\dots,k^{\prime\prime}_{\sigma^{-1}(N)})
    \end{align}
    where
    \begin{align}
        k^\prime_l=\begin{cases}k_i+1&l=i\\1&l=j+1\\k_{l-1}&j+1<l\le r+1\\k_l&\text{otherwise}\end{cases},\quad
        k^{\prime\prime}_l=\begin{cases}k_i+1&l=i\\1&l=j+1\\k_{l-1}&j+1<l\le r+s+1\\k_l&\text{otherwise}\end{cases}.
    \end{align}
    The component of depth $N+1$ whose $n$-th entry equals $1$ is computed as
    \begin{align}
        &\sum_{1\le i\le j\le r}2k_i\sum_{\substack{\sigma:\{1,\dots,r+s+1\}\twoheadrightarrow\{1,\dots,N+1\}\\\sigma(1)<\cdots<\sigma(j+1)=n<\cdots<\sigma(r+1)\\\sigma(r+2)<\cdots<\sigma(r+s+1)}}(k^\prime_{\sigma^{-1}(1)},\dots,k^\prime_{\sigma^{-1}(n-1)},1,k^\prime_{\sigma^{-1}(n+1)},\dots,k^\prime_{\sigma^{-1}(N+1)})\\
        +&\sum_{r+1\le i\le j\le r+s}2k_i\sum_{\substack{\sigma:\{1,\dots,r+s+1\}\twoheadrightarrow\{1,\dots,N+1\}\\\sigma(1)<\cdots<\sigma(r)\\\sigma(r+1)<\cdots<\sigma(j+1)=n<\cdots<\sigma(r+s+1)}}(k^{\prime\prime}_{\sigma^{-1}(1)},\dots,k^{\prime\prime}_{\sigma^{-1}(n-1)},1,k^{\prime\prime}_{\sigma^{-1}(n+1)},\dots,k^{\prime\prime}_{\sigma^{-1}(N+1)})\\
        =&\sum_{\substack{\sigma:\{1,\dots,r+s\}\twoheadrightarrow\{1,\dots,N\}\\\sigma(1)<\cdots<\sigma(r)\\\sigma(r+1)<\cdots<\sigma(r+s)}}\sum_{j=1}^n\sum_{i\in\sigma^{-1}(j)}2k_i(k_{\sigma^{-1}(1)},\dots,k_{\sigma^{-1}(j)+1},\dots,k_{\sigma^{-1}(n-1)},1,k_{\sigma^{-1}(n)},\dots,k_{\sigma^{-1}(N)}).
    \end{align}
    This is exactly \eqref{eq:astshuffle2}.    
\end{proof}

\begin{lem}\label{lem:gilat}
    Let $A,B$ be $v$-moulds, the coefficient of $v_1^{k_1-1}\cdots v_r^{k_r-1}$ in $\gilat_B(A)$ is given as follows.
    \begin{align}
        &\summ{0=t_0<q_1\le t_1<q_2\le t_2<\cdots<q_s\le t_s=r\\(1\le s\le r)}\summ{n_{t_{j-1}+1},\dots,n_{t_j}\ge1\\n_{t_{j-1};t_j}=k_{t_{j-1};t_j}\\(1\le j\le s)}a(n_{q_1},\dots,n_{q_s})\\
        &\times\prod_{j=1}^s\bigg\{(-1)^{k_{t_{j-1};t_j}+k_{q_j}+n_{t_{j-1};q_j-1}}\Bigg(\prod_{\substack{p=t_{j-1}+1\\p\ne q_j}}^{t_j}\binom{n_p-1}{k_p-1}\Bigg)b(n_{t_{j-1}+1},\dots,n_{q_j-1})b(n_{t_j},\dots,n_{q_j+1})\bigg\}.
    \end{align}
\end{lem}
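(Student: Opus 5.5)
We will compute the coefficient directly from the definition of $\gaxit_{B,B_-}$ specialised to $v$-moulds. For $v$-moulds the flexion markers only act on the lower row: $\lceil\bm{b}\rceil$ leaves the $v$-variables of $\bm{b}$ unchanged, while $\bm{a}\rfloor=(v_1-v_{i+1},\dots,v_i-v_{i+1})$ and $\lfloor\bm{c}=(v_{i+1}-v_i,\dots,v_r-v_i)$. The $u$-variables can be ignored throughout. Moreover, only terms in which each middle block $\bm{b}_j$ is a single letter contribute to the coefficients that appear in $\langle\gila(A,B)\mid\bm{k}\rangle$ and in the Fourier expansion. The general formula is then the same computation with $a(n_{q_1},\dots,n_{q_s})$ replaced by the appropriate coefficient of $A$. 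So we parametrise a decomposition $\bm{v}=\bm{a}_1\bm{b}_1\bm{c}_1\cdots\bm{a}_s\bm{b}_s\bm{c}_s$ by the position $q_j$ of the single letter $\bm{b}_j=(v_{q_j})$ and by the block ends $t_j$. Here $\bm{a}_j$ occupies positions $t_{j-1}+1,\dots,q_j-1$ and $\bm{c}_j$ occupies positions $q_j+1,\dots,t_j$. The condition $\bm{c}_j\bm{a}_{j+1}\neq\emptyset$ is not restrictive once blocks are indexed by $t_j$. This yields the constraints $0=t_0<q_1\le t_1<\cdots<q_s\le t_s=r$.

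Inside the $j$-th block the factor is
$$A(\dots,v_{q_j},\dots)\,B(v_{t_{j-1}+1}-v_{q_j},\dots,v_{q_j-1}-v_{q_j})\,B_-(v_{q_j+1}-v_{q_j},\dots,v_{t_j}-v_{q_j}).$$
Using $B_-=\mathsf{neg}\circ\anti\circ\pari(B)$, the last factor equals $(-1)^{t_j-q_j}B(v_{q_j}-v_{t_j},\dots,v_{q_j}-v_{q_j+1})$. We expand each $B$ as $\sum b(\bm{n})\prod(\text{argument})^{n_p-1}$, and each binomial $(v_p-v_{q_j})^{n_p-1}$ or $(v_{q_j}-v_p)^{n_p-1}$ by the binomial theorem. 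Extracting the coefficient of $v_p^{k_p-1}$ for $p\neq q_j$ gives $\binom{n_p-1}{k_p-1}$ times a power $v_{q_j}^{n_p-k_p}$ with sign $(-1)^{n_p-k_p}$ on the $\bm{a}$-side and sign $+1$ on the $\bm{c}$-side, beyond the $\pari$ sign. The remaining power of $v_{q_j}$ combines with $v_{q_j}^{n_{q_j}-1}$ from $A$. Matching it to $v_{q_j}^{k_{q_j}-1}$ forces the weight condition $n_{t_{j-1};t_j}=k_{t_{j-1};t_j}$. Note that the $\bm{c}$-side indices appear reversed, $b(n_{t_j},\dots,n_{q_j+1})$, because of $\anti$.

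The remaining step, and the only real obstacle, is bookkeeping the sign. It is $\prod_{p<q_j}(-1)^{n_p-k_p}$ times $(-1)^{t_j-q_j}$, where the latter comes from $\pari$ combined with $\mathsf{neg}$, which contributes $(-1)^{\sum(n_p-1)}$ on the $\bm{c}$-side. We will simplify this using the weight constraint $\sum_p n_p=\sum_p k_p$ in the block. The goal is the form $(-1)^{k_{t_{j-1};t_j}+k_{q_j}+n_{t_{j-1};q_j-1}}$. The $\bm{c}$-side sign is $(-1)^{\sum_{p>q_j}n_p}$, and the $\bm{a}$-side sign is $(-1)^{\sum_{p<q_j}(n_p+k_p)}$. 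Substituting $\sum_{p>q_j}n_p=k_{t_{j-1};t_j}-n_{q_j}-n_{t_{j-1};q_j-1}$ and reducing mod $2$ should give exactly the stated exponent once $n_{q_j}$ is absorbed via the $a$-coefficient parity. If a stray $(-1)^{n_{q_j}+k_{q_j}}$ remains, we will recheck it against the $\bm{a}$-side expansion, since the $v_{q_j}$-powers from the $\bm{a}$-side carry exactly this sign.
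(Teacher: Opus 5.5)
\textbf{Overall.} Your plan is the paper's: expand $\gilat_B(A)=\gaxit_{B,B_-}(A)$ for $v$-moulds, parametrise by the $q_j$ and $t_j$, and extract coefficients with the binomial theorem. Two steps, however, are wrong as written.

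\textbf{Reduction to single-letter middle blocks.} The lemma concerns arbitrary $v$-moulds, so you cannot appeal to which coefficients occur in $\langle\gila(A,B)\mid\bm{k}\rangle$ or in the Fourier expansion. For general $A$, longer $\bm{b}_j$ do contribute. For $r=2$, the decomposition $s=1$, $\bm{a}_1=\bm{c}_1=\varnothing$, $\bm{b}_1=(v_1,v_2)$ gives $a(k_1,k_2)$.

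Conversely, the condition $\bm{c}_j\bm{a}_{j+1}\neq\varnothing$ is restrictive. Among single-letter decompositions it forbids $t_j=q_j$, $q_{j+1}=t_j+1$, which the stated index set allows. That is exactly where $a(k_1,k_2)$ sits in the formula ($s=2$, $q_1=t_1=1$, $q_2=t_2=2$). Your two errors cancel, which is why you reach the correct range.

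The actual argument is a bijection. For $v$-moulds, $\lceil\cdot\rceil$ leaves the lower row unchanged, $\bm{a}_j\rfloor$ involves only the first letter of $\bm{b}_j$, and $\lfloor\bm{c}_j$ involves only its last letter. So splitting $\bm{b}_j=(w_1,\dots,w_\ell)$ into $\ell$ single letters separated by empty $\bm{c}\bm{a}$-pieces identifies the $\gaxit$-decompositions with the unconstrained single-letter ones, term by term. This uses $B(\varnothing)=1$, which the lemma tacitly assumes.

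\textbf{Sign computation.} Write
$B_-(v_{q_j+1}-v_{q_j},\dots,v_{t_j}-v_{q_j})=\sum(-1)^{n_{q_j;t_j}}b(n_{t_j},\dots,n_{q_j+1})\prod_{p>q_j}(v_p-v_{q_j})^{n_p-1}$.
The coefficient of $v_p^{k_p-1}$ in $(v_p-v_{q_j})^{n_p-1}$ is $\binom{n_p-1}{k_p-1}(-1)^{n_p-k_p}v_{q_j}^{n_p-k_p}$, exactly as on the $\bm{a}$-side. It does not carry sign $+1$. In your other form $(v_{q_j}-v_p)^{n_p-1}$ the sign is $(-1)^{k_p-1}$, which together with $(-1)^{t_j-q_j}$ gives the same result.

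Hence each $p>q_j$ contributes $(-1)^{k_p}$ and each $p<q_j$ contributes $(-1)^{n_p+k_p}$. The block sign is therefore $(-1)^{n_{t_{j-1};q_j-1}+k_{t_{j-1};t_j}-k_{q_j}}$, which is already the stated exponent. The weight constraint is not needed for this.

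Your sign is off by $(-1)^{\sum_{p>q_j}(n_p-k_p)}$, which is not $1$ in general. Take $r=2$, $s=1$, $q_1=1$, $t_1=2$. The true contribution is $\sum_{n_1+n_2=k_1+k_2}a(n_1)b(n_2)(-1)^{k_2}\binom{n_2-1}{k_2-1}$, whereas your bookkeeping gives $(-1)^{n_2}$ in place of $(-1)^{k_2}$.

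Neither of your proposed repairs is available. $a$ is an arbitrary coefficient, so there is no parity that could absorb $n_{q_j}$. In this example the $\bm{a}$-side is empty, so there is nothing on that side to recheck against.
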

\begin{proof}
    By definition of $\gilat$, we have
    \begin{align}
        \gilat_B(A)(v_1,\dots,v_r)&=\summ{\bm{v}=\bm{a}_1\bm{b}_1\bm{c}_1\cdots \bm{a}_s\bm{b}_s\bm{c}_s\\\bm{b}_i,\bm{c}_i\bm{a}_{i+1}\ne\emptyset}A(\bm{b}_1\cdots\bm{b}_s)\prod_{j=1}^sB(\bm{a}_j\rfloor)B_-(\lfloor\bm{c}_j)\\
        &=\summ{0=t_0<q_1\le t_1<\cdots<q_s\le t_s<r+1\\(1\le s\le r)}A(v_{q_1},\dots,v_{q_s})\\
        &\qquad\times\prod_{i=1}^sB(v_{t_{i-1}+1}-v_{q_i},\dots,v_{q_i-1}-v_{q_i})B_-(v_{q_i+1}-v_{q_i},\dots,v_{t_i}-v_{q_i}).\\
    \end{align}
    Each mould can be expanded as follows.
    \begin{align}
        A(v_{q_1},\dots,v_{q_s})&=\sum_{n_{q_1},\dots,n_{q_s}>0}a(n_{q_1},\dots,n_{q_s})v_{q_1}^{n_{q_1}-1}\cdots v_{q_s}^{n_{q_s}-1},\\
        B(v_{t_{j-1}+1}-v_{q_j},\dots,v_{q_j-1}-v_{q_j})&=\sum_{n_{t_{j-1}+1},\dots,n_{q_j-1}>0}b(n_{t_{j-1}+1},\dots,n_{q_j-1})\prod_{p=t_{j-1}+1}^{q_j-1}(v_p-v_{q_j})^{n_p-1},\\
        B_-(v_{q_j+1}-v_{q_j},\dots,v_{t_j}-v_{q_j})&=\sum_{n_{q_j+1},\dots,n_{t_j}>0}(-1)^{n_{q_j;t_j}}b(n_{t_j},\dots,n_{q_j+1})\prod_{p=q_j+1}^{t_j}(v_p-v_{q_j})^{n_p-1}.
    \end{align}
    Expanding total sum using binomial coefficients and extracting the coefficient of $v_1^{k_1-1}\cdots v_r^{k_r-1}$ yields the claim.
\end{proof}

\begin{lem}\label{lem:cancel harmonic}
    Let $\rho_i:\hha^1\rightarrow\hha^1$ $(i\geq0)$ be the $\QQ$-linear map defined by
    \begin{align}
        \rho_i\left(\ai{k_1,\dots,k_r}{d_1,\dots,d_r}\right)\coloneqq\begin{cases}\ai{k_1,\dots,k_i,\dots,k_r}{d_1,\dots,d_i+1,\dots,d_r}&1\leq i\leq r\\0&\otherwise\end{cases}.
    \end{align}
    Then, for $\bm{k},\bm{l}$ with $\dep(\bm{k})=r\geq1,\dep(\bm{l})=s\geq0$, we have
    \begin{align}\label{eq:rho1}
        \sum_{w_1w_2=z_{\bm{k}}}w_1\ast\rho_1(S(w_2)\ast z_{\bm{l}})=\rho_{r+1}(z_{\bm{k}}z_{\bm{l}})-\rho_{r}(z_{\bm{k}}z_{\bm{l}}).
    \end{align}
\end{lem}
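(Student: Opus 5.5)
We will prove \eqref{eq:rho1} by passing to generating functions, where $\rho_i$ becomes differentiation in $u_i$ and the antipode sum becomes the identity $\sum_{w_1w_2=w}w_1\ast S(w_2)=\varepsilon(w)$.

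First we observe that $\rho_1$ interacts simply with the harmonic product. If $x$ is a word of depth $m\ge1$ and $y$ any word, then in $x\ast y$ the first letter of each term is either the first letter of $x$, the first letter of $y$, or their merge $\ai{k+k'}{d+d'}$. Since $\rho_1$ raises the first lower index by one, and the lower indices add under merging, we get a Leibniz rule of the form
\begin{align}
    \rho_1(x\ast y)=\rho_1^{(x)}(x\ast y)+\rho_1^{(y)}(x\ast y).
\end{align}
Here $\rho_1^{(x)}$ means we raise the $d$-entry of the letter coming from $x$'s first letter, but only in those terms where that letter (possibly merged) sits in first position. This is cleanest in bimould language. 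The generating series of $\rho_i$ applied to $\fA$ is $\partial_{u_i}\fA$. The harmonic product corresponds to symmetrility, and the flexion $u$-variables of merged letters add. So $\rho_1$ corresponds to differentiation along the first $u$-slot.

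The plan is then to expand the left-hand side of \eqref{eq:rho1} using the deconcatenation structure. Write $z_{\bm{k}}=w_1w_2$ and use that $\rho_1(S(w_2)\ast z_{\bm{l}})$ picks out the first letter of each term in $S(w_2)\ast z_{\bm{l}}$. For $w_2\neq\varnothing$, that first letter is the first letter of $S(w_2)$ (a sum of star-letters ending with $k_r$), the first letter of $z_{\bm{l}}$, or a merge of the two. We split accordingly:
\begin{align}
    \rho_1(S(w_2)\ast z_{\bm{l}})=(\rho_1S(w_2))\ast' z_{\bm{l}}+S(w_2)\ast'\rho_1(z_{\bm{l}}),
\end{align}
where $\ast'$ denotes the harmonic product restricted to terms with the distinguished first letter. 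Using the recursive definition of $\ast$, the restricted products can be rewritten as $a\,(u\ast v)$ with $a$ the first letter.

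Summing over deconcatenations, terms telescope by the antipode identity applied to the tails. What survives should be a boundary term at the junction between $z_{\bm{k}}$ and $z_{\bm{l}}$:
\begin{itemize}
    \item $+\rho_{r+1}(z_{\bm{k}}z_{\bm{l}})$ from the $z_{\bm{l}}$-first-letter contribution (the first letter of $z_{\bm{l}}$ sits just after the whole of $z_{\bm{k}}$);
    \item $-\rho_r(z_{\bm{k}}z_{\bm{l}})$ from the $S(w_2)$-first-letter contribution with $w_2=z_{k_r}$.
\end{itemize}

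Concretely, we will prove this by induction on $r$. For $r=1$ the sum has two terms: $z_{k_1}\ast\rho_1(z_{\bm{l}})$ and $-\rho_1(z_{k_1}\ast z_{\bm{l}})$. Expanding $z_{k_1}\ast z_{\bm{l}}=z_{k_1}z_{\bm{l}}+\ai{l_1}{0}(z_{k_1}\ast z_{l_2}\cdots)+\ai{k_1+l_1}{0}z_{l_2}\cdots$ and comparing with $z_{k_1}\ast\rho_1(z_{\bm{l}})$ gives the following. The second term minus its $\rho_1$-analogue cancels. The merged terms cancel. The remainder is $z_{k_1}\rho_1(z_{\bm{l}})-\rho_1(z_{k_1}z_{\bm{l}})=\rho_2-\rho_1$, as desired.

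For the inductive step, we use $S(xa)=-S(a)$-type recursions. Equivalently we take the identity $\sum_{w_1w_2=w}w_1\ast S(w_2)=0$ and peel off the first letter $z_{k_1}$ via the recursive definition of $\ast$. This reduces the statement for $\bm{k}$ to that for $(k_2,\dots,k_r)$, plus terms that cancel pairwise.

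\textbf{Main obstacle.} The hard step is the bookkeeping of which terms carry the raised $d$ in first position, especially the merged letters $\ai{k_i+l_1}{1}$ produced from star-terms of $S$. I would first try the generating-function route: identify $\sum_{w_1w_2}w_1\ast\rho_1(S(w_2)\ast z_{\bm{l}})$ with $\partial_{u_1}$ of a mould product $\fA\times(\text{inverse of }\fA)$ evaluated with the first $u$-variable shifted. This should make the telescoping a one-line consequence of $\fA\times\fA^{-1}=1$.
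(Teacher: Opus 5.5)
\textbf{There is a genuine gap: the inductive step is asserted, not proved.} Your base case $r=1$ is correct. The inductive step, which carries all the content, only says the statement ``reduces to that for $(k_2,\dots,k_r)$, plus terms that cancel pairwise'', and the tools you offer would not produce it.

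\begin{enumerate}[(i)]
\item Your Leibniz-type split acts on the \emph{inner} product $S(w_2)\ast z_{\bm{l}}$. What must be controlled is how $\rho_1$ fails to commute with the \emph{outer} multiplication $w_1\ast(-)$: only after $\rho_1$ is moved outside can the antipode identity $\sum_{w_1w_2=w}w_1\ast S(w_2)=\varepsilon(w)$ be used.
\item As written, the split is ambiguous on terms whose leading letter is a merged letter $\ai{k+k'}{d+d'}$. If both $\rho_1^{(x)}$ and $\rho_1^{(y)}$ act there, those terms are counted twice.
\item The generating-function ``one-liner'' does not exist as described. Differentiating $\fA\times\fA^{-1}=1$ gives nothing. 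Also $\rho_1$ (i.e.\ $\partial_{u_1}$ on generating series) is not a derivation of $\ast$, because the first slot of a product term can be $u_1$, $u_1'$ or $u_1+u_1'$.
\item The ``$S(xa)=-S(a)$'' recursion is not one the antipode satisfies.
\item The leftover terms are never shown to vanish. They vanish only collectively, via the antipode identity for $z_{k_2}\cdots z_{k_r}$, not pairwise.
\end{enumerate}

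\textbf{The missing idea is a commutation rule for the outer product.} Write $\rho$ for raising the lower index of a single letter, so $\rho_1(aX')=\rho(a)X'$. Raising a merged letter $\ai{k+k'}{d+d'}$ is the same as raising either constituent. From the recursive definition of $\ast$, for every word $X=aX'$ and every $Y\in\hha^1$,
\[
X\ast\rho_1(Y)=\rho_1(X\ast Y)-\rho(a)\,(X'\ast Y)+a\,(X'\ast\rho_1(Y)).
\]
This holds because the terms whose leading letter comes from $Y$ (alone or merged with $a$) agree on both sides.

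Apply this with $X=w_1=z_{k_1}w_1'$ and $Y=S(w_2)\ast z_{\bm{l}}$, and sum over $w_1w_2=z_{\bm{k}}$. Then use the antipode identity for $z_{\bm{k}}$ and for $z_{\bm{k}'}=z_{k_2}\cdots z_{k_r}$. Writing $L(\bm{k})$ for the left-hand side of the lemma, you get
\[
L(\bm{k})=z_{k_1}\,L(\bm{k}')-\varepsilon(z_{\bm{k}'})\,\ai{k_1}{1}z_{\bm{l}},\qquad L(\varnothing)=\rho_1(z_{\bm{l}}).
\]
The lemma follows at once by induction on $r$. So your peel-off-the-first-letter plan is viable, but only with this identity, which your proposal never states.

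\textbf{Comparison with the paper.} The paper inducts from the other end. It expands $S(z_{k_1}\cdots z_{k_r})=-\sum_{i<r}S(z_{k_1}\cdots z_{k_i})\ast z_{k_{i+1}}\cdots z_{k_r}$. It then applies the induction hypothesis to the prefixes $(k_1,\dots,k_j)$, with $z_{\bm{l}}$ replaced by the linear combination $z_{k_{j+1}}\cdots z_{k_r}\ast z_{\bm{l}}$. It finishes with harmonic-product identities that track where the letter containing $l_1$ sits. The corrected first-letter route avoids that final bookkeeping entirely.
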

\begin{proof}
    We prove by induction of $\dep(\bm{k})=r\geq1$. In the case $r=1$, we have
    \begin{align}
        z_{k}\ast\rho_1(z_{\bm{l}})+\rho_1(S(z_k)\ast z_{\bm{l}})=\ai{k,\bm{l}}{0,1,\{0\}^{s-1}}-\ai{k,\bm{l}}{1,\{0\}^s}.
    \end{align}
    When $r>1$, since the antipode is given by $S(z_{k_1}\cdots z_{k_r})=-\sum_{i=0}^{r-1}S(z_{k_1}\cdots z_{k_i})\ast z_{k_{i+1}}\cdots z_{k_r}$, the left-hand side of \eqref{eq:rho1} is given as follows.
    \begin{align}
        &z_{\bm{k}}\ast\rho_1(z_{\bm{l}})-\sum_{i=0}^{r-1}\sum_{j=i}^{r-1}z_{k_1}\cdots z_{k_i}\ast\rho_1(S(z_{k_{i+1}}\cdots z_{k_j})\ast z_{k_{j+1}}\cdots z_{k_r}\ast z_{\bm{l}})\\
        =&z_{\bm{k}}\ast\rho_1(z_{\bm{l}})-\rho_1(z_{\bm{k}}\ast z_{\bm{l}})-\sum_{j=1}^{r-1}\sum_{w_1w_2=z_{k_1}\cdots z_{k_j}}w_1\ast\rho_1(S(w_2)\ast z_{k_{j+1}}\cdots z_{k_r}\ast z_{\bm{l}}).
    \end{align}
    By induction hypothesis, this equals to
    \begin{align}
        z_{\bm{k}}\ast\rho_1(z_{\bm{l}})-\rho_1(z_{\bm{k}}\ast z_{\bm{l}})-\sum_{j=1}^{r-1}\left(\rho_{j+1}(z_{k_1}\cdots z_{k_j}(z_{k_{j+1}}\cdots z_{k_r}\ast z_{\bm{l}}))-\rho_j(z_{k_1}\cdots z_{k_j}(z_{k_{j+1}}\cdots z_{k_r}\ast z_{\bm{l}}))\right).
    \end{align}
    By considering the harmonic product, the following holds and we have the conclusion.
    \begin{align}
        z_{\bm{k}}\ast\rho_1(z_{\bm{l}})-\sum_{j=1}^{r-1}\rho_{j+1}(z_{k_1}\cdots z_{k_j}(z_{k_{j+1}}\cdots z_{k_r}\ast z_{\bm{l}}))&=\rho_{r+1}(z_{\bm{k}}z_{\bm{l}}),\\
        \rho_1(z_{\bm{k}}\ast z_{\bm{l}})-\sum_{j=1}^{r-1}\rho_j(z_{k_1}\cdots z_{k_j}(z_{k_{j+1}}\cdots z_{k_r}\ast z_{\bm{l}}))&=\rho_r(z_{\bm{k}}z_{\bm{l}}).
    \end{align}
\end{proof}

\subsection{Comparison of two $\ast$-regularization methods}
Given a $\QQ$-algebra $R$ and a family of homomorphisms
\begin{align}
    \{w\mapsto f_w(m)\}_{m>0}
\end{align}
from $\ha^1_\ast$ to $R$, we define for a word $w\in\ha^1$ and $M>0$
\begin{align}
    F_w(M)\coloneqq\summ{w_1\cdots w_s=w\\(1\le s\le\dep(w),w_i\neq\emptyset)}\sum_{M>m_1>\cdots>m_s>0}f_{w_1}(m_1)\cdots f_{w_s}(m_s).
\end{align}
\begin{lem}[\cite{Ba2}]\label{lem:f to F}
    For any $M>0$, the map from $\ha^1_\ast$ to $R$ defined by $w\mapsto F_w(M)$ is an algebra homomorphism.
\end{lem}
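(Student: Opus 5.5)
We prove that $w\mapsto F_w(M)$ is multiplicative for $w\in\ha^1_\ast$ by induction on $M$, using a recursion in $M$ that mirrors the recursion defining the harmonic product.

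\textbf{Recursion in $M$.} Separate the contribution of the largest summation variable. If $m_1=M-1$, the first block $w_1$ is a nonempty prefix of $w$ and the remaining blocks form a decomposition summed over $M-1>m_2>\cdots$. This gives, for every word $w$ and $M\ge1$,
\begin{align}
F_w(M+1)=F_w(M)+\sum_{\substack{w=w_1w_2\\ w_1\ne\emptyset}}f_{w_1}(M)\,F_{w_2}(M),
\end{align}
with the conventions $F_{\emptyset}(M)=1$ and $F_w(1)=\delta_{w,\emptyset}$. Set $\Phi_M(w):=\sum_{w=w_1w_2}f_{w_1}(M)F_{w_2}(M)$, the sum now including $w_1=\emptyset$ with $f_\emptyset=1$. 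Then $F(M+1)=\Phi_M$, and in the language of \Cref{rem:symmetril}
\begin{align}
\Phi_M=\mu\circ(f(M)\otimes F(M))\circ\Delta_{\mathrm{dec}}.
\end{align}

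\textbf{Induction.} The convolution with respect to the deconcatenation coproduct of two algebra homomorphisms $\ha^1_\ast\to R$ is again an algebra homomorphism, because $\Delta_{\mathrm{dec}}$ is an algebra morphism for $\ast$ (the quasi-shuffle Hopf algebra, \cite{Ho}). The map $f(M)$ is a homomorphism by hypothesis. $F(1)$ is the counit, which is a homomorphism. So if $F(M)$ is a homomorphism, then $\Phi_M=F(M+1)$ is one too, and the induction closes. This uses only that $R$ is a commutative $\QQ$-algebra.

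\textbf{Main obstacle.} The only nontrivial input is the compatibility $\Delta_{\mathrm{dec}}(u\ast v)=\Delta_{\mathrm{dec}}(u)\ast\Delta_{\mathrm{dec}}(v)$, which we take from Hoffman's result. To be self-contained, one can verify it by induction on $\dep(u)+\dep(v)$ using the recursive definition of $\ast$: splitting $z_ku'\ast z_lv'$ according to whether the cut lands before or after the first letter produces exactly the three terms $z_k(\cdot)$, $z_l(\cdot)$ and $z_{k+l}(\cdot)$.

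A direct alternative avoids Hopf language. Expand $F_u(M)F_v(M)$, and merge the two decreasing sequences $(m_i)$ and $(m'_j)$ into one decreasing sequence, allowing ties. Tied indices multiply $f_{u_i}(m)f_{v_j}(m)=f_{u_i\ast v_j}(m)$, and the resulting block decompositions recover exactly those of $u\ast v$. The bookkeeping of that bijection is the messy part, which is why we prefer the inductive convolution argument above.
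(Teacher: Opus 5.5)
Your proof is correct. It is complete once one grants that $\Delta_{\mathrm{dec}}$ is an algebra morphism for $\ast$, which the paper also takes as known, and your inductive sketch of that fact is sound. The paper gives no argument for this lemma; it is quoted from \cite{Ba2}, so there is no internal proof to compare against. Your route is, however, exactly the mechanism the paper uses in \Cref{rem:symmetril} and in the proof of \Cref{lem:comp}. Your recursion says $F(M+1)=f(M)\times F(M)$ in the paper's mould product. Hence $F(M)=f(M-1)\times f(M-2)\times\cdots\times f(1)$, and the lemma is just the iterated statement that a product of symmetril moulds is symmetril.

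Two points you handled correctly are worth keeping explicit:
\begin{itemize}
\item The literal definition, with $1\le s\le\dep(w)$, gives $F_\varnothing(M)=0$. Your convention $F_\varnothing(M)=1$ is therefore needed for unitality.
\item Commutativity of $R$ is genuinely necessary, not merely convenient. Already for $M=3$ one gets $F_{z_a}(3)F_{z_b}(3)-F_{z_a\ast z_b}(3)=f_{z_a}(1)f_{z_b}(2)-f_{z_b}(2)f_{z_a}(1)$.
\end{itemize}

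A further advantage of your formulation is that it carries over verbatim to $\hha^1_\ast$ and bimoulds, since Hoffman's result holds for any quasi-shuffle algebra. That is precisely the ``similar fact'' the paper invokes in the proof of \Cref{thm:symmetril bMES}.
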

Note that a $v$-mould $A$ over $R$ can be identified with a linear map from $\ha^1$ to $R$ that sends $z_{k_1}\cdots z_{k_r}$ to $\langle A\mid(k_1,\dots,k_r)\rangle$, and we use the same notation. For $M>0$, define $v$-moulds $\co{F}(M),F^\ast(M)$ by
\begin{align}
    \co{F}(M)(v_1,\dots,v_r)&\coloneqq(-1)^r\sum_{M>m_1\ge\cdots\ge m_r>0}\mu^{m_1,\dots,m_r}f_{z_1}(m_1)\cdots f_{z_1}(m_r),\\
    F^\ast(M)&\coloneqq\co{F}(M)\times F(M),
\end{align}
where $\mu^{m_1,\dots,m_r}=\frac{1}{r_1!\cdots r_s!}$ for a non-increasing sequence $(m_1,\dots,m_r)=(\{n_1\}^{r_1},\dots,\{n_s\}^{r_s})$.
\begin{lem}\label{lem:coF}
    For any $M>0$, $\co{F}(M):\ha^1_\ast\to R:z_{k_1}\cdots z_{k_r}\mapsto\langle \co{F}(M)\mid(k_1,\dots,k_r)\rangle$ is an algebra homomorphism.
\end{lem}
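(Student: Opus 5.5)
The plan is to compute the map $\co{F}(M)$ completely explicitly and then check the harmonic product by hand. Since $\co{F}(M)$ is a constant mould, its value does not depend on $v_1,\dots,v_r$. So, viewed as a linear map $\ha^1\to R$, it sends $z_{k_1}\cdots z_{k_r}$ to $0$ unless $k_1=\cdots=k_r=1$. It sends $z_1^r$ to
\begin{align}
c_r\coloneqq(-1)^r\sum_{M>m_1\ge\cdots\ge m_r>0}\mu^{m_1,\dots,m_r}f_{z_1}(m_1)\cdots f_{z_1}(m_r).
\end{align}
The first step is to identify $c_r$ in closed form. Put $X\coloneqq\sum_{0<m<M}f_{z_1}(m)\in R$. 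A non-increasing sequence $(\{n_1\}^{r_1},\dots,\{n_s\}^{r_s})$ is the same as a multiset of elements of $\{1,\dots,M-1\}$ with multiplicities $r_i$, and the weight $\mu=1/(r_1!\cdots r_s!)$ is exactly the multinomial normalisation. Comparing with the expansion of $\prod_{0<m<M}\exp(-t f_{z_1}(m))$ (commutativity of $R$ is used here) gives the generating function
\begin{align}
\sum_{r\ge0}c_rt^r=\exp(-tX),\qquad\text{so}\qquad c_r=\frac{(-X)^r}{r!}.
\end{align}

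The second step is to verify multiplicativity on pairs of words $u,v$. Suppose first that $u$ or $v$ contains a letter $z_k$ with $k\ge2$. The harmonic product only concatenates letters or merges them as $z_k,z_l\mapsto z_{k+l}$, so letter indices never decrease. Hence every word in $u\ast v$ still contains a letter $z_{k'}$ with $k'\ge2$, and both sides of $\co{F}(M)(u\ast v)=\co{F}(M)(u)\co{F}(M)(v)$ vanish. Now suppose $u=z_1^a$ and $v=z_1^b$. In $z_1^a\ast z_1^b$, every term in which some merge occurs contains $z_2$ and is killed by $\co{F}(M)$. The remaining terms are the $\binom{a+b}{a}$ interleavings, each equal to $z_1^{a+b}$. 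Thus it suffices to check
\begin{align}
\binom{a+b}{a}\frac{(-X)^{a+b}}{(a+b)!}=\frac{(-X)^a}{a!}\cdot\frac{(-X)^b}{b!},
\end{align}
which is immediate. The unit condition $\co{F}(M)(1)=c_0=1$ is clear.

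I expect no step to be a serious obstacle. The only point needing care is the combinatorial identification of the $\mu$-weighted sum over non-increasing sequences with the coefficient of $t^r$ in the product of exponentials. I would do this by grouping equal values of the $m_i$ and applying the multinomial theorem.
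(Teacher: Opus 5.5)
Your proof is correct, but it takes a different route from the paper.

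The paper never computes $\co{F}(M)$ in closed form. It introduces the auxiliary family $\widetilde{f}_w(m)=\frac{(-1)^i}{i!}f_{z_1}(m)^i$ for $w=z_1^i$ (and $0$ for all other words), and checks that each single $\widetilde{f}(m)$ respects the harmonic product via $\widetilde{f}_{z_1^i}(m)\widetilde{f}_{z_1^j}(m)=\binom{i+j}{i}\widetilde{f}_{z_1^{i+j}}(m)$. It then observes that the iterated-sum construction of \Cref{lem:f to F}, applied to this family, returns exactly $\co{F}(M)$: grouping a non-increasing sequence into blocks of equal values, taken at strictly decreasing $m$'s, reproduces the weights $\mu^{m_1,\dots,m_r}$. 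Multiplicativity is then inherited from \Cref{lem:f to F}.

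You instead collapse all blocks at once through $\prod_{0<m<M}\exp(-tf_{z_1}(m))=\exp(-tX)$. This shows that $\co{F}(M)$ itself has the same shape as a single $\widetilde{f}(m)$, with $f_{z_1}(m)$ replaced by $X=\sum_{0<m<M}f_{z_1}(m)$. Your multiplicativity check is then the same binomial identity the paper verifies for $\widetilde{f}(m)$. You also supply the observation that letters of index $\ge2$ survive under $\ast$, which the paper leaves implicit.

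What your route buys is self-containedness: you do not need the lemma quoted from \cite{Ba2}. It also gives the explicit formula $\co{F}(M)(z_1^r)=(-X)^r/r!$, which makes transparent why $F^\ast_{z_1}(M)=F_{z_1}(M)-X=0$ in \Cref{lem:comp}. The paper's route is shorter, since \Cref{lem:f to F} is needed elsewhere anyway, for instance in \Cref{thm:symmetril bMES}.

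Both arguments use commutativity of $R$: yours in the exponential identity, the paper's inside \Cref{lem:f to F}. This is consistent with the paper's standing convention.
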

\begin{proof}
    Consider a family of linear maps $\{w\mapsto\widetilde{f}_w(m)\}_{m>0}$ defined by
    \begin{align}
        \widetilde{f}_w(m)\coloneqq\begin{cases}\frac{(-1)^i}{i!}f_{z_1}(m)^i&w=z_1^i\;(i\ge0),\\0&\otherwise.\end{cases}
    \end{align}
    For $m>0$, $\widetilde{f}(m)$ satisfy the harmonic product since
    \begin{align}
        \widetilde{f}_{z_1^i}(m)\widetilde{f}_{z_1^j}(m)=\frac{(-1)^{i+j}}{i!j!}f_{z_1}(m)^{i+j}=\binom{i+j}{i}\widetilde{f}_{z_1^{i+j}}(m).
    \end{align}
    By \Cref{lem:f to F}, the associated family of linear maps $\{w\mapsto\widetilde{F}_w(M)\}_{M>0}$ is again a family of homomorphisms, and this $\widetilde{F}(M)$ coincides with $\co{F}(M)$.
\end{proof}
\begin{lem}\label{lem:comp}
    For any $M>0$, $F^\ast(M):\ha^1_\ast\to R$ is an algebra homomorphism. Moreover, we have $F^\ast_w(M)=F_{\mathrm{reg}^0_\ast(w)}(M)$ for any $w\in\ha^1$, where the map (actually an algebra homomorphism) $\mathrm{reg}^0_\ast:\ha^1_\ast\to\ha^0_\ast$ is the harmonic regularization map introduced by Ihara--Kaneko--Zagier \cite{IKZ}.
\end{lem}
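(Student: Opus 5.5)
The first assertion follows from results already in hand. By \Cref{lem:f to F}, $w\mapsto F_w(M)$ is an algebra homomorphism $\ha^1_\ast\to R$, and by \Cref{lem:coF} so is $\co{F}(M)$. By \Cref{rem:identification}, these are symmetril $v$-moulds. The mould product $\times$ corresponds to convolution with respect to the deconcatenation coproduct, which is compatible with $\ast$ (the $v$-mould version of \Cref{rem:symmetril}). Hence $F^\ast(M)=\co{F}(M)\times F(M)$ is symmetril, i.e. an algebra homomorphism on $\ha^1_\ast$.

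For the identification with $F\circ\mathrm{reg}^0_\ast$, I would use the structure theorem of Ihara--Kaneko--Zagier: $\ha^1_\ast\cong\ha^0_\ast[z_1]$, and $\mathrm{reg}^0_\ast$ is the unique algebra homomorphism that is the identity on $\ha^0$ and sends $z_1\mapsto0$. Both $F^\ast(M)$ and $F(M)\circ\mathrm{reg}^0_\ast$ are algebra homomorphisms $\ha^1_\ast\to R$. Since $\ha^1_\ast$ is generated as an algebra by $\ha^0$ and $z_1$, it suffices to check that the two maps agree on $z_1$ and on every word $w\in\ha^0$.

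\textbf{Check on $z_1$.} Here $F_{\mathrm{reg}^0_\ast(z_1)}(M)=F_0=0$. On the other side,
\begin{align}
F^\ast_{z_1}(M)=\co{F}(M)(z_1)+F_{z_1}(M)=-\sum_{M>m>0}f_{z_1}(m)+\sum_{M>m>0}f_{z_1}(m)=0.
\end{align}

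\textbf{Check on $w\in\ha^0$.} Write $w=z_{k_1}\cdots z_{k_r}$ with $k_1\ge2$. Then
\begin{align}
F^\ast_w(M)=\sum_{i}\co{F}(M)(z_{k_1}\cdots z_{k_i})\,F_{z_{k_{i+1}}\cdots z_{k_r}}(M).
\end{align}
The coefficient $\langle\co{F}(M)\mid\bm{k}\rangle$ is nonzero only when $\bm{k}=(1,\dots,1)$, since $\co{F}(M)$ is a constant mould. So only $i=0$ survives because $k_1\ge2$, giving $F^\ast_w(M)=F_w(M)=F_{\mathrm{reg}^0_\ast(w)}(M)$.

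The main obstacle is the product step, namely that the mould product of symmetril $v$-moulds is symmetril. The paper only states this for bimoulds in \Cref{rem:symmetril}, but the $v$-mould case is the specialization $u=0$, or equivalently the statement that deconcatenation is a coalgebra map for $\ast$, which is Hoffman's quasi-shuffle Hopf algebra structure. Once this is in place, the rest is the routine generator check above.
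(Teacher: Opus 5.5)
Your proposal is correct and follows the paper's proof. The first part uses the identification of $\times$ with convolution for the deconcatenation coproduct, together with \Cref{lem:f to F} and \Cref{lem:coF}. The second part uses $F^\ast_{z_1}(M)=0$ and the universal property of $\mathrm{reg}^0_\ast$ coming from $\ha^1_\ast\cong\ha^0_\ast[z_1]$.

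You also check explicitly that $F^\ast(M)$ and $F(M)$ agree on $\ha^0$: since $\co{F}(M)$ is a constant mould, it is supported only on indices $(1,\dots,1)$. The paper leaves this step implicit in its appeal to universality, so your write-up is slightly more complete.
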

\begin{proof}
    Note that the multiplication of $v$-moulds $A\times B$ coincides with the convolution product of $A,B\in\Hom_{\QQ\text{-}\mathrm{linear}}(\ha^1,R)$ defined by
    \begin{align}
        A\star_{\mathrm{dec}}B\coloneqq\mu\circ(A\otimes B)\circ\Delta_{\mathrm{dec}},
    \end{align}
    where $\Delta_{\mathrm{dec}}:\ha^1\to\ha^1\otimes\ha^1$ is the deconcatenation coproduct defined by $\Delta_{\mathrm{dec}}(z_{k_1}\cdots z_{k_r})\coloneqq\sum_{i=0}^rz_1\cdots z_{k_i}\otimes z_{k_i+1}\cdots z_{k_r}$. And it is known that $\Delta_\mathrm{dec}$ is an algebra homomorphism in terms of the harmonic product $\ast$. Thus it suffices to show that $\co{F}(M)$ satisfies the harmonic product, and it follows from \Cref{lem:coF}.
    The identity $F^\ast(M)=F(M)\circ\mathrm{reg_\ast^0}$     holds since $F^\ast_{z_1}(M)=0$ and the fact that $\mathrm{reg}_\ast^0:\ha^1_\ast\to\ha^0_\ast$ satisfies the following universality: ``For any $\QQ$-algebra $R$ and any algebra homomorphism $\mathfrak{z}:\ha^0_\ast\to R$, there exists a unique algebra homomorphism $\mathfrak{z}^\ast:\ha^1_\ast\to R$ such that $\mathfrak{z}^\ast(z_1)=0$ and $\mathfrak{z}^\ast=\mathfrak{z}\circ\mathrm{reg}_\ast^0$''. 
\end{proof}

\end{document}